\DeclareMathAlphabet{\mathup}{OT1}{\familydefault}{m}{n}
\newcommand{\dx}[1]{\mathop{}\!\mathup{d} #1}
\newcommand{\pderiv}[3][]{\frac{\mathop{}\!\mathup{d}^{#1} #2}{\mathop{}\!\mathup{d} #3^{#1}}}
\newcommand{\eps}{\varepsilon}
\DeclarePairedDelimiter{\abs}{\lvert}{\rvert}
\DeclarePairedDelimiter{\norm}{\lVert}{\rVert}
\DeclarePairedDelimiter{\bra}{(}{)}
\DeclarePairedDelimiter{\pra}{[}{]}
\DeclarePairedDelimiter{\set}{\{}{\}}
\DeclarePairedDelimiter{\skp}{\langle}{\rangle}
\newcommand{\customlabel}[2]{%
   \protected@write \@auxout {}{\string \newlabel {#1}{{#2}{\thepage}{#2}{#1}{}} }%
   \hypertarget{#1}{}
}
\numberwithin{figure}{section}
\newcommand{\dashover}[2][\mathop]{#1{\mathpalette\df@over{{\dashfill}{#2}}}}
\newcommand{\fillover}[2][\mathop]{#1{\mathpalette\df@over{{\solidfill}{#2}}}}
\newcommand{\df@over}[2]{\df@@over#1#2}
\newcommand\df@@over[3]{%
	\vbox{
		\offinterlineskip
		\ialign{##\cr
			#2{#1}\cr
			\noalign{\kern1pt}
			$\m@th#1#3$\cr
		}
	}%
}
\newcommand{\dashfill}[1]{%
	\kern-.5pt
	\xleaders\hbox{\kern.5pt\vrule height.4pt width \dash@width{#1}\kern.5pt}\hfill
	\kern-.5pt
}
\newcommand{\dash@width}[1]{%
	\ifx#1\displaystyle
	2pt
	\else
	\ifx#1\textstyle
	1.5pt
	\else
	\ifx#1\scriptstyle
	1.25pt
	\else
	\ifx#1\scriptscriptstyle
	1pt
	\fi
	\fi
	\fi
	\fi
}
\newcommand{\solidfill}[1]{\leaders\hrule\hfill}
 \def\calB{{\mathcal B}} 
\def\calD{{\mathcal D}} \def\calE{{\mathcal E}} 
 \def\calH{{\mathcal H}} \def\calI{{\mathcal I}}
  \def\calL{{\mathcal L}}
\def\calM{{\mathcal M}}  
\def\calP{{\mathcal P}}  \def\calR{{\mathcal R}}
  \def\rmf{{\mathrm f}}
\def\rmj{{\mathrm j}}
 \def\rmB{{\mathrm B}} 
\def\rmD{{\mathrm D}}
\def\rmV{{\mathrm V}}
 \def\sfn{{\mathsf n}}
\def\sfv{{\mathsf v}} \def\sfw{{\mathsf w}}
  \def\sfC{{\mathsf C}}
 \def\sfE{{\mathsf E}} 
\def\sfG{{\mathsf G}}  \def\sfI{{\mathsf I}}
  \def\sfL{{\mathsf L}}
\def\sfM{{\mathsf M}}  
  \def\sfR{{\mathsf R}}
\def\sfV{{\mathsf V}}  \def\sfX{{\mathsf X}}
\def\sfY{{\mathsf Y}}
  \def\scrL{{\mathscr  L}}
 \def\scrk{{\mathscr  k}}
 \def\bbN{{\mathbb N}} 
  \def\bbR{{\mathbb R}}
 \def\bbT{{\mathbb T}}
\newcommand{\AC}{\mathrm{AC}}
\newcommand{\Mgraph}{\sfM}
\newcommand{\Medges}{\sfL}
\newcommand{\nodes}{\sfV}
\newcommand{\edges}{\sfE}
\newcommand{\normal}{\sfn}
\newcommand{\R}{\mathbb{R}}
\newcommand{\EDP}{{\mathsf{EDP}}}
\newcommand{\CE}{{\mathsf{CE}}}
\newcommand{\pCE}{{\overline{\mathsf{CE}}_{\hat\nodes}}}
\renewcommand{\div}{\mathop{\mathrm{div}}\nolimits}
\newcommand{\bdiv}{\mathop{\mathsf{div}}\nolimits}
\newcommand{\gdiv}{\mathop{\overline{\mathrm{div}}}\nolimits}
\newcommand{\gnabla}{\overline{\nabla}}
\newcommand{\diffedge}{d}
\DeclareMathOperator{\arsinh}{arsinh}
\newcommand{\bnabla}{{\mathpalette\b@nabla\relax}}
\newcommand\b@nabla[2]{%
        \setbox\z@=\hbox{$\m@th#1\bigtriangledown$}%
        \ht\z@.7\ht\z@
        \raise\dp\z@\box\z@
}
\newtheorem{theorem}{Theorem}[section]
\newtheorem{assumption}[theorem]{Assumption}
\newtheorem{lemma}[theorem]{Lemma}
\newtheorem{corollary}[theorem]{Corollary}
\newtheorem{definition}[theorem]{Definition}
\newtheorem{proposition}[theorem]{Proposition}
\newtheorem{remark}[theorem]{Remark}
\newtheorem{remark*}{Remark}
\newtheorem{property}[theorem]{Property}
\newtheorem{principle}[theorem]{Principle}
\numberwithin{equation}{section}
\newcommand{\oset}[3][0ex]{%
  \mathrel{\mathop{#3}\limits^{
    \vbox to#1{\kern-2\ex@
    \hbox{$\scriptstyle#2$}\vss}}}}
\NewDocumentCommand{\rmjmath}{}{\mathbin{\mathpalette\eplus@\relax\mspace{1mu}}}
\newcommand{\eplus@}[2]{\clipbox{-.5 -.5 0 {.35\height}}{$\m@th#1\rmj$}}
\newcommand{\one}{1\!\!1}
\definecolor{bluegray}{rgb}{0.4, 0.6, 0.8}
\title{Gradient flows on metric graphs with reservoirs:\\
Microscopic derivation and multiscale limits}
\author[1]{Georg Heinze}
\author[2]{Jan-Frederik Pietschmann}
\author[3]{André Schlichting}
\affil[1]{{
\small
Research Group ``Partial Differential Equations'', Weierstrass Institute,
Mohrenstrasse 39, 10117 Berlin, Germany. 
{georg.heinze@wias-berlin.de}
}}
\affil[2]{{
\small
Universit\"{a}t Augsburg, Institut f\"ur Mathematik, Universit\"{a}tsstra\ss e 12a, 86159 Augsburg, Germany 
\newline
and
Centre for Advanced Analytics and Predictive Sciences (CAAPS), University of Augsburg,
Universit\"{a}tsstr. 12a, 86159 Augsburg, Germany. 
{jan-f.pietschmann@uni-a.de}
}}
\affil[3]{{
\small
Ulm University, Institute for Applied Analysis, Ulm, Germany. 
andre.schlichting@uni-ulm.de
}}
\date{\today}
\begin{document}

\maketitle

\begin{abstract}
We study evolution equations on metric graphs with reservoirs, that is graphs where a one-dimensional interval is associated to each edge and, in addition, the vertices are able to store and exchange mass with these intervals. 
Focusing on the case where the dynamics are driven by an entropy functional defined both on the metric edges and vertices, we provide a rigorous understanding of such systems of coupled ordinary and partial differential equations as (generalized) gradient flows in continuity equation format. 
Approximating the edges by a sequence of vertices, which yields a fully discrete system, we are able to establish existence of solutions in this formalism. 
Furthermore, we study several scaling limits using the recently developed framework of EDP convergence with embeddings to rigorously show convergence to gradient flows on reduced metric and combinatorial graphs.  
Finally, numerical studies confirm our theoretical findings and provide additional insights into the dynamics under rescaling.
\end{abstract}

\tableofcontents

\section{Introduction}

Our goal is the study of gradient flow dynamics on metric graphs with reservoirs, i.e., graphs, where each edge is associated to a mass-carrying interval and where the storage of mass on each vertex is also possible. Here, we are particularly interested in the (non-metric) gradient structure underlying these dynamics and we aim to employ a unified approach to study different multiscale limits both on the level of the gradient structures and the dynamics.  

As our starting point, we consider an undirected irreducible graph, that is a (finite) node set $\nodes$ with edge set $\edges \subset \nodes\times \nodes$, such that if $\sfv\sfw \in \edges$ then $\sfw\sfv\not\in\edges$. We also forbid self-loops, i.e., for all $\sfv\in \nodes$, $\sfv\sfv\not\in\edges$.  Based on the combinatorial graph $(\nodes,\edges)$, we construct a metric graph $\Mgraph$ by fixing for each $e\in\edges$ an orientation and then associating this oriented edge with a finite line segment $[0,\ell^e]$ of $\R$ for some $\ell^e>0$.
To fix notations, given $e = \sfv\sfw$, we will call $\sfv$ \emph{tail} and $\sfw$ \emph{head} and give those the orientation
\begin{equation}\label{eq:def:normalfield}
	\normal : \nodes \times \edges \to \set*{-1,0,1} \quad\text{with}\quad \normal_{\sfv}^e = \begin{cases}
		-1 , & \sfv \text{ tail} ;  \\
		0 , & \sfv \not\in e  ;  \\
		1 , & \sfv \text{ head}. 
	\end{cases}
\end{equation}
Then, we associate for the embedded metric graph an edge length to each $e\in \edges$ via the function
\begin{equation}\label{e:def:metric_edges}
	\ell : \edges\to (0,\infty) \qquad\text{giving rise to the set}\qquad  \Medges = \bigsqcup_{e\in \edges} [0, \ell^e] \,. 
\end{equation}
Here and in the following $\bigsqcup_{e\in \edges} [0,\ell^e]$ denotes the disjoint union, also called coproduct, understood as set of ordered pairs $\bigcup_{e\in \edges} (e,[0,\ell^e])$.
The metric graph is now the triplet $\Mgraph=(\nodes,\edges,\Medges)$, see Figure~\ref{fig:sketch_graph} for an example.
\begin{figure}[ht]
    \centering
    \begin{tikzpicture}[scale=1, transform shape]
    \pgfdeclarelayer{background}
    \pgfsetlayers{background,main}
    \begin{scope}[every node/.style={draw,circle}]
            \node (v1) at (0,0) {$\sfv_1$};
            \node (v2) at (2,-1) {$\sfv_2$};
            \node (v3) at (2,1) {$\sfv_3$};
    \end{scope}
    \begin{scope} %
            \node[below] (v4) at (5,0) {};
            \node[below] (v5) at (9,0) {};
            \node[below] (v6) at (5.1,-.3) {$0$};
            \node[below] (v7) at (8.9,-.3) {$\ell^e$};
            \node[below] (vl) at (4.25,0) {};
            \node[below] (vr) at (9.75,0) {};
            \node (a1) at (3,0) {};
            \node (a2) at (4,1) {};
            \node (a3) at (5,0) {};

            \draw[<-|] (vl) edge node[above] {{\small $\normal_{\sfv_2}^{e_2}=-1$}} (v4);
            \draw[|->] (v5) edge node[above] {{\small $\normal_{\sfv_3}^{e_2}=+1$}} (vr);
    \end{scope}
    \begin{scope}[every edge/.style={draw=bluegray,line width=1pt}]
            \path [-] (v1) edge node[bluegray,below] {$e_1$} (v2);
            \path [-] (v2) edge node[bluegray,below left] {$e_2$} (v3);
            \path [-] (v3) edge node[bluegray,above left] {$e_3$} (v1);
            \path [-] (v4) edge node[bluegray,below] {$e_2$} (v5);
             \draw[->] [black,thick,dashed] plot [smooth,tension=1] coordinates { (2.5,.4) (4.25,1.3) (6,.4)};
    \end{scope}
    \end{tikzpicture}
    \caption{Example of a complete three-state metric graph $\Mgraph_3\coloneqq(\nodes_3,\edges_3,\Medges_3)$ defined in terms of the nodes $\nodes_3\coloneqq\set*{\sfv_1,\sfv_2,\sfv_3}$, edges $\edges_3\coloneqq\set*{e_1,e_2,e_3}$ with $e_1\coloneqq\sfv_1\sfv_2$, $e_2\coloneqq\sfv_2\sfv_3$, $e_3\coloneqq\sfv_3\sfv_1$, edge lengths $\ell_3:\edges_3\to (0,\infty)$ and metric edges $\Medges_3$ as in~\eqref{e:def:metric_edges}.}
    \label{fig:sketch_graph}    
\end{figure}
To describe the amount of mass present on each edge and each vertex, respectively, we define the space of probability measures on $\Mgraph$ as tuples $\mu = (\gamma,\rho) \in \calP(\Mgraph) \subseteq \calM_{\geq 0}(\nodes)\times \calM_{\geq 0}(\Medges)$ such that $\gamma(\nodes)+ \rho(\Medges)=1$.

We are now in the position to introduce a system of PDEs on the metric edges that are coupled to the vertex reservoirs in a suitable way. 
Generally speaking, gradient flow methods are applicable to a variety of equations including nonlinear or nonlocal equations. However, since the main scope of this work is the introduction of appropriate gradient structures as well as the study of multiscale limits, we focus on a family of linear drift-diffusion equations on $\Medges$ that are linearly coupled with the vertex reservoirs. For a discussion of nonlocal interaction terms, we refer the reader to Section~\ref{sec:conclusion}.

To introduce the linear system, we fix a positive reference probability measure $(\omega,\pi)\in \calP_+(\Mgraph)$, where we assume $\pi$ to have a density given by $\pi(\dx x)= e^{-P(x)} \dx{x}$ for $P: \Medges\to \R$ such that $P^e : [0,\ell^e]\to \R$ is Lipschitz for all $e\in \edges$. With this, we consider for a.e. $t \in [0,T]$, the system
\begin{subequations}\label{eq:system_intro_linear}
\begin{align}
    \label{eq:edge_intro_linear}\partial_t \rho^e &= \diffedge^e\partial_x\cdot\bra*{\partial_x \rho^e +\rho^e \partial_x P^e },  &&\text{on } [0,\ell^e] \text{ for }  e\in \edges, \\
    \label{eq:Kirchhoff_intro_linear}
    - \diffedge^e \left.\bra*{\partial_x \rho^e + \rho^e \partial_x P^e}\right|_{\sfv}\cdot \sfn_{\sfv}^e &=r(e,\sfv) \rho^e|_{\sfv} - r(\sfv,e) \gamma_\sfv ,  &&\forall e\in \edges, \sfv\in \nodes,\\
    \label{eq:vertex_intro_linear}\partial_t\gamma_\sfv &= \sum_{e\in \edges(\sfv)}\bra*{ r(e,\sfv) \rho^e|_{\sfv} - r(\sfv,e) \gamma_\sfv },  &&\forall \sfv\in \nodes \,,
\end{align}
\end{subequations}
where the set of adjacent edges $\edges(\sfv)$ is defined in~\eqref{eq:def:adjacent:edges} below.
Hereby, we denote by $\rho^e|_{\sfv}= \rho^e(0)$ if $e=\sfv\sfw$ and $\rho^e|_\sfv = \rho^e(\ell^e)$ if $e=\sfw\sfv$ for some $\sfw\in \nodes$.
Moreover, we introduced the parameters $\diffedge^e>0$ as the diffusion constant on the metric edge associated to an edge $e$, $r(e,\sfv)$ as the jump rate from the endpoint of the metric edge associated to $e$ to an adjacent vertex $\sfv$, and $r(\sfv,e)$ for the reverse jump rate from the vertex $\sfv$ to the adjacent edge $e$.
We note that the boundary conditions \eqref{eq:Kirchhoff_intro_linear} ensure that the solutions of \eqref{eq:system_intro_linear} conserve their mass in time, i.e., it holds
\begin{align}
	\label{eq:mass-conservation_intro}\frac{\dx }{\dx t}\bra*{\sum_{e\in \edges}\rho^e([0,\ell^e])+\sum_{\sfv\in \nodes}\gamma_\sfv} = 0 \,.
\end{align}
Furthermore, the system~\eqref{eq:system_intro_linear} admits a gradient flow formulation provided that the jump rates satisfy the \emph{detailed balance condition} with respect to the reference measure $(\omega,\pi)\in \calP_+(\Mgraph)$, that is it holds
\begin{equation}\label{eq:DBC:r}
	\scrk^e_{\sfv}\coloneqq r(e,\sfv) 
    \sqrt{\frac{\pi^e\smash[t]{|_\sfv}}{\omega_\sfv}}
    = r(\sfv,e) 
    \sqrt{\frac{\omega_\sfv}{\pi^e|_\sfv}}
    \qquad \forall e\in \edges, \sfv\in \nodes \,.
\end{equation}
Indeed, the above conditions ensure that there exists a gradient system in continuity equation format which allows us to understand \eqref{eq:system_intro_linear} as a gradient flow with respect to the free energy consisting of edge and vertex relative entropies
\begin{align}\label{eq:def:free_energy}
	\calE(\mu) &\coloneqq \calE_{\Medges}(\rho) + \calE_{\nodes}(\gamma) \coloneqq \sum_{e\in\edges}\calH(\rho^e| \pi^e)+
	\sum_{\sfv\in\nodes} 
	\calH(\gamma_\sfv |\omega_\sfv) \,,
\end{align}
where the relative entropy $\calH$ is defined as
\begin{equation}\label{eq:def:RelEnt}
	\calH(\mu | \nu ) \coloneqq \begin{cases}
		\int \eta\bra[\big]{\pderiv{\mu}{\nu}} \dx{\nu} ,& \text{if } \mu \ll \nu \,; \\
		+\infty , &\text{else,}
	\end{cases}
	\qquad\text{where}\qquad %
	\eta(r)\coloneqq r\log r -r +1 \quad\text{ for } r\geq 0 \,.
\end{equation}
Here and in the following, we use the convention that $0\cdot \log 0 =0$.

In particular, a formal calculation shows that the free energy~\eqref{eq:def:free_energy} is a Lyapunov functional for the evolution~\eqref{eq:system_intro_linear}:
\begin{align}\label{eq:free_energy:dissipation}
	\pderiv{}{t} \mathcal{E}(\mu)
	&= -  \sum_{e\in \edges} \diffedge^e\!\!\int_0^{\ell^e} \!\abs[\bigg]{\partial_x \log\frac{\rho^e}{\pi^e} }^2 \!\dx{\rho^e} - 
	\sum_{\sfv \in \nodes} \!\sum_{e\in \edges(\sfv)}\!\!\mathscr{k}_\sfv^e\sqrt{\pi^e|_\sfv \omega_\sfv} \pra*{\log\frac{\rho^e|_\sfv}{\pi^e|_\sfv} - \log\frac{\gamma_\sfv}{\omega_\sfv}}   \pra*{\frac{\rho^e|_\sfv}{\pi^e|_\sfv} - \frac{\gamma_\sfv}{\omega_\sfv}} \leq 0 \,,
\end{align}
where the sign is thanks to the fact that the logarithm is monotone increasing.

Here, we observe that the dissipation consists of the sum two distinct mechanisms: a continuous part on each metric edge and a discrete jump part accommodating for the mass exchange between vertices and edges.

In the following subsections, we give an overview over the aspects of this work.

\subsection{Abstract gradient systems and EDP convergence with embedding}

We study the system~\eqref{eq:system_intro_linear} and its various limits from the perspective of gradient flows in continuity equation format, introduced in~\cite{PeletierSchlichting2022}. In Section~\ref{sec:EDP_general} we discuss this perspective on an abstract level, establishing in Definition~\ref{def:EDP-sol} the abstract notion of \emph{EDP solutions} to gradient flow equations as curves satisfying the \emph{energy dissipation inequality} \eqref{eqdef:EDP}.
In particular, for gradient systems satisfying a \emph{chain rule inequality} (Property~\ref{property:chain-rule}), we can understand solutions as minimizers of an \emph{energy dissipation functional}, introduced in \eqref{eq:def:EDfunctional}.

In the limits, which we will study, the base space of the dynamic will change, e.g.~from a combinatorial graph to a metric graph or vice versa. 
In order to establish these limits, we prove multiple $\Gamma$-type convergence results for the respective energy dissipation functionals $\calL^\eps$ towards a limit $\calL$ as $\eps\to 0$. 
A major step to achieve this goal is the introduction of an appropriate embedding to accommodate for the changing base spaces of the $\eps$-problems. In this way, we can obtain compactness in a common space.
Since the involved functionals are maps to $\R$, the lower limit inequalities can be achieved without the embedding. 
To summarize this strategy, we formulate the notion of \emph{EDP convergence with embedding} in Principle~\ref{def:EDP-convergence}.

\subsection{Gradient flow interpretation and energy dissipation principle}

Our first main result can be found in Section~\ref{sec:MetricGraphGF}. It is the rigorous interpretation of \eqref{eq:system_intro_linear} as gradient flow with respect to the energy \eqref{eq:def:free_energy}. 
The graph structure with reservoirs makes it necessary to encode two dissipation mechanisms acting either inside the edges or at the coupling of edge and vertex dynamics, which can be observed from the free energy dissipation identity~\eqref{eq:free_energy:dissipation}.
To this end, we tentatively define (see Definition~\ref{def:bCE:abstract} for a rigorous definition) the following gradient and divergence operations for $\Phi\coloneqq (\phi,\varphi)\in C^1(\nodes\times \Medges)$ and $\rmj\coloneqq(\bar\jmath,j)\in \calM((\nodes\times \edges)\times \Medges)$ by
\begin{alignat*}{2}
    \nabla \varphi(e,x) &\coloneqq \partial_x\varphi^e(x), &\qquad (\div j)(e,x) &\coloneqq \partial_x j^e(x),\\
    \gnabla(\phi,\varphi)(\sfv,e) &\coloneqq \phi_\sfv -\varphi^e|_\sfv, &\qquad(\gdiv\bar\jmath)_\sfv &\coloneqq -\sum_{e\in\edges(\sfv)}\bar\jmath^e_\sfv,
\end{alignat*}
as well as the dual dissipation potentials given for $\xi^e\in C^1([0,\ell^e])$ for $e\in\edges$ and $\zeta\in\R^{\nodes\times \edges}$ by 
\begin{align*}
    \calR_\Medges^\ast(\rho,\xi) &\coloneqq \sum_{e\in\edges}\frac{1}{2}\diffedge^e\int_0^{\ell^e}\abs{\xi_e}^2\dx\rho^e, \\
    \calR_{\nodes,\edges}^\ast(\mu,\zeta) &\coloneqq \sum_{e\in\edges}\sum_{\sfv\in\nodes(e)}\scrk^e_\sfv\sqrt{\rho^e|_\sfv \gamma_\sfv} \sfC^*(\zeta_\sfv^e) \qquad\text{with } \sfC^*(r)\coloneqq 4\bra[\big]{\cosh(r/2)-1} \,.
\end{align*}
We observe that on the edges, we have the classical quadratic Otto-Wasserstein dissipation, while for the coupling we consider a (non-metric) dissipation of $\cosh$-type. With these notions, we can formally rewrite \eqref{eq:system_intro_linear} as a system consisting of the \emph{continuity equations}
\begin{subequations}\label{eq:GF:strong}
\begin{equation}\label{eq:CE:strong}
    \partial_t \rho^e + \div j^e = 0  \,, \qquad
    \partial_t\gamma_\sfv + (\gdiv \bar\jmath)_\sfv = 0, 
    \,
    \qquad 
    j^e|_{\sfv} \sfn_{\sfv}^e = \bar\jmath^e_{\sfv}\,;
\end{equation}
and \emph{constitutive relations}
\begin{equation}\label{eq:CR:strong}
     j^e \coloneqq \rmD_2\calR_\Medges^\ast\bra*{\rho,\nabla(-\operatorname{D}\calE_\Medges(\rho))}|_e \quad\text{ and } \quad \bar\jmath^e_{\sfv} \coloneqq \rmD_2\calR_{\nodes,\edges}^\ast(\mu,\gnabla(-\operatorname{D}\calE(\mu)))|_{e,\sfv} \,; 
\end{equation}
\end{subequations}
revealing that it is a formal gradient flow with respect to \eqref{eq:def:free_energy}. 
System~\eqref{eq:CE:strong} has to be understood in a suitable weak form, which we provide later. On the other hand, the relations~\eqref{eq:CR:strong} are characterized variationally by the (again formally) defined \emph{energy dissipation functional}
\begin{align}\label{eq:intro:defL}
		\calL(\mu,\rmj)&\coloneqq \mathcal{E}(\mu(T))- \mathcal{E}(\mu(0)) + \calD(\mu,\rmj),
\end{align}
which contains the \emph{dissipation functional}
\begin{align*}
        \calD(\mu,\rmj) &\coloneqq \int_0^T \pra*{ \calR_\Medges(\rho_t,j_t) + \calR_{\nodes,\edges}(\mu_t,\bar\jmath_t) + \calR_\Medges^\ast(\rho_t, -\nabla\calE_\Medges'(\rho_t)) + \calR^\ast_{\nodes,\edges}(\mu_t, -\gnabla\calE'(\mu_t)) } \dx t \,. \nonumber
\end{align*}
Here, $\calR_{\nodes,\edges}$ and $\calR_\Medges$ denote the \emph{primal dissipation potentials} dual to $\calR_{\nodes,\edges}^\ast$ and $\calR_\Medges^\ast$, respectively. We refer to Definition~\ref{def:Dpot} and Definition~\ref{def:Mgraph:EDfunctional} for precise definitions of the above objects. 
By means of an appropriate chain rule, Proposition~\ref{prop:chain-rule}, we are then able to show the chain-rule inequality $\calL\geq0$.
This then allows us to characterize
solutions to \eqref{eq:system_intro_linear} as elements of the null-levelset of $\calL$. 
We emphasize that the functional $\calR_\Medges$ and its dual $\calR_{\Medges}^*$ only depend on edge-based quantities $\rho$ and $j$, while the functional $\calR_{\nodes,\edges}$ and its dual $\calR_{\nodes,\edges}^*$ depend on both edge- and vertex-based quantities $\mu=(\gamma,\rho)$ and $\rmj = (\bar\jmath,j)$. 
The former describe the diffusion along the metric edges, whereas the latter model the mass exchange between the edges and their adjacent vertices.

\subsection{Existence via microscopic derivation}
Having established a variational characterization of the system~\eqref{eq:system_intro_linear} via
a suitable energy dissipation functional~$\calL$ in~\eqref{eq:intro:defL}, allows us to show existence of solutions by an approximation strategy.

In Section~\ref{sec:microscopic}, we discretise each edge into a set of $n$ internal vertices, replacing the diffusion by jumps between nearest neighbours. Similar to the jumps at the boundary of each edge, these interior jumps are characterized by cosh-type dual dissipation potentials, the main difference being a scaling by $h^e_n \coloneqq \ell^e/n$. 
The jumps are linked to the diffusion by a suitably constructed embedding, allowing us to obtain compactness for the family of discrete solutions $(\upgamma_n,\rmf_n)_{n\in\bbN}$ satisfying $\calL_n(\upgamma_n,\rmf_n) = 0$ for all $n\in\bbN$. Those discrete solutions satisfy a birth-death type dynamic given 
internally on each $e\in \edges$ for $k=2,\ldots, n-1$ by
\begin{subequations}\label{eq:system_intro_micro}
\begin{align}
   \frac{\dx}{\dx t}\tilde\gamma^e_k 
       &= (h^e_n)^{-2} \tilde r_n(e,{k+1},k)\tilde\gamma^e_{k+1} 
   - (h^e_n)^{-2} \tilde r_n(e,k,{k+1})\tilde\gamma^e_k\\
   &+ (h^e_n)^{-2} \tilde r_n(e,{k-1},k)\tilde\gamma^e_{k-1} 
   - (h^e_n)^{-2} \tilde r_n(e,k,{k-1})\tilde\gamma^e_k
\end{align}
and at the boundary points of each edge $e=\sfv\sfw\in \edges$ by
\begin{align}
    \frac{\dx}{\dx t}\tilde\gamma^e_1 
    &= (h^e_n)^{-2} \tilde r_n(e,2,1)\tilde\gamma^e_2 
    - (h^e_n)^{-2} \tilde r_n(e,1,2)\tilde\gamma^e_1 
    +  \bar r_n(\sfv,e,1)\bar\gamma_\sfv
    - \bar r_n(e,1,\sfv)\tilde\gamma^e_1,\\
    \frac{\dx}{\dx t}\tilde\gamma^e_n 
    &= \bar r_n(\sfv,e,n) \bar\gamma_\sfv 
    - \bar r_n(e,n,\sfv)\tilde\gamma^e_n 
    + (h^e_n)^{-2} \tilde r_n(e,n-1,n)\tilde\gamma^e_{n-1} 
    - (h^e_n)^{-2} \tilde r_n(e,n,n-1)\tilde\gamma^e_n,    
\end{align}
for the rates $\tilde r_n(e,k,l) \coloneqq \diffedge^e\sqrt{\frac{\tilde\omega^e_{n,l}}{\tilde\omega^e_{n,k}}}$, $\bar r_n(e,k,\sfv) \coloneqq  \scrk^e_\sfv\sqrt{\frac{\tilde\omega^e_{n,k}}{\omega_\sfv}}$, and $\bar r_n(\sfv,e,k) \coloneqq  \scrk^e_\sfv\sqrt{\frac{\omega_\sfv}{\tilde\omega^e_{n,k}}}$, where we set $\tilde\omega^e_{n,k} \coloneqq \pi^e([(k-1)h^e_n,k h^e_n])$.

The vertex evolution is characterized at every $\sfv\in\nodes$ by
\begin{align}
    \partial_t\gamma_\sfv &= \sum_{e=\sfv\sfw\in \edges(\sfv)}\pra*{\bar r_n(e,1,\sfv)\tilde\gamma^e_1 - \bar r_n(\sfv,e,1)\bar\gamma_\sfv} 
    + \sum_{e=\sfw\sfv\in \edges(\sfv)}\pra*{\bar r_n(e,n,\sfv)\tilde\gamma^e_n - \bar r_n(\sfv,e,n) \bar\gamma_\sfv}.
\end{align}
\end{subequations}
An illustration of the  discretization can be found in Figure~\ref{fig:discretization}.

After showing the chain-rule inequality $\calL\ge 0$, the argument is completed by proving the lower limit inequality (without embedding) $\liminf_{n\to \infty}\calL_n(\upgamma_n,\rmf_n) \ge \calL(\mu,\rmj)$. This is done via continuity and duality arguments utilizing the fact that the cosh grows quadratically near zero.

\subsection{Multiscale limits}\label{ssec:MS_intro}
By rescaling the reference measures, the diffusion coefficients, the reaction rates or combinations thereof, we obtain $\eps$-dependent versions of system~\eqref{eq:system_intro_linear}. 
Depending on the particular choice of rescaling, we derive in Section~\ref{sec:multiscale} different limit systems, which we summarize below.

\paragraph*{Kirchhoff limit }%

In Section~\ref{ssec:Kirchhoff} we introduce a rescaling, which leads to a vanishing of the vertex reservoirs to obtain a limit equation with Kirchhoff-type coupling conditions at the vertices. 
To this end, we introduce the rescaled reference measures
\begin{subequations}\label{eq:KirchhoffScaling}
\begin{equation}\label{eq:KirchhoffScaling:omega}
	\omega^\eps_\sfv \coloneqq \frac{1}{Z^\eps}\eps \omega_\sfv\quad\text{and}\quad  \pi^{\eps,e} \coloneqq \frac{1}{Z^\eps}\pi^e\quad\text{where}\quad Z^\eps \coloneqq\eps\sum_{\sfv\in\nodes}\omega_\sfv + \sum_{e\in\edges}\pi^e([0,\ell^e])
\end{equation}
ensures that $(\omega^\eps,\pi^\eps) \in \calP(\Mgraph)$. 
To obtain a non-trivial limit, the detailed balance condition~\eqref{eq:DBC:r} dictates the scaling 
\begin{equation}\label{eq:KirchhoffScaling:r}
	r^\eps(\sfv,e) \coloneqq \eps^{-1} r(\sfv,e) \qquad\text{and}\qquad r^\eps(e,\sfv) \coloneqq \frac{r^\eps(\sfv,e) \omega^{\eps}_\sfv }{\pi^{\eps,e}|_{\sfv}} = \frac{r(\sfv,e) \omega_\sfv }{\pi^{\eps,e}|_{\sfv}} \qquad\forall \sfv\in \nodes , e\in\edges \,.
\end{equation}
\end{subequations}
From this, we derive that the solutions $\mu^\eps=(\gamma^\eps,\rho^\eps)$ converge (after an appropriate embedding, cf. Definition~\ref{def:tildeCE}) towards a limit $\mu^0=(0,\rho)$ satisfying the reduced system 
\begin{subequations}\label{eq:system_intro_Kirchhoff}
	\begin{align}
		\label{eq:edge_intro}\partial_t \rho^e &= \diffedge^e\partial_x\cdot\bra*{\partial_x \rho^e + \rho^e \partial_x P^e },  &&\text{on } [0,\ell^e] \ \forall e\in \edges, \\
		\label{eq:Kirchhoff_intro_limit}
		0&= \sum_{e\in \edges(\sfv)} \diffedge^e \bra*{\partial_x \rho^e + \rho^e \partial_x P^e}\cdot \sfn_{\sfv}^e,  &&\forall \sfv\in \nodes \,.
	\end{align}
\end{subequations}
The system~\eqref{eq:system_intro_Kirchhoff} was studied in~\cite{ErbarForkertMaasMugnolo2022}, where the authors showed in particular that it can be understood as an Otto-Wasserstein-type gradient flow of the free energy $\calE(\mu^0) \coloneqq  \sum_{e\in\edges}\calH(\rho^e| \pi^e)$.

\paragraph*{Fast edge diffusion limit }

In Section~\ref{ssec:edgepoints} we consider the rescaling of the diffusion constants
\begin{equation}
	\diffedge^{\eps,e} \coloneqq \eps^{-1} \diffedge^e\qquad\text{for all } e\in \edges \,.
\end{equation}
This rescaling has the effect that the dynamic on each metric edge collapes to the quasistationary evolution 
\begin{equation}
	\rho^{\eps,e}(x,t) \to \zeta^e(t) \pi^e(x) \qquad \text{as } \eps \to 0 \qquad \forall x\in [0,\ell^e]\  \forall e\in \edges \,.
\end{equation}
In particular, prelimit solutions $\mu^\eps=(\gamma^\eps,\rho^\eps)$ converge to a limit $(\gamma,\zeta \pi)$ as $\eps \to 0$, which is a solution of the system of ordinary differential equations
\begin{subequations}\label{eq:system_intro_EdgePoints}
	\begin{align}
		\label{eq:vertex_intro_EdgeI}\partial_t\gamma_\sfv(t) &= \sum_{e\in \edges(\sfv)}\bra*{ r(e,\sfv) \zeta^e(t) \pi^e|_{\sfv} - r(\sfv,e) \gamma_\sfv(t)},  &&\forall \sfv\in \nodes,\\
		\label{eq:vertex_intro_EdgeII}\partial_t\zeta^e(t)\pi^e[0,\ell^e] &= \sum_{\sfv\in \nodes(e)}\bra*{ r(\sfv,e) \gamma_\sfv(t)-r(e,\sfv) \zeta^e(t) \pi^e|_{\sfv} },  &&\forall e\in \edges \,.
	\end{align}
\end{subequations}
The limit system~\eqref{eq:system_intro_EdgePoints} is a (non-metric) gradient flow on the extended combinatorial graph $(\hat\nodes,\hat\edges)$ with
\begin{equation}\label{eq:def:ExtendedGraph}
\text{ extended node set } \hat\nodes\coloneqq\nodes \cup \edges \text{ and extended edge set } \hat\edges \coloneqq \set*{e\sfv : e\in \edges , \sfv\in \nodes(e)}\,,
\end{equation}
where $\nodes(e)$ denotes the set of two vertices attached to $e$.
We illustrate in Figure~\ref{fig:MarkovTerminal} (left side) the extended graph $(\hat \nodes_3,\hat\edges_3)$ obtained from the metric graph $\Mgraph_3=(\nodes_3,\edges_3,\Medges_3)$ from Figure~\ref{fig:sketch_graph}.

\paragraph*{Combinatorial graph limit }

Finally, in Section~\ref{ssec:terminal} we carry out an additional rescaling to arrive at a limit, which is only supported on the combinatorial graph $(\nodes,\edges)$. More precisely, we perform a two-terminal limit on each of the old metric edges following~\cite[§7]{PeletierSchlichting2022} (see also~\cite[§3.3]{LieroMielkePeletierRenger2017} for a similar setting). 
The scaling is opposite to the Kirchhoff limit described before by reversing the roles of nodes and edges, that is (up to a renormalization to unit mass, similar to the Kirchhoff prelimit)
\begin{equation}\label{eq:scaling:TerminalLimit}
	\pi^{\eps,e} = \eps \pi^{e} \qquad\text{and}\qquad r^\eps(e,\sfv) = \eps^{-1} r(e,\sfv) \qquad\forall e\in \edges, \sfv\in\nodes \,.
\end{equation}
	\begin{figure}[ht]
	\centering
	\begin{tikzpicture}[scale=1.1]
		\tikzstyle{every node}=[draw,shape=circle]
		\node (v1) at (2,0) {$\quad\sfv_1\quad$};
		\node (e1) at (4,0) {$e_1$};
		\node (v2) at (6,0) {$\quad\sfv_2\quad$};
		\node (e2) at (5,1.73) {$e_2$};
		\node (v3) at (4,3.46) {$\quad\sfv_3\quad$};
		\node (e3) at (3,1.73) {$e_3$};
		\node (v10) at (10,0) {$\quad \sfv_1\quad$};
		\node (v20) at (14,0) {$\quad \sfv_2\quad$};
		\node (v30) at (12,3.4) {$\quad \sfv_3 \quad$};
		\tikzstyle{every node}=[]
		
		\draw[-Latex, thick] (v1) to [out=-5, in=-170] node[below] {\footnotesize$r(\sfv_1,e_1)$} (e1);
		\draw[-Latex, very thick] (e1) to [out=170, in=5] node[above] {\footnotesize$\ \ \frac{r(e_1,\sfv_1)}{\eps}$} (v1);
		\draw[-Latex, very  thick] (e1) to [out=-10, in=-175] node[below] {\footnotesize$\frac{r(e_1,\sfv_2)}{\eps}$} (v2);
		\draw[-Latex, thick] (v2) to [out=175, in=10] node[above] {\footnotesize$r(\sfv_2,e_1)$} (e1);
		\draw[-Latex, thick] (v2) to [out=115, in=-50] node[right,above,rotate=-60] {\footnotesize$r(\sfv_2,e_2)$} (e2);
		\draw[-Latex, very  thick] (e2) to [out=-70, in=125] node[left,below,rotate=-60] {\footnotesize$\frac{r(e_2,\sfv_2)}{\eps}\quad$} (v2);
		\draw[-Latex, thick] (v3) to [out=-65, in=130] node[left,below,rotate=-60] {\footnotesize$r(\sfv_3,e_2)$} (e2);
		\draw[-Latex, very thick] (e2) to [out=110, in=-55] node[right,above,rotate=-60] {\footnotesize$\frac{r(e_2,\sfv_3)}{\eps}$} (v3);
		\draw[-Latex, thick] (v3) to [out=-125, in=70] node[right,above,rotate=60] {\footnotesize$r(\sfv_3,e_3)$} (e3);
		\draw[-Latex, very  thick] (e3) to [out=50, in=-115] node[left,below,rotate=60] {\footnotesize$\frac{r(e_2,\sfv_2)}{\eps}\quad$} (v3);
		\draw[-Latex, very thick] (e3) to [out=-130, in=65] node[left,above,rotate=60] {\footnotesize$\frac{r(e_3,\sfv_1)}{\eps}$} (v1);
		\draw[-Latex, thick] (v1) to [out=55, in=-110] node[right,below,rotate=60] {\footnotesize$r(\sfv_1,e_3)$} (e3);
		
		\draw[shorten >=0.7cm, shorten <=0.7cm,-Latex, double, thick] (6,1.73) to [out=0, in=180] node[above]{\Large$\eps\to 0$} (10,1.73);

        \draw[-Latex, thick] (v10) to [out=5, in=175] node[above] {\footnotesize$\hat r(\sfv_1,\sfv_2)$} (v20);
        \draw[-Latex, thick] (v20) to [out=-175, in=-5] node[below] {\footnotesize$\hat r(\sfv_2,\sfv_1)$} (v10);
        \draw[-Latex, thick] (v20) to [out=125, in=-65] node[left,below,rotate=-60] {\footnotesize$\hat r(\sfv_2,\sfv_3)$} (v30);
        \draw[-Latex, thick] (v30) to [out=-55, in=115] node[right,above,rotate=-60] {\footnotesize$\hat r(\sfv_3,\sfv_2)$} (v20);
        \draw[-Latex, thick] (v30) to [out=-115, in=55] node[right,below,rotate=60] {\footnotesize$\hat r(\sfv_3,\sfv_1)$} (v10);
        \draw[-Latex, thick] (v10) to [out=65, in=-125] node[left,above,rotate=60] {\footnotesize$\hat r(\sfv_1,\sfv_3)$} (v30);
	\end{tikzpicture}
	\caption{\emph{Left:} The extended graph $(\hat\nodes,\hat \edges)$ constructed from the complete three-state metric graph illustrated in Figure~\ref{fig:sketch_graph} with the rescaled rates from~\eqref{eq:scaling:TerminalLimit}, that is a  high rate of leaving the contracted metric edges $e\in \edges$. \\
		\emph{Right:} The limit for $\eps\to 0$, which leads to reduced dynamics on the three-state combinatorial graph $(\nodes_3,\edges_3)$ with harmonic averaged rates given in~\eqref{eq:intro:TerminalLimit}.}
	\label{fig:MarkovTerminal}
\end{figure}%
Under this rescaling, we arrive at a limit system supported on the combinatorial graph $(\nodes,\edges)$ described by the single probability distribution $\hat \gamma(t) \in \calP(\nodes)$ satisfying
\begin{equation}\label{eq:intro:TerminalLimit}
	\partial_t \hat \gamma_\sfv = \ \ \sum_{\mathclap{\sfw\in\nodes:\sfv\sfw\in \edges}}\ \  \hat r(\sfw,\sfv) \hat \gamma_\sfw-\ \ \sum_{\mathclap{\sfw\in\nodes:\sfw\sfv\in \edges}}\ \  \hat r(\sfv,\sfw) \hat \gamma_\sfv  \,,\quad{\text{with}}\quad  \hat r(\sfv,\sfw) \coloneqq  \frac{\mathsf{Harm}\bra*{\pi^{\sfv\sfw}|_\sfv r(\sfv,\sfv\sfw), \pi^{\sfv\sfw}|_\sfw r(\sfw,\sfv\sfw)}}{2\omega(\nodes) \omega_\sfv} \,,
\end{equation}
where $\mathsf{Harm}(a,b)\coloneqq 2/ (\frac{1}{a}+\frac{1}{b})$ for $a,b>0$ denotes the harmonic mean and $\omega(\nodes)\coloneqq\sum_{\sfv\in\nodes} \omega_\sfv$.
We provide a rigorous EDP convergence result towards the limit system, which then is a gradient flow on the combinatorial graph $(\nodes,\edges)$ by applying results from~\cite{LieroMielkePeletierRenger2017,PeletierSchlichting2022} in Section~\ref{ssec:terminal}.

\subsection{Numerical simulations}
In Section~\ref{sec:Numerics}, we will present numerical simulations based on discrete scheme introduced in Section~\ref{sec:microscopic}. 
We put a particular focus on the short-time behaviour in the case of initial data that are not well-prepared, thereby going beyond the analytic results. 
Furthermore, for well-prepared initial data, we study the convergence not only of the relative entropies, but also the curves themselves, comparing  Hellinger-type distances between prelimit and limit curves. 
Here, the findings are in agreement with the analytic results.
Finally, we investigate the joint fast edge diffusion and combinatorial graph limit and discuss the influence of the spatial discretization on this limit.

\subsection{Related work}\label{sec:related_work}

\paragraph*{Gradient flows and transport distances on metric graphs }
Gradient flows on metric graphs have previously been studied in \cite{ErbarForkertMaasMugnolo2022}, yet without any explicit dynamics on the vertices. In this case, the authors of \cite{ErbarForkertMaasMugnolo2022} were able to recover the classical characterization of absolutely continuous curves on the metric graph as solutions to a continuity equation with bounded velocity field. 
Based on this, they were able to rigorously show existence of solutions in the sense of an energy dissipation inequality (EDI) for the gradient flow of an energy containing an entropic part, an external potential and a nonlocal contribution.
In the complementary work \cite{BurgerHumpertPietschmann2023}, the authors study the dynamical formulation of a transport distance on a metric graph with mass reservoirs on the vertices. They use techniques from convex duality to show that this distance is well-defined (see also \cite{Fazeny2024} for the non-quadratic version of this case and related gradient flows).

Using a finite volume approximation, the authors of \cite{CancesEtAl2023} study the existence of solutions to a two-species system of coupled evolution PDEs on an interval, where the fluxes in the interior are coupled to chemical potentials on the boundary. Similar to the present setting, this coupling is realized by a linear relation%
, which they obtain from a force-flux relationship involving $\sinh$-type function.

\paragraph*{Gradient flows and transport distances on combinatorial graphs }

The theory of gradient flows on combinatorial graphs started with three independent works \cite{Maas2011,Mielke2011,ChowHuangLiZhou12}, where Markov chains were posed as gradient flows of the entropy. Later, the theory was generalized in the setting of discrete state space or nonlocal evolution equations in~\cite{Erb14,ErbarFathiLaschosSchlichting2016,Erbar2023,EspositoPatacchiniSchlichtingSlepcev2021,PeletierRossiSavareTse2022,PeletierSchlottke2022},

\paragraph*{Various definitions of EDP convergence } The \emph{$\Gamma$-convergence of gradient flows} goes back in the Hilbert space setting to~\cite{SandierSerfaty04,Serfaty11}, which implies the \emph{EDP-convergence} after~\cite{Mielke2016a,Mielke2016,LieroMielkePeletierRenger2017,MielkeMontefuscoPeletier21,PeletierSchlichting2022}.
Our variational approach is similar to the ones in~\cite{Schlichting2019} for a discrete to continuum limit of a growth model, in \cite{DisserLiero2015,HraivoronskaTse2023,HraivoronskaSchlichtingTse2024} to study the limit behaviour of random walks on tessellations in the diffusive limit, in~\cite{EspositoHeinzeSchlichting2023} for certain local limits on random graphs and in~\cite{LamSchlichting2024} for thermodynamic limits of stochastic particle system. In the context of multiscale limits, we also want to mention~\cite{MielkeStephan2020,Stephan21,PeletierRenger21,FrenzelLiero21}. 
The microscopic derivation of our model in Theorem~\ref{thm:EDP_discrete} is of $\Gamma$-convergence type, whereas the multiscale limits contained in Theorems~\ref{thm:EDP_Kirchhoff}, \ref{thm:EDP_limit_edgefast} and \ref{thm:EDP_terminal} are of EDP-convergence type.

\paragraph*{Other approaches to evolution equations on metric graphs }

The study of diffusion processes on metric graphs goes back to~\cite{FreidlinWentzell1993}, where asymptotic properties of diffusions in narrow tubes are investigated and where metric graph evolutions are obtained by coarse graining suitable perturbed Hamiltonian systems. The framework used is based on large deviations and developed further in~\cite{FreidlinSheu2000}.

Diffusion processes on metric graphs also go under the name of quantum graphs~\cite{QuantumGraphsApplications2006}. In this setting, similar multiscale limits as investigated in this work are considered, as for instance shrinking edges~\cite{BerkolaikoLatushkinSukhtaiev2019}.
There is by now a very classical approach towards evolution equations on networks, including metric graphs, based on semigroup theory, which is summarized in the book~\cite{Mugnolo2014}. 
In this context, we highlight the work~\cite{mugnolo2007dynamic}, where the authors study the asymptotic behaviour and regularity of solutions for a drift-diffusion system on the edges of a metric graph that are coupled to vertex dynamics. This model is indeed similar to the prelimit dynamics studied in the present work, though the applied techniques differ.

For a different recent review of the semigroup approach see also~\cite{KramarFijav-Puchalska2020}. 
The theory does not only apply to diffusion type processes, but also transport processes, as for instance in~\cite{BuddeKramar2024}.

\paragraph*{Stationary measures of diffusions on metric graphs }

There are different approaches to study stationary measures on metric graphs.
A functional analytic setting is used in~\cite{Carlson2008,Carlson2022} to characterize stationary states as the kernel for a metric graph Laplacian with different boundary conditions defined via a suitable Dirichlet form.
An approach inspired from Markov processes and statistical mechanics is used in~\cite{AleandriColangeliGabrielli2020}, leading to a combinatorial representation of the stationary states.

\subsection{Notation}

\paragraph*{Notational conventions }

To distinguish quantities related to the combinatorial graph $(\nodes,\edges)$ from the metric edges $\Medges$, we use the following notational convention:
\begin{itemize}
	\item Combinatorial graph quantities: Sans serif (e.g. $\sfv\in \nodes$) with subscripts.
	\item Metric graph quantities: Roman fonts (e.g. $e\in \edges$ or $x\in [0,\ell^e]\subset [0,\infty)$) with superscripts.
\end{itemize}
For a given node $\sfv\in \nodes$, we define the adjacent edges by
\begin{equation}\label{eq:def:adjacent:edges}
	\edges(\sfv)\coloneqq \set*{e\in \edges: e =\sfv\sfw \text{ or } e=\sfw\sfv \text{ for some } \sfw \in \nodes } \,.
\end{equation}
Analogously, for a given edge $e\in \edges$, it is convenient to define the set of adjacent nodes by
\begin{equation}\label{eq:def:adjacent:nodes}
	\nodes(e)\coloneqq \set*{\sfv,\sfw} \quad\text{ for } e=\sfv\sfw\in \edges \,.
\end{equation}
Based on these two definitions, we have for any combinatorial graph function $\varphi:\nodes \times \edges \to \R$ the identity
\begin{equation}\label{eq:resum}
	\sum_{\sfv\in \nodes }\sum_{e\in\edges(\sfv)} \varphi_\sfv^e = \sum_{e\in \edges}\sum_{\sfv \in \nodes(e)} \varphi_\sfv^e \,.
\end{equation}

\paragraph*{Spaces of functions and measures }

We briefly write $\varphi\in C^k(\Medges)$ for a family of functions $\varphi = \set*{\varphi^e \in C^k([0,\ell^e])}_{e\in \edges}$. Note however, that no continuity or differentiability is assumed at common nodes. 
Similarly, we have that the measure $j\in \calM(\Medges)$ denotes the family of measures $j = \set*{ j^e\in \calM([0,\ell^e])}_{e\in \edges}$.

\paragraph*{Evaluation of metric functions on combinatorial nodes } By a slight abuse of notation, we evaluate functions $\varphi\in C(\sfL)$ for a given edge $e=\sfv\sfw\in \edges$ on the nodes $\sfv,\sfw\in \nodes$ connected by the edge, by denoting 
\begin{equation}\label{eq:EdgeEvaluation}
	\varphi^e|_{\sfv} = \varphi^e(0) \qquad\text{and}\qquad \varphi^e|_{\sfw} = \varphi^e(\ell^e) \,.
\end{equation}
For $\sfv\notin \nodes(e)$, we set $\varphi^e|_\sfv\coloneqq0$.
Moreover, for $\varphi\in C(\nodes\times \Medges)$, we have continuity along the metric edge $e=\sfv\sfw\in \edges$, that is
\begin{equation*}
	\lim_{x\to 0} \varphi^e(x) = \varphi^e(0) = \varphi^e|_{\sfv} \qquad\text{and}\qquad \lim_{x\to \ell^e} \varphi^e(x) = \varphi^e(\ell^e) = \varphi^e|_{\sfw} \,.
\end{equation*}
However, there is no continuity implied between the endpoint evaluations $\varphi^e|_{\sfv}$ and the vertex values $\varphi_\sfv$. 

\paragraph*{Integration by parts formula } For two functions $\varphi,\psi\in C^1(\sfL)$ we have the integration by parts formula
\begin{equation}\label{eq:L:IntByParts}
	 \sum_{e\in \edges} \int_0^{\ell^e} \varphi^e(x) \partial_x \psi^e(x) \dx{x} = \sum_{e\in \edges} \bra*{\varphi^e(\ell^e) \psi^e(\ell^e)- \varphi^e(0) \psi^e(0) } - \sum_{e\in \edges} \int_0^{\ell^e} \partial_x \varphi^e(x) \psi^e(x) \dx{x} \,.
\end{equation}
Note that with the notation~\eqref{eq:EdgeEvaluation}, boundary terms can be rewritten as
\begin{equation}\label{eq:resum:bdry}
 	\sum_{e\in \edges} \bra*{\varphi^e(\ell^e) - \varphi^e(0)} = \sum_{e=\sfv\sfw\in \edges} \bra[\big]{ \varphi^e|_\sfw - \varphi^e|_{\sfv} } \,.
\end{equation}

\paragraph*{Variational derivatives of functionals } 
A functional $\calE: X\to \R$ for $X$ an open subset of some Banach space is said to have functional derivative $\calE'(x)\in X^*$ in $x$, provided that this is the unique element such that
\begin{equation}\label{eq:def:VarDeriv}
	\skp{\calE'(x),y}_{X^*\times X} = \left.\pderiv{\calE(x+h y)}{h}\right|_{h=0} \,,\qquad \text{ for all } y\in X\,.
\end{equation}

\paragraph*{Convention about the use of extended reals }

We work with extended real numbers $a\in [-\infty,\infty]$ and adopt the convention
\begin{equation}\label{convention:infty}
	\abs*{\pm \infty} = +\infty \quad a \cdot (+\infty) \coloneqq \begin{cases}
		+\infty  & \text{ if } a >0 ,\\
		0  & \text{ if } a = 0 ,\\
		-\infty & \text{ if } a < 0
	\end{cases}
	\quad\text{and}\quad
	a \cdot (-\infty) = - a \cdot (+\infty) \,.
\end{equation}
Also $\pm \infty + a = \pm\infty$ for any $a\in(-\infty,\infty)$.

\section{Gradient systems and EDP convergence with embeddings}\label{sec:EDP_general}

We follow the notion of gradient systems in continuity equation format, which was formalized in~\cite{PeletierSchlichting2022}, but used implicitly in various previous works as discussed in~\ref{sec:related_work}.
Since we deal with different spaces in the following, we consider two \emph{base spaces} $\sfX,\sfY$, which are two compact topological spaces. Moreover, we assume that there exists an abstract gradient $\bnabla: C^1(\sfX)\to C(\sfY)$, which is the basis for the weak formulation of the continuity equation. We define an abstract divergence operator $\bdiv: \calM(\sfY) \to \calM(\sfX)$ by duality as negative adjoint of the abstract gradient via
\begin{equation}\label{eq:def:bdiv}
	\skp{\psi,\bdiv j}_{\sfX} = - \skp{ \bnabla \psi, j}_{\sfY} \qquad \text{for all } \psi \in C^1(\sfX),\, j\in \calM(\sfY) \,,
\end{equation}
where $\skp{\cdot,\cdot}_{\sfX}$ and $\skp{\cdot,\cdot}_{\sfY}$ are suitable dual pairings on $\sfX$ and $\sfY$, respectively. In our setting, $j$ and $\bdiv j$ are typically measures and hence $\psi$ and $\bnabla\psi$ need to be continuous bounded functions.
Since in our setting $\sfX$ and $\sfY$ are compact, the narrow and wide topologies coincide.
\begin{definition}[Continuity equation]\label{def:continuity equation}\label{def:CE-top}
	For $T>0$ a pair~$(\rho(t,\cdot),j(t,\cdot))_{t\in [0,T]}$ satisfies the continuity equation  $\partial_t\rho + \bdiv j = 0$ if:
	\begin{enumerate}
		\item \label{def:CE-part1-continuity}
		For each~$t\in[0,T]$, $\rho(t,\cdot)\in\calM_{\geq0}(\sfX)$, and 
		the map~$t\mapsto \rho(t,\cdot)$ is continuous on~$\calM_{\geq0}(\sfX)$
		\item \label{def:CE-part2-j}
		For each~$t\in[0,T]$, $j(t,\cdot)\in\calM(\sfY)$, 
		the map~$t\mapsto j(t,\cdot)$ is measurable in~$\calM(\sfY)$
		and the joint measure $|j|\in \calM([0,T]\times \sfY)$ is locally finite on $[0,T]\times \sfY$. 
		\item \label{def:CE-part3-weak-eq}
		The pair solves $\partial_t\rho + \bdiv j = 0$ in the sense that  for any $\varphi\in C_{\mathrm c}^1((0,T)\times \sfX)$,
		\begin{equation}
			\label{eq:weak-form-CE}
			\int_0^T\!\!\int_{\sfX} \partial_t \varphi(t,x) \, \rho(t,\dx x) \dx t 
			+\int_0^T\!\!\int_{\sfY} \bnabla \varphi(t,y) \, j(t,\dx y)\dx t = 0.
		\end{equation}
	\end{enumerate}
	The set of all pairs~$(\rho,j)$ satisfying the continuity equation is denoted by $\CE$.
	
	In addition, a family $(\rho_n,j_n)_{n\in\bbN}\subset \CE$ is said to converge to $(\rho,j)\in \CE$ if 
	\begin{enumerate}[label=(\roman*)]
		\item $\rho_n(t)\to \rho(t)$ in $\calM_{\ge 0}(\sfX)$ for all $t\in[0,T]$.
		\item $j_n\to j$ in $\calM([0,T]\times \sfY)$.
	\end{enumerate}
\end{definition}
We refer to the continuity equation structure above as $(\sfX,\sfY,\bnabla)$ with $\bdiv$ defined as negative adjoint through~\eqref{eq:def:bdiv}. 
Next, we introduce the additional ingredients for specifying a gradient system.
\begin{definition}[Gradient system in continuity equation format]\label{def:GradSystCE}
	A gradient system in continuity equation format is a quintuple $(\sfX,\sfY,\bnabla,\calE,\calR)$ with the following properties:
	\begin{enumerate}
		\item $\sfX,\sfY$ are a topological spaces;%
		\item $\bnabla$ is a linear map from  $C^1(\sfX)$ to $C(\sfY)$,  with negative dual $\bdiv$ defined through~\eqref{eq:def:bdiv};
		\item $\calE:\calM_{\geq0}(\sfX)\to\R$ is an energy functional;
		\item \label{def:GradSystCE:DP}
		$\calR^*$ is a \emph{dual dissipation potential}, which means that for each $\rho\in \calM_{\geq0}(\sfX)$, $\Xi \mapsto \calR^*(\rho,\Xi)$ is a convex lower semicontinuous functional on $C(\sfY)$ satisfying $\min \calR^*(\rho,\cdot) = \calR^*(\rho,0) = 0$. 
	\end{enumerate}
\end{definition}
The dual dissipation potential induces a \emph{primal dissipation potential} by
\begin{equation}\label{eq:def:R*}
	\calR(\rho,j) \coloneqq \sup_{\Xi \in C(\sfY)} \set*{ \skp{j ,\Xi}_{\sfY} - \calR^*(\rho,\Xi)}  \,.
\end{equation}
In our setting, the dissipation potentials will be sufficiently regular with respect to the second variable, such that the subdifferentials $\partial_2 \calR$ and $\partial_2 \calR^*$ are single valued and we denote the single element with $\rmD_2 \calR$ and $\rmD_2\calR^*$, respectively. The dissipation potential defines a \emph{kinetic relation} through the equality case (contact set) in~\eqref{eq:def:R*}, which by basic convex analysis is characterized by any pair $(j,\Xi)\in\calM(\sfY)\times C(\sfY)$ satisfying
\begin{equation}\label{eq:KR}
	\skp{j,\Xi} = \calR(\rho,j)+\calR^*(\rho,\Xi) 
	\quad\Longleftrightarrow\quad
	j= \rmD_2 \calR^*(\rho,\Xi)
	\quad\Longleftrightarrow\quad
	\Xi = \rmD_2 \calR(\rho,j) \,.
\end{equation}
A gradient flow is now characterized by a solution of the continuity equation with flux given by the \emph{constitutive relation} 
\begin{equation}\label{eq:GF:abstract}
	\partial_t \rho + \bdiv j = 0 \qquad\text{and}\qquad j = \rmD_2 \calR^*(\rho,-\bnabla \calE'(\rho)) \,.
\end{equation}
The pair $(\rho,j)$ in~\eqref{eq:GF:abstract} can be characterized variationally as follows.
\begin{definition}[EDP solution to gradient system] \label{def:EDP-sol}
	Given a gradient system in continuity equation format $(\sfX,\sfY,\bnabla,\calE,\calR)$.
	For each $T>0$ and any $(\rho,j)\in \CE$ define the \emph{dissipation functional}
    \begin{equation}\label{eqdef:ED:dissipation}
		\calD(\rho,j) \coloneqq 
		\begin{cases}
			 \int_0^T \Bigl[ \calR(\rho,j)+\calR^*(\rho,-\bnabla \calE'(\rho))\Bigr]\dx t & \text{if }(\rho,j)\in \CE \,;\\
			+\infty &\text{otherwise}.
		\end{cases}
	\end{equation}
	Whenever $\calE(\rho(0))<\infty$, the \emph{energy dissipation functional} is defined by
	\begin{equation}\label{eq:def:EDfunctional}
		\calL(\rho,j)\coloneqq \calE(\rho(T))-\calE(\rho(0)) + \calD(\rho,j).
	\end{equation}
	A curve $(\rho,j)\in \CE$ is an \emph{EDP solution} of the gradient system $(\sfX,\sfY,\bnabla,\calE,\calR)$ provided that
	\begin{equation}\label{eqdef:EDP}	
		\calL(\rho,j) \leq 0 \,.
	\end{equation}	
\end{definition}

In general $-\bnabla \calE'$ is not in $C(Y)$, thus making it necessary to replace $\calR^*(\rho,-\bnabla \calE'(\rho))$ by a relaxed Fisher information term $\calI(\rho)$.
In light of this low regularity, a main challenge when linking gradient systems and gradient flows is establishing the chain rule property.
This property ensures in particular that inequality~\eqref{eqdef:EDP} characterizes EDP solutions as minimizers of the energy dissipation functional $\calL$.

\begin{property}[Chain rule]\label{property:chain-rule}
	A gradient system $(\sfX,\sfY,\bnabla, \calE,\calR)$ satisfies the chain rule inequality, if for $(\rho,j)\in\CE$ with $\sup_{t\in[0,T]}\calE(\rho(t))<\infty$ and $\calD(\rho,j)<\infty$ it holds
	\begin{align*}
		\calE(\rho(t)) - \calE(\rho(s)) = \int_s^t \skp{j(\tau),\bnabla \calE'(\rho(\tau))} \dx{\tau} \qquad\forall \ 0\leq s \leq t \leq T\,.
	\end{align*}
	In particular, $\calL$ defined in~\eqref{eq:def:EDfunctional} is non-negative, $\calL(\rho,j) \ge 0$.
\end{property}

The power of the variational characterization is that limits of gradient flows can be characterized variational very similar to $\Gamma$-convergence. 
We introduce a generalization to the one from the literature~\cite{Mielke2016a,Mielke2016}, which is on the one hand also incorporates the continuity equation structure and compared to~\cite[Definition 2.8]{PeletierSchlichting2022} uses an embedding.	
\begin{principle}[EDP convergence with embedding] \label{def:EDP-convergence}
The family $(\sfX^\eps,\sfY^\eps,\bnabla^\eps, \calE^\eps,\calR^\eps)_{\eps>0}$ is said to \emph{EDP converge} to $(\sfX^0,\sfY^0,\bnabla^0, \calE^0,\calR^0)$, in symbols
$(\sfX^\eps,\sfY^\eps,\bnabla^\eps, \calE^\eps,\calR^\eps) \xrightarrow{\EDP} (\sfX^0,\sfY^0,\bnabla^0, \calE^0,\calR^0)$, if the following conditions hold
\begin{enumerate}[label=(\roman*)]
    \item Compactness after embedding: There exists a family of embeddings $(\Pi^\eps)_{\eps>0}$, $\Pi^\eps: \CE^\eps \to \CE^0$ such that for every family $(\rho^\eps,j^\eps)_{\eps>0}$ with $(\rho^\eps,j^\eps)\in\CE^\eps$, $\sup_{\eps>0}\sup_{t\in[0,T]} \calE^\eps( \rho^\eps(t))<\infty$, and $\sup_{\eps>0} \calD^\eps( \rho^\eps,j^\eps)<\infty$ the embedded family $(\Pi^\eps(\rho^\eps, j^\eps))_{\eps>0}$ is relatively compact in~$\CE^0$.    
    \item Lower limit inequality: For each $(\rho^\eps,j^\eps)_{\eps>0}$ with $(\rho^\eps,j^\eps)\in\CE^\eps$ satisfying the uniform bounds $\sup_{\eps>0}\sup_{t\in[0,T]} \calE^\eps( \rho^\eps(t))<\infty$ and $\sup_{\eps>0} \calD^\eps( \rho^\eps,j^\eps)<\infty$, the convergence $\Pi(\rho^\eps,j^\eps) \to (\rho^0,j^0)$ in $\CE^0$, and the well-preparedness condition $\lim_{\eps\to0}\calE^\eps(\rho^\eps(0)) = \calE^0(\rho^0(0))$, it holds 
    \begin{align*}
    \liminf_{\eps \to 0} \calL^\eps(\rho^\eps,j^\eps) &\geq \calL^0(\rho^0,j^0). 
	\end{align*}
\end{enumerate}
\end{principle}
A direct consequence of the EDP convergence of gradient systems satisfying the chain rule property is the convergence of solutions:
\begin{corollary}\label{cor:meta_conv_of_sol}
    Let $(\Mgraph^\eps,\bnabla^\eps, \calE^\eps,\calR^\eps) \xrightarrow{\EDP} (\Mgraph^0,\bnabla^0, \calE^0,\calR^0)$. 
    Assume $(\Mgraph^0,\bnabla^0, \calE^0,\calR^0)$ satisfies the chain rule Property~\ref{property:chain-rule}.
    Consider a family of curves $(\rho^\eps,j^\eps)_{\eps>0}$ satisfying the uniform energy bound $\sup_{\eps>0}\sup_{t\in[0,T]} \calE^\eps( \rho^\eps(t))<\infty$, and for which every curve $(\rho^\eps,j^\eps)\in\CE^\eps$ lies in the zero level-set of its respective EDP functional, i.e., $\calL^\eps(\rho^\eps,j^\eps) = 0$. 
    Assume the initial data $(\rho^\eps_0)_{\eps\ge0}$ of the curves are well-prepared, that is
    \begin{align*}
        \lim_{\eps\to 0} \calE^\eps(\rho^\eps_0) =\calE^0(\rho^0_0) \,.
    \end{align*}
    Then, there exists an EDP solution $(\rho^0,j^0)\in\CE^0$ in the sense of Definition~\ref{def:EDP-sol}
    and (along a subsequence) we have $\Pi^\eps(\rho^\eps,j^\eps)\to (\rho^0,j^0)$ in $\CE^0$.
\end{corollary}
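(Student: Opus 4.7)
The plan is to assemble the three ingredients provided by the hypotheses: the compactness and lower-limit clauses of EDP convergence (Principle~\ref{def:EDP-convergence}) together with the chain rule Property~\ref{property:chain-rule} for the limit system. This is essentially a formal consequence of the variational setup, so I do not expect a genuine technical obstacle; the only care needed is matching the hypotheses.

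First, I would establish a uniform dissipation bound. Since $\calL^\eps(\rho^\eps,j^\eps)=0$ by assumption, the definition~\eqref{eq:def:EDfunctional} yields
\begin{equation*}
	\calD^\eps(\rho^\eps,j^\eps) = \calE^\eps(\rho^\eps(0)) - \calE^\eps(\rho^\eps(T)).
\end{equation*}
Well-preparedness gives $\calE^\eps(\rho^\eps_0)\to \calE^0(\rho^0_0)<\infty$, hence a uniform bound on the initial energies; combined with the assumed uniform upper bound on $\calE^\eps(\rho^\eps(t))$ and the fact that the relative entropies used in the paper are bounded below by $0$, this yields $\sup_{\eps>0}\calD^\eps(\rho^\eps,j^\eps)<\infty$.

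Next, I would invoke the compactness clause of Principle~\ref{def:EDP-convergence}: the uniform energy and dissipation bounds are precisely its hypotheses, so after extracting a subsequence (not relabelled) there is a limit curve $(\rho^0,j^0)\in \CE^0$ with $\Pi^\eps(\rho^\eps,j^\eps)\to (\rho^0,j^0)$ in $\CE^0$.

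Finally, I would combine the lower-limit clause with the chain rule. The hypotheses of the lower limit (uniform energy bound, convergence in $\CE^0$, well-preparedness) are all in place, hence
\begin{equation*}
	0 = \liminf_{\eps\to 0} \calL^\eps(\rho^\eps,j^\eps) \ge \calL^0(\rho^0,j^0).
\end{equation*}
On the other hand, Property~\ref{property:chain-rule} for the limit system forces $\calL^0(\rho^0,j^0)\ge 0$. The two inequalities combine to $\calL^0(\rho^0,j^0)=0$, which by Definition~\ref{def:EDP-sol} says that $(\rho^0,j^0)$ is an EDP solution of the limit gradient system. The only subtlety to check is that the chain rule indeed applies to the limit curve, which needs $\sup_{t\in[0,T]}\calE^0(\rho^0(t))<\infty$ and $\calD^0(\rho^0,j^0)<\infty$; both follow from lower semicontinuity of the respective functionals under the convergence delivered in the previous step, using the uniform bounds transported through the embeddings~$\Pi^\eps$.
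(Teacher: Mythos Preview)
Your proposal is correct and follows essentially the same approach as the paper: the paper's proof is the single chain of inequalities $0=\liminf_{\eps\to 0}\calL^\eps(\rho^\eps,j^\eps)\ge \calL^0(\rho^0,j^0)\ge 0$, and you have simply unpacked the steps (uniform dissipation bound, compactness via Principle~\ref{def:EDP-convergence}(i), lower limit via Principle~\ref{def:EDP-convergence}(ii), nonnegativity via Property~\ref{property:chain-rule}) that the paper leaves implicit.
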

\begin{proof}
    The result follows from Principle~\ref{def:EDP-convergence} establishing the chain of inequalities
    \begin{equation*}
        0 = \liminf_{\eps\to 0} \calL^\eps(\rho^\eps,j^\eps) \ge \calL^0(\rho^0,j^0) \ge 0. \qedhere
    \end{equation*}
\end{proof}

\section{Gradient structure over metric graphs with reservoirs}\label{sec:MetricGraphGF}

The first step to apply the abstract strategy from Section~\ref{sec:EDP_general} to the system~\eqref{eq:system_intro_linear} is understanding it rigorously as a gradient flow in continuity equation format.

To this end, we introduce the standing assumption on the external potential.
\begin{assumption}[Lipschitz external potential]\label{ass:Lip_Potential}
The potential $P:\Mgraph\to \R$ is Lipschitz, that is for all $e\in\edges$ it holds $P^e\in \operatorname{Lip}([0,\ell^e])$.
\end{assumption}
We recall that $\pi(\dx x)= e^{-P(x)}\dx{x}$ on $\Mgraph$ is understood as $\pi^e(\dx x) = \exp(-P^e(x))\dx x$ for every $e\in \edges$. For later reference, we state:
\begin{remark}[Poincaré inequality]\label{rem:PI}
	The equilibrium $\pi=\exp(-P)$ on the metric edges $\Medges$ satisfies a uniform Poincaré inequality, that is there exists $C_{\mathrm{PI}}\in (0,\infty)$, such that
	\begin{equation}\label{eq:ass:PI}
		\forall f \in L^2(\pi^e),e\in\edges: \qquad \int_0^{\ell^e} \abs[\bigg]{f - \frac{1}{\pi^e([0,\ell^e])}\int_0^{\ell^e} f \dx\pi^e}^2 \dx \pi^e \leq C_{\mathrm{PI}} \int_0^{\ell^e} \abs*{\partial_x f}^2 \dx \pi^e .
	\end{equation}
    Indeed, this is a classic consequence of the fact that the uniform measure on $[0,\ell^e]$ satisfies a Poincaré equality in combination with the Holley-Stroock perturbation principle~\cite{HolleyStroock1987} thanks to the fact that $\pi$ has strictly positive and bounded density. 
\end{remark}

\subsection{Continuity equation structure}

To arrive at a continuity equation structure on $\Mgraph$, we start with the definition of the continuity equation on the metric edges $\Medges$, for which we specify a family of time-dependent signed measures $\rho: [0,T]\to \calM_{\geq 0}(\Medges)$ and fluxes $j: [0,T]\to  \calM(\Medges)$ such that 
\begin{equation}\label{e:def:CE_Medges}
\forall e\in \edges: \qquad\partial_t \rho^e + \partial_x j^e = 0 \qquad \text{in } (C^1)'\bra*{[0,\ell^e]}. 
\end{equation}
To obtain a mass preserving evolution, we complement the equation with a local Kirchhoff condition, which receives the mass leaving from the metric edges due to the net-balance of the normal fluxes of $\set{j^e}_{e\in \edges}$ evaluated in $\sfv \in \nodes$ and acts as a reservoir for each of the metric edges.
Since, in general, $j\in \calM(\Medges)$ does not posses a normal flux, we introduce a new set of variables $\bar\jmath^e_\sfv : [0,T]\to \R$ for any $(\sfv,e)\in \nodes\times \edges$, which are coupled in a weak sense by imposing the integration by parts formula~\eqref{eq:L:IntByParts} on the metric edges, that is for any $j^e\in \calM([0,\ell^e])$ with smooth density $j^e(x)\dx{x}$, we impose the identity
\begin{equation}\label{eq:flux:IntByParts}
	\int_0^{\ell^e} \varphi^e(x) \partial_x j^e(x)\dx{x} = \sum_{\sfv \in \nodes(e)} \varphi^e|_{\sfv} \, \bar\jmath^e_\sfv - \int_0^{\ell^e} \partial_x \varphi^e \dx j^e
\end{equation}
Then, the net-balance of the adjacent normal fluxes is stored in the reservoir $\gamma_\sfv$ for each $\sfv\in \nodes$ and the \emph{local Kirchhoff law} becomes
\begin{equation}\label{e:def:localKirchhoff}
 \forall \sfv \in \nodes  : \qquad \partial_t \gamma_\sfv(t) = \sum_{e\in\edges(\sfv)} \bar\jmath^e_\sfv(t) \eqqcolon - \bra*{\gdiv \bar\jmath(t)}_\sfv \qquad\text{ for a.e. } t \in [0,T] . 
\end{equation}
We summarize the system~\eqref{e:def:CE_Medges} and~\eqref{e:def:localKirchhoff} compactly as metric graph continuity equation with node reservoirs in the following definition.
\begin{definition}[Continuity equation on metric graphs with node reservoirs]\label{def:bCE}
	Let $T>0$. A family of curves $\mu=\bra*{\gamma,\rho} : [0,T] \to \calP(\Mgraph)$ with flux pair $\rmj= (\bar\jmath,j)  : [0,T] \to \calM((\nodes \times \edges)\times \Medges)$ satisfies the \emph{continuity equation on the metric graph with node reservoirs} provided that~\eqref{e:def:CE_Medges} and~\eqref{e:def:localKirchhoff} hold. 
	Such a curve is denoted with $(\mu,\rmj)\in \CE$ and solves the abstract continuity equation
	\begin{equation}\label{eq:def:bCE}
		\partial_t \mu_t + \bdiv \rmj_t = 0 \qquad\text{ in } (C^1)'(\sfV\times \Medges) \text{ for a.e. } t\in [0,T] \,,
	\end{equation}
	which in terms of duality for $\Phi=(\phi,\varphi)\in C^1(\nodes \times \Medges)$ is defined by
	\begin{equation}\label{e:def:bCE:weak}
		\pderiv{}{t} \pra[\bigg]{ \sum_{\sfv \in \nodes} \phi_\sfv \gamma_\sfv(t) + \sum_{e\in\edges} \int_0^{\ell^e} \mkern-8mu \varphi^e(x) \dx{\rho^e}(t)} = \sum_{\sfv \in \nodes} \sum_{e\in\edges(\sfv)} \bra*{\phi_\sfv - \varphi^e|_\sfv} \bar\jmath^e_\sfv(t) + \sum_{e\in\edges} \int_0^{\ell^e}\mkern-8mu \partial_x \varphi^e \dx j^e(t)  \,
	\end{equation}
	for a.e.~$t\in [0,T]$.
\end{definition}
Among the solutions to~\eqref{eq:def:bCE} with bounded flux, we can find an absolutely continuous representative.
\begin{lemma}[Well-posedness of $\CE$]\label{lem:AC:representative}
	Let $(\mu,\rmj)\in\CE$ with
	\begin{equation}\label{eq:flux:div:integrable}
		\int_0^T\pra[\bigg]{ \sum_{\sfv \in \nodes} \sum_{e\in \edges(\sfv)} \abs{\bar\jmath^e_{\sfv}}(t) +  \sum_{e\in \edges} \int_0^{\ell^e} \dx\abs{j^e}(t)} \dx t < \infty \,,
	\end{equation}
	then there is $(\tilde\mu,\tilde \rmjmath)\in\CE$ such that $\tilde \mu \in \AC(0,T;\calP(\Mgraph))$ equipped with the narrow topology of measures, that is 
    for all $\Phi=(\phi,\varphi) \in C(\nodes \times \Medges)$ the map
	\begin{equation*}
		t \mapsto \skp{\varphi,\tilde\mu(t)}_{\sfV\times \sfL} \coloneqq \sum_{\sfv\in\nodes} \phi_\sfv \tilde\gamma_\sfv(t) + \sum_{e\in \edges} \int_0^{\ell^e} \varphi^e \dx{\tilde\rho^e(t)}
	\end{equation*}
    is absolutely continuous.
\end{lemma}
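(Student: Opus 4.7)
The plan is to use the weak formulation~\eqref{e:def:bCE:weak} of the continuity equation, tested against time-independent $\Phi=(\phi,\varphi)\in C^1(\nodes\times\Medges)$. For almost every $0\leq s\leq t\leq T$ this yields
\begin{equation*}
\skp{\Phi,\mu_t}-\skp{\Phi,\mu_s}
= \int_s^t\Bigl[\sum_{\sfv\in\nodes}\sum_{e\in\edges(\sfv)}(\phi_\sfv-\varphi^e|_\sfv)\bar\jmath^e_\sfv(\tau)+\sum_{e\in\edges}\int_0^{\ell^e}\partial_x\varphi^e\dx j^e(\tau)\Bigr]\dx\tau.
\end{equation*}
Setting $m(\tau)\coloneqq\sum_{\sfv\in\nodes}\sum_{e\in\edges(\sfv)}\abs{\bar\jmath^e_\sfv}(\tau)+\sum_{e\in\edges}\int_0^{\ell^e}\dx\abs{j^e}(\tau)$, which lies in $L^1(0,T)$ by~\eqref{eq:flux:div:integrable}, the right-hand side is bounded in absolute value by $C\|\Phi\|_{C^1(\nodes\times\Medges)}\int_s^t m(\tau)\dx\tau$ for a constant $C$ depending only on $\abs{\nodes}$ and $\abs{\edges}$. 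Consequently $t\mapsto\skp{\Phi,\mu_t}$ lies in $W^{1,1}(0,T)$ and admits a unique absolutely continuous representative $a_\Phi\in\AC([0,T];\R)$.

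Next I would stitch these per-test-function representatives into a single measure-valued curve. Since $\Mgraph$ is a finite disjoint union of compact intervals attached to a finite set of nodes, $C^1(\nodes\times\Medges)$ is separable and dense in $C(\nodes\times\Medges)$ for the uniform norm; fix a countable family $\set{\Phi_n}_{n\in\bbN}\subset C^1(\nodes\times\Medges)$ that is uniformly dense in $C(\nodes\times\Medges)$. The set $N\coloneqq\bigcup_{n\in\bbN}\set{t\in[0,T]:\skp{\Phi_n,\mu_t}\neq a_{\Phi_n}(t)}$ is Lebesgue-null. For $t\in N$ pick any sequence $t_k\notin N$ with $t_k\to t$; narrow compactness of $\calP(\Mgraph)$ (since $\Mgraph$ is compact metrizable) yields a subsequential cluster point $\tilde\mu_t\in\calP(\Mgraph)$. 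The uniform estimate on $a_{\Phi_n}$ gives $\skp{\Phi_n,\mu_{t_k}}=a_{\Phi_n}(t_k)\to a_{\Phi_n}(t)$ for every $n$, so density of $\set{\Phi_n}$ together with the total-mass normalization $\tilde\mu_t(\Mgraph)=1$ identifies $\tilde\mu_t$ uniquely and upgrades subsequential to full narrow convergence along $t_k\to t$. Setting $\tilde\mu_t\coloneqq\mu_t$ for $t\notin N$ then defines the candidate representative.

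Finally, I would verify that $(\tilde\mu,\rmj)\in\CE$, which is automatic because the weak formulation~\eqref{e:def:bCE:weak} involves only time integrals and is insensitive to modifications of $\mu$ on the null set $N$. The absolute-continuity claim reduces to passing the bound
\begin{equation*}
\bigl|\skp{\Phi,\tilde\mu_t}-\skp{\Phi,\tilde\mu_s}\bigr|\leq C\|\Phi\|_{C^1(\nodes\times\Medges)}\int_s^t m(\tau)\dx\tau
\end{equation*}
from the dense family $\set{\Phi_n}$ to all $\Phi\in C^1(\nodes\times\Medges)$ by continuity in the $C^1$-norm, and then to arbitrary $\Phi\in C(\nodes\times\Medges)$ via uniform approximation combined with the total-mass bound $\tilde\mu_t(\Mgraph)=1$, which controls the pointwise error of the pairings. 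The main subtlety I expect is the simultaneous selection of the exceptional null set $N$ independently of the test function; this is exactly resolved by the separability/density step above, and no additional difficulty arises from the hybrid node/edge structure because all estimates decouple over the finite combinatorial sets $\nodes$ and $\edges$.
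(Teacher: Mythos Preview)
Your argument is correct and is precisely the standard construction that the paper invokes by citing \cite[Lemma 8.1.2]{AmbrosioGigliSavare2008} and \cite[Lemma 3.1]{Erb14}; the paper gives no details beyond those references, and what you have written is exactly the content of those lemmas adapted to the hybrid node/edge setting. One cosmetic remark: the constant $C$ in your estimate need not depend on $\abs{\nodes}$ or $\abs{\edges}$, since those cardinalities are already absorbed into your definition of $m(\tau)$; a universal constant such as $2$ suffices.
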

\begin{proof}
	Analogous to \cite[Lemma 8.1.2]{AmbrosioGigliSavare2008} for the metric edge parts and \cite[Lemma 3.1]{Erb14} for the edge-vertex-transition parts. 
\end{proof}

We show that the continuity equation on the metric graph with node reservoirs fits into the abstract framework from Section~\ref{sec:EDP_general}.
\begin{definition}[Abstract formulation of $\CE$]\label{def:bCE:abstract}
The continuity equation as defined in Definition~\ref{def:bCE} is of the form $(\sfX,\sfY,\bnabla)$ from Definition~\ref{def:continuity equation} in Section~\ref{sec:EDP_general} by defining the spaces $\sfX\coloneqq \nodes \times \Medges$ and $\sfY\coloneqq(\nodes\times\edges) \times \Medges$, which are given the disjoint union topologies of discrete and the standard one on~$\R$, respectively. The divergence $\bdiv$ in~\eqref{eq:def:bCE} is obtained as negative adjoint of the gradient operator $\bnabla: C^1(\sfX)\to C^0(\sfY)$, which is read off from~\eqref{e:def:bCE:weak} as follows: For a function $\Phi=(\phi,\varphi) \in C^1(\nodes \times \Medges)$, it is defined by the two cases
\begin{subequations}\label{eq:def:abstract-gradient}
\begin{align}
	\forall (\sfv,e)\in\nodes \times \edges: \qquad \bnabla \Phi(\sfv,e) &\coloneqq (\gnabla \Phi)(\sfv,e) \coloneqq\phi_\sfv - \varphi^e|_\sfv \,; 
	\label{eq:def:abstract-gradient:Medges} \\
	\text{and}\quad \forall (e,x) \in \Medges : \qquad \bnabla \Phi(e,x) &\coloneqq (\nabla \varphi)(e,x)\coloneqq\partial_x \varphi^e(x) \,.
	\label{eq:def:abstract-gradient:nodes}
\end{align}
\end{subequations}
It is readily checked, that for any sufficiently smooth $\rmj= (\bar \jmath,j)$ and $\Phi=(\phi,\varphi)\in C^1(\nodes \times \Medges)$, the integration by parts formula~\eqref{eq:L:IntByParts} gives the duality
\begin{equation}\label{e:grad-div:duality}
	 \skp{ \phi , \gdiv \bar\jmath}_{\nodes} + \skp{ \varphi, \div j}_{\Medges} \eqqcolon	\skp{\Phi, \bdiv  \rmj}_{\sfX}  = - \skp{\bnabla \Phi,  \rmj}_{\sfY} \coloneqq -\skp{\gnabla \Phi, \bar\jmath}_{\nodes\times\edges} - \skp{\nabla \varphi, j}_{\Medges} ,
\end{equation}
where
\begin{equation}\label{eq:def:products}
\begin{alignedat}{2}
  \skp{ \phi , \gdiv \bar\jmath}_{\nodes}&\coloneqq \sum_{\sfv \in \nodes} \phi_\sfv \bra[\bigg]{-\sum_{e\in\edges(\sfv)} \bar\jmath^e_\sfv} \,, &\qquad 
  \skp{\gnabla \Phi, \bar\jmath}_{\nodes\times\edges}&\coloneqq\sum_{\sfv \in \nodes} \sum_{e\in\edges(\sfv)} \bra*{\phi_\sfv - \varphi^e|_\sfv} \bar\jmath^e_\sfv \,, \\
  \skp{ \varphi, \div j}_{\Medges} &\coloneqq \sum_{e\in \edges} \int_0^{\ell^e} \varphi^e(x) \partial_x j^e(x) \dx{x} \,, &
  \skp{\nabla \varphi, j}_{\Medges} &\coloneqq \sum_{e\in\edges} \int_0^{\ell^e} \partial_x \varphi^e(x) j^e(x) \dx{x} \,.
\end{alignedat}
\end{equation}
With those definition $(\sfX,\sfY,\bnabla)$ defines a continuity equation after Definition~\ref{def:continuity equation}, which weak form is~\eqref{eq:def:bCE} and with the introduced notation can be rewritten in the abstract form~\eqref{eq:def:bCE} as
\begin{equation}
	\pderiv{}{t}\skp{\Phi,\mu_t}_{\sfX} =    \pderiv{}{t} \bra[\big]{\skp{\phi, \gamma_t}_{\nodes} +  \skp{\varphi, \rho_t}_{\Medges}  } =\skp{\gnabla \Phi, \bar\jmath_t}_{\nodes\times\edges} + \skp{\nabla \varphi, j_t}_{\Medges} 
	=\skp{\bnabla \varphi,\rmj_t}_\sfY 
	\qquad\forall \varphi \in C^1(\sfX) \,.
\end{equation}
Note that due to the presence of the boundary terms in~\eqref{eq:L:IntByParts}, neither is there an individual integration by parts formula between $\skp{ \phi , \gdiv \bar\jmath}_{\nodes}$ and $\skp{\gnabla \Phi, \bar\jmath}_{\nodes\times\edges}$ nor between $\skp{ \varphi, \div j}_{\Medges}$ and $\skp{\nabla \varphi, j}_{\Medges}$. 
\end{definition}

\subsection{Dissipation potentials}

We recall the constitutive relations~\eqref{eq:CR:strong} for the flux, which take the form
\begin{subequations}\label{eq:fluxes}
\begin{align}
	j^e(t) &= \diffedge^e \rho^e(t) \partial_x \log \frac{\rho^e(t)}{\pi^e}  &&\forall e\in \edges \text{ on } [0,\ell^e] \,; \label{eq:fluxes:je} \\	
	\bar\jmath^e_\sfv(t) &=  \mathscr{k}_\sfv^e\sqrt{\pi^e|_\sfv \omega_\sfv} \pra*{\frac{\rho^e(t)|_\sfv}{\pi^e|_\sfv} - \frac{\gamma_\sfv(t)}{\omega_\sfv}} &&\forall e\in \edges \ \forall \sfv\in \nodes \,. \label{eq:fluxes:hatj}
\end{align}
\end{subequations}
The equations~\eqref{eq:fluxes} are encoded through a kinetic relation~\eqref{eq:KR} between the fluxes and the variational derivatives $\mathcal{E}$ of the free energy~\eqref{eq:def:free_energy}, which we identify, in the sense of~\eqref{eq:def:VarDeriv}, for any $\mu\in \calP_+(\Medges)$ with the two components given by 
\begin{subequations}\label{eq:free_energy:variations}
	\begin{align}
		\calE_{\Medges}'(\rho)(e,x) &= \log \pderiv{{\rho^e}}{{\pi^e}}(x)  &&\forall e\in \edges \text{ and a.e. }  x\in [0,\ell^e] \,; \label{eq:free_energy:variation:Medges} \\	
		\calE_{\nodes}'(\gamma)(\sfv) &= \log \frac{\gamma_\sfv}{\omega_\sfv}  && \forall \sfv\in \nodes \,. \label{eq:free_energy:variation:nodes}
	\end{align}
\end{subequations}
The kinetic relation~\eqref{eq:KR} is encoded with the help of suitable dissipation potentials defined as follows. We use for the relation~\eqref{eq:fluxes:hatj} dissipation potentials of $\cosh$-type defined in term of the convex Legendre-Fenchel pair
\begin{equation}\label{eq:def:C-C*}
        \sfC(r) \coloneqq 2r\log\bra[\bigg]{\frac{r+\sqrt{r^2+4}}{2}}-2\sqrt{r^2+4}+4 \qquad\text{and}\qquad 
        \sfC^\ast(s) \coloneqq 4 \bra*{\cosh(s/2)- 1} \,.
\end{equation}
We note that $\sfC'(r) = 2\arsinh(r/2)$ and $(\sfC^*)'(s)= 2\sinh(s/2)$.

\begin{definition}[Dissipation potentials]\label{def:Dpot}
Let $\mu=(\gamma,\rho)\in \calP(\Mgraph)$ and $\rmj= (\bar\jmath,j)\in \calM((\nodes \times \edges)\times \Medges)$. 
The primal dissipation potential is defined by
	\begin{subequations}\label{eq:primal-dissipation}
	\begin{equation}\label{eq:primal-dissipation:decompose}
		\calR(\mu,\rmj) \coloneqq \calR_{\nodes,\edges}(\mu,\bar\jmath) + \calR_{\Medges}(\rho, j) \coloneqq \sum_{\sfv\in\nodes}\sum_{e\in\edges(\sfv)} \sfR_\sfv^e(\gamma_\sfv,\rho^e|_\sfv, \bar\jmath_\sfv^e) +\sum_{e\in \edges} \sfR^e(\rho^e,j^e), 
	\end{equation}
	where
	\begin{align}
			\sfR_\sfv^e(\gamma_\sfv,\rho^e|_\sfv, \bar\jmath_\sfv^e) &\coloneqq \sigma_{\sfv}^e(\gamma_\sfv,\rho^e|_\sfv) \sfC\bra*{\frac{\bar\jmath_\sfv^e}{\sigma_{\sfv}^e(\gamma_\sfv,\rho^e|_\sfv)}} , \qquad\text{with}\qquad      \sigma_{\sfv}^e(a,b) \coloneqq 
            \scrk_\sfv^e \sqrt{ a b} \,.
			\label{eq:primal-dissipation:nodes} \\
		 \sfR^{e}(\rho^e,j^e) &\coloneqq \frac{1}{2 \diffedge^e}\int_0^{\ell^e}\abs*{\pderiv{{j^e}}{{\rho^e}}}^2\dx\rho^e.
		 \label{eq:primal-dissipation:Medges}
	\end{align}
	\end{subequations}
    For $\mu=(\gamma,\rho)\in \calP(\Mgraph)$ and $\upxi=(\bar\xi,\xi)\in C((\nodes\times \edges)\times\Medges)$, the dual dissipation potential is defined by
   	\begin{subequations}\label{eq:dual-dissipation}
    \begin{align}\label{eq:dual-dissipation:decompose}
		\calR^*(\mu,\upxi) 
        &\coloneqq \calR_{\nodes,\edges}^*(\mu,\bar\xi) + \calR_{\Medges}^*(\rho,\xi) 
        \coloneqq \sum_{\sfv\in\nodes}\sum_{e\in\edges(\sfv)} (\sfR_\sfv^e)^\ast\bigr(\gamma_\sfv,\rho^e|_\sfv,\bar\xi^e_\sfv \bigl) +\sum_{e\in \edges} (\sfR^e)^\ast\bigr(\rho^e,\xi^e\bigl),
	\end{align}
	where
	\begin{align}
        (\sfR_\sfv^e)^\ast(\gamma_\sfv,\rho^e|_\sfv,\bar\xi_\sfv^e) &\coloneqq \sigma_\sfv^e(\gamma_\sfv,\rho^e|_\sfv) \sfC^\ast(\bar\xi_\sfv^e), \\
		(\sfR^e)^\ast(\rho,\xi^e) &\coloneqq \frac{\diffedge^e}{2}\int_{\ell^e}\abs{\xi^e}^2\dx\rho^e. \label{eq:Mgraph:Re*}
	\end{align}
    \end{subequations}
    The relaxed dual dissipation potentials are defined by
	\begin{subequations}\label{eq:relaxed-dual-dissipation}
		\begin{equation}\label{eq:relaxed-dual-dissipation:decompose}
			\calI(\mu) \coloneqq \calI_{\nodes,\edges}(\mu) + \calI_{\Medges}(\rho) \coloneqq \sum_{\sfv\in\nodes}\sum_{e\in\edges(\sfv)} \sfI_\sfv^e(\gamma_\sfv,\rho^e|_\sfv) +\sum_{e\in \edges} \sfI^e(\rho^e),
		\end{equation}
		where
		\begin{align}\label{eq:relaxed-dual-dissipations} 
			\sfI_\sfv^e(\gamma_\sfv,\rho^e|_\sfv) &\coloneqq 
			2 \sigma_\sfv^e(\gamma_\sfv,\rho^e|_\sfv) \abs[\bigg]{ \sqrt{\frac{\rho^e}{\pi^e}}\bigg|_\sfv - \sqrt{\frac{\gamma_\sfv}{\omega_\sfv}}}^2
            \quad\text{and}\quad
            \sfI^e(\rho^e) \coloneqq 2\diffedge^e\int_0^{\ell^e}\abs[\bigg]{\partial_x\sqrt{\frac{\rho^e}{\pi^e}}}^2\dx\pi^e.
		\end{align}
	\end{subequations}
\end{definition}

\begin{remark}[Other dissipation potentials]
    We note that there exist other choices for the pair $(\sigma_\sfv^e,\sfC)$ which lead to a different gradient system for the same equations \eqref{eq:system_intro_linear}. One such choice is (the continuous continuation of) the weighted logarithmic mean
\begin{equation}\label{eq:quadratic-sigma}
	\sigma_\sfv^e(\gamma_\sfv,\rho^e|_\sfv) = 
    \scrk^e_\sfv\frac{\frac{\rho^e}{\pi^e}\big|_\sfv-\frac{\gamma_\sfv}{\omega_\sfv}}{\log \frac{\rho^e}{\pi^e}\big|_\sfv -\log \frac{\gamma_\sfv}{\omega_\sfv}} 
\end{equation}
in conjunction with the quadratic function
\begin{align*}
    \sfC^\ast(\zeta) = \frac{1}{2} \abs{\zeta}^2.
\end{align*}
This quadratic choice leads to a metric structure and hence turns \eqref{eq:system_intro_linear} into a metric gradient flow in the spirit of \cite{AmbrosioGigliSavare2008}. However, as we see in \eqref{eq:quadratic-sigma}, this choice explicitly depends on the reference measure defining the energy $\calE$, i.e., the dissipative structure explicitly depends on the energy driving the dynamics. In contrast, the dissipative structure introduced in Definition~\ref{def:Dpot} is independent of the energy, it is \emph{tilt-independent}. This property of tilt-independence is shared, e.g., by the classic Wasserstein gradient structure and is generally viewed as a desirable property. We refer the reader to \cite{PeletierSchlichting2022} for a broader picture on the topic of tilting gradient flows. 
\end{remark}

Before we proceed, we collect important properties of the Fisher information functional, which in particular characterizes the edge terms of the relaxed slope.
\begin{lemma}[Properties of the Fisher information functional] \label{lem:I:convex}
    Let $\ell, \diffedge >0$ and for $P\in\mathrm{Lip}([0,\ell])$ let $\dx\pi=e^{-P}\dx x$. 
    The functional $\sfI: \calM_{\geq 0}([0,\ell]) \to [0,\infty]$ defined by
    \begin{equation}
    	\sfI(\rho) = 2\diffedge\int_0^{\ell}\abs[\big]{\partial_x\sqrt{u}}^2\dx\pi \quad\text{with}\quad u = \frac{\dx\rho}{\dx \pi} \,.\label{eq:relaxed-dual-dissipation:Mgraph}
    \end{equation}
   coincides for $\sqrt{u}\in H^1(0,\ell)$ with the \emph{Fisher information} 
    \begin{equation}\label{eq:def:Fisher:edge}
        \sfI(\rho) \coloneqq
        \begin{cases}
            \displaystyle\frac{\diffedge}{2}\int_0^{\ell} \abs*{\frac{\partial_x u}{u}}^2 \dx \rho , & \text{ if } \rho\ll \pi \text{ with } \dx \rho = u \dx \pi \,;\\
            +\infty , & \text{ else.}
        \end{cases}
    \end{equation}
    In particular, it is convex and lower semicontinuous with respect to the narrow topology on its domain. 
    
\end{lemma}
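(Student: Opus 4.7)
The plan is to establish the three conclusions in sequence: equality of the two formulas, convexity, and narrow lower semicontinuity.

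For the equality of \eqref{eq:relaxed-dual-dissipation:Mgraph} and \eqref{eq:def:Fisher:edge}, I would invoke the chain rule for Sobolev functions. If $\sqrt{u}\in H^1(0,\ell)$ then $u=(\sqrt{u})^2 \in W^{1,1}(0,\ell)$ with $\partial_x u = 2\sqrt{u}\,\partial_x\sqrt{u}$ a.e., and by Stampacchia's theorem $\partial_x u = 0$ a.e.\ on $\{u=0\}$. Adopting the convention $0/0 = 0$ one obtains $\abs{\partial_x\sqrt{u}}^2 = \abs{\partial_x u}^2/(4u)$ pointwise a.e., and multiplying by $2\diffedge$ and integrating against $\dx\pi = e^{-P}\dx{x}$ converts \eqref{eq:relaxed-dual-dissipation:Mgraph} into \eqref{eq:def:Fisher:edge}; the converse implication is the same computation read backwards.

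For convexity I would use the dual representation $b^2/a = \sup_{c\in\R}(2bc - ac^2)$ valid on $(0,\infty)\times\R$, which extends to a jointly convex, lower semicontinuous function $f$ on $[0,\infty)\times\R$ by setting $f(0,0)=0$ and $f(0,b)=+\infty$ for $b\neq 0$. Since $\rho\mapsto u = d\rho/d\pi$ is linear and $u\mapsto \partial_x u$ is linear in the distributional sense, the integrand map $\rho \mapsto f(u(x),\partial_x u(x))$ is linear in $\rho$ for a.e.\ $x$, so $\sfI(\rho) = \tfrac{\diffedge}{2}\int_0^\ell f(u,\partial_x u)\dx\pi$ inherits convexity from $f$.

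The main obstacle is narrow lower semicontinuity, which I would obtain via a compactness argument taking advantage of the one-dimensional setting. Suppose $\rho_n \to \rho$ narrowly on $[0,\ell]$ and set $C := \liminf_n \sfI(\rho_n)$; we may assume $C<\infty$ and, up to a subsequence, that $\sfI(\rho_n) \to C$. Writing $u_n = d\rho_n/d\pi$ and using that $P$ is Lipschitz on the compact interval (so $e^{-P}$ is bounded above and away from zero), the energy bound yields a uniform $L^2(0,\ell)$-bound on $\partial_x\sqrt{u_n}$. The mass bound $\norm{\sqrt{u_n}}_{L^2(\pi)}^2 = \rho_n([0,\ell])$ is also uniform, so $(\sqrt{u_n})$ is bounded in $H^1(0,\ell)$. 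The compact embedding $H^1(0,\ell)\hookrightarrow L^2(0,\ell)$ furnishes a subsequence with $\sqrt{u_n} \rightharpoonup g$ weakly in $H^1$ and strongly in $L^2$; consequently $u_n = (\sqrt{u_n})^2 \to g^2$ in $L^1(0,\ell)$. Narrow uniqueness of the limit forces $\rho = g^2 \pi$, hence $g = \sqrt{u}$. Weak $L^2(\pi)$-lower semicontinuity of the gradient seminorm then gives
\[
	\sfI(\rho) = 2\diffedge \norm{\partial_x\sqrt{u}}_{L^2(\pi)}^2 \le 2\diffedge \liminf_n \norm{\partial_x\sqrt{u_n}}_{L^2(\pi)}^2 = C,
\]
completing the argument. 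The crucial ingredient is the one-dimensional Rellich--Kondrachov embedding, which allows identification of the weak $H^1$-limit with $\sqrt{u}$ from only narrow convergence of the $\rho_n$.
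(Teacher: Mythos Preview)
Your argument is correct, with one small slip in the convexity paragraph: you write that the integrand map $\rho \mapsto f(u(x),\partial_x u(x))$ is \emph{linear} in $\rho$, but of course it is the map $\rho \mapsto (u(x),\partial_x u(x))$ that is linear, and the composition with the convex $f$ is then convex. The intent is clear and the conclusion is right.

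Your approach differs from the paper's for convexity and lower semicontinuity. The paper derives, via integration by parts (using $P\in\mathrm{Lip}$ and $u\in W^{1,1}(\pi)$), the dual representation
\[
	\sfI(\rho) = \sup_{\varphi\in C_c^1((0,\ell))} \diffedge \int_0^{\ell} \Bigl(\partial_x P\,\varphi - \partial_x\varphi - \tfrac{\varphi^2}{2}\Bigr)\dx\rho,
\]
which exhibits $\sfI$ as a supremum of narrowly continuous affine functionals and hence yields both convexity and narrow lower semicontinuity in one stroke. Your route instead treats the two properties separately: convexity via the pointwise joint convexity of $(a,b)\mapsto b^2/a$, and lower semicontinuity via the compact embedding $H^1(0,\ell)\hookrightarrow L^2(0,\ell)$ to identify the weak $H^1$-limit of $\sqrt{u_n}$ with $\sqrt{u}$ before invoking weak lower semicontinuity of the $L^2(\pi)$-seminorm. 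The paper's method is slicker and dimension-independent; your compactness argument is more hands-on and leans essentially on the one-dimensional Rellich--Kondrachov embedding, which is fine here since the lemma is stated on an interval.
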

\begin{proof}
The representation~\eqref{eq:def:Fisher:edge} for $\sqrt{u}\in H^1(0,\ell)$ is well-known, see e.g. in~\cite[Lemma 2.2]{GianazzaSavareToscani2009}. For dimension one, it e.g. follows from the fact that $H^1(0,\ell) \subset \AC((0,\ell))$ and the Leibniz rule for absolutely continuous curves.

Convexity and lower semicontinuity is best observed from the dual representation of \eqref{eq:def:Fisher:edge}, which is derived by the convex dual of the square-function and suitable integration by parts.
From $\sfI(\rho)<\infty$, we get that $u\in W^{1,1}(\pi)$ by Hölder's inequality:
\[
    \int_0^{\ell} \abs*{\partial_x u} \dx{\pi}=  \int_0^{\ell} \abs*{\frac{\partial_x u}{u}} u \dx{\pi}
    =  \int_0^{\ell} \abs*{\frac{\partial_x u}{u}} \dx{\rho}\leq \sqrt{\frac{2}{\diffedge} \sfI(\rho)} \sqrt{\rho([0,\ell]) } < \infty \,.
\]
With this and noting that $\pi$ has a Lipschitz density $e^{-P}$ by Assumption~\ref{ass:Lip_Potential}, we have for any $\varphi\in C_c^1((0,\ell))$ and $u\in W^{1,1}(\pi)$ the integration by parts identity
\[
    \int_0^{\ell} \partial_x u \varphi \dx{\pi} = \int_0^{\ell} \bra*{\partial_x P \varphi - \partial_x \varphi} u \dx{\pi} \,.
\]
Hence, we derive the dual formulation as
	\begin{align}
		\sfI(\rho) &= \sup_{\varphi\in C_c^1((0,\ell))} \diffedge \int_0^{\ell}  \bra*{ \frac{\partial_x u}{u} \varphi - \frac{\varphi^2}{2}} \dx \rho \nonumber \\
        &= \sup_{\varphi\in C_c^1((0,\ell))} \set[\bigg]{ \diffedge \int_0^{\ell}  \partial_x u \varphi \dx\pi  -\int_0^{\ell} \frac{\varphi^2}{2} \dx \rho } \nonumber \\
        &= \sup_{\varphi\in C_c^1((0,\ell))} \diffedge \int_0^{\ell}  \bra[\bigg]{ \partial_x P \varphi - \partial_x \varphi - \frac{\varphi^2}{2}} \dx \rho \,, \label{eq:dual:Fisher}
	\end{align}
    where we used that $P$ is differentiable $\rho$-a.e. since $\rho\ll \scrL$.
    
	The dual formulation~\eqref{eq:dual:Fisher} shows that 
    $\sfI$ is a supremum of linear functionals on $\calM_{\ge 0}([0,\ell])$. Therefore, $\sfI$ is convex and lower semicontinuous with respect to narrow convergence by~\cite[Theorem 3.4.1.]{Buttazzo1989}. 

\end{proof}

The functionals $\calR$ and $\calR^\ast$ are dual to each other if restricted to positive densities and we have the following representation.
\begin{lemma}[Dual estimate]\label{lem:duality:dissipations}
     For any $\mu \in \calP_+(\Mgraph)$ the dual dissipation potential $\calR^*$ defined in~\eqref{eq:dual-dissipation} is indeed the dual of $\calR$ from~\eqref{eq:primal-dissipation}, that is any $\upxi=(\bar\xi,\xi)\in C^0((\nodes\times \edges)\times\Medges)$ satisfies
     \begin{align}\label{eq:DualDissipation:duality}
     	 \calR(\mu, \rmj) = \sup_{\upxi\in C((\nodes\times\edges)\times\Medges)} \set*{ \skp{\upxi,\rmj}_{\Mgraph} - \calR^*(\mu, \upxi)} \,.
     \end{align}
     For such $\mu \in \calP_+(\Mgraph)$ it agrees with the relaxed dual dissipation potential $\calI$ 
     defined in~\eqref{eq:relaxed-dual-dissipation}, that is 
     \begin{equation}\label{eq:Identitfy:DualDissipations}
     	\calI(\mu) = \calR^*(\mu,-\bnabla \calE'(\mu)) \,.
     \end{equation}
    Furthermore, $\calI$ extends to $\calP(\Mgraph)$ as the sequentially lower semicontinuous envelope of the map $\calP_+(\Mgraph) \ni \mu \mapsto \calR^*(\mu,-\bnabla \calE'(\mu))$, i.e., for every $\mu\in \calP(\Mgraph)$ it holds
    \begin{equation}\label{eq:I:envelope}
        \calI(\mu) = \inf \set*{ \liminf_{n\to \infty} \calR^*(\mu_n,-\bnabla \calE'(\mu_n)) : \mu_n \xrightharpoonup{*} \mu \text{ as } n\to \infty \text{ with } \mu_n \in \calP_+(\Mgraph) \text{ for all } n\in \bbN } \,.
    \end{equation}
\end{lemma}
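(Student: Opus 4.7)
Plan: The argument has three parts, corresponding to the three claims, and all rely on the sum-decomposition \eqref{eq:primal-dissipation:decompose}--\eqref{eq:dual-dissipation:decompose} of $\calR$ and $\calR^\ast$ into independent edge and vertex-edge contributions, which reduces every statement to a componentwise one.

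For the duality \eqref{eq:DualDissipation:duality}, I would take the supremum over $\upxi=(\bar\xi,\xi)$ componentwise. On each metric edge, $(\sfR^e,(\sfR^e)^\ast)$ is the classical Legendre pair of a perspective quadratic, and $\sfR^e(\rho^e,j^e)=\sup_{\xi^e\in C([0,\ell^e])}\bigl\{\langle \xi^e,j^e\rangle-(\sfR^e)^\ast(\rho^e,\xi^e)\bigr\}$ follows by approximating the formal pointwise maximizer $\xi^e=\tfrac{\dx j^e}{\diffedge^e\,\dx\rho^e}$ in $L^2(\rho^e)$ by continuous functions, using that $\rho^e$ has positive mass. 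On each vertex-edge pair $(\sfv,e)$, the positivity $\mu\in\calP_+(\Mgraph)$ gives $\sigma_\sfv^e>0$, and the Legendre duality of $\sfC$ and $\sfC^\ast$ together with the homogeneity of the perspective transform yields $\sfR_\sfv^e(\rho^e|_\sfv,\gamma_\sfv,\bar\jmath_\sfv^e)=\sup_{\bar\xi_\sfv^e\in\R}\bigl\{\bar\xi_\sfv^e\bar\jmath_\sfv^e-(\sfR_\sfv^e)^\ast(\rho^e|_\sfv,\gamma_\sfv,\bar\xi_\sfv^e)\bigr\}$. Summing reconstructs \eqref{eq:DualDissipation:duality}.

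For \eqref{eq:Identitfy:DualDissipations} I would substitute $\upxi=-\bnabla\calE'(\mu)$ and compute each component explicitly. Writing $u^e\coloneqq\dx\rho^e/\dx\pi^e$, the chain rule gives $-\nabla\calE_\Medges'(\rho)(e,x)=-\partial_x\log u^e(x)$, and $|\partial_x\log u^e|^2\,u^e=4|\partial_x\sqrt{u^e}|^2$ converts $(\sfR^e)^\ast(\rho^e,-\nabla\calE_\Medges'(\rho))$ to the edge integrand $\sfI^e(\rho^e)$ from \eqref{eq:relaxed-dual-dissipation:Mgraph}. On each vertex-edge pair, $-\gnabla\calE'(\mu)(\sfv,e)=\log\!\bigl((\rho^e|_\sfv/\pi^e|_\sfv)/(\gamma_\sfv/\omega_\sfv)\bigr)$, and the elementary identity $\sfC^\ast(\log(a/b))=2(\sqrt{a}-\sqrt{b})^2/\sqrt{ab}$ combined with the definition of $\sigma_\sfv^e$ in \eqref{eq:primal-dissipation:nodes} yields the matching vertex-edge integrand $\sfI_\sfv^e$ from \eqref{eq:relaxed-dual-dissipations}.

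For the envelope characterization \eqref{eq:I:envelope} I would verify sequential lower semicontinuity of $\calI$ on $\calP(\Mgraph)$ and exhibit a recovery sequence from $\calP_+(\Mgraph)$. The edge contribution is lower semicontinuous by Lemma~\ref{lem:I:convex}; for the vertex-edge terms, finite Fisher information bounds $\sqrt{u^e_n}$ in $H^1(0,\ell^e)$, so Morrey's embedding $H^1\hookrightarrow C^{0,1/2}$ turns weak $H^1$ convergence into uniform convergence, passing the boundary traces $\rho^e_n|_\sfv$ to the limit, whereupon continuity of the integrands closes the argument. As recovery sequence I would take the tilted mixture $\mu_\delta\coloneqq(1-\delta)\mu+\delta(\omega,\pi)/Z$ with $Z=\omega(\nodes)+\pi(\Medges)$, which lies in $\calP_+(\Mgraph)$; convexity of the integrands together with $\calI((\omega,\pi)/Z)=0$ gives $\calI(\mu_\delta)\le(1-\delta)\calI(\mu)$, while the just-established lower semicontinuity supplies the reverse inequality $\liminf_{\delta\to 0}\calI(\mu_\delta)\ge\calI(\mu)$. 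The main obstacle is controlling boundary traces under narrow convergence: without the Morrey step the traces $\rho^e_n|_\sfv$ need not pass to the limit, and degeneracies $\rho^e|_\sfv=0$ or $\gamma_\sfv=0$ must be handled via the convention~\eqref{convention:infty}, which is however compatible with the lower semicontinuous dual representation of Lemma~\ref{lem:I:convex}.
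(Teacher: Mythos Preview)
Your proposal is correct and, for the duality \eqref{eq:DualDissipation:duality} and the identity \eqref{eq:Identitfy:DualDissipations}, proceeds essentially as the paper does: decompose into independent edge and vertex--edge pieces, invoke Legendre duality of $(\sfC,\sfC^\ast)$ and of the quadratic, and use the algebraic identity $\sqrt{ab}\,\sfC^\ast(\log(a/b))=2(\sqrt a-\sqrt b)^2$.

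For the envelope characterization \eqref{eq:I:envelope} your argument is noticeably more explicit than the paper's. The paper simply observes that $(a,b)\mapsto 2(\sqrt a-\sqrt b)^2$ is jointly one-homogeneous, convex and continuous on $[0,\infty)^2$ and cites Lemma~\ref{lem:I:convex} for the edge part, declaring that $\calI$ is the lower semicontinuous envelope. This leaves implicit precisely the point you address: narrow convergence of $\rho^e_n$ does not by itself pass traces $\rho^e_n|_\sfv$ to the limit, so one needs the $H^1$-bound on $\sqrt{u^e_n}$ (from finite Fisher information) together with the compact embedding $H^1(0,\ell^e)\hookrightarrow C([0,\ell^e])$ to recover trace convergence. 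Your recovery sequence $\mu_\delta=(1-\delta)\mu+\delta(\omega,\pi)$ with $\calI((\omega,\pi))=0$ and convexity is likewise a concrete construction the paper does not spell out. In short, the paper's route relies on the reader supplying the trace-control and recovery steps; your route supplies them, at the cost of a few extra lines. Both are valid, and yours is arguably more self-contained.
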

\begin{proof}
	To check the consistency between the duality relation~\eqref{eq:DualDissipation:duality} and the definition~\eqref{eq:primal-dissipation:decompose} of~$\calR$, we make use of the decomposition of $\calR^\ast$ from~\eqref{eq:dual-dissipation:decompose} and the products in~\eqref{eq:def:products} to obtain
	\begin{align*}
		\calR(\mu,\rmj) 
		&= \calR_{\nodes,\edges}(\mu,\bar\jmath) 
        + \calR_{\Medges}(\rho,j) 
		= \sup_{\upxi\in C((\nodes\times\edges)\times\Medges)} \set*{ \skp{\bar \xi,\bar\jmath}_{\nodes \times\edges} + \skp{\nabla \xi,j}_{\Medges} 
        - \calR_{\nodes,\edges}^*(\mu,\bar\xi) 
        - \calR_{\Medges}^*(\rho,\xi)}\\
		&= \sup_{\bar \xi \in \R^{\nodes \times \edges}} \set*{ \skp{\bar \xi,\bar\jmath}_{\nodes \times\edges} -  \calR_{\nodes,\edges}^*(\mu,\bar\xi)}
		+ \sup_{\xi\in C(\Medges)} \set*{ \skp{\nabla \xi,j}_{\Medges}- \calR_{\Medges}^*(\rho,\xi)} \,,
	\end{align*}
	where we use the fact that $C(\nodes\times\edges)\times C(\Medges)\subset C((\nodes\times\edges)\times\Medges)$ is dense with respect to weak-$^*$ convergence and that the map $\calM(\Medges)\ni j \mapsto \skp{\nabla \xi,j}_{\Medges}- \calR_{\Medges}(\rho, j)$ is upper semicontinuous for any fixed $\xi \in C^1(\Medges)$, $\rho\in \calM_+(\Medges)$, thanks to the lower semicontinuity of $\calM(\Medges)\ni j \mapsto \calR_{\Medges}(\rho, j)$ as a convex integral functional. Hence, the duality~\eqref{eq:DualDissipation:duality} is consequence of the individual dualities $\calR_{\nodes,\edges}(\mu,\bar\jmath) = \sup_{\bar \xi \in \R^{\nodes \times \edges}} \set[\big]{ \skp{\bar \xi,\bar\jmath}_{\nodes \times\edges} -  \calR^\ast_{\nodes,\edges}(\mu,\bar\xi)}$ and $\calR_{\Medges}(\rho,j) = \sup_{\xi\in C(\Medges)} \set*{ \skp{\nabla \xi,j}_{\Medges}- \calR^\ast_{\Medges}(\rho, \xi)}$, which are immediate from their definitions~\eqref{eq:primal-dissipation} and~\eqref{eq:dual-dissipation} taking also the duality between $\sfC$ and $\sfC^*$ from~\eqref{eq:def:C-C*} into account.
	
	The identity~\eqref{eq:Identitfy:DualDissipations} follows from basic algebraic manipulations, once observed after recalling the definition of $\sfC^*$ in~\eqref{eq:def:C-C*} that
	\begin{equation*}
		\sqrt{ab} \; \sfC^*\bra*{ \log \frac{a}{b}} = 2\bra*{\sqrt{a}-\sqrt{b}}^2 \qquad\text{ for all } a,b>0 \,.
	\end{equation*}
	Since the function $(a,b) \mapsto 2\bra[\big]{\sqrt{a}-\sqrt{b}}^2$ is jointly one-homogeneous, convex and continuous on $[0,\infty)\times [0,\infty)$ and using Lemma~\ref{lem:I:convex} for the part $\calI_{\Medges}$, we observe that $\calI$ is indeed the sequentially lower semicontinous envelope satisfying~\eqref{eq:I:envelope}.
\end{proof}

\subsection{Energy dissipation principle}
Having specified the continuity equation structure $(\sfX,\sfY,\bnabla)$ in Definition~\ref{def:bCE:abstract}, the dissipation potential $\calR$ in Definition~\ref{def:Dpot} and the energy $\calE$ in~\eqref{eq:def:free_energy}, we have the ingredients for a gradient system in continuity equation format $(\sfX,\sfY,\bnabla,\calE,\calR)$. The energy dissipation functional for a curve $(\mu,\rmj)\in \CE$ is abstractly defined according to Definition~\ref{def:EDP-sol}. However, since $-\bnabla \calE'(\mu)$ is not necessarily in the domain of $\calR^*(\mu,\cdot)$, we replace it with the envelope defined in~\eqref{eq:I:envelope}. 
\begin{definition}[Energy dissipation functional]\label{def:Mgraph:EDfunctional}
    For a given curve $(\mu,\rmj)\in \CE$, the \emph{dissipation functional} is defined by
    \begin{equation}\label{eq:def:Mgraph:Dfunctional}
        \calD(\mu,\rmj)\coloneqq \int_0^T \pra[\big]{\calR(\mu(t),\rmj(t)) + \calI(\mu(t))} \dx{t} 
    \end{equation}
    and the \emph{energy dissipation functional} is defined by
	\begin{equation}\label{eq:def:Mgraph:EDfunctional}
		\calL(\mu,\rmj)\coloneqq \mathcal{E}(\mu(T))- \mathcal{E}(\mu(0)) + \calD(\mu,\rmj).
	\end{equation}
\end{definition}
For later reference, we note the convexity of the involved functionals with respect to affine interpolations.
\begin{lemma}[Strict convexity with respect affine interpolation]\label{lem:ED:convex:affine}
	The map $(\mu,\rmj)\mapsto \calL(\mu,\rmj)$ is strictly convex for curves with fixed starting point within the domain of $\calL$, that is for $\mu^0,\mu^1 \in \AC(0,T;\calP(\Mgraph))$ with $\calL(\mu^0,\rmj^0),\calL(\mu^1,\rmj^1)< \infty$ starting from $\bar \mu$ with $(\mu^0,\rmj^0),(\mu^1,\rmj^1)\in \CE$, the curve defined for $\lambda \in (0,1)$ with $(\mu^\lambda,\rmj^\lambda)\in \CE$ by $\mu^\lambda \coloneqq (1-\lambda)\mu^0 + \lambda \mu^1$ and likewise $\rmj^\lambda\coloneqq (1-\lambda)\rmj^0 + \lambda \rmj^1$ satisfies
	\begin{equation}\label{eq:ED:convex}
		\calL(\mu^\lambda,\rmj^\lambda) < (1-\lambda) \calL(\mu^0,\rmj^0) + \lambda \calL(\mu^1,\rmj^1) \qquad\text{for any } \lambda \in (0,1)\,.
	\end{equation}
\end{lemma}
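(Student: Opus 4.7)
The strategy is to decompose $\calL(\mu^\lambda,\rmj^\lambda)$ into its constituent parts and check convexity summand by summand, then to argue that at least one inequality must be strict once $(\mu^0,\rmj^0)\neq(\mu^1,\rmj^1)$. Writing
\[
\calL(\mu^\lambda,\rmj^\lambda) = \calE(\mu^\lambda(T)) - \calE(\bar\mu) + \int_0^T \bra*{\calR(\mu^\lambda,\rmj^\lambda) + \calI(\mu^\lambda)} \dx t,
\]
the initial-energy term is $\lambda$-independent since both curves start at $\bar\mu$. Thus convexity in $\lambda$ is driven by the remaining three contributions.

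The free energy $\calE = \sum_{e}\calH(\cdot|\pi^e)+\sum_{\sfv}\calH(\cdot|\omega_\sfv)$ is strictly convex on $\calP(\Mgraph)$ because the integrand $\eta$ from~\eqref{eq:def:RelEnt} satisfies $\eta''(r)=1/r>0$. Next, $\calR$ is jointly convex in $(\mu,\rmj)$: the edge contribution $\sfR^e(\rho^e,j^e)=\tfrac{1}{2\diffedge^e}\int|\rmd j^e/\rmd\rho^e|^2\dx\rho^e$ is the classical Benamou--Brenier action (equivalently, the sup representation $\int\phi\dx j^e - \tfrac{1}{2}\int\phi^2\dx\rho^e$ over $\phi\in C^1([0,\ell^e])$ exhibits it as a supremum of affine functionals), and the node contribution $\sigma_\sfv^e(\gamma_\sfv,\rho^e|_\sfv)\sfC(\bar\jmath^e_\sfv/\sigma_\sfv^e(\gamma_\sfv,\rho^e|_\sfv))$ is the perspective of the convex function $\sfC$ with concave weight $\sigma_\sfv^e(a,b)=\scrk_\sfv^e\sqrt{ab}$ (geometric mean), hence jointly convex. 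Finally, $\calI$ is convex by Lemma~\ref{lem:duality:dissipations}, as a supremum of linear functionals in its dual representation (see also Lemma~\ref{lem:I:convex}).

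For strictness, assume $(\mu^0,\rmj^0)\neq(\mu^1,\rmj^1)$ and split into cases. If $\mu^0(T)\neq\mu^1(T)$, the strict convexity of $\calE$ at the terminal point alone yields~\eqref{eq:ED:convex}. Otherwise, $\mu^0(T)=\mu^1(T)$ and the pairs differ on a set of positive measure in $[0,T]$. In the subcase $\mu^0=\mu^1$ but $\rmj^0\neq\rmj^1$, both $\calE$ and $\calI$ reduce to the same values, while $\calR(\mu,\cdot)$ is strictly convex in its flux argument for every fixed $\mu$ (the edge part is quadratic in the velocity $\rmd j^e/\rmd\rho^e$, the node part is $\sigma\,\sfC(\cdot/\sigma)$ with strictly convex $\sfC$), whence strict inequality. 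In the remaining subcase $\mu^0\neq\mu^1$ with $\mu^0(T)=\mu^1(T)$, the duality formula $\sfR^e(\rho,j)=\sup_\phi\{\int\phi\dx j-\tfrac{1}{2}\int\phi^2\dx\rho\}$ shows that equality in the convexity inequality for $\sfR^e$ along the segment forces the optimizers (i.e.\ the edge velocities $v^i=\rmd j^{i,e}/\rmd\rho^{i,e}$) and, analogously, the node ratios $\bar\jmath^i/\sigma_\sfv^e(\mu^i)$ to coincide a.e.\ in time. If that were the case, both $\mu^0$ and $\mu^1$ would solve the same continuity equation $\partial_t\mu+\bdiv\rmj=0$ driven by a common velocity/rate field with the common initial datum $\bar\mu$, and uniqueness of the one-dimensional linear continuity equation on each edge together with the shared exchange law at the vertices forces $\mu^0=\mu^1$, a contradiction.

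The main obstacle I expect is the last subcase: the strict convexity of the perspective functions is lost along radial directions (they are $1$-homogeneous), so one cannot conclude directly from the shape of $\calR$ alone. The argument has to cross-reference the continuity equation constraint $(\mu^i,\rmj^i)\in\CE$ with common initial data to rule out degenerate equality. Here the probabilistic normalization in $\calP(\Mgraph)$ excludes nontrivial radial scalings, and the edge-wise one-dimensional structure makes the uniqueness of the continuity equation with a prescribed drift tractable (the finiteness of $\calD$ furnishes the $L^2$-integrability of the velocity needed for uniqueness on each $[0,\ell^e]$).
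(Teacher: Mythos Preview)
Your convexity arguments for the three summands---$\calE$, $\calR$, and $\calI$---are essentially identical to the paper's: strict convexity of the relative entropy for $\calE$, perspective-function arguments (of the square and of $\sfC$ over the concave geometric mean) for the two parts of $\calR$, and the convexity of the Fisher information (Lemma~\ref{lem:I:convex}) together with joint convexity of $(a,b)\mapsto(\sqrt a-\sqrt b)^2$ for $\calI$. This is exactly the paper's route.

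Where you diverge is in the treatment of \emph{strictness}. The paper's proof stops after recording that $\calE$ is strictly convex and $\calR,\calI$ are convex; it does not carry out your case analysis. In effect, the paper extracts the strict inequality solely from the terminal-energy term $\calE(\mu(T))$, which---as you correctly observe---only gives strictness when $\mu^0(T)\neq\mu^1(T)$. Your Cases~2 and~3 go beyond what the paper actually proves. Your Case~2 (same $\mu$, different $\rmj$) is fine. Your Case~3, however, leans on uniqueness for the continuity equation with a velocity field that is merely $L^2(\rho\,\dx x\,\dx t)$; you flag this yourself as the ``main obstacle,'' and indeed this step is not straightforward to justify without additional regularity (DiPerna--Lions/Ambrosio theory does not apply directly).

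For the paper's purposes this subtlety is harmless: the lemma is only invoked in Theorem~\ref{thm:GF:well-posed} to derive uniqueness of EDP solutions, and there one may restrict to any subinterval $[0,T']$ with $\mu^0(T')\neq\mu^1(T')$ (if no such $T'$ exists, the continuous curves coincide, and the constitutive relation in Corollary~\ref{cor:EDB} then pins down the flux). So your more careful analysis is not wrong in spirit, but the paper deliberately takes the shortcut of using only the strict convexity of $\calE$ and leaves the full strict-convexity statement somewhat imprecise.
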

\begin{proof}
	The result is an immediate consequence of the strict convexity of $\calE(\mu)$ as defined in~\eqref{eq:def:free_energy}, because of the strict convexity of the relative entropy defined in~\eqref{eq:def:RelEnt}. Moreover, we have the joint convexity of $\calR(\mu,\rmj)$ thanks to its definition in~\eqref{eq:primal-dissipation:decompose} as the sum of two jointly convex functional. Hereby, $\sfR_\sfv^e$ defined in~\eqref{eq:primal-dissipation:nodes} is convex as the perspective function of the convex function $\sfC$ defined in~\eqref{eq:def:C-C*} and the concavity of the geometric mean. Likewise $\sfR^e$ as defined in~\eqref{eq:primal-dissipation:Medges} is convex as the perspective function of the square, which is convex. 
	Similarly, the relaxed dual dissipation functional $\calI$ defined in~\eqref{eq:relaxed-dual-dissipation:decompose} is the sum of two convex functions. Again $\sfI_\sfv^e$ defined in~\eqref{eq:relaxed-dual-dissipation} is convex, since $(a,b)\mapsto \abs[\big]{\sqrt{a}-\sqrt{b}}^2$ is jointly convex. Finally, $\sfI^e$ defined in~\eqref{eq:relaxed-dual-dissipation:Mgraph} is convex by Lemma~\ref{lem:I:convex}.
\end{proof}
Our next result concerns the differentiability of $\mathcal{E}$ along a curve $\mu \in \AC(0,T;\calP(\Mgraph))$, which we characterize as a solution to the continuity equation $(\mu,\rmj)\in\CE$ by Lemma~\ref{lem:AC:representative}. 
\begin{proposition}[Chain rule]\label{prop:chain-rule}
	Along any curve $(\mu,\rmj)\in\CE$ with $\calE(\mu(0))<\infty$ and
 $\calL(\mu,\rmj) < \infty$, the free energy $[0,T]\ni t \mapsto \calE(\mu(t))$ is absolutely continuous satisfying the chain rule identity
	\begin{equation}\label{eq:chain-rule}
		  \calE(\mu(t)) - \calE(\mu(s)) 
          = \int_s^t \pra*{ \skp{\gnabla \calE'(\mu(\tau)), \bar \jmath(\tau)}_{\nodes,\edges}  + \skp{\nabla \calE'(\mu(\tau)), j(\tau)}_{\Medges} } \dx{\tau}. 
	\end{equation}
	Here, the variation of the energy is given in~\eqref{eq:free_energy:variations}. Since $\calE'$ takes values in $[-\infty,+\infty)$, the definition of the according gradients is extended in comparison to~\eqref{eq:def:abstract-gradient} using the convention~\eqref{convention:infty} as
	\begin{subequations}\label{eq:force_from_energy}
	\begin{align}
		\forall (\sfv,e)\in \nodes\times \edges:\quad \gnabla \calE'(\mu)(\sfv,e) &\coloneqq 
		\begin{cases}
			0 \,, &\text{if } \calE_\nodes'(\gamma)(\sfv) =  -\infty =\calE_\Medges'(\rho)(e,\cdot)|_{\sfv} \,, \\
		\calE_\nodes'(\gamma)(\sfv) - \calE_\Medges'(\rho)(e,\cdot)|_{\sfv} \,, &\text{else,}
		\end{cases} \\
		 \forall (e,x)\in \Medges: \quad \nabla \calE'(\mu)(e,x) &= \partial_x \calE'_\Medges(\rho)(e,x) \,.
	\end{align}
\end{subequations}
\end{proposition}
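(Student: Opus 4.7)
The plan is to establish the chain rule by regularization: I first reduce to curves with strictly positive, $C^1$-in-space densities, where the identity~\eqref{eq:chain-rule} follows from a direct computation using the weak formulation~\eqref{e:def:bCE:weak} of the continuity equation, and then pass to the limit using the convexity and lower semicontinuity structure from Lemmas~\ref{lem:I:convex}, \ref{lem:duality:dissipations} and~\ref{lem:ED:convex:affine}. Absolute continuity of $\tau\mapsto\calE(\mu(\tau))$ will emerge as a by-product, since the identified integrand turns out to lie in $L^1(0,T)$ thanks to the Fenchel--Young bound and the integrability of $\calR$ and $\calI$.

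For the reduction, I would set $\mu^\eps\coloneqq(1-\eps)\mu+\eps(\omega,\pi)$ and $\rmj^\eps\coloneqq(1-\eps)\rmj$. Since $(\omega,\pi)$ is stationary, $(\mu^\eps,\rmj^\eps)\in\CE$ and the densities $u^{\eps,e}\coloneqq\dx\rho^{\eps,e}/\dx\pi^e$ and $\gamma^\eps_\sfv/\omega_\sfv$ are bounded below by $\eps$. To gain the $C^1$-regularity in $x$ needed to use $\calE'(\mu^\eps(\tau))$ as a test function in~\eqref{e:def:bCE:weak}, I further mollify each $u^{\eps,e}$ in $x$ (with a kernel adapted to the interval via even reflection at the endpoints) and in time, obtaining $(\mu^{\eps,\delta},\rmj^{\eps,\delta})\in\CE$ with a modified flux converging to $\rmj^\eps$ in $L^1$ as $\delta\to 0$. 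Smoothness in $t$ and strict positivity make $\tau\mapsto\calE(\mu^{\eps,\delta}(\tau))$ classically $C^1$, so the pointwise chain rule for $\eta(r)=r\log r-r+1$ gives
\begin{equation*}
\calE(\mu^{\eps,\delta}(t))-\calE(\mu^{\eps,\delta}(s))=\int_s^t\skp[\big]{\calE'(\mu^{\eps,\delta}(\tau)),\partial_\tau\mu^{\eps,\delta}(\tau)}_{\sfX}\dx\tau.
\end{equation*}
Using the continuity equation and the duality~\eqref{e:grad-div:duality} with the bounded $C^1$ test function $\calE'(\mu^{\eps,\delta}(\tau))$, the inner integrand becomes $\skp{\bnabla\calE'(\mu^{\eps,\delta}(\tau)),\rmj^{\eps,\delta}(\tau)}_{\sfY}$, which after the decomposition~\eqref{eq:def:abstract-gradient} yields the RHS of~\eqref{eq:chain-rule} for the regularized curve.

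For the limit $\delta\to 0$ and then $\eps\to 0$, the LHS converges to $\calE(\mu(t))-\calE(\mu(s))$ by lower semicontinuity of the relative entropy and the affine upper bound $\calE(\mu^\eps)\leq(1-\eps)\calE(\mu)$ from Lemma~\ref{lem:ED:convex:affine}. For the RHS, the Fenchel--Young inequality together with the identification $\calR^*(\mu,-\bnabla\calE'(\mu))=\calI(\mu)$ from~\eqref{eq:Identitfy:DualDissipations} gives
\begin{equation*}
\abs[\big]{\skp{\bnabla\calE'(\mu^{\eps,\delta}(\tau)),\rmj^{\eps,\delta}(\tau)}_{\sfY}}\leq\calR(\mu^{\eps,\delta}(\tau),\rmj^{\eps,\delta}(\tau))+\calI(\mu^{\eps,\delta}(\tau)),
\end{equation*}
whose right-hand side is majorized, via the joint convexity of $\calR$ and $\calI$, by $\calR(\mu(\tau),\rmj(\tau))+\calI(\mu(\tau))\in L^1(0,T)$; dominated convergence then identifies the limit of the RHS with the integral in~\eqref{eq:chain-rule}, which is therefore finite and establishes the absolute continuity of $\calE\circ\mu$.

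The main obstacle is handling the singularities of $\calE'(\mu)$ where $\rho^e$ or $\gamma_\sfv$ vanishes: there $\log$ diverges to $-\infty$ and the extension~\eqref{eq:force_from_energy} must be consistently realized as the limit of the bounded quantities $\calE'(\mu^{\eps,\delta})$. Finiteness of $\int_0^T\calI(\mu(\tau))\dx\tau$ ensures via~\eqref{eq:relaxed-dual-dissipations} that $\sqrt{u^e}\in H^1(0,\ell^e)$ for a.e.\ $\tau$, so $u^e$ can vanish only on closed subintervals, and that the boundary--vertex mismatch $\sigma_\sfv^e(\rho^e|_\sfv,\gamma_\sfv)\bigl(\sqrt{\rho^e|_\sfv/\pi^e|_\sfv}-\sqrt{\gamma_\sfv/\omega_\sfv}\bigr)^2$ is integrable in $\tau$; combined with the finiteness of $\calR$, which forces the corresponding components of $\rmj$ to vanish wherever $\mu$ does via the primal representation~\eqref{eq:primal-dissipation}, this ensures that the products $\skp{\nabla\calE'(\mu),j}_{\Medges}$ and $\skp{\gnabla\calE'(\mu),\bar\jmath}_{\nodes\times\edges}$ are unambiguously defined under the convention~\eqref{convention:infty} and coincide with the limits of their regularized counterparts.
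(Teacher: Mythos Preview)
Your strategy is sound in outline but takes a genuinely different route from the paper. The paper does not regularize the measures at all; it splits the argument in two, treating the metric-edge contribution via the space-regularization of~\cite[Section~3.3, Proposition~5.8]{ErbarForkertMaasMugnolo2022} (each interval is extended by two auxiliary half-edges, test functions are extended \emph{constantly}, and mollification is carried out on the enlarged interval so that $\gnabla\Phi^\delta\equiv\gnabla\Phi$), and the vertex/jump contribution by truncating the entropy integrand: replacing $\log$ by $\log_k\coloneqq(-k)\vee\log\wedge k$ gives bounded Lipschitz variations for which the chain rule is elementary, and the passage $k\to\infty$ is by \emph{monotone} convergence since $\eta_k\uparrow\eta$. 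Your scheme instead regularizes $\mu$ (convex interpolation with $(\omega,\pi)$, then mollification) and passes to the limit by Fenchel--Young plus \emph{dominated} convergence. This makes the role of the duality~\eqref{eq:Identitfy:DualDissipations} and the joint convexity of $\calR$ and $\calI$ more transparent, at the cost of re-deriving the mollification machinery that the paper simply imports.

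One step needs tightening. Mollification by even reflection at the endpoints does \emph{not} preserve the trace values $\varphi^e|_\sfv$ (dually: $\rho^{e,\delta}|_\sfv$ is an average of $\rho^e$ near the endpoint, not $\rho^e|_\sfv$). Consequently the pair $(\mu^{\eps,\delta},\rmj^{\eps,\delta})$ need not satisfy~\eqref{e:def:bCE:weak} with the \emph{same} $\bar\jmath$, and your uniform majorant for $\calR_{\nodes,\edges}(\mu^{\eps,\delta},\bar\jmath)$ is no longer delivered by convexity alone, since only the first argument $\rho^e|_\sfv$ of $\sfR^e_\sfv$ is averaged while $\gamma_\sfv$ and $\bar\jmath^e_\sfv$ stay fixed. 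The auxiliary-edge construction of~\cite[Section~3.3]{ErbarForkertMaasMugnolo2022} is designed precisely so that boundary traces survive the mollification; once you substitute it for even reflection (or, for fixed $\eps>0$, exploit directly that $\calE'(\mu^{\eps})$ is bounded so that an $L^1$-flux bound suffices for the $\delta\to0$ step), your argument goes through.
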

\begin{proof}%
	To show the statement, we combine the proof of \cite[Proposition 5.8]{ErbarForkertMaasMugnolo2022} for the metric edges with the one in~\cite[Theorem 4.16]{PeletierRossiSavareTse2022} for the reservoirs. First, we adapt the regularization procedure \cite[Section 3.3]{ErbarForkertMaasMugnolo2022} to our framework. 
    
    To do so, we note that each test function $\Phi=(\phi,\varphi) \in C^1(\nodes \times \Medges)$ consists of the edge components $\varphi^e \in C^1([0,\ell^e])$, $e\in\edges$ and vertex components $\phi_\sfv\in\R$, $\sfv\in\nodes$, which are independent of each other. In particular, we do not require $\varphi$ to be continuous when passing through a vertex from one edge to another. Therefore, for each edge $e=\sfv\sfw\in\edges$ we introduce two auxiliary edges $e_\sfv$ and $e_\sfw$, connected to $\sfv$ and $\sfw$, respectively. Using these auxiliary edges, we follow the strategy of \cite[Section 3.3]{ErbarForkertMaasMugnolo2022} to construct space-regularized test functions and by duality also space regularized measures. To apply the regularization to $\varphi\in C_c^1(\Medges)$, we need to extend it to the auxiliary edges. 
    In all cases we choose the constant extension, which ensures that the associated regularized test function $\varphi^\eps$ satisfies $\gnabla\Phi \equiv \gnabla\Phi^\eps$ with $\Phi^\eps\coloneqq(\phi,\varphi^\eps)$. With this, following the proof of \cite[Proposition 5.8]{ErbarForkertMaasMugnolo2022}, we obtain the $\Medges$-component in~\eqref{eq:chain-rule}. 
    
    Likewise, the discrete jump parts follows along the lines of~\cite[Theorem 4.16]{PeletierRossiSavareTse2022} by regularization of the free energies in~\eqref{eq:def:free_energy}. That is, we define for $k>0$ the function $\log_k:(0,\infty)\to [-k,k]$ defined by
	\begin{equation*}
		\log_k(x) \coloneqq \begin{cases}
			-k, & \log x < -k\\
			\log x, & -k\leq \log x \leq k \\
			k, & \log x \geq k 
		\end{cases}\,.
	\end{equation*}
	Let $\eta_k$ be the primitive of $\log_k$ normalized such that $\eta_k(1)=0$ for any $k>0$ and $\calH_k(\mu|\nu)$ the according regularized entropy as defined in~\eqref{eq:def:RelEnt} with $\eta$ replaced by $\eta_k$. Then the chain rule is classic for the regularized energies $\calE_{\Medges,k}(\rho)\coloneqq\sum_{e\in \edges} \calH_k(\rho^e|\pi^e)$ and $\calE_{\nodes,k}\coloneqq \sum_{\sfv\in \nodes} \calH_k(\gamma_\sfv|\omega_\sfv)$. Since $\eta_k\leq \eta$ pointwise, we also get $\calH_k(\mu|\nu)\leq \calH(\mu|\nu)$ and similar bounds for $\calE_{\Medges,k}$ and $\calE_{\nodes,k}$, which allows for a monotone convergence argument along the same lines as in~\cite[Theorem 4.16]{PeletierRossiSavareTse2022}
\end{proof}
The chain-rule and the duality structure from Lemma~\ref{lem:duality:dissipations} allow to conclude that the energy dissipation functional~\eqref{eq:def:Mgraph:EDfunctional} from Definition~\ref{def:EDP-sol} is actually non-negative on its domain. 
\begin{corollary}[Nonnegativity of energy dissipation functional]\label{cor:EDfunctional:nonneg}
	For any $(\mu,\rmj)\in \CE$ with $\calE(\mu(0))<\infty$ it holds $\calL(\mu,\rmj)\geq 0$.
\end{corollary}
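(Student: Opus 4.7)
The plan is to combine the chain rule from Proposition~\ref{prop:chain-rule} with the Fenchel--Young inequality built into the $(\calR,\calR^*)$-duality of Lemma~\ref{lem:duality:dissipations}. The trivial case $\calL(\mu,\rmj) = +\infty$ is immediate, so I may assume $\calL(\mu,\rmj)<\infty$. Since the relative entropy is non-negative and $\calE(\mu(0))<\infty$, this forces $\calE(\mu(T))<\infty$ and $\calD(\mu,\rmj)<\infty$. Proposition~\ref{prop:chain-rule} therefore applies and gives
\[
  \calE(\mu(T)) - \calE(\mu(0)) = \int_0^T \skp{\bnabla\calE'(\mu(t)),\rmj(t)}_\sfY \,\dx t,
\]
with $\bnabla\calE'$ interpreted through the convention~\eqref{eq:force_from_energy}.

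The key auxiliary estimate I would establish is the pointwise Fenchel--Young inequality
\[
  \calR(\mu(t),\rmj(t)) + \calI(\mu(t)) \geq -\skp{\bnabla\calE'(\mu(t)),\rmj(t)}_\sfY \qquad\text{for a.e.\ } t\in[0,T],
\]
which, once integrated in $t$ and added to the chain rule identity, immediately produces $\calL(\mu,\rmj)\geq 0$. For strictly positive $\mu(t)\in \calP_+(\Mgraph)$, the bound is direct: $-\bnabla\calE'(\mu(t))$ is then a bona fide element of $C(\sfY)$, hence admissible as a test field in the variational formula~\eqref{eq:DualDissipation:duality}, and combined with the identification~\eqref{eq:Identitfy:DualDissipations} of $\calI(\mu) = \calR^*(\mu,-\bnabla\calE'(\mu))$ from Lemma~\ref{lem:duality:dissipations} this yields exactly the desired inequality.

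The main obstacle is to extend the Fenchel--Young bound to curves that at some times fail to be strictly positive, where $\bnabla\calE'(\mu)$ may cease to be continuous and may attain the value $-\infty$, disqualifying it as a test field in~\eqref{eq:DualDissipation:duality}. I would handle this via the envelope characterization~\eqref{eq:I:envelope}: for each such $\mu$, select a strictly positive sequence $\mu_n \in \calP_+(\Mgraph)$ narrowly converging to $\mu$ and realizing $\calR^*(\mu_n,-\bnabla\calE'(\mu_n)) \to \calI(\mu)$, natural candidates being affine interpolations of the form $\mu_n = (1-\tfrac{1}{n})\mu + \tfrac{1}{n}\bar\mu$ with a suitable reference $\bar\mu\in\calP_+(\Mgraph)$. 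Writing Young's inequality at $\mu_n$ in the rearranged form
\[
  \calR^*(\mu_n,-\bnabla\calE'(\mu_n)) \geq -\skp{\bnabla\calE'(\mu_n),\rmj}_\sfY - \calR(\mu_n,\rmj),
\]
I would then pass to the liminf: the convexity of $\calR(\cdot,\rmj)$ along the affine interpolation yields $\limsup_n \calR(\mu_n,\rmj) \leq \calR(\mu,\rmj)$, while the pairing $\skp{\bnabla\calE'(\mu_n),\rmj}_\sfY$ converges to $\skp{\bnabla\calE'(\mu),\rmj}_\sfY$ by dominated convergence using the sign convention~\eqref{convention:infty} and the integrability implied by $\calD(\mu,\rmj)<\infty$. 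Combining these with the envelope identity promotes the Fenchel--Young bound to general $\mu$, and integrating in time closes the argument.
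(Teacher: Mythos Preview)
Your overall strategy coincides with the paper's: reduce to the case $\calL(\mu,\rmj)<\infty$, invoke the chain rule of Proposition~\ref{prop:chain-rule}, and close with the Fenchel--Young inequality $\calR+\calI \ge -\skp{\bnabla\calE',\rmj}$ integrated in time. The paper's proof simply asserts this last inequality componentwise for the relaxed slope~$\calI$ and does not pass through an approximation argument.

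Your additional justification via the envelope characterization~\eqref{eq:I:envelope} is reasonable in spirit, but one step is not correctly argued. The claim that ``convexity of $\calR(\cdot,\rmj)$ along the affine interpolation yields $\limsup_n \calR(\mu_n,\rmj) \le \calR(\mu,\rmj)$'' would require $\calR(\bar\mu,\rmj)<\infty$, which can fail: for the quadratic edge part $\sfR^e(\bar\rho^e,j^e)=\tfrac{1}{2d^e}\int |j^e|^2/\bar\rho^e$, knowing only $j^e\in L^2(\rho^e)$ does not give $j^e\in L^2(\bar\rho^e)$. What actually rescues the claim is a domination argument: with $\mu_n=(1-\tfrac1n)\mu+\tfrac1n\bar\mu$ one has $\rho_n^e \ge (1-\tfrac1n)\rho^e$, hence $|j^e|^2/\rho_n^e \le \tfrac{n}{n-1}|j^e|^2/\rho^e$, which is integrable; together with $j^e=0$ on $\{\rho^e=0\}$ (forced by $\calR(\mu,\rmj)<\infty$) and pointwise convergence, dominated convergence yields $\sfR^e(\rho_n^e,j^e)\to\sfR^e(\rho^e,j^e)$. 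The discrete part is handled analogously via the monotonicity of $s\mapsto s\,\sfC(c/s)$. With this correction (and a similarly careful treatment of the pairing limit, which also relies on the constraint $j\ll\rho$ and the finiteness of $\calR$ rather than a generic dominated convergence), your argument goes through and is a more explicit version of what the paper leaves implicit.
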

\begin{proof}
If $\calL(\mu,\rmj)=\infty$, there is nothing to show. Assume $\calL(\mu,\rmj)<\infty$.
Thanks to the chain rule from Proposition~\ref{prop:chain-rule}, we get for $(\mu,\rmj)\in \CE$ solving~\eqref{eq:def:bCE} the identity
\begin{equation}\label{eq:EDfunctional:nonneg}
\begin{aligned}
  \pderiv{}{t} \calE(\mu) &%
  = -\skp{-\gnabla \calE'(\mu),\bar \jmath}_{\nodes,\edges} + -\skp{-\nabla \calE_\Medges'(\rho), j}_{\Medges}\\
  &\ge -\calI_{\nodes,\edges}(\mu)-\calR_{\nodes,\edges}(\mu, \bar\jmath)-\calI_{\Medges}(\rho) -\calR_{\Medges}(\rho, j)\\
  &= - \sum_{\sfv\in\nodes}\sum_{e\in\edges(\sfv)} \sfI_\sfv^e\bra{\gamma_\sfv,\rho^e|_\sfv} - \sum_{\sfv\in\nodes}\sum_{e\in\edges(\sfv)} \sfR_\sfv^e(\gamma_\sfv,\rho^e|_\sfv, j_\sfv^e)
  -\sum_{e\in\edges}\sfI^e(\rho^e)- \sum_{e\in\edges} \sfR^e(\rho^e,j^e).
\end{aligned}
\end{equation}
An integration in time finishes the proof.
\end{proof}
Next, we show that the zero locus set of $\calL$ indeed corresponds to weak solutions to the system~\eqref{eq:system_intro_linear}.

\begin{corollary}[Energy dissipation balance]\label{cor:EDB}
    Let $(\mu,\rmj)\in\CE$ 
    with $\calE(\mu(0))<\infty$. 
    It holds $\calL(\mu,\rmj) = 0$ if and only if $\mu$ satisfies \eqref{eq:system_intro_linear} in the sense of distributions and $\rmj$ is given by \eqref{eq:fluxes}.
\end{corollary}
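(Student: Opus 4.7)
The plan is to exploit the equality case of the Fenchel--Young inequality in combination with the chain rule. The proof of Corollary~\ref{cor:EDfunctional:nonneg} already yields the chain of inequalities
\begin{equation*}
    \calE(\mu(T))-\calE(\mu(0)) \;=\; \int_0^T \bra*{\skp{\gnabla \calE'(\mu),\bar\jmath}_{\nodes,\edges} + \skp{\nabla\calE'_\Medges(\rho),j}_\Medges} \dx t \;\geq\; -\calD(\mu,\rmj),
\end{equation*}
where the equality is the chain rule (Proposition~\ref{prop:chain-rule}) and the inequality is the Fenchel--Young inequality $-\skp{\bnabla\calE'(\mu),\rmj} \leq \calR(\mu,\rmj)+\calI(\mu)$ applied pointwise in time. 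Thus $\calL(\mu,\rmj)=0$ is equivalent to equality holding in every step, i.e., the integrand $\calR(\mu,\rmj)+\calI(\mu)+\skp{\bnabla\calE'(\mu),\rmj}$ vanishes for a.e.~$t\in[0,T]$.

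For the forward direction, assume $\calL(\mu,\rmj)=0$. Since the non-negative integrand above vanishes a.e., the Fenchel equality case (the contact set characterization~\eqref{eq:KR}, extended from $\calR^\ast$ to its relaxation $\calI$ on points where $\mu$ has zero density, via the sequential lower semicontinuity shown in Lemma~\ref{lem:duality:dissipations}) yields pointwise for a.e.~$t$ the kinetic relations
\begin{equation*}
    j^e = \rmD_2\calR^\ast_\Medges\bra*{\rho,-\nabla\calE'_\Medges(\rho)}\big|_e \quad\text{and}\quad \bar\jmath^e_\sfv = \rmD_2\calR^\ast_{\nodes,\edges}\bra*{\mu,-\gnabla\calE'(\mu)}\big|_{e,\sfv}.
\end{equation*}
A direct computation of these derivatives, using $(\sfC^\ast)'(s)=2\sinh(s/2)$ and the detailed balance identity~\eqref{eq:DBC:r} for the vertex--edge terms, reproduces exactly the flux formulas~\eqref{eq:fluxes:je}--\eqref{eq:fluxes:hatj}. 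Plugging these fluxes into the weak continuity equation~\eqref{e:def:bCE:weak} and integrating by parts on each metric edge recovers the edge equation~\eqref{eq:edge_intro_linear}, the boundary/Kirchhoff condition~\eqref{eq:Kirchhoff_intro_linear}, and the vertex equation~\eqref{eq:vertex_intro_linear} in the sense of distributions.

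For the converse, suppose $\mu$ solves~\eqref{eq:system_intro_linear} in the sense of distributions with $\rmj$ given by~\eqref{eq:fluxes}. Then $(\mu,\rmj)\in\CE$ by construction, and the flux formulas are precisely the kinetic relations, so equality in Fenchel--Young holds a.e.~in time. Integrating $-\skp{\bnabla\calE'(\mu),\rmj} = \calR(\mu,\rmj)+\calI(\mu)$ and applying the chain rule gives $\calE(\mu(T))-\calE(\mu(0))=-\calD(\mu,\rmj)$, i.e., $\calL(\mu,\rmj)=0$. The main technical obstacle is the first step of the forward direction: the Fenchel--Young inequality was established using $\calR^\ast$ whereas the functional $\calL$ is defined with its lower semicontinuous envelope $\calI$; the equality case therefore has to be transferred to $\calI$ by a careful approximation argument based on~\eqref{eq:I:envelope}, exploiting the convention~\eqref{eq:force_from_energy} to handle points where the densities $\gamma_\sfv$ or $\rho^e|_\sfv$ vanish so that $-\bnabla\calE'(\mu)$ diverges but the corresponding flux component is forced to be zero.
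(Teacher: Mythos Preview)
Your proposal follows essentially the same approach as the paper: both use the chain rule (Proposition~\ref{prop:chain-rule}) together with the Fenchel--Young inequality encoded in~\eqref{eq:EDfunctional:nonneg}, concluding that $\calL=0$ forces equality and hence the kinetic relations~\eqref{eq:fluxes}, while the converse follows by direct substitution. Your version is more explicit about the technical passage from $\calR^*$ to its relaxation $\calI$ in the equality case, a point the paper's proof leaves implicit.
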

\begin{proof}
    Let $\calL(\mu,\rmj) = 0$. Then, we have that all inequalities in \eqref{eq:EDfunctional:nonneg} are in fact equalities. This implies that \eqref{eq:fluxes} must hold, which in conjunction with the continuity equation yields~\eqref{eq:system_intro_linear}.

    Conversely, let us assume that \eqref{eq:system_intro_linear} holds true. In particular, we have $(\mu,\rmj)\in\CE$ and \eqref{eq:fluxes} is satisfied. This, yields equality in \eqref{eq:EDfunctional:nonneg} and hence $\calL(\mu,\rmj) = 0$.
\end{proof}
Corollary~\ref{cor:EDB} provides only a characterization of solutions and we will ensure existence by a finite dimensional approximation result via EDP convergence in the sense of Definition~\ref{def:EDP-convergence}. Nevertheless, we can already conclude uniqueness of any global minimizer of the energy dissipation functional $\calL$.
\begin{theorem}[Uniqueness]\label{thm:GF:well-posed}
	For any $T>0$ and any $\bar \mu\in\calP(\Mgraph)$ with $\calE(\bar\mu)<\infty$, there exists at most one gradient flow $\mu\in \AC(0,T;\calP(\Mgraph))$ in the sense of Definition~\ref{def:EDP-sol} starting from $\mu(0)=\bar \mu$.
\end{theorem}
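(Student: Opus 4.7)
The plan is to exploit strict convexity to rule out any bifurcation of gradient flows from a common starting point. Suppose, for contradiction, there are two gradient flows $\mu^0 \neq \mu^1 \in \AC(0,T;\calP(\Mgraph))$ with $\mu^0(0) = \mu^1(0) = \bar\mu$, given together with flux pairs $\rmj^0, \rmj^1$ so that $(\mu^i,\rmj^i) \in \CE$ is an EDP solution for $i=0,1$. Combining the defining inequality $\calL(\mu^i,\rmj^i) \leq 0$ from Definition~\ref{def:EDP-sol} with the nonnegativity $\calL \geq 0$ of Corollary~\ref{cor:EDfunctional:nonneg} (which applies since $\calE(\bar\mu) < \infty$), both solutions sit exactly on the zero level set:
\[
\calL(\mu^0,\rmj^0) = \calL(\mu^1,\rmj^1) = 0.
\]

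Now form the affine interpolation $\mu^\lambda \coloneqq (1-\lambda)\mu^0 + \lambda\mu^1$ and $\rmj^\lambda \coloneqq (1-\lambda)\rmj^0 + \lambda \rmj^1$ for $\lambda \in (0,1)$. Since both the continuity equation~\eqref{eq:def:bCE} and the absolute-continuity property from Lemma~\ref{lem:AC:representative} are preserved under linear combinations, $(\mu^\lambda,\rmj^\lambda)\in\CE$ and $\mu^\lambda(0) = \bar\mu$. Consequently $\calE(\mu^\lambda(0)) = \calE(\bar\mu) < \infty$, so Corollary~\ref{cor:EDfunctional:nonneg} applies to give
\[
\calL(\mu^\lambda,\rmj^\lambda) \geq 0.
\]
On the other hand, because $\mu^0 \neq \mu^1$ and hence $(\mu^0,\rmj^0) \neq (\mu^1,\rmj^1)$ as elements of $\CE$, the strict convexity with fixed starting point from Lemma~\ref{lem:ED:convex:affine} yields
\[
\calL(\mu^\lambda,\rmj^\lambda) < (1-\lambda)\calL(\mu^0,\rmj^0) + \lambda \calL(\mu^1,\rmj^1) = 0,
\]
contradicting the previous display. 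Hence $\mu^0 = \mu^1$, which is the claimed uniqueness of the $\mu$-component.

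I do not expect any real obstacle: the ingredients (chain rule via Proposition~\ref{prop:chain-rule}, nonnegativity via Corollary~\ref{cor:EDfunctional:nonneg}, strict convexity along affine interpolations via Lemma~\ref{lem:ED:convex:affine}, and linearity of $\CE$) have all been prepared upstream. The only minor care is to verify finiteness of the relevant quantities to invoke these results; this follows since $\calL(\mu^i,\rmj^i) = 0 < \infty$ places each $(\mu^i,\rmj^i)$ in the domain of $\calL$, and the initial energy $\calE(\bar\mu)$ is finite by assumption and unchanged along the interpolation.
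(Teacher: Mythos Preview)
Your proof is correct and follows essentially the same approach as the paper: both argue by contradiction, use Lemma~\ref{lem:ED:convex:affine} on the affine interpolation of two putative solutions to get $\calL(\mu^\lambda,\rmj^\lambda) < 0$, and then appeal to the nonnegativity of $\calL$ from Corollary~\ref{cor:EDfunctional:nonneg}. Your write-up is slightly more explicit in justifying $\calL(\mu^i,\rmj^i)=0$ and $(\mu^\lambda,\rmj^\lambda)\in\CE$, but the argument is identical.
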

\begin{proof}
	We use Lemma~\ref{lem:ED:convex:affine} along $\mu^0,\mu^1 \in \AC(0,T;\calP(\Mgraph))$ two solutions with $\calL(\mu^0,\rmj^0)=0=\calL(\mu^1,\rmj^1)$ starting from $\bar \mu$ with $(\mu^0,\rmj^0),(\mu^1,\rmj^1)\in \CE$. 
	Then, for the convex combination $(\mu^\lambda,\rmj^\lambda)\in \CE$, we get from~\eqref{eq:ED:convex} for any $\lambda\in (0,1)$ the estimate
	\begin{equation}
		\calL(\mu^\lambda,\rmj^\lambda) < (1-\lambda)\calL(\mu^0,\rmj^0) + \lambda \calL(\mu^1,\rmj^1) = 0 \,
	\end{equation}
	which is a contradiction to the non-negativity of $\calL$ shown in Corollary~\ref{cor:EDfunctional:nonneg}.
\end{proof}

\section{Microscopic derivation from pure jump process}\label{sec:microscopic}

In this section we introduce a microscopic model, which provides a finite dimensional Markov chain approximation to the evolution \eqref{eq:edge_intro_linear} on the metric graph. 
We show EDP-convergence of the associated gradient flows structure in the sense of Definition~\ref{def:EDP-convergence} from Section~\ref{sec:EDP_general}, which provides existence for the gradient structure of the metric-graph-system defined in Section~\ref{sec:MetricGraphGF}.

\subsection{Microscopic model}

We discretize the metric graph as follows.
For $e\in\edges$, $n\in\bbN$ we define the grid length $h^e_n\coloneqq \ell^e/n$, sets of internal vertices $\tilde\nodes_n\coloneqq\set{\sfv^e_k\coloneqq h^e_n(k-1/2): e\in\edges,k=1,\ldots,n}$, and sets of all vertices $\nodes_n\coloneqq\nodes\cup\tilde\nodes_n$.

    The set of internal edges is given as $\tilde\edges_n\coloneqq\set{\sfv^e_k\sfv^e_{k+1}:\sfv^e_k,\sfv^e_{k+1}\in\tilde\nodes_n}$. We further define $\bar\edges_n\coloneqq\set{\sfv\sfv^e_1:\sfv^e_1\in\tilde\nodes_n,e=\sfv\sfw}\cup\set{\sfv^{e}_n\sfw:\sfv^e_n\in\tilde\nodes_n,e=\sfv\sfw}$ and we denote $\edges_n\coloneqq \tilde\edges_n\cup\bar\edges_n$.
Next, for $e\in\edges$, $n\in\bbN$, and $\alpha\in[1,n)$ we define intervals
\begin{align*}
    I^e_{n,\alpha}&\coloneqq [(\alpha-1) h^e_n,\alpha h^e_n)
\end{align*}
and the internal discrete vertex measures $\tilde\omega_n\coloneqq (\tilde\omega^e_n)_{e\in\edges}$ as 
\begin{align}\label{eq:def_internal_vertex_measures}
\tilde\omega^e_n\coloneqq (\tilde\omega^e_{n,k})_{k=1}^n\text{ with }\tilde\omega^e_{n,k}\coloneqq \pi^e(I^e_{n,k}).
\end{align}

\begin{figure}[!ht]
\centering
\begin{tikzpicture}
  \draw (0,0) -- (5,0);
  \foreach \x in {0,.5,1.5,4.5,5}
    \draw (\x,0.05) -- (\x,-0.05);
  \foreach \x in {0.5,1.5,4.5} {
    \draw[bluegray,thick] (\x - 0.5,0.15) -- (\x + 0.5,0.15);
    }
  \foreach \x in {0,1,2,4,5} {
    \draw[blue] (\x,0.2) -- (\x,0.1);
    }  

  \node at (0, -0.3) {$0$};
  \node at (.5, -0.3) {$\sfv^e_1$};
  \node at (1.5, -0.3) {$\sfv^e_2$};
  \node at (4.5, -0.3) {$\sfv^e_n$};
  \node at (5, -0.3) {$\ell^e$};

  \node[above] at (.5,0.1) {\textcolor{bluegray}{$I^e_{n,1}$}};
  \node[above] at (1.5,0.1) {\textcolor{bluegray}{$I^e_{n,2}$}};
  \node[above] at (4.5,0.1) {\textcolor{bluegray}{$I^e_{n,n}$}};

  \draw[loosely dotted] (2.075, -0.3) -- (4., -0.3);
  \draw[blue, loosely dotted] (2.075, 0.15) -- (4., 0.15);
\end{tikzpicture}
\caption{Sketch of the discretization of the edges into new vertices. Blue: Intervals $I^e_{n,k}$.}
\label{fig:discretization}
\end{figure}

We combine these with the vertex measures of the limit model and introduce $\upomega_n\coloneqq \omega+\tilde\omega_n\in\calP(\nodes_n)$. For any probability measure $\upgamma\in\calP(\nodes_n)$, we set $\bar\gamma\coloneqq \upgamma|_\nodes$ and $\tilde\gamma\coloneqq\upgamma|_{\tilde\nodes_n}$. 

We continue by introducing appropriate free energy and dissipation functionals on these quantities. For every $n\in\bbN$ we define the discrete energies $\calE_n:\calP(\nodes_n)\to\R$ and dual dissipation potentials $\calR^\ast_n:\calP(\nodes_n)\times \R^{\edges_n}\to[0,\infty)$ for $\upxi=((\tilde \xi^e_k)_{e\in\edges,k=1,\ldots,n-1},(\bar \xi^e_\sfv)_{e\in\edges,\sfv\in\nodes})$ by
\begin{align*}
    \calE_n(\upgamma) &\coloneqq \calH(\upgamma|\upomega_n),\\
    \calR^\ast_n (\upgamma,\upxi) &\coloneqq \sum_{e\in\edges}\sum_{k=1}^{n-1} (\sfR^e_{n,k})^\ast(\tilde\gamma^e_{k},\tilde\gamma^e_{k+1},\tilde\xi^e_{k}) + \sum_{e=\sfv\sfw\in\edges}\Big[(\sfR_\sfv^e)^\ast(\bar\gamma_\sfv,\tilde\gamma^e_1,\bar\xi_\sfv^e)+(\sfR_\sfw^e)^\ast(\bar\gamma_\sfw,\tilde\gamma^e_n,\bar\xi_\sfw^e)\Big]\\
    (\sfR^e_{n,k})^\ast(\tilde\gamma,\tilde\xi)&\coloneqq \sigma^e_n\bra[\big]{\tilde\gamma^e_{k}, \tilde\gamma^e_{k+1}}\sfC^\ast(\tilde\xi^e_{k}), \qquad\text{where } \sigma^e_n(a,b)\coloneqq \tfrac{\diffedge^e}{\bra{h^e_n}^2}\sqrt{a b} \,.
\end{align*}
Similar to Definition~\ref{def:Dpot} before, we introduce the Fisher information
\begin{equation*}
	\calI_n(\upgamma) \coloneqq %
    \sum_{e\in\edges}\sum_{k=1}^{n-1} \sfI^e_{n,k}(\tilde\gamma^e_{k},\tilde\gamma^e_{k+1}) + \sum_{e=\sfv\sfw\in\edges}\Big[\sfI_\sfv^e(\bar\gamma_\sfv,\tilde\gamma^e_1)+\sfI_\sfw^e(\bar\gamma_\sfw,\tilde\gamma^e_n)\Big],
\end{equation*}
with
\begin{align*}
    \sfI^e_{n,k}(\tilde\gamma^e_{k},\tilde\gamma^e_{k+1}) &\coloneqq 
	2\sigma_n^e(\tilde\omega^e_{n,k},\tilde\omega^e_{n,k+1}) \abs[\Bigg]{ \sqrt{\frac{\tilde\gamma^e_{k}}{\tilde\omega_{n,k}^e}} - \sqrt{\frac{\tilde\gamma^e_{k+1}}{\tilde\omega_{n,k+1}^e}}}^2,\\
	\sfI_\sfv^e(\gamma_\sfv,\tilde\gamma^e_1) &\coloneqq 
	2\sigma_\sfv^e(\omega_\sfv,\tilde\omega^e_{n,1}) \abs[\bigg]{ \sqrt{\frac{\tilde\gamma^e_1}{\tilde\omega_{n,1}^e}} - \sqrt{\frac{\bar\gamma_\sfv}{\omega_\sfv}}}^2,\\
    \sfI_\sfw^e(\gamma_\sfw,\tilde\gamma^e_n) &\coloneqq 
	2\sigma_\sfw^e(\omega_\sfw,\tilde\omega^e_{n,n}) \abs[\bigg]{ \sqrt{\frac{\tilde\gamma^e_n}{\tilde\omega_{n,n}^e}} - \sqrt{\frac{\bar\gamma_\sfw}{\omega_\sfw}}}^2,
\end{align*}
for $e =\sfv\sfw$ and $\sigma_\sfv^e(a,b) = \scrk^e_\sfv\sqrt{ab}$.

Analogous to $\upxi$, we use the discrete flux measures $\rmf = ((\tilde f^e_k)_{e\in\edges,k=1,\ldots,n-1},(\bar f^e_\sfv)_{e\in\edges,\sfv\in\nodes}) \in \R^{\edges_n}$ and introduce the primal dissipation potentials $\calR_n:\calP(\nodes_n)\times \R^{\edges_n}\to[0,\infty)$ by
\begin{align*}
    \calR_n (\upgamma,\rmf) &\coloneqq \sum_{e\in\edges}\sum_{k=1}^{n-1} \sfR^e_{n,k}(\tilde\gamma^e_{k},\tilde\gamma^e_{k+1},\tilde f^e_k) + \sum_{e=\sfv\sfw\in\edges}\Big[\sfR_\sfv^e(\bar\gamma_\sfv,\tilde\gamma^e_1,\bar f_\sfv^e)+\sfR_\sfw^e(\bar\gamma_\sfw,\tilde\gamma^e_n,\bar f_\sfw^e)\Big],\\
    \sfR^e_{n,k}(\tilde\gamma,\tilde f)&\coloneqq \sigma^e_n\bra[\big]{\tilde\gamma^e_{k}, \tilde\gamma^e_{k+1}}\sfC\bra*{\frac{\tilde f^e_k}{\sigma^e_n\bra[\big]{\tilde\gamma^e_{k}, \tilde\gamma^e_{k+1}}}}.
\end{align*}
Finally, we define the graph gradient operators $\gnabla:\R^{\nodes_n} \to \R^{\edges_n}$ and the graph divergence operators $\gdiv:\R^{\edges_n} \to \R^{\nodes_n}$ by
\begin{align*}
    (\gnabla\upvarphi)_{\sfv\sfw} \coloneqq \upvarphi_\sfw-\upvarphi_\sfv
    \qquad\text{and}\qquad 
    (\gdiv \rmf)_\sfv \coloneqq \sum_{\sfw:\sfv\sfw\in\edges_n} \rmf_{\sfv\sfw} - \sum_{\sfw:\sfw\sfv\in\edges_n} \rmf_{\sfw\sfv}.
\end{align*}
They give rise to the continuity equation 
\begin{align}\label{eq:CE_discrete}
    \dot \upgamma + \gdiv \rmf = 0.
\end{align}
We denote by $\overline\CE_n$ the set of pairs $(\upgamma,\rmf)\in\calP(\nodes_n)\times\R^{\edges_n}$ satisfying \eqref{eq:CE_discrete} in the sense of distributions. Similar to Lemma~\ref{lem:AC:representative}, if $\rmf$ is integrable in time, then $\upgamma\in\AC(0,T;\calP(\nodes_n))$ and since $\nodes_n$ is finite, we can understand \eqref{eq:CE_discrete} in the strong sense.

For each $n\in\bbN$ and each pair $(\upgamma,\rmf)\in\overline\CE_n$, the discrete analogue of the dissipation functional defined in \eqref{eq:def:Mgraph:EDfunctional} then reads as 
\begin{align}\label{eq:def:D_n}
    \calD_n(\upgamma,\rmf) \coloneqq \int_0^T \pra*{\calR_n (\upgamma,\rmf)+ \calI_n(\upgamma)} \dx t \,.
\end{align}
The associated energy dissipation functional of the curve $(\upgamma,\rmf)\in \overline\CE_n$ 
\begin{align}\label{eq:def:L_n}
    \calL_n(\upgamma,\rmf) \coloneqq \calE_n(\upgamma(T))-\calE_n(\upgamma(0)) + \calD_n(\upgamma,\rmf).
\end{align}
For any fixed $n\in\bbN$ this model falls in the class considered in \cite{PeletierRossiSavareTse2022}. 
In particular, from \cite[Section 5]{PeletierRossiSavareTse2022} the following two results follow.
\begin{proposition}[Chain rule inequality]
    For all $n\in\bbN$ and all $(\upgamma,\rmf)\in\overline\CE_n$ such that $\calE_n(\upgamma(0))<\infty$ and $\calD_n(\upgamma,\rmf)<\infty$, it holds
    \begin{align*}
        \calL_n(\upgamma,\rmf) \ge 0.
    \end{align*}
\end{proposition}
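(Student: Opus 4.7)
The plan is to verify that the discrete model $(\nodes_n,\edges_n,\gnabla,\calE_n,\calR_n^\ast)$ fits within the framework of~\cite{PeletierRossiSavareTse2022} so that the abstract chain-rule identity from there applies, and then to derive non-negativity of $\calL_n$ by a standard Fenchel--Young argument, exactly as in the proof of Corollary~\ref{cor:EDfunctional:nonneg}. Since $\nodes_n$ is finite and $\upomega_n$ is strictly positive, the situation is in fact considerably simpler than the continuous one handled in Section~\ref{sec:MetricGraphGF}.

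First, I would note that the energy $\calE_n(\upgamma)=\calH(\upgamma|\upomega_n)$ is the Boltzmann relative entropy with respect to a strictly positive probability measure on a finite set, and that the dual dissipation $\calR_n^\ast$ is of $\sfC^\ast$-type with geometric-mean edge weights $\sigma_n^e(a,b)\propto\sqrt{ab}$ and $\sigma_\sfv^e(a,b)\propto\sqrt{ab}$. Direct inspection of the definitions of $\tilde\omega^e_{n,k}=\pi^e(I^e_{n,k})$ and $\scrk^e_\sfv$ shows that the jump rates satisfy detailed balance with respect to $\upomega_n$. Hence, by the identity $\sqrt{ab}\,\sfC^\ast(\log(a/b))=2(\sqrt a-\sqrt b)^2$ used already in Lemma~\ref{lem:duality:dissipations}, we have on $\calP_+(\nodes_n)$
\begin{equation*}
    \calI_n(\upgamma) = \calR_n^\ast\bigl(\upgamma,-\gnabla\calE_n'(\upgamma)\bigr),
\end{equation*}
and $\calI_n$ is the sequentially lower semicontinuous envelope of this map on all of $\calP(\nodes_n)$ by the same convexity argument as in Lemma~\ref{lem:duality:dissipations}.

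Second, I would establish the chain rule along curves $(\upgamma,\rmf)\in\overline{\CE}_n$ satisfying $\calE_n(\upgamma(0))<\infty$ and $\calD_n(\upgamma,\rmf)<\infty$, in the form
\begin{equation*}
    \calE_n(\upgamma(t))-\calE_n(\upgamma(s)) = -\int_s^t \bigl\langle\gnabla\calE_n'(\upgamma(\tau)),\rmf(\tau)\bigr\rangle\dx{\tau},\qquad 0\le s\le t\le T.
\end{equation*}
This is exactly the finite-state instance of~\cite[Theorem~4.16]{PeletierRossiSavareTse2022}: one truncates the logarithm as in the proof of Proposition~\ref{prop:chain-rule} by the regularised entropy $\eta_k$, for which the chain rule is classical, and then passes to the limit $k\to\infty$ by monotone convergence. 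The crucial input making the limit finite is that $\calR_n(\upgamma,\rmf)<\infty$ together with the superlinear growth of $\sfC$ forces $\rmf$ to be absolutely continuous with respect to the edge weights $\sigma_n^e(\upgamma)$, $\sigma_\sfv^e(\upgamma)$, which in turn controls the pairing with the possibly unbounded gradient $\gnabla\calE_n'(\upgamma)$.

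Finally, I apply the Fenchel--Young inequality dual to~\eqref{eq:KR}, together with the extension of the duality $\calR_n\leftrightarrow\calR_n^\ast$ from $\calP_+(\nodes_n)$ to $\calP(\nodes_n)$ via the envelope $\calI_n$, to conclude that for almost every $t\in[0,T]$
\begin{equation*}
    -\bigl\langle\gnabla\calE_n'(\upgamma),\rmf\bigr\rangle \;\le\; \calR_n(\upgamma,\rmf)+\calR_n^\ast\bigl(\upgamma,-\gnabla\calE_n'(\upgamma)\bigr) \;=\; \calR_n(\upgamma,\rmf)+\calI_n(\upgamma).
\end{equation*}
Integrating this inequality against the chain rule from $0$ to $T$ yields $\calE_n(\upgamma(T))-\calE_n(\upgamma(0))+\calD_n(\upgamma,\rmf)\ge 0$, i.e.\ $\calL_n(\upgamma,\rmf)\ge 0$. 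The main technical obstacle is the justification of the chain rule at states where some component $\tilde\gamma^e_k(t)$ vanishes, so that $\calE_n'(\upgamma)$ may blow up; this is handled precisely by the $\sigma$-dominated absolute continuity of the flux described above, so the argument reduces to a direct transcription of~\cite[§5]{PeletierRossiSavareTse2022}.
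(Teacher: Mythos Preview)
Your approach is exactly what the paper does: it simply observes that for each fixed $n$ the discrete model falls into the class treated in \cite{PeletierRossiSavareTse2022} and invokes \cite[\S5]{PeletierRossiSavareTse2022} directly, so your more detailed verification (detailed balance, $\calI_n=\calR_n^\ast(\upgamma,-\gnabla\calE_n'(\upgamma))$ on $\calP_+(\nodes_n)$, truncated-entropy chain rule, Fenchel--Young) is a faithful unpacking of that reference.

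One sign slip to fix: with the conventions $(\gnabla\upvarphi)_{\sfv\sfw}=\upvarphi_\sfw-\upvarphi_\sfv$ and $\dot\upgamma+\gdiv\rmf=0$, the chain rule reads
\[
\calE_n(\upgamma(t))-\calE_n(\upgamma(s)) = \int_s^t \bigl\langle\gnabla\calE_n'(\upgamma(\tau)),\rmf(\tau)\bigr\rangle\dx{\tau},
\]
not with a minus sign (compare Proposition~\ref{prop:chain-rule}). Your Fenchel--Young inequality $-\langle\gnabla\calE_n'(\upgamma),\rmf\rangle\le\calR_n+\calI_n$ is then equivalent to $\langle\gnabla\calE_n'(\upgamma),\rmf\rangle\ge -(\calR_n+\calI_n)$, and integrating \emph{this} against the corrected chain rule gives $\calE_n(\upgamma(T))-\calE_n(\upgamma(0))\ge -\calD_n(\upgamma,\rmf)$, i.e.\ $\calL_n\ge 0$. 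As written, your two displayed formulas combine to the wrong inequality.
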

\begin{theorem}[Existence of solutions]\label{thm:discrete_existence}
    For each $n\in\bbN$ let $\upgamma_0 \in \calP(\nodes_n)$ such that $\calE_n(\upgamma_0)<\infty$. Then, there exists an ODE solution $\upgamma\in\AC(0,T;\calP(\nodes_n))$ of \eqref{eq:system_intro_micro} with $\upgamma(0) = \upgamma_0$.
    Furthermore, there exists a flux $\rmf:[0,T]\to \R^{\edges_n}$ such that $(\upgamma,\rmf)\in\overline\CE_n$ and $\calL_n(\upgamma,\rmf) = 0$, i.e., $(\upgamma,\rmf)$ is an EDP solution in the sense of Definition~\ref{def:EDP-sol}.
\end{theorem}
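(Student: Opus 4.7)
The plan is to apply the existence theory for finite-state Markov jump gradient systems with $\cosh$-type dissipation developed in \cite[Section~5]{PeletierRossiSavareTse2022}, of which our discrete model is an instance. For each fixed $n$, the state space $\nodes_n$ is finite, the reference $\upomega_n\in\calP_+(\nodes_n)$ is strictly positive, the energy $\calE_n=\calH(\cdot|\upomega_n)$ is the relative entropy, and the dual dissipation potential is the discrete $\cosh$-type potential with geometric-mean mobilities and detailed-balanced jump rates $n^2\tilde r_n,\bar r_n$ induced by the detailed balance condition \eqref{eq:DBC:r}. This setup falls verbatim in the class treated by the cited reference, so its existence result immediately delivers an EDP solution $(\upgamma,\rmf)\in\overline\CE_n$ with $\calL_n(\upgamma,\rmf)=0$ starting from any $\upgamma_0\in\calP(\nodes_n)$ with $\calE_n(\upgamma_0)<\infty$.

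A concrete realization proceeds as follows. Using the algebraic identity
\begin{equation*}
    \scrk\sqrt{ab}\,\bigl(\sqrt{a\beta/(b\alpha)} - \sqrt{b\alpha/(a\beta)}\bigr) = \scrk\sqrt{\alpha\beta}\,\bigl(a/\alpha - b/\beta\bigr),
\end{equation*}
applied edgewise, the candidate constitutive flux $\rmf=\rmD_2\calR_n^\ast(\upgamma,-\gnabla\calE_n'(\upgamma))$ is in fact \emph{linear} in $\upgamma$ with coefficients given by the fixed rates $n^2\tilde r_n,\bar r_n$. Inserting it into the continuity equation \eqref{eq:CE_discrete} converts the gradient flow into a linear ODE $\dot\upgamma=Q^\top\upgamma$, where $Q$ is the $Q$-matrix built from these rates; zero row sums follow from \eqref{eq:DBC:r}. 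Standard Markov chain theory then yields $\upgamma\in C^\infty([0,T];\calP(\nodes_n))\subset\AC(0,T;\calP(\nodes_n))$ solving \eqref{eq:system_intro_micro} strongly, and setting $\rmf$ by the affine formula above gives $(\upgamma,\rmf)\in\overline\CE_n$ automatically.

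It remains to verify $\calL_n(\upgamma,\rmf)=0$. The chain-rule proposition stated directly above the theorem yields $\calL_n\ge0$ together with the identity $\calE_n(\upgamma(T))-\calE_n(\upgamma(0))=\int_0^T\skp{\gnabla\calE_n'(\upgamma),\rmf}\,\dx t$. By construction, $\rmf$ realizes the Fenchel equality in \eqref{eq:KR}, and the discrete counterpart of Lemma~\ref{lem:duality:dissipations} identifies $\calR_n^\ast(\upgamma,-\gnabla\calE_n'(\upgamma))=\calI_n(\upgamma)$; combining these gives $\calL_n=0$. The main anticipated obstacle is the boundary of $\calP(\nodes_n)$, where $\gnabla\calE_n'$ has infinite components. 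This is handled by noting that the assumed irreducibility of $(\nodes,\edges)$ lifts to $(\nodes_n,\edges_n)$, so the Markov semigroup maps $\calP(\nodes_n)$ into $\calP_+(\nodes_n)$ instantaneously, making the Fenchel equality valid pointwise a.e.\ on $(0,T]$; time integration suffices by Lebesgue theory. Alternatively, one regularizes the initial datum via $\upgamma_0^\eps\coloneqq(1-\eps)\upgamma_0+\eps\upomega_n/\upomega_n(\nodes_n)\in\calP_+(\nodes_n)$, runs the argument in the interior, and passes to the limit $\eps\to0$ using continuity of $\calE_n$ on $\calP(\nodes_n)$ and lower semicontinuity of $\calD_n$.
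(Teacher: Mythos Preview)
Your proposal is correct and takes essentially the same approach as the paper: both invoke \cite[Section~5]{PeletierRossiSavareTse2022}, noting that the discrete model for fixed $n$ is a finite-state Markov jump gradient system of exactly the type treated there. The paper in fact gives no further argument beyond that citation, so your concrete realization (linearity of the constitutive flux, the resulting Kolmogorov forward equation, and the handling of the boundary of $\calP(\nodes_n)$) is additional detail that the paper omits.
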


\subsection{Statement of EDP convergence and existence of solutions}

So far we have constructed a discrete gradient system satisfying the chain rule inequality and with~\eqref{eq:system_intro_micro} as its gradient flow. Our next goal is to show that families of these discrete gradient systems indeed converge to the gradient system introduced in Section~\ref{sec:MetricGraphGF}, which will directly imply the existence of gradient flow solutions for~\eqref{eq:system_intro_linear}. We begin by constructing appropriate embeddings to connect $\overline\CE_n$ and $\CE$.
\begin{definition}[Embedding]
    For $\upgamma\in\calP(\nodes_n)$, the embedded measures are defined by
    \begin{equation}\label{eq:embedding_discr_to_cont}
    \begin{aligned}
        \rho^e_n(\dx x) &\coloneqq \iota_n \tilde\gamma^e(\dx x) \coloneqq \frac{1}{h^e_n}\sum_{k=1}^n \tilde\gamma^e_k \one_{I^e_{n,k}}(x)\dx x\,,\\
        \gamma_{n,\sfv} &\coloneqq \iota_n \bar\gamma_\sfv \coloneqq \bar\gamma_\sfv \,,
    \end{aligned}
    \end{equation}
    and the embedded fluxes by
    \begin{subequations} \label{eq:micro:embed-flux}
    \begin{align}
        \iota_n \tilde f^e(\dx x) &\coloneqq 
        \int_0^1 \sum_{k=1}^{n-1} \tilde f^e_k\one_{I^e_{n,k+s}}(x) \dx s \dx x
        \label{eq:micro:embed-flux1}\\
        \iota_n \bar f^e_\sfv &\coloneqq \bar f^e_\sfv.
        \label{eq:micro:embed-flux2}
    \end{align}
    \end{subequations}
    Furthermore, on $\tilde\nodes_n$, the dual coarse graining operator $\iota_n^\ast$ for $\varphi\in C(\Medges)$ is defined by
    \begin{align*}
        (\iota_n^\ast \varphi)^e_k \coloneqq \frac{1}{h^e_n}\int_{I^e_{n,k}}\varphi\dx x
    \end{align*}
    Finally, we introduce for $\alpha \le n$ the shift operator $S^\alpha_n$ given for $\varphi^e\in C([0,\ell^e])$ by
    \begin{align}\label{eq:def:shift-operator}
    S^\alpha_n\varphi^e(x)\coloneqq 
    \begin{cases}
        \varphi^e(x+\alpha h^e_n), &x\in [0,h^e_n(n-\alpha)]\\
        \varphi^e(\ell^e), &x\in(h^e_n(n-\alpha),\ell^e]
    \end{cases}
    \end{align}
\end{definition}

\begin{remark}\label{rem:RefMeasureStrong}
    We note that due to Assumption~\ref{ass:Lip_Potential} it follows from standard arguments that for all $k\in\bbN_0$ we have
    \begin{align*}
        S^k_n\iota_n\tilde\omega^e_n \to \pi^e
		\quad\text{ strongly in } L^\infty(0,\ell^e) \text{ for each } e\in \edges \,.
    \end{align*}
\end{remark}

Next, we would like to state that the embedding maps solutions to the discrete continuity equation \eqref{eq:CE_discrete} to those satisfying the continuum continuity equation in the sense of Definition~\ref{def:bCE}.
However, this is generally not true, since for most edge test functions there is mismatch between the boundary value and the average over a short interval near the boundary. 
Therefore, we restrict the statement to edge test functions that are flat near the boundary, thus ensuring that the values coincide.
When passing to the limit $n\to\infty$, we will remove this restriction via a diagonal argument.
\begin{lemma}\label{lem:discrete_CE}
    Let $\delta>0$. We denote
    \begin{equation}\label{eq:def_C1flat}
        C^1_{\textnormal{flat},\delta}(\Mgraph)\coloneqq \set{\Phi\in C^1(\Mgraph): \varphi^e|_{(0,\delta)} \equiv \varphi^e(0), \varphi^e|_{(\ell^e-\delta,\ell^e)} \equiv \varphi^e(\ell^e)\; \forall e\in\edges}
    \end{equation}
    the class of differentiable functions whose metric edge parts are flat near the boundary (and we analogously denote differentiable and/or time dependent functions that are flat near the boundary).
    
    If $n \ge 1/\delta$ and $(\upgamma,\rmf)\in\overline\CE_n$, then $\iota_n(\upgamma,\rmf)$ satisfies \eqref{e:def:bCE:weak} for all $\Phi\in C^1_{\textnormal{flat},\delta}(\Mgraph)$. 
    
    Conversely, if $\iota_n(\upgamma,\rmf)$ satisfies \eqref{e:def:bCE:weak} for all $\Phi\in C^1_{\textnormal{flat},1/n}(\Mgraph)$, then $(\upgamma,\rmf)\in\overline\CE_n$.
\end{lemma}
\begin{proof}
    We recall the shift map introduced in \eqref{eq:def:shift-operator} and observe that due to the differentiability of $\varphi^e$ it holds
    \begin{align}\label{eq:HSDI}
        S^1_n\varphi^e(x)-\varphi^e(x) 
        &= \int_0^1 \partial_x\varphi^e\biggl(x+\frac{s\ell^e}{n}\biggr)h^e_n \dx s 
        = h^e_n\int_0^1 S^s_n\partial_x\varphi^e(x) \dx s
    \end{align}
    With this, we can carry out the following calculation:
    \begin{align*}
        \langle \tilde f,\overline\nabla\iota_n^\ast \varphi \rangle 
        &= \sum_{e\in\edges}\sum_{k=1}^{n-1} \tilde f^e_k \pra[\bigg]{\frac{1}{h^e_n}\int_{I^e_{n,k+1}}\varphi^e\dx x-\frac{1}{h^e_n}\int_{I^e_{n,k}}\varphi\dx x}\\
        &= \sum_{e\in\edges}\sum_{k=1}^{n-1} \tilde f^e_k \int_{I^e_{n,k}} \int_0^1 S^s_n\partial_x\varphi^e(x) \dx s\dx x\\
        & =  \sum_{e\in\edges}\sum_{k=1}^{n-1} \tilde f^e_k \int_0^{\ell^e}\int_0^1\one_{I^e_{n,k}}(x) S^s_n\partial_x\varphi^e(x) \dx s\dx x\\
        &= \sum_{e\in\edges} \int_0^{\ell^e}\partial_x\varphi^e(x) \int_0^1 \sum_{k=1}^{n-1} \tilde f^e_k \one_{I^e_{n,k+s}}(x) \dx s\dx x \,.
    \end{align*}
    Thanks to~\eqref{eq:micro:embed-flux1}, this shows $\langle \tilde f,\overline\nabla\iota_n^\ast \varphi \rangle  = \langle \iota_n \tilde f,\nabla \varphi \rangle_{\sfL}$ where we used the notation~\eqref{eq:def:products}. 
    This yields the equivalence of the formulations in the interior of the metric edges. 
    
    For the boundary and vertex parts we use that if $n\ge 1/\delta$, then $\frac{1}{h^e_n}\int_{I^e_{n,1}}\varphi\dx x = \varphi^e(0)$ (and the same holds at the opposite boundary), which directly yields the first implication.
    
    The converse implication follows by choosing test functions that are constant on $I^e_{n,1}$, but differ between $I^e_{n,1}$ and $I^e_{n,2}$.
\end{proof}

We state the EDP convergence result of this section in the sense Definition~\ref{def:EDP-convergence}.
\begin{theorem}[EDP convergence]\label{thm:EDP_discrete}
    Let $(\upgamma_n,\rmf_n)_{n\in\bbN}$ such that for all $n\in\bbN$ it holds that $(\upgamma_n,\rmf_n)\in\overline\CE_n$ 
    satisfy the uniform bounds 
    \begin{equation}\label{eq:ass:discrete:bdd}
        \sup_{n\in\bbN}\sup_{t\in(0,T)}\calE_n(\upgamma_n(t))<\infty
    	\qquad\text{ and }\qquad
    	\sup_{n\in\bbN}\calD_n(\upgamma_n,\rmf_n)<\infty \,.
    \end{equation}
    Then, the embedding $\iota_n(\upgamma_n,\rmf_n)\in\CE$ is relatively compact (in the sense of Proposition~\ref{prop:discrete_comp}, below) and along every converging subsequence with limit $(\mu,j)\in\CE$%
    , the lower limit inequalities 
    \begin{subequations}
    \label{eq:discrete:lower-limit}
    \begin{align}
        \liminf_{n\to\infty} \calE_n(\upgamma_n)&\ge \calE(\mu) , 
        \qquad\text{for all } t\in[0,T]\,;
        \label{eq:discrete:lower-limit-energy} \\
        \liminf_{n\to\infty} \calD_n(\upgamma_n,\rmf_n) &\ge \calD(\mu,\rmj)
        \label{eq:discrete:lower-limit-dissipation}
    \end{align}
    \end{subequations}
    hold for the functionals defined in \eqref{eq:def:D_n} and \eqref{eq:def:Mgraph:Dfunctional}.

    If in addition the initial data are well-prepared, i.e., if $\calE_n(\upgamma_n(0)) \to \calE(\mu(0))$ as $n\to\infty$, it follows
    \begin{align*}
        \liminf_{n\to\infty} \calL_n(\upgamma_n,\rmf_n) &\ge \calL(\mu,\rmj).
    \end{align*}
\end{theorem}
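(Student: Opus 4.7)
The plan is to verify the two requirements of Principle~\ref{def:EDP-convergence} separately, then combine them with the well-preparedness of the initial data. Compactness will be established by a standard argument: the uniform energy bound from~\eqref{eq:ass:discrete:bdd} yields $\sup_n \sup_t \calH(\rho_n(t)\,|\,\tilde\pi_n)<\infty$ (where $\tilde\pi_n^e \coloneqq \sum_k \tilde\omega^e_{n,k}\,n\one_{I^e_{n,k}}\dx x$ is the piecewise constant approximation of $\pi^e$), which yields tightness of $\rho_n(t)$ since $\tilde\pi_n \to \pi$ in total variation (using Assumption~\ref{ass:Lip_Potential}). The uniform dissipation bound together with the elementary inequality $|r|\leq \sfC(r/\sigma)\sigma + 2\sigma$ (valid since $\sfC^*$ has superlinear growth) gives a uniform $L^1$-bound on the embedded flux $\iota_n\rmf_n$. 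Together with the weak form of the continuity equation, this yields equicontinuity in time of $\rho_n$ in a suitable weak sense, and by a Arzel\`a--Ascoli argument (as in~\cite{AmbrosioGigliSavare2008}) we extract a converging subsequence in $\CE$ after embedding.

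For the energy lower limit~\eqref{eq:discrete:lower-limit-energy}, the key identity is that the embedded discrete entropy coincides with the continuous relative entropy against the piecewise constant reference, i.e.\ $\calE_n(\upgamma_n) = \calH(\iota_n \tilde\gamma_n\,|\,\tilde\pi_n) + \calH(\bar\gamma_n\,|\,\omega)$. Since $\tilde\pi_n \to \pi$ uniformly on each $[0,\ell^e]$ (the density $e^{-P^e}$ being Lipschitz), joint lower semicontinuity of the relative entropy under weak convergence of both arguments yields the desired inequality $\liminf_n \calE_n(\upgamma_n(t)) \geq \calE(\mu(t))$.

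The main obstacle is the dissipation lower limit~\eqref{eq:discrete:lower-limit-dissipation}. We split $\calD_n = \calD_n^{\mathrm{bdry}} + \calD_n^{\mathrm{int}}$, where $\calD_n^{\mathrm{bdry}}$ collects the finitely many transitions between $\bar\gamma_\sfv$ and the endpoint cells $\tilde\gamma^e_1,\tilde\gamma^e_n$. For $\calD_n^{\mathrm{bdry}}$, continuity of the weights $\scrk^e_\sfv\sqrt{ab}$ together with the convergence of the endpoint densities to $\rho^e|_\sfv$ (guaranteed by the embedding and a standard approximation argument using the dissipation bound to control boundary traces) allow one to pass to the limit directly in both the primal part $\sfR^e_\sfv$ and the Fisher-information part $\sfI^e_\sfv$. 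For $\calD_n^{\mathrm{int}}$, we invoke the duality $\calR_n(\upgamma_n,\rmf_n) \geq \langle \rmf_n,\tilde\upxi\rangle - \calR^*_n(\upgamma_n,\tilde\upxi)$ for any test discrete field, and test with $\tilde\xi^{e,n}_k \coloneqq \xi^e(x^e_{k+1}) - \xi^e(x^e_k) = \frac{\ell^e}{n}\int_0^1 S^s_n\partial_x\xi^e(x^e_k)\dx s$ for arbitrary $\xi \in C^1(\Medges)$. Using the discrete cell partition and~\eqref{eq:HSDI} one computes $\sum_{e,k}\tilde f^{e,n}_k \tilde\xi^{e,n}_k = \langle \iota_n\tilde f_n,\nabla\xi\rangle_\Medges$. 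On the dual side, since $\sfC^*(r) = r^2/2 + O(r^4)$ and $\tilde\xi^{e,n}_k = O(1/n)$ uniformly in $k$, a Taylor expansion gives $n^2 \sfC^*(\tilde\xi^{e,n}_k) \to \frac{1}{2}(\partial_x\xi^e(x^e_k))^2$, and the weights $\sqrt{\tilde\gamma^e_k \tilde\gamma^e_{k+1}}$ embed (via $\iota_n$) to an $L^1$-approximation of $\rho^e$ thanks to concavity of the square root and Jensen's inequality. Hence $\calR^*_n(\upgamma_n,\tilde\upxi) \to \calR_\Medges^*(\rho,\xi)$, and taking the supremum over $\xi$ yields $\liminf_n \calD_n^{\mathrm{int},\calR}(\upgamma_n,\rmf_n) \geq \int_0^T \calR_\Medges(\rho_t,j_t)\dx t$. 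The internal Fisher-information part $\calI_n^{\mathrm{int}}$ is treated analogously using the representation~\eqref{eq:I:envelope}: for $\xi\in C^1_c(\Medges)$, one tests against $\tilde\xi^{e,n}_k$ as above and observes that the discrete formulas $2\sigma^e_n|\sqrt{\tilde\gamma^e_k/\tilde\omega^e_{n,k}} - \sqrt{\tilde\gamma^e_{k+1}/\tilde\omega^e_{n,k+1}}|^2$ embed (by a discrete chain rule and Taylor expansion of $\sqrt{\cdot}$) to a Riemann sum approximation of $2\diffedge^e|\partial_x\sqrt{\rho^e/\pi^e}|^2\dx\pi^e$, the target Fisher density.

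Finally, combining the lower limits at $t=T$ and over $(0,T)$ with the well-preparedness $\calE_n(\upgamma_n(0))\to\calE(\mu(0))$ gives $\liminf_n \calL_n(\upgamma_n,\rmf_n) \geq \calL(\mu,\rmj)$, completing the proof. The main technical subtlety throughout is ensuring sufficient regularity for the Taylor expansions to close, which is achieved by first restricting to smooth compactly supported test fields and then extending by density using the lower semicontinuity of the involved functionals.
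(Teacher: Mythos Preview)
Your overall architecture is right and matches the paper's: compactness after embedding, then lower limits for energy, primal dissipation (via duality and Taylor expansion of $\sfC^*$), and Fisher information separately. The energy and primal-dissipation arguments are essentially those of the paper (Proposition~\ref{prop:lsc_dissipation_discrete} via Lemmas~\ref{lem:limsup_Rast_discrete} and~\ref{lem:lsc_rate_discrete}).

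There is, however, a genuine gap in your treatment of the Fisher (relaxed slope) part. You write that the discrete terms $2\sigma^e_n\bigl|\sqrt{\tilde\gamma^e_k/\tilde\omega^e_{n,k}}-\sqrt{\tilde\gamma^e_{k+1}/\tilde\omega^e_{n,k+1}}\bigr|^2$ ``embed \dots to a Riemann sum approximation'' of $2\diffedge^e|\partial_x\sqrt{\rho^e/\pi^e}|^2\dx\pi^e$. A Riemann-sum argument would require convergence, but what you need is a \emph{lower} limit for a quadratic functional of discrete gradients of the rough densities $\nu_n=\sqrt{\tilde\gamma_n/\tilde\omega_n}$. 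This cannot come from Taylor expanding $\sqrt{\cdot}$; it requires first proving that the finite differences $\nabla_n\nu_n$ converge \emph{weakly in $L^2(0,T;L^2)$} to $\partial_x\sqrt{\rho/\pi}$, and then invoking weak lower semicontinuity of the weighted $L^2$-norm (together with the strong $L^\infty$ convergence of the embedded reference weights, Lemma~\ref{lem:RefMeasureStrong}). The weak $L^2$ compactness of $\nabla_n\nu_n$ is a separate step (Lemma~\ref{lem:comp_fin_diff} in the paper), driven precisely by the uniform bound on the discrete Fisher information. Your proposal does not contain this step, and the ``testing'' you allude to for the Fisher part has no obvious meaning since $\calI_n$ has no flux variable.

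Relatedly, your compactness sketch is too weak to support the rest of the argument. An Arzel\`a--Ascoli step on the continuity equation yields only narrow convergence of $\iota_n\upgamma_n(t)$, which is not enough: the Fisher lower limit needs the above $L^2$-weak convergence of $\nabla_n\nu_n$; the boundary terms $\sfI^e_\sfv,\sfR^e_\sfv$ need weak $L^1(0,T)$ convergence of the endpoint ratios $\tilde\gamma^e_{n,1}/\tilde\omega^e_{n,1}\rightharpoonup(\rho^e/\pi^e)|_\sfv$ (Proposition~\ref{prop:discrete_comp}\ref{prop:discrete_comp:4new}); and identifying the limit density requires \emph{strong} $L^1((0,T)\times\Mgraph)$ convergence of $\iota_n\upgamma_n$. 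The paper obtains the latter by an Aubin--Lions--Simon argument using a second, affine embedding $\tilde\iota_n$ to manufacture $W^{1,1}$ spatial regularity from the discrete Fisher bound (Lemma~\ref{lem:strong_compactness}), and derives the trace convergence from this affine embedding as well. Your ``standard approximation argument using the dissipation bound to control boundary traces'' hides exactly this non-trivial step.
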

\begin{proof}
    The relative compactness of the embedded curves is content of Proposition~\ref{prop:discrete_comp}.
    The lower limit of energies and dissipation functionals is shown in Proposition~\ref{prop:lsc_dissipation_discrete}.
\end{proof}

\begin{corollary}[Existence of solutions]
    Let $\mu^0\in\calP(\Mgraph)$ be such that $\calE(\mu^0)<\infty$. Then, there exists an EDP solution (in the sense of Definition~\ref{def:EDP-sol}) to the system \eqref{eq:system_intro_linear} with initial datum $\mu^0$, where the corresponding gradient system in continuity equation format (in the sense of Definition~\ref{def:GradSystCE}) is $(\nodes\times\Medges,(\nodes\times\edges)\times\Medges,\bnabla,\calE,\calR^\ast)$ as introduced in Section~\ref{sec:MetricGraphGF}.
\end{corollary}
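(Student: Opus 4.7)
The plan is to obtain the EDP solution as the limit of finite-dimensional EDP solutions produced by Theorem~\ref{thm:discrete_existence}, using the EDP convergence of Theorem~\ref{thm:EDP_discrete}. The scheme is: approximate the initial datum on the discrete level, produce discrete solutions, extract a limit via the compactness part of Theorem~\ref{thm:EDP_discrete}, and verify the EDP inequality using the lower-limit inequality together with the non-negativity of $\calL$ proven in Corollary~\ref{cor:EDfunctional:nonneg}.

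The first step is to construct well-prepared discrete initial data. I set $\bar\gamma^{0}_{n,\sfv} \coloneqq \gamma^0_\sfv$ for $\sfv\in\nodes$ and $\tilde\gamma^{e,0}_{n,k} \coloneqq \rho^{0,e}(I_{n,k}^e)$ for $e\in\edges$, $k=1,\dots,n$. Because $\mu^0\in\calP(\Mgraph)$, the resulting $\upgamma_n^0$ lies in $\calP(\nodes_n)$, and the embedded measure $\iota_n\upgamma_n^0$ is the piecewise-constant cell average of $\rho^0$ on the edges and equals $\gamma^0$ on the combinatorial vertices; in particular $\iota_n\upgamma_n^0\to\mu^0$ narrowly. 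Applying Jensen's inequality to the convex function $\eta$ on each cell $I_{n,k}^e$ (valid since $\calE(\mu^0)<\infty$ implies $\rho^{0,e}\ll\pi^e$) delivers the uniform bound
\begin{equation*}
    \calE_n(\upgamma_n^0)\le\calE(\mu^0)<\infty.
\end{equation*}

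The second step is to invoke Theorem~\ref{thm:discrete_existence} to obtain, for every $n\in\bbN$, a discrete EDP solution $(\upgamma_n,\rmf_n)\in\overline\CE_n$ with $\upgamma_n(0)=\upgamma_n^0$ and $\calL_n(\upgamma_n,\rmf_n)=0$. Since the dissipation $\calD_n$ is non-negative on any subinterval, the discrete energy is non-increasing along the flow, so
\begin{equation*}
   \sup_{n\in\bbN}\sup_{t\in[0,T]}\calE_n(\upgamma_n(t))\le\calE(\mu^0),\qquad \sup_{n\in\bbN}\calD_n(\upgamma_n,\rmf_n)\le\calE(\mu^0),
\end{equation*}
verifying the hypotheses~\eqref{eq:ass:discrete:bdd} of Theorem~\ref{thm:EDP_discrete}.

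The third step is to pass to the limit. Theorem~\ref{thm:EDP_discrete} yields a subsequence along which $\iota_n(\upgamma_n,\rmf_n)\to(\mu,\rmj)$ in $\CE$; pointwise narrow convergence at $t=0$ forces $\mu(0)=\mu^0$. Combining the Jensen upper bound above with the pointwise lower limit~\eqref{eq:discrete:lower-limit-energy} at $t=0$ gives the well-preparedness $\calE_n(\upgamma_n^0)\to\calE(\mu^0)$. The conclusion of Theorem~\ref{thm:EDP_discrete} then applies, giving $\liminf_n\calL_n(\upgamma_n,\rmf_n)\ge\calL(\mu,\rmj)$. Combined with $\calL_n(\upgamma_n,\rmf_n)=0$ and the non-negativity $\calL(\mu,\rmj)\ge 0$ from Corollary~\ref{cor:EDfunctional:nonneg}, this forces $\calL(\mu,\rmj)=0$, so $(\mu,\rmj)$ is the desired EDP solution. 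The only mildly delicate point is the well-preparedness construction, which rests on interpreting the cell-average map as a conditional expectation with respect to the partition $\{I_{n,k}^e\}$ and invoking Jensen; the remainder is a direct composition of previously established results.
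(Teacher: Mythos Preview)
Your proof is correct and follows the same route as the paper: construct well-prepared discrete data, invoke Theorem~\ref{thm:discrete_existence} for discrete EDP solutions, verify the uniform bounds~\eqref{eq:ass:discrete:bdd}, and conclude via Theorem~\ref{thm:EDP_discrete} and Corollary~\ref{cor:EDfunctional:nonneg}. The one notable difference is your choice of discrete initial data $\tilde\gamma^{e,0}_{n,k}=\rho^{0,e}(I^e_{n,k})$, i.e., the $\pi^e$-conditional expectation of the density on each cell; the paper instead uses a Lebesgue-averaged variant $(\tilde\gamma_n^0)^e_k = n\tilde\omega^e_{n,k}\int_{I^e_{n,k}}\tfrac{\rho^0}{\pi}\dx x$, which forces an additional $o(1/n)$ correction in the Jensen step to pass from Lebesgue to $\pi^e$ weighting. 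Your choice makes the Jensen inequality exact (no remainder) and also makes mass preservation and narrow convergence of $\iota_n\upgamma^0_n$ to $\mu^0$ immediate, so it is a mild simplification of the paper's argument.
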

\begin{proof}
    The result follows directly from Theorem~\ref{thm:discrete_existence} and Theorem~\ref{thm:EDP_discrete} upon constructing well-prepared initial data.

    For this, we use the simple construction $(\tilde\gamma_n^0)^e_k \coloneqq \rho^0(I^e_{n,k})$ and $(\bar\gamma_n^0)^e_\sfv \coloneqq (\gamma^0)^e_\sfv$.
    The constructed family of initial data indeed satisfies $\limsup_{n\to\infty}\calE_n(\upgamma_n^0)\leq \calE(\mu^0)$. 
    For the terms containing $(\bar\gamma_n^0)^e_\sfv$, this is immediate. 
    For the edge terms we apply Jensen's inequality to the convex function $\lambda_B(r) = r\log r-r+1$ to obtain for $e\in\edges$ and $k=1,\ldots,n$
    \begin{align*}
        \lambda_B\bra[\bigg]{\frac{(\tilde\gamma^0_n)^e_k}{\tilde\omega^e_{n,k}}}\tilde\omega^e_{n,k}
        &= \lambda_B\bra[\bigg]{\frac{1}{\pi(I^e_{n,k})}\int_{I^e_{n,k}}\frac{\dx\rho^0}{\dx\pi}\dx\pi}\pi(I^e_{n,k})
        \le \int_{I^e_{n,k}}\!\! \lambda_B\bra[\Big]{\frac{\dx\rho^0}{\dx\pi}}\dx \pi.
    \end{align*}
    A summation of these terms yields the limsup-estimate. The full convergence of initial data then follows from \eqref{eq:discrete:lower-limit-energy}.
\end{proof}

\subsection{Proof of the EDP limit}

The compactness result of this section is as follows.
\begin{proposition}[Compactness after embedding]\label{prop:discrete_comp}
    Consider $(\upgamma_n,\rmf_n)_{n\in\bbN}$ such that for all $n\in\bbN$ it holds $(\upgamma_n,\rmf_n)\in\overline\CE_n$ and the uniform bounds~\eqref{eq:ass:discrete:bdd} 
    are satisfied.
    Then, there exist $(\mu,\rmj)\in\CE$ such that (abusing notation by using the same symbols for measures and densities) we have 
    \begin{enumerate}[label=(\roman*)]
        \item\label{prop:discrete_comp:1} $\iota_n \tilde f_n \rightharpoonup j$ weakly in $L^1([0,T]\times\Medges)$;
        \item\label{prop:discrete_comp:2} $\iota_n \bar f_n \rightharpoonup \bar\jmath$ weakly in $L^1([0,T]\times\nodes)$;
        \item\label{prop:discrete_comp:3newnew} $\iota_n\upgamma_n(t) \rightharpoonup \mu(t)$ narrowly in $\calP(\Mgraph)$ for all $t\in[0,T]$;
        \item\label{prop:discrete_comp:3} $\iota_n\upgamma_n \to \mu$ strongly in $L^1([0,T]\times\Mgraph)$;
        \item\label{prop:discrete_comp:4new} For all $e=\sfv\sfw\in\edges$ it holds $\frac{\tilde\gamma^e_{n,1}}{\tilde\omega^e_{n,1}}\rightharpoonup\frac{\rho^e}{\pi^e}\big|_\sfv$ and $\frac{\tilde\gamma^e_{n,n}}{\tilde\omega^e_{n,n}}\rightharpoonup\frac{\rho^e}{\pi^e}\big|_\sfw$ weakly in $L^1(0,T)$;
        \item\label{prop:discrete_comp:4} $\nabla_n \iota_n\sqrt{\tilde\gamma_n/\tilde\omega_n}\rightharpoonup \nabla \sqrt{\rho/\pi}$ weakly in $L^2([0,T]\times\Medges)$;
    \end{enumerate}
    where $\nabla_n:PC(\Medges)\to PC(\Medges)$ is defined by $\nabla_n \varphi^e(x) \coloneqq \one_{[0,(n-1)h^e_n]}\frac{\varphi^e(x+h^e_n)-\varphi^e(x)}{h^e_n}$ for $PC(\Medges)$ denoting the piecewise continuous functions on $\Medges$.
\end{proposition}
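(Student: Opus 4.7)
The plan is to decompose the proof according to items (i)--(vi) and then verify that the limit $(\mu,\rmj)$ satisfies the continuity equation. All statements below are understood along subsequences.

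\textbf{Flux and time-wise narrow compactness [items (i)--(iii)].}
To control the embedded fluxes I would use the quadratic lower bound $\sfC(r)\geq cr^2$ near the origin, which via Cauchy--Schwarz gives the pointwise-in-time estimate $\sum_k |\tilde f^e_{n,k}| \leq C\bigl(\sum_k \sigma^e_n(\tilde\gamma^e_{n,k},\tilde\gamma^e_{n,k+1})\bigr)^{1/2}\sqrt{\calR_n(\upgamma_n,\rmf_n)}$, with the large-$f/\sigma$ regime absorbed using the superlinearity of $\sfC$. The $n^2$-scaling of $\sigma^e_n$ is exactly compensated by the $\ell^e/n$ cell-width inherited from the embedding~\eqref{eq:micro:embed-flux1}, and combined with the uniform mass bound $\upgamma_n(\nodes_n)=1$ this produces a uniform $L^1$ bound on $\iota_n \tilde f_n$ on $[0,T]\times \Medges$ (and similarly on $\iota_n \bar f_n$, whose scaling is simpler). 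Banach--Alaoglu then delivers (i) and (ii). For (iii) I would combine this $L^1$ flux bound with the discrete continuity equation \eqref{eq:CE_discrete} to obtain equi-continuity of $t\mapsto \iota_n\upgamma_n(t)$ in a bounded-Lipschitz metric on $\calP(\Mgraph)$; a refined Arzelà--Ascoli argument (using the narrow compactness of $\calP(\Mgraph)$) then yields pointwise-in-time narrow convergence to a limit curve $\mu$.

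\textbf{Fisher information and discrete $H^1$-type compactness [item (vi)].}
For item (vi) I would relate the interior Fisher terms $\sfI^e_{n,k}$ directly to the $L^2$-norm of $\nabla_n \iota_n\sqrt{\tilde\gamma_n/\tilde\omega_n}$. Writing $u^e_{n,k}\coloneqq \tilde\gamma^e_{n,k}/\tilde\omega^e_{n,k}$ and factorising $\sqrt{\tilde\gamma^e_{n,k}\tilde\gamma^e_{n,k+1}}=\sqrt{\tilde\omega^e_{n,k}\tilde\omega^e_{n,k+1}}\sqrt{u^e_{n,k}u^e_{n,k+1}}$, the scaling $\sigma^e_n(a,b)=\diffedge^e n^2\sqrt{ab}$ combines with the cell-length $\ell^e/n$ and the Lipschitz regularity of $P$ (Assumption~\ref{ass:Lip_Potential}) to show that $\sum_k \sfI^e_{n,k}$ is bounded below by a constant times $\int_0^{\ell^e} |\nabla_n\iota_n\sqrt{u_n}|^2 \dx\pi^e$. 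Together with the Poincaré inequality from Remark~\ref{rem:PI} and the $L^1$ mass bound, this yields uniform boundedness of $\iota_n\sqrt{\tilde\gamma_n/\tilde\omega_n}$ in $L^2(0,T;H^1(\Medges))$, hence weak convergence of the gradients along a subsequence. Identification of the limit as $\nabla\sqrt{\rho/\pi}$ proceeds by testing against $C_c^1$ functions and using the strong $L^1$ convergence of densities from the next step.

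\textbf{Strong $L^1$ convergence, traces, and identification of the limit CE [items (iv)--(v) and conclusion].}
For (iv) I would apply an Aubin--Lions-type argument combining the pointwise-in-time narrow convergence from the first step with the spatial $H^1$ bound from the second to obtain $\iota_n\tilde\gamma_n \to \rho$ in $L^1(0,T;L^1(\Medges))$. For (v), the continuity of the trace operator on $H^1(0,\ell^e)$ and the weak convergence of $\iota_n\sqrt{\tilde\gamma_n/\tilde\omega_n}$ in $L^2(0,T;H^1)$ provide convergence of the continuous endpoint traces; the discrete endpoint cell values $\tilde\gamma^e_{n,1}/\tilde\omega^e_{n,1}$ are then reconciled with these continuous traces via Cauchy--Schwarz against the $L^2$ bound on the discrete gradient, controlling the difference between the endpoint cell and the continuous boundary value uniformly in $n$. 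Finally, to conclude $(\mu,\rmj)\in \CE$, I would test the discrete continuity equation against $\iota_n^\ast\Phi$ (extended trivially to $\nodes$) for $\Phi\in C^1(\nodes\times\Medges)$, and use the shift identity~\eqref{eq:HSDI} to show $\gnabla\iota_n^\ast \Phi \to \bnabla\Phi$ uniformly on the edges; passing to the limit with the weak-$\ast$ flux convergence then verifies $\partial_t\mu + \bdiv\rmj = 0$.

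\textbf{Main obstacle.}
I expect the hardest step to be the trace identification in item (v): after embedding, the endpoint cell values $\tilde\gamma^e_{n,1}/\tilde\omega^e_{n,1}$ live on intervals of shrinking length $\ell^e/n$, while the limit trace is a pointwise object arising from the $H^1$-bound. The precise compensation between the $n^2$-factor in $\sigma^e_n$ and the $\ell^e/n$ cell width is what makes the interior Fisher information strong enough in the limit to produce a continuous trace compatible with the coupling dissipation $\sfI^e_\sfv$. Making this rigorous---without losing the quantitative link between the discrete endpoint cells and the continuous trace---requires a careful interpolation argument that exploits the Lipschitz regularity of $P$ together with the mass bound.
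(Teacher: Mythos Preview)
Your overall architecture is sound and close to the paper's, but there is one genuine gap and one step that needs more care.

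\textbf{The genuine gap is in step (iv).} You write that the piecewise-constant embedding $\iota_n\sqrt{\tilde\gamma_n/\tilde\omega_n}$ is uniformly bounded in $L^2(0,T;H^1(\Medges))$ and then invoke Aubin--Lions. But $\iota_n\tilde\gamma_n$ is a step function in space; it is \emph{not} in $H^1$, and the only spatial regularity you actually have is the $L^2$-bound on the \emph{discrete} gradient $\nabla_n$. A standard Aubin--Lions lemma does not apply to such an object. The paper resolves this by introducing a \emph{second} embedding $\tilde\iota_n$---a piecewise affine interpolation of the cell values---which \emph{does} lie in $W^{1,1}(\Medges)$, with the $W^{1,1}$-norm controlled by the discrete Fisher information (essentially your computation in step (vi)). Aubin--Lions--Simon is applied to $\tilde\iota_n\tilde\gamma_n$, and then one shows $\|\iota_n\tilde\gamma_n-\tilde\iota_n\tilde\gamma_n\|_{L^1(0,T;L^1)}\to0$, again using the discrete Fisher bound. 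Without this auxiliary interpolation (or an equivalent discrete Aubin--Lions argument) your strong compactness step is incomplete. This also affects your trace argument in (v): the paper obtains the trace convergence from the weak $L^1(0,T;W^{1,1})$ convergence of the \emph{affine} interpolant, for which $\tilde\rho_n^e|_\sfv=n\tilde\gamma^e_{n,1}$ exactly, so no separate reconciliation between ``endpoint cell'' and ``continuous trace'' is needed.

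\textbf{The flux bound in step (i) needs more care.} The inequality $\sfC(r)\geq cr^2$ only holds near the origin; for large $|r|$ one has $\sfC(r)\sim 2|r|\log|r|$, so the Cauchy--Schwarz argument as written does not go through globally. Your remark about ``absorbing the large-$f/\sigma$ regime via superlinearity'' can be made precise, but the paper takes a cleaner route: it applies Jensen's inequality to $\sfC$ over the sum, uses the monotonicity of $a\mapsto a\sfC(b/a)$ together with $\sum_k\sigma^e_n\leq Cn^2\scrL^1(A)$, and then the elementary bound $\sfC^{-1}(a)\leq b^{-1}(a+\sfC^\ast(b))$ with $b=s/n$ to obtain a uniform-in-$n$ estimate of the form $|\tilde J_n^e|(A\times[0,\ell^e])\leq C(s^{-1}+\scrL^1(A)\,s\cosh s)$. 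This simultaneously gives the total-variation bound \emph{and} uniform absolute continuity in time, which you need for the disintegration $\tilde J=\int\tilde\jmath(t)\,dt$ asserted in (i).
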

\begin{proof}
    The proof is split into Lemma~\ref{lem:finite_flux_comp}, Lemma~\ref{lem:strong_compactness}, and Lemma~\ref{lem:comp_fin_diff}, below.
\end{proof}

As an intermediate step, we observe the following spatial regularity and improved spatio-temporal integrability. 
The proof is essentially a simplification of \cite[Proposition~4.6]{HraivoronskaTse2023}.

\begin{lemma}[Spatial regularity and higher integrability]\label{lem:spat_reg}
    For $n\in\bbN$ denote $\pi_n = \iota_n \tilde\omega_n$ and $u_n = \frac{\dx\rho_n}{\dx\pi_n}$.
    The sequence $(u_n)_n$ is uniformly bounded in $L^1([0,T];\rmB\rmV(\Medges))$ and in $L^{2}([0,T]\times\Medges)$.
\end{lemma}
\begin{proof}
    We begin our considerations with the following observation: Fix $e\in\edges$, $\delta>0$, and constantly extend $u^e_n$ to $[-\delta,\ell^e+\delta)$. For $x\in[0,\ell^e]$, $y\in[-\delta,\delta]$ define the set of indices
    \begin{align*}
        \mathrm{Int}^e(x,y) \coloneqq \set{k\in\set{1,\ldots,n{-}1}: k h^e_n\in [x-y,x]}.
    \end{align*}
    From the opposite perspective, for $y\in[0,\delta)$ and $k\in\set{1,\ldots,n{-}1}$ define the set of points $x\in[0,\ell^e]$ such that the interval between $x{-}y$ and $x$ contains $k h^e_n$, the boundary between $I^e_{n,k}$ and $I^e_{n,k+1}$,
    \begin{align*}
        \widetilde{\operatorname{Int}}^e(y,k) \coloneqq \set[\big]{x\in [0,\ell^e]: k h^e_n\in [x-y,x]}.
    \end{align*} 
    Then, recalling that $u^e_{n,k} = \tilde\gamma^e_{n,k}/\tilde\omega^e_{n,k}$,
    it holds for all $x\in[0,\ell^e]$, $y\in[-\delta,\delta]$
    \begin{equation}
    \label{eq:cylinder_estimate}
    \begin{aligned}
        \abs[\big]{u^e_n(x-y)-u^e_n(x)} 
        &\le \sum_{k=1}^{n-1}\abs[\bigg]{\frac{\tilde\gamma^e_{n,k+1}}{\tilde\omega^e_{n,k+1}}-\frac{\tilde\gamma^e_{n,k}}{\tilde\omega^e_{n,k}}}\one_{\mathrm{Int}^e(x,y)}(k)
        = \sum_{k=1}^{n-1}\abs[\bigg]{\frac{\tilde\gamma^e_{n,k+1}}{\tilde\omega^e_{n,k+1}}-\frac{\tilde\gamma^e_{n,k}}{\tilde\omega^e_{n,k}}}\one_{\widetilde{\mathrm{Int}}^e(y,k)}(x).
    \end{aligned}
    \end{equation}
    Note that $\widetilde{\mathrm{Int}}^e(y,k)$ is an interval of length $\abs{y}$ and hence integrating the above inequality w.r.t. $x,t$, we obtain
    \begin{align*}
        \MoveEqLeft \int_0^T\int_0^{\ell^e}\abs[\big]{u^e_n(x-y)-u^e_n(x)}\dx x\dx t
        \le \abs{y}\int_0^T \sum_{k=1}^{n-1}\abs[\bigg]{\frac{\tilde\gamma^e_{n,k+1}}{\tilde\omega^e_{n,k+1}}-\frac{\tilde\gamma^e_{n,k}}{\tilde\omega^e_{n,k}}}\dx t\\
        &\le C_\pi\abs{y}\int_0^T \sum_{k=1}^{n-1}n\abs[\bigg]{\frac{\tilde\gamma^e_{n,k+1}}{\tilde\omega^e_{n,k+1}}-\frac{\tilde\gamma^e_{n,k}}{\tilde\omega^e_{n,k}}} \sqrt{\tilde\omega^e_{n,k}\tilde\omega^e_{n,k+1}}\dx t\\
        &\le \widetilde C_\pi\abs{y}\sqrt{T\norm{u^e_n}_{L^\infty([0,T];L^1([0,\ell^e])}}\bra[\bigg]{\int_0^T\sum_{k=1}^{n-1}\abs[\bigg]{\sqrt{\frac{\tilde\gamma^e_{n,k+1}}{\tilde\omega^e_{n,k+1}}}-\sqrt{\frac{\tilde\gamma^e_{n,k}}{\tilde\omega^e_{n,k}}}}^2 \sigma^e_n\bra{\tilde\omega^e_{n,k}\tilde\omega^e_{n,k+1}}\dx t}^{\frac{1}{2}}\\
        &\le C\sqrt{T}\abs{y},
    \end{align*}
    where we used that $\pi^e$ is comparable to the Lebesgue measure on $[0,\ell^e]$ and the uniform bounds on energy and the discrete Fisher information.
    In particular, for every $\varphi\in C^1([0,T]\times[0,\ell^e])$ we can approximate $\partial_x\varphi$ by difference quotients to obtain
    \begin{align*}
        \int_0^T\int_0^{\ell^e} u^e_n \partial_x\varphi \dx x\dx t 
        \le  C\sqrt{T}\norm{\varphi}_{L^\infty},
    \end{align*}
    i.e., $u_n$ is uniformly bounded in $L^1\bigl([0,T];\rmB\rmV(\Medges)\bigr)$ and hence also in $L^1\bigl([0,T];L^{\infty}(\Medges)\bigr)$.
    Since the energy control also yields uniform boundedness in $L^\infty\bigl([0,T];L^1(\Medges)\bigr)$, an interpolation also leads the uniform bound in $L^{2}([0,T]\times\Medges)$.
\end{proof}
Next, we prove statements~\ref{prop:discrete_comp:1} and~\ref{prop:discrete_comp:2} from Proposition~\ref{prop:discrete_comp}.
\begin{lemma}[Compactness of fluxes]\label{lem:finite_flux_comp}
    Consider $(\upgamma_n,\rmf_n)_{n\in\bbN}$ such that for all $n\in\bbN$ it holds $(\upgamma_n,\rmf_n)\in\overline\CE_n$ and the uniform bounds~\eqref{eq:ass:discrete:bdd} 
    are satisfied.
    Then, there exists $\rmj =(\bar\jmath,j) \in L^1([0,T]\times \Medges)\times L^1([0,T]\times \nodes)$ such that along a (not renamed) subsequence $\iota_n \tilde f_n \rightharpoonup j$ weakly in $L^1([0,T]\times\Medges)$ and $\iota_n \bar f_n \rightharpoonup \bar\jmath$ weakly in $L^1([0,T]\times\nodes)$.
\end{lemma}
\begin{proof}
    We argue as in the proof of \cite[Proposition~5.5]{heinze2025discrete}.
    In particular, we rely on the fact that for all $s\in\bbR$, $r\ge 0$, and $q>1$ it holds
    \begin{align}\label{eq:magical}
        \sfC(s) \le \frac{q}{q{-}1} \sfC(s|r) + \frac{4\,r^q}{q{-}1}
    \end{align}
    (cf. e.g. \cite[Eq. (2.7)]{fischer2022global} or \cite[Appendix A]{heinze2025discrete}).
    With this, we obtain for $e\in\edges$
    \begin{align*}
        \frac{1}{h^e_n}\sum_{k=1}^{n-1}\int_0^T \sfC\bra[\Big]{ &\tilde f^e_{n,k}}\dx t
        \le
        \frac{1}{h^e_n}\sum_{k=1}^{n-1}\int_0^T \bra[\Bigg]{\frac{q}{ q{-}1}  \sfC\bra[\bigg]{\tilde f^e_{n,k} \bigg| \frac{\sqrt{\tilde \gamma^e_{n,k}\tilde \gamma^e_{n,k+1}}}{h^e_n}} 
        + \frac{4}{q {-}1 } \bra[\bigg]{\frac{\tilde \gamma^e_{n,k}}{h^e_n}\frac{\tilde \gamma^e_{n,k+1}}{h^e_n}}^{q/2}} \dx t\\
        & \leq  C_{q,\pi,\Medges} \int_0^T \bra[\bigg]{ \sum_{k=1}^{n-1}\sfC\big( \tilde f^e_{n,k} \big| (h^e_n)^{-2}(\tilde \gamma^e_{n,k}\tilde \gamma^e_{n,k+1})^{q/2} \big) + \int_0^{\ell^e-h^e_n}(u^e_n S^1_nu^e_n)^{q/2}\dx x} \dx t,
    \end{align*} 
    where for the second inequality we used that $\pi^e$ is comparable to the Lebesgue measure on $[0,\ell^e]$ and that for $r \ge 0$ the map $(0,\infty)\ni\lambda\mapsto \lambda^2 \sfC(r/\lambda)$ is increasing.
    The first term on the right-hand side is bounded by the dissipation, while the second term is bounded for $q\in(1,2]$ due to Lemma~\ref{lem:spat_reg}.
    
    Upon realizing that $\sum_{k=1}^{n-1}\int_0^1 \one_{I^e_{n,k+s}}(x)\dx s =(n-1)h^e_n\approx\ell^e$, the above estimate and an application of Jensen's inequality also yields the $n$-uniform bound
    \begin{align*}
        \sup_{n\in \bbN}\int_0^T\int_0^{\ell^e} \sfC\bra[\bigg]{\frac{\dx(\iota_n\tilde f^e)}{\dx\scrL^2}}\dx x\dx t <\infty.
    \end{align*}
    Since $\sfC$ is superlinear, this implies that the densities $\frac{\dx(\iota_n\tilde f^e)}{\dx \scrL^2}$ are uniformly integrable and hence (along a not renamed subsequence) have a weak limit in $L^1([0,T]\times[0,\ell^e])$.

    The weak compactness for $\iota_n\bar f_n$ follows similarly, keeping in mind that $\nodes$ is a finite space.
\end{proof}

\begin{lemma}[Compactness of curves and continuity equation]\label{lem:strong_compactness}
    Let $(\upgamma_n,\rmf_n)_{n\in\bbN}$ such that for all $n\in\bbN$ it holds $(\upgamma_n,\rmf_n)\in\overline\CE_n$ and the uniform bounds~\eqref{eq:ass:discrete:bdd}
    are satisfied.
    
    Then, there exists $\mu\in L^1([0,T]\times\Mgraph)$ s.t. along a (not renamed) subsequence $\iota_n\upgamma_n\to \mu$ strongly in $L^1([0,T]\times\Mgraph)$ and for a.e. $t\in[0,T]$ we have $\iota_n\upgamma_n(t) \rightharpoonup\mu(t)$ narrowly in $\calP(\Mgraph)$.

    Furthermore, denoting $\rmj= (\bar\jmath,j)$ the limit obtained in Lemma~\ref{lem:finite_flux_comp} (w.l.o.g. along the same subsequence), we have $(\mu,\rmj)\in\CE$.
\end{lemma}
\begin{proof}
    The pointwise in time narrow in space compactness follows from the bounds established in Lemma~\ref{lem:finite_flux_comp} (see also the proof of \cite[Lemma 4.5]{HraivoronskaTse2023}).
    
    The strong $L^1$-convergence follows as an application of the Aubin-Lions-type argument \cite[Proposition 1.10]{rossi2003tightness} by combining the time regularity obtained in Lemma~\ref{lem:finite_flux_comp} and the spatial regularity obtained in Lemma~\ref{lem:spat_reg} (see also the proof of \cite[Theorem 4.8]{HraivoronskaTse2023}).
    
    To show that the limit satisfies the continuity equation, let $\Phi: \Medges\to \R$ be Lipschitz and for $\delta>0$ sufficiently small define $\Phi_\delta\in C^1_{\textnormal{flat},\delta}(\Mgraph)$ by $\varphi^e_\delta(x) \coloneqq \varphi^e(x)\tilde\chi_\delta(x)$ for some $\tilde\chi_\delta\in C^\infty([0,\ell^e];[0,1])$ with $\tilde\chi_\delta|_{[0,\delta)\cup (\ell^e-\delta,\ell^e]} \equiv 0$, $\tilde\chi_\delta|_{[2\delta,\ell^e-2\delta]} \equiv 1$, and which is monotone on the between intervals.

    In particular, for every $e\in\edges$, $x\in\set{0,\ell^e}$, we have
    \begin{align*}
        \abs{\varphi^e(x) - \varphi^e_\delta(x)}\le C_\varphi \delta \overset{\delta\to 0}{\longrightarrow} 0.
    \end{align*}    
    Regarding the dual product in the interior, we have
    \begin{align*}
        \int_0^T\int_0^{\ell^e} \nabla(\varphi^e-\varphi^e_\delta)\cdot\dx j^e_n \le \norm{\nabla\varphi}_{L^\infty([0,T]\times\Medges)}\abs{j^e_n}([0,T]\times([0,2\delta]\cup[\ell^e-2\delta,\ell^e])) \overset{\delta\to 0}{\longrightarrow} 0,
    \end{align*}
    uniformly in $n$ due to Lemma~\ref{lem:finite_flux_comp}.
    With this, the claim follows from Lemma~\ref{lem:discrete_CE}.    
\end{proof}

\begin{lemma}[Compactness of finite differences and traces]\label{lem:comp_fin_diff}
    Let $(\upgamma_n,\rmf_n)_{n\in\bbN}$ such that for all $n\in\bbN$ it holds $(\upgamma_n,\rmf_n)\in\overline\CE_n$ and the uniform bounds~\eqref{eq:ass:discrete:bdd}
    are satisfied. Let $(\mu,j)\in\CE$ be the limit of $\iota_n(\upgamma_n,\rmf_n)$.
    Let the square root densities be given by
    \[ 
    	\nu_n \coloneqq \sum_{e\in\edges}\sum_{k=1}^n \sqrt{\frac{\tilde\gamma^e_{n,k}}{\tilde\omega^e_{n,k}}} \one_{I^e_{n,k}}
    	\quad\text{and}\quad
    	\nu \coloneqq \sum_{e\in\edges} \sqrt{\frac{\rho^e}{\pi^e}}
    \]
   	as well as the difference quotient operators mapping
   	\[
   		\nabla_n:PC(\Medges)\to PC(\Medges)
   		\quad\text{with}\quad
   		\nabla_n \varphi^e(x) \coloneqq \one_{[0,(n-1)h^e_n]}\frac{\varphi^e(x+h^e_n)-\varphi^e(x)}{h^e_n} \,,
   	\] 
    where $PC(\Medges)$ denotes the piecewise continuous functions on $\Medges$.
    Then, $\nu^e\in L^2(0,T;H^1(0,\ell^e))$ and (along a subsequence) $\nu^e_n \rightharpoonup \nu^e$ strongly in $L^2([0,T]\times[0,\ell^e])$ and $\nabla_n \nu^e_n\rightharpoonup \nabla \nu^e$ weakly in $L^2([0,T]\times[0,\ell^e])$.

    Additionally, for all $e=\sfv\sfw\in\edges$ it holds (along the same subsequence) that $\frac{\tilde\gamma^e_{n,1}}{\tilde\omega^e_{n,1}}\rightharpoonup\frac{\rho^e}{\pi^e}\big|_\sfv$ and $\frac{\tilde\gamma^e_{n,n}}{\tilde\omega^e_{n,n}}\rightharpoonup\frac{\rho^e}{\pi^e}\big|_\sfw$ weakly in $L^1(0,T)$.
\end{lemma}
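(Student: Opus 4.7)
The approach is to extract uniform $L^2((0,T)\times(0,\ell^e))$-bounds on $\nu^e_n$ and $\nabla_n\nu^e_n$, apply Banach--Alaoglu to obtain weakly convergent subsequences, and identify the weak limits with $\nu^e$ and its distributional derivative. The endpoint convergence will then follow from a one-dimensional trace argument.

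For the $L^2$-bounds, write $v^e_k\coloneqq \sqrt{\tilde\gamma^e_{n,k}/\tilde\omega^e_{n,k}}$. Since $\pi^e$ has a Lipschitz and strictly positive density on $[0,\ell^e]$ by Assumption~\ref{ass:Lip_Potential}, we have $\tilde\omega^e_{n,k}\gtrsim \ell^e/n$, so
\[
\int_0^{\ell^e}(\nu^e_n)^2\dx x \;=\; \sum_{k=1}^n (v^e_k)^2 \,\frac{\ell^e}{n}\;\lesssim\; \sum_{k=1}^n \tilde\gamma^e_{n,k}\le 1,
\]
uniformly in $n$ and $t$. Likewise $\int_0^{\ell^e}|\nabla_n\nu^e_n|^2\dx x = \ell^e n\sum_{k=1}^{n-1}(v^e_{k+1}-v^e_k)^2$, which is controlled by the Fisher-information contribution to the dissipation bound \eqref{eq:ass:discrete:bdd}: the summand $\sfI^e_{n,k}$ dominates $C n(v^e_{k+1}-v^e_k)^2$ thanks to the lower bound on the reference weights.

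Equipped with both bounds, Banach--Alaoglu yields $\nu^e_n\rightharpoonup \tilde\nu^e$ and $\nabla_n\nu^e_n\rightharpoonup \chi^e$ in $L^2((0,T)\times(0,\ell^e))$ along a subsequence. To identify $\tilde\nu^e=\nu^e\coloneqq \sqrt{\rho^e/\pi^e}$, I would combine the strong $L^1$-convergence $\iota_n\tilde\gamma^e_n\to \rho^e$ from Lemma~\ref{lem:strong_compactness} (yielding pointwise a.e.\ convergence along a further subsequence) with the uniform $L^2$-bound and apply Vitali's theorem to upgrade to strong $L^2$-convergence; the weak and strong limits then coincide. For $\chi^e=\partial_x\nu^e$, I would test against $\varphi\in C^1_{\mathrm c}((0,T)\times(0,\ell^e))$ via a discrete integration by parts shifting $\nabla_n$ onto $\varphi$; since the resulting backward difference of $\varphi$ converges uniformly to $-\partial_x\varphi$ (up to the scaling inherent in $\nabla_n$) and $\nu^e_n\to\nu^e$ strongly in $L^2$, passing to the limit identifies $\chi^e$ with $\partial_x\nu^e$ distributionally, and in particular $\nu^e\in L^2(0,T;H^1(0,\ell^e))$.

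For the endpoint $\sfv$ (the argument at $\sfw$ being symmetric), I would introduce the piecewise-affine interpolant $\tilde\nu^e_n$ of $(v^e_k)_k$ at the midpoints of $I^e_{n,k}$. The discrete $H^1$-bound shows $(\tilde\nu^e_n)_n$ is uniformly bounded in $L^2(0,T;H^1(0,\ell^e))$ and satisfies $\|\tilde\nu^e_n-\nu^e_n\|_{L^2}\to 0$, so $\tilde\nu^e_n\to\nu^e$ strongly in $L^2((0,T)\times(0,\ell^e))$. Combining this spatial regularity with the time regularity inherited from the continuity equation (as in the proof of Lemma~\ref{lem:strong_compactness}) and invoking an Aubin--Lions-type argument together with continuity of the 1D trace map $H^1(0,\ell^e)\to\bbR$, $v\mapsto v(0)$, yields $\tilde\nu^e_n|_\sfv\to \nu^e|_\sfv$ strongly in $L^2(0,T)$. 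Since $v^e_1$ differs from $\tilde\nu^e_n|_\sfv$ by an $o(1)$ quantity in $L^2(0,T)$ and the squaring map $L^2\to L^1$ is continuous, one concludes $\tilde\gamma^e_{n,1}/\tilde\omega^e_{n,1}=(v^e_1)^2\to \rho^e/\pi^e|_\sfv$ in $L^1(0,T)$, which is stronger than the claimed weak convergence. The main technical obstacle is precisely this trace passage: extracting genuine $L^2(0,T)$-convergence of boundary values from the discrete $H^1$-bound in space together with the time regularity from the continuity equation requires carefully marrying the discrete structure with 1D Sobolev trace theory.
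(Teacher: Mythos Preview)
Your argument for the $L^2$-bounds, weak extraction, identification of $\tilde\nu^e=\nu^e$ via strong $L^1$-convergence of the embedded densities, and identification of $\chi^e=\partial_x\nu^e$ via discrete integration by parts is essentially the paper's proof. The paper makes the same moves, using the dissipation bound for $\nabla_n\nu^e_n$, the energy bound (via Pinsker) for $\nu^e_n$, and the strong $L^1$-convergence from Lemma~\ref{lem:strong_compactness} together with $\iota_n\tilde\omega^e_n\to\pi^e$ in $L^\infty$ to pin down the weak limits.

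The trace step is where you diverge from the paper, and your proposed route has a genuine gap. You want to run Aubin--Lions on the square-root interpolant $\tilde\nu^e_n$, but the time regularity available from the continuity equation is for the \emph{densities} $\tilde\gamma_n$, not for $\sqrt{\tilde\gamma_n/\tilde\omega_n}$; since $\sqrt{\cdot}$ is not Lipschitz at zero, the $\BV(0,T;(W^{1,\infty})^*)$ bound on $\tilde\rho_n$ does not transfer to $\tilde\nu^e_n$. You yourself flag this as ``the main technical obstacle,'' and indeed it is one the paper simply avoids. The paper's trace argument stays at the level of densities: it reuses the affine interpolant $\tilde\rho_n$ already built in Lemma~\ref{lem:strong_compactness}, for which the $W^{1,1}$ bound and time regularity are in hand, so that $\tilde\rho_n\rightharpoonup\rho$ weakly in $L^1(0,T;W^{1,1}(\Medges))$ and hence $\tilde\rho^e_n|_\sfv\rightharpoonup\rho^e|_\sfv$ weakly in $L^1(0,T)$. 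Then one observes that by construction $\tilde\rho^e_n|_\sfv = n\tilde\gamma^e_{n,1}$, while $n\tilde\omega^e_{n,1}=n\int_{I^e_{n,1}}\pi^e\dx x\to\pi^e|_\sfv$ by continuity of $\pi^e$; dividing gives the claimed weak $L^1(0,T)$ convergence of $\tilde\gamma^e_{n,1}/\tilde\omega^e_{n,1}$ directly, without ever needing time regularity of square roots or strong trace convergence.
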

\begin{proof}
Recall that by Assumption~\ref{ass:Lip_Potential} the family $(\tilde\omega^e_n)_n$ is uniformly bounded away from zero and that by Remark~\ref{rem:RefMeasureStrong} $\iota_n\tilde\omega^e_n\to \pi^e$ strongly $L^\infty(0,\ell^e)$ as $n\to \infty$. 
On the other hand, Lemma~\ref{lem:strong_compactness} in particular implies strong $L^2$-convergence of $\sqrt{\rho_n}$ to $\sqrt{\rho}$ along a not renamed subsequence.
Hence, along the same subsequence we have $\nu_n \to \sqrt{\frac{\rho}{\pi}} =\nu$ strongly in $L^2([0,T]\times\Medges)$.

Regarding the finite differences, observe that 
\begin{align*}
    \sum_{e\in\edges}\sum_{k=1}^{n-1} \bra[\bigg]{\frac{\sqrt{\mathstrut\smash{\tilde\gamma^e_{n,k}/\tilde\omega^e_{n,k}}} - \sqrt{\mathstrut\smash{\tilde\gamma^e_{n,k+1}/\tilde\omega^e_{n,k+1}}}}{h^e_n}}^{\!2}
    &= \sum_{e\in\edges} \int_0^{(n-1)h^e_n} \bra*{\frac{\nu^e_n(x)-\nu^e_n(x+h^e_n)}{h^e_n}}^2\dx x\\
    &= \sum_{e\in\edges} \int_0^{\ell^e} \abs[\big]{-\nabla_n \nu^e_n(x)}^{2} \!\dx x.
\end{align*}
We thus obtain from the a priori bound~\eqref{eq:ass:discrete:bdd} that $\int_0^T
\norm{\nabla_n \nu^e_n}_{L^2([0,T]\times[0,\ell^e])} 
\le C<\infty$ for all $e\in\edges$ and (along a not renamed subsequence of the previous subsequence) that $\nabla_n \nu^e_n \rightharpoonup v$ weakly in $L^2([0,T]\times[0,\ell^e])$ for some $v \in L^2([0,T]\times[0,\ell^e])$. 
This $v$ is the weak gradient of $\nu^e$. 
Indeed, for all $\varphi\in C_c^\infty([0,T]\times(0,\ell^e))$ we have $\one_{[h^e_n,\ell^e]}(x)\bra[\Big]{\frac{S^{-1}_n\varphi-\varphi}{h^e_n}} \to -\partial_x\varphi$ and $\one_{[0,\ell^e-h^e_n]}\varphi\to \varphi$ strongly in $L^2([0,T]\times[0,\ell^e])$ as $n\to\infty$. Therefore, it holds (along the above subsubsequence) for every $e\in\edges$
\begin{align*}
    \int_0^T\int_0^{\ell^e} v^e(t,x) \varphi(t,x)\dx x \dx t &= \lim_{n\to\infty} \int_0^T\int_0^{\ell^e-h^e_n}\bra*{\frac{\nu^e_n(t,x+h^e_n)-\nu^e_n(t,x)}{h^e_n}}\varphi(t,x)\dx x \dx t\\
    &= \lim_{n\to\infty} \int_0^T\int_{h^e_n}^{\ell^e}\nu^e_n(t,x)\bra*{\frac{\varphi(t,x-h^e_n)-\varphi(t,x)}{h^e_n}}\dx x \dx t\\
    &= -\int_0^T\int_0^{\ell^e}\nu^e(t,x)\partial_x\varphi(t,x)\dx x \dx t.
\end{align*}
For the convergence towards the traces, recall from Lemma~\ref{lem:strong_compactness} that $\rho^n\rightharpoonup\rho$ weakly in $L^1(0,T;W^{1,1}(\Medges))$. 
In particular, we have for each $e\in\edges$, $\sfv\in\nodes(e)$ that $\rho_n^e|_\sfv\rightharpoonup \rho^e|_\sfv$ weakly in $L^1(0,T)$. 
On the other hand, by definition it holds for each $e=\sfv\sfw\in\edges$, $n\in\bbN$ that $\rho_n^e|_\sfv = \tilde\gamma^e_{n,1}/h^e_n$ and $\rho_n^e|_\sfv = \tilde\gamma^e_{n,n}/h^e_n$.

Since the reference measures are continuous, we further have $\omega^e_{n,1}/h^e_n = \tfrac{1}{h^e_n}\int_{\smash[b]{I^e_{n,1}}}\pi^e\dx x \to \pi^e(0)$ as well as $\omega^e_{n,n}/h^e_n = \pi^e(\ell^e)$, and their strict positivity allows us to divide by them to obtain the claimed convergence of quotients.
\end{proof}

Having established compactness, next we prove the corresponding lower limit inequality, which is the second step in establish the EDP convergence result (see Definition~\ref{def:EDP-convergence}).

\begin{proposition}[Lower limit inequality]
\label{prop:lsc_dissipation_discrete}
    Let $(\upgamma_n,\rmf_n)_{n\in\bbN}$ such that for all $n\in\bbN$ it holds $(\upgamma_n,\rmf_n)\in\overline\CE_n$ and the uniform bounds~\eqref{eq:ass:discrete:bdd}
    are satisfied. 
    Let $(\mu,j)\in\CE$ be a limit of $\iota_n(\upgamma_n,\rmf_n)$ in the sense of Proposition~\ref{prop:discrete_comp}.
    Then the lower limit inequalities~\eqref{eq:discrete:lower-limit} hold.
\end{proposition}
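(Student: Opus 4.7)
The plan is to establish the two lower-limit inequalities~\eqref{eq:discrete:lower-limit-energy}--\eqref{eq:discrete:lower-limit-dissipation} separately; the second one is obtained by further decomposing $\calD_n$ into its primal part $\int_0^T\calR_n\,\dx t$ and its relaxed-dual part $\int_0^T\calI_n\,\dx t$, each of which is split into its edge and boundary contributions. For the energy inequality I combine the narrow convergence $\iota_n\upgamma_n(t)\rightharpoonup\mu(t)$ from Proposition~\ref{prop:discrete_comp}\eqref{prop:discrete_comp:3newnew} with the uniform convergence of the piecewise-constant $\iota_n$-embeddings of the reference measures $\tilde\omega_n$ to $\pi$, a consequence of the Lipschitz regularity of $P$ (Assumption~\ref{ass:Lip_Potential}). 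The joint lower semicontinuity of the relative entropy under narrow convergence in the first argument and uniform convergence of a positive density in the second---most transparent from the dual representation $\calH(\rho|\pi)=\sup_{\varphi\in C_b}\bigl[\int\varphi\,\dx\rho-\log\int e^\varphi\,\dx\pi\bigr]$---then yields~\eqref{eq:discrete:lower-limit-energy} pointwise in $t\in[0,T]$.

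For the primal dissipation I would use the duality formula in Lemma~\ref{lem:duality:dissipations}: for every smooth test pair $\upxi=(\bar\xi,\xi)$ one has the pointwise-in-time inequality $\calR_n(\upgamma_n,\rmf_n)\geq\skp{\upxi_n,\rmf_n}-\calR^\ast_n(\upgamma_n,\upxi_n)$, with $\upxi_n$ the natural cellwise-constant discretization of $\upxi$. The linear term passes to its continuous counterpart thanks to the weak-$\ast$ flux compactness in Proposition~\ref{prop:discrete_comp}\eqref{prop:discrete_comp:1}--\eqref{prop:discrete_comp:2}, noting that the discrete graph gradient applied to $\upxi_n$ approximates both $\partial_x\xi$ on each edge and the boundary gap $\phi_\sfv-\varphi^e|_\sfv$. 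The edge quadratic part of $\calR^\ast_n$ passes using the strong $L^1$ convergence of densities from Proposition~\ref{prop:discrete_comp}\eqref{prop:discrete_comp:3}, and the $\cosh$-type boundary part passes by joint continuity of $(a,b)\mapsto\scrk^e_\sfv\sqrt{ab}\,\sfC^\ast(\bar\xi^e_\sfv)$ combined with the weak-$L^1$ trace convergence in Proposition~\ref{prop:discrete_comp}\eqref{prop:discrete_comp:4new} and the finite-dimensional convergence $\bar\gamma_n\to\gamma$. Integrating in time and taking the supremum over $\upxi$ delivers the primal half of~\eqref{eq:discrete:lower-limit-dissipation}.

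For the relaxed-dual Fisher-information part the essential input is Proposition~\ref{prop:discrete_comp}\eqref{prop:discrete_comp:4}, the weak-$L^2$ convergence $\nabla_n\nu_n\rightharpoonup\partial_x\sqrt{\rho/\pi}$ of the embedded discrete square-root gradients. I would rewrite the interior sum $\sum_k\sfI^e_{n,k}$ as a weighted squared $L^2$-norm of $\nabla_n\nu_n$ with a weight built from the $\iota_n$-embedding of $\sqrt{\tilde\omega^e_{n,k}\tilde\omega^e_{n,k+1}}$, which converges uniformly to a multiple of $\pi^e$ by Assumption~\ref{ass:Lip_Potential}; the lower limit then follows from the standard joint lower semicontinuity of $\int w|f|^2\,\dx x$ under uniform convergence of a positive weight $w$ and weak $L^2$ convergence of $f$. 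For the boundary Fisher terms, the weak-$L^1$ trace convergence in Proposition~\ref{prop:discrete_comp}\eqref{prop:discrete_comp:4new}, combined with the joint convexity of the Hellinger-type integrand $(a,b)\mapsto\scrk^e_\sfv\bigl(\sqrt a-\sqrt b\bigr)^2$ (after appropriate normalization by the reference measures) and Ioffe's theorem on integral functionals, yields the lower limit. Fatou's lemma in time assembles these edge and boundary contributions into the relaxed-dual half of~\eqref{eq:discrete:lower-limit-dissipation}.

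The principal technical obstacle is the edge Fisher information: the discrete weight $\sigma^e_n(\tilde\gamma^e_k,\tilde\gamma^e_{k+1})=\diffedge^e n^2\sqrt{\tilde\gamma^e_k\tilde\gamma^e_{k+1}}$ mixes the unknown density with the reference weights, so one must disentangle them by means of the identity $\sqrt{ab}\,\sfC^\ast(\log(a/b))=2\bigl(\sqrt a-\sqrt b\bigr)^2$ from the proof of Lemma~\ref{lem:duality:dissipations}, so that the surviving weight is purely of the reference measure and the surviving squared term is a finite difference of $\nu_n$. Only then does the expression fit the weak-$L^2$ compactness framework of Lemma~\ref{lem:comp_fin_diff}. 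A secondary difficulty lies on the zero-set of the densities, but this is automatically handled: since $\calI$ is defined as the lower semicontinuous envelope in Lemma~\ref{lem:duality:dissipations}, it suffices to establish the bound along positive-density approximants and then extend by lower semicontinuity.
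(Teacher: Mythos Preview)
Your overall strategy---energy via joint lower semicontinuity of relative entropy, primal dissipation via the Fenchel--Moreau duality, and the Fisher part via weak $L^2$-convergence of $\nabla_n\nu_n$ against a uniformly convergent reference weight---matches the paper's proof, which delegates the dissipation bound to Lemmas~\ref{lem:lsc_slope_discrete} and~\ref{lem:lsc_rate_discrete} along exactly these lines.

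There is, however, one genuine gap in your treatment of the primal part. You refer to ``the edge quadratic part of $\calR^\ast_n$'', but the interior dual dissipation $(\sfR^e_{n,k})^\ast(\tilde\gamma,\tilde\xi)=\sigma^e_n(\tilde\gamma^e_k,\tilde\gamma^e_{k+1})\,\sfC^\ast(\tilde\xi^e_k)$ is \emph{not} quadratic: it is of $\cosh$-type with the prefactor $\sigma^e_n=\diffedge^e n^2\sqrt{\,\cdot\,}$ carrying an explicit $n^2$. The passage to the quadratic edge potential $(\sfR^e)^\ast(\rho,\partial_x\varphi)=\tfrac{\diffedge^e}{2}\int|\partial_x\varphi|^2\,\dx\rho$ is a nontrivial cancellation of scales: for a smooth test function $\varphi$, the discrete gradient satisfies $|(\gnabla\iota^\ast_n\varphi)_k|=O(1/n)$, so that $n^2\,\sfC^\ast\bigl((\gnabla\iota^\ast_n\varphi)_k\bigr)=n^2\bigl[(\gnabla\iota^\ast_n\varphi)_k^2+o(n^{-2})\bigr]\to|\partial_x\varphi|^2$ via the Taylor expansion $\sfC^\ast(r)=r^2+o(r^2)$ near $0$. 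Only after this step does the strong $L^1$-convergence of densities suffice to control the limsup of $\calR^\ast_n$. This is precisely the content of the paper's Lemma~\ref{lem:limsup_Rast_discrete}, and without it your duality argument does not close. Your remark that the discrete gradient ``approximates $\partial_x\xi$'' hints at the first ingredient but not at the interaction with $\sfC^\ast$ and the $n^2$ scaling.

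A minor comment on your ``principal technical obstacle'': once the identity $\sqrt{ab}\,\sfC^\ast(\log(a/b))=2(\sqrt a-\sqrt b)^2$ is applied, the surviving weight in $\sfI^e_{n,k}$ is $2\diffedge^e n^2\sqrt{\tilde\omega^e_{n,k}\tilde\omega^e_{n,k+1}}$---purely reference-measure-based, as you anticipate---so your Fisher argument is clean and agrees with the paper.
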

\begin{proof}
    The lower limit of the energies follows by Fatou's Lemma. 
    Indeed, for all $t\in[0,T]$ we have
    \begin{equation*}
       \liminf_{n\to\infty} \calE_n(\upgamma_n(t)) 
       = 
       \liminf_{n\to\infty} \calH(\upgamma_n(t)|\upomega_n) 
       = \liminf_{n\to\infty} \calH(\iota_n\upgamma_n(t)|\iota_n\upomega_n)
       \ge  \calH(\mu(t)|(\omega,\pi))  \,.
    \end{equation*}
    For the dissipation we consider the relaxed slope and the rate individually, showing the lower limit inequality for the former in Lemma~\ref{lem:lsc_slope_discrete} and for the latter in Lemma~\ref{lem:lsc_rate_discrete}.
\end{proof}

\begin{lemma}\label{lem:lsc_slope_discrete}
    Let $(\upgamma_n,\rmf_n)_{n\in\bbN}$ such that for all $n\in\bbN$ it holds $(\upgamma_n,\rmf_n)\in\overline\CE_n$ and the uniform bounds $\sup_{n\in\bbN}\sup_{t\in(0,T)}\calE_n(\upgamma_n(t))<\infty$ and $\sup_{n\in\bbN}\calD_n(\upgamma_n,\rmf_n)<\infty$ are satisfied. 
    Let $(\mu,j)\in\CE$ be a limit of $\iota_n(\upgamma_n,\rmf_n)$ in the sense of Proposition~\ref{prop:discrete_comp}.
    Then, it holds
    \begin{align*}
        \liminf_{n\to\infty}\int_0^T \calI_n(\upgamma_n)\dx t
        &\ge \int_0^T\calI(\mu)\dx t.
    \end{align*}
\end{lemma}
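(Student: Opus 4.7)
The plan is to split $\calI_n = \calI_n^{\mathrm{int}} + \calI_n^{\mathrm{bd}}$ into its contributions from interior jumps (between consecutive $\sfv^e_k$) and from boundary jumps (between $\sfv^e_1,\sfv^e_n$ and the adjacent combinatorial vertex) and to establish a lower-limit inequality for each piece separately, exploiting the compactness of fluxes and traces from Proposition~\ref{prop:discrete_comp} together with the strong convergence of the discretized reference weights from Lemma~\ref{lem:RefMeasureStrong}.

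For the interior part, the elementary identity $\sqrt{ab}\,\sfC^{*}(\log(a/b)) = 2(\sqrt{a}-\sqrt{b})^2$ applied at $a = \tilde\gamma^e_{n,k}/\tilde\omega^e_{n,k}$ and $b = \tilde\gamma^e_{n,k+1}/\tilde\omega^e_{n,k+1}$ rewrites the sum $\sum_{k=1}^{n-1}\sfI^e_{n,k}$ as a weighted squared $L^2$-norm of the discrete gradient of $\nu^e_n \coloneqq \sqrt{\iota_n\tilde\gamma^e_n/\iota_n\tilde\omega^e_n}$, of the schematic form
\[
\sum_{k=1}^{n-1}\sfI^e_{n,k}(\tilde\gamma^e_{n,k},\tilde\gamma^e_{n,k+1}) = 2\diffedge^e \int_0^{\ell^e(n-1)/n} M^e_n(x)\,\abs{\nabla_n\nu^e_n(x)}^2\dx x,
\]
up to the explicit normalization of the sum-to-integral passage, with weight $M^e_n \coloneqq (\iota_n\tilde\omega^e_n\cdot S^1_n\iota_n\tilde\omega^e_n)^{1/2}\,\nu^e_n\,S^1_n\nu^e_n$. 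By Lemma~\ref{lem:RefMeasureStrong} the symmetrized reference weight tends uniformly to $\pi^e$, and by Lemma~\ref{lem:strong_compactness} the product $\nu^e_n\,S^1_n\nu^e_n$ tends to $\rho^e/\pi^e$ strongly in $L^1(0,T;L^1(0,\ell^e))$, so that $M^e_n \to \rho^e$ strongly in $L^1$. Combined with the weak-$L^2$ convergence $\nabla_n\nu^e_n\rightharpoonup \partial_x\sqrt{\rho^e/\pi^e}$ from Lemma~\ref{lem:comp_fin_diff}, I apply the linearization $M^e_n\xi^2\ge 2M^e_n\,\psi\,\xi - M^e_n\,\psi^2$ with any $\psi\in C^1_c((0,T)\times(0,\ell^e))$ and $\xi = \nabla_n\nu^e_n$: the cross term is a weak–strong pairing passing to $2\int\rho^e\psi\,\partial_x\sqrt{\rho^e/\pi^e}\dx x$, the $\psi^2$-term is a strong–strong product, and taking the supremum over $\psi$ via the dual representation~\eqref{eq:dual:Fisher} of the Fisher information identifies the limit as $\int_0^T\sfI^e(\rho^e)\dx t$.

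For the boundary part, the map $(a,b)\mapsto \sfI^e_\sfv(a,b) = \sigma^e_\sfv(a,b)\,\sfC^{*}\bigl(\log(a\omega_\sfv/(b\pi^e|_\sfv))\bigr) = 2\sigma^e_\sfv(a,b)\,\abs{\sqrt{a/\pi^e|_\sfv}-\sqrt{b/\omega_\sfv}}^2$ is jointly convex and continuous on $[0,\infty)^2$, as the perspective of the convex function $\sfC^{*}$ along the jointly concave geometric mean (cf.\ the argument in Lemma~\ref{lem:ED:convex:affine}). By Proposition~\ref{prop:discrete_comp}\,\ref{prop:discrete_comp:4new} the trace ratios $\tilde\gamma^e_{n,1}/\tilde\omega^e_{n,1}$ converge weakly in $L^1(0,T)$ to $\rho^e/\pi^e\big|_\sfv$ (and analogously at the other endpoint), while Lemma~\ref{lem:strong_compactness} together with the finite-dimensionality of $\calP(\nodes)$ gives $\bar\gamma_{n,\sfv}\to\gamma_\sfv$ strongly in $L^1(0,T)$. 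Ioffe's lower semicontinuity theorem for convex integrands evaluated on a weak$\,\times\,$strong converging pair then yields $\liminf_n\int_0^T\sum_{e,\sfv}\sfI^e_\sfv\dx t \ge \int_0^T\sum_{e,\sfv}\sfI^e_\sfv(\rho^e|_\sfv,\gamma_\sfv)\dx t$. Summing the interior and boundary bounds concludes the proof.

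The main obstacle is the interior part: the weight $M^e_n$ inside the weighted-$L^2$ expression depends on the approximating densities themselves, so no standard weighted-$L^2$ lower-semicontinuity theorem applies directly. The linearization step above, which mirrors the dual representation~\eqref{eq:dual:Fisher} of the continuum Fisher information and produces a weak–strong pairing that is stable in the limit, is the key technical device that overcomes this difficulty.
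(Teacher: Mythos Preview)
Your boundary treatment is fine and aligns with the paper's. The interior argument, however, has a genuine gap rooted in the weight.

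Writing $u_k\coloneqq\tilde\gamma^e_{n,k}/\tilde\omega^e_{n,k}$ and $\nu_k\coloneqq\sqrt{u_k}$, the relaxed interior Fisher term is the dual dissipation evaluated at the force, and the computation
\[
(\sfR^e_{n,k})^*\bigl(\tilde\gamma_n,-\overline\nabla\calE'_n(\upgamma_n)\bigr)
=\diffedge^e n^2\sqrt{\tilde\gamma^e_{n,k}\tilde\gamma^e_{n,k+1}}\;\sfC^*\!\bigl(\log\tfrac{u_k}{u_{k+1}}\bigr)
=2\diffedge^e n^2\sqrt{\tilde\omega^e_{n,k}\tilde\omega^e_{n,k+1}}\,(\nu_k-\nu_{k+1})^2
\]
(apply your identity $\sqrt{ab}\,\sfC^*(\log(a/b))=2(\sqrt a-\sqrt b)^2$ with $a=u_k$, $b=u_{k+1}$) shows that the weight is $(\iota_n\tilde\omega^e_n\,S^1_n\iota_n\tilde\omega^e_n)^{1/2}$, involving \emph{only} the reference measures. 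This is precisely the form the paper's proof uses. By Lemma~\ref{lem:RefMeasureStrong} this weight converges to $\pi^e$ strongly in $L^\infty$, and together with $\nabla_n\nu^e_n\rightharpoonup\partial_x\nu^e$ weakly in $L^2$ the lower limit of $\int w_n|\nabla_n\nu^e_n|^2$ follows from standard weighted-$L^2$ lower semicontinuity; no linearisation trick is needed.

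Your weight $M^e_n$ retains the extra factor $\nu^e_n\,S^1_n\nu^e_n$ and therefore converges only in $L^1$ (as the product of two strongly $L^2$-convergent sequences). Two things then fail. First, the cross term $\int M^e_n\psi\,\nabla_n\nu^e_n$ pairs a weakly-$L^2$-convergent sequence with one that is merely strongly $L^1$-convergent, which does not suffice to identify the limit. Second, even if this passage went through, you would arrive at $2\diffedge^e\int_0^{\ell^e}\rho^e\,|\partial_x\nu^e|^2\,\dx x$, whereas the target $\sfI^e(\rho^e)=2\diffedge^e\int_0^{\ell^e}|\partial_x\nu^e|^2\,\dx\pi^e$ carries the weight $\pi^e$, not $\rho^e$; these differ by the factor $\rho^e/\pi^e$ and are not comparable, so the invocation of~\eqref{eq:dual:Fisher} does not recover $\sfI^e$.
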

\begin{proof}
    We consider first the internal edge parts. 
    Since for all $e\in\edges$ we have by Remark~\ref{rem:RefMeasureStrong} that $\bra[\big]{\iota_n\tilde\omega^e_n S^1_n\iota_n\tilde\omega^e_n}^{1/2} \to \pi^e$ strongly in $L^\infty(0,\ell^e)$ and by Lemma~\ref{lem:comp_fin_diff} we have $\nabla_n \nu^e_n\rightharpoonup \nabla \nu^e$ weakly in $L^2([0,T]\times[0,\ell^e])$, the lower semicontinuity statement \cite[Theorem 2.3.1]{Buttazzo1989} applies and yields for all $e\in\edges$
    \begin{align*}
        \liminf_{n\to\infty}\int_0^T \sum_{k=1}^{n-1} \sfI^e_{n,k}(\tilde\gamma^e_{n,k},\tilde\gamma^e_{n,k+1})\dx t
        &= \liminf_{n\to\infty} 2\diffedge^e \int_0^T\int_0^{\ell^e}\bra[\big]{\iota_n\tilde\omega^e_n S^1_n\iota_n\tilde\omega^e_n}^{1/2}(x)\abs*{\nabla_n\nu^e_n(x)}^2\dx x\dx t\\
        &\ge 2\diffedge^e \int_0^T\int_0^{\ell^e} \abs[\big]{\nabla \nu^e}^{2}\dx\pi^e\dx t = \int_0^T \sfI^e(\rho^e)\dx t.
    \end{align*}
    The lower limit for the edge-vertex transition slopes $\sfI^e_\sfv$ directly follows from \cite[Theorem 2.3.1]{Buttazzo1989} in light of Proposition~\ref{prop:discrete_comp}~\ref{prop:discrete_comp:4new}.
\end{proof}

For the lower limit of the rate, we follow the proof strategy of \cite[Theorem 6.2 (i)]{HraivoronskaTse2023}, first establishing a $\limsup$-estimate for the dual dissipation potentials with sufficiently smooth test functions.
\begin{lemma}\label{lem:limsup_Rast_discrete}
    Let $(\upgamma_n,\rmf_n)_{n\in\bbN}$ such that for all $n\in\bbN$ it holds $(\upgamma_n,\rmf_n)\in\overline\CE_n$ and the uniform bounds~\eqref{eq:ass:discrete:bdd}
    are satisfied. 
    Let $(\mu,j)\in\CE$ be a limit of $\iota_n(\upgamma_n,\rmf_n)$ in the sense of Proposition~\ref{prop:discrete_comp}.
    Then, for every $e\in\edges$ and $\varphi^e \in C^1([0,\ell^e])$ it holds
    \begin{align*}
        \limsup_{n\to\infty}\sum_{k=1}^{n-1}(\sfR^e_{n,k})^\ast(\tilde\gamma_n,\gnabla\iota_n^\ast\varphi) \le \frac{\diffedge^e}{2} \int_0^{\ell^e}\rho^e \abs[\big]{\partial_x\varphi^e}^2\dx x.
    \end{align*}
\end{lemma}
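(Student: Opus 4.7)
The plan is to exploit the quadratic behavior of $\sfC^\ast$ near zero---namely $\sfC^\ast(s)=\tfrac{s^2}{2}+O(s^4)$---to reduce the sum to an integral that converges via the strong compactness of Lemma~\ref{lem:strong_compactness}. Conceptually, in the regime where the discrete force $\gnabla\iota_n^\ast\varphi$ is small, the $\cosh$-type dual dissipation becomes asymptotically quadratic, thereby matching the continuum Wasserstein-type quantity on the right-hand side.

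First, I would control the argument of $\sfC^\ast$. Using the shift-operator identity $S_n^1\varphi^e-\varphi^e = \tfrac{\ell^e}{n}\int_0^1 S_n^s\partial_x\varphi^e\,ds$ (already employed in the integration-by-parts computation earlier in the paper), one obtains
\[
(\gnabla\iota_n^\ast\varphi)^e_k = n\int_{I^e_{n,k}}(S_n^1\varphi^e-\varphi^e)(x)\,dx, \qquad \bigl|(\gnabla\iota_n^\ast\varphi)^e_k\bigr|\le \tfrac{(\ell^e)^2}{n}\|\partial_x\varphi^e\|_\infty,
\]
so the argument vanishes uniformly in $k$ and Taylor's theorem yields $\sfC^\ast((\gnabla\iota_n^\ast\varphi)^e_k)\le \tfrac{1+\eta_n}{2}((\gnabla\iota_n^\ast\varphi)^e_k)^2$ with a single $\eta_n\to 0$ that is uniform in $k$. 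I would then apply Jensen's inequality twice---to the spatial average over $I^e_{n,k}$ and to the integral over $s\in[0,1]$---to bound the square by a constant multiple of $\tfrac{1}{n}\int_{I^e_{n,k}}\int_0^1(S_n^s\partial_x\varphi^e)^2\,ds\,dx$. Combined with the arithmetic-geometric-mean bound $2\sqrt{\tilde\gamma^e_{n,k}\tilde\gamma^e_{n,k+1}}\le \tilde\gamma^e_{n,k}+\tilde\gamma^e_{n,k+1}$ inside $\sigma^e_n = \diffedge^e n^2\sqrt{\tilde\gamma^e_{n,k}\tilde\gamma^e_{n,k+1}}$, the full sum is upper bounded, up to the factor $1+\eta_n$ and fixed constants in $\ell^e$, by
\[
\int_0^{\ell^e}\bigl(\iota_n\tilde\gamma^e(x) + S_n^1\iota_n\tilde\gamma^e(x)\bigr)\int_0^1\bigl(S_n^s\partial_x\varphi^e(x)\bigr)^2\,ds\,dx,
\]
after identifying the piecewise-constant weight $n(\tilde\gamma^e_{n,k}+\tilde\gamma^e_{n,k+1})\one_{I^e_{n,k}}(x)$ with $\iota_n\tilde\gamma^e+S_n^1\iota_n\tilde\gamma^e$.

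To conclude I would pass to the limit: by Lemma~\ref{lem:strong_compactness}, $\iota_n\tilde\gamma^e\to \tilde\rho^e$ strongly in $L^1([0,\ell^e])$, and the same limit holds for $S_n^1\iota_n\tilde\gamma^e$ by continuity of translations on $L^1$; meanwhile $\int_0^1(S_n^s\partial_x\varphi^e)^2\,ds\to(\partial_x\varphi^e)^2$ uniformly on $[0,\ell^e]$, since $\partial_x\varphi^e$ is uniformly continuous. Collecting these convergences produces $2\tilde\rho^e$ from the sum of the two embedded terms, which exactly matches the factor $\tfrac{1}{2}$ in the target and yields the claimed lim-sup bound. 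The step I expect to be the most delicate is the arithmetic-geometric-mean replacement: the available compactness gives only strong $L^1$-convergence of the embedded density, not pointwise control over the discrete values $\tilde\gamma^e_{n,k}$, so passing directly to the limit in the geometric mean $\sqrt{\tilde\gamma^e_{n,k}\tilde\gamma^e_{n,k+1}}$ is unavailable. The AM-GM linearization bypasses this and is asymptotically sharp precisely because both neighboring values approximate the same continuous density $\tilde\rho^e$ on the shrinking intervals $I^e_{n,k}$ and $I^e_{n,k+1}$.
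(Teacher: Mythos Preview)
Your proposal is correct and follows essentially the same route as the paper: both arguments bound the discrete gradient uniformly as $O(1/n)$, Taylor-expand $\sfC^\ast$ at the origin to reduce to a quadratic, linearize the geometric mean via AM--GM, and pass to the limit against the continuous test function $|\partial_x\varphi^e|^2$. The only cosmetic differences are that the paper expresses the discrete-gradient control through a modulus-of-continuity remainder rather than your integral representation with double Jensen, and it passes to the limit via the narrow convergence $\iota_n\tilde\gamma^e_n\rightharpoonup^\ast\tilde\rho^e$ (Proposition~\ref{prop:discrete_comp}\ref{prop:discrete_comp:3newnew}) rather than the strong $L^1(0,T;L^1)$-convergence of Lemma~\ref{lem:strong_compactness}; since the lemma is stated pointwise in time, narrow convergence is the cleaner citation, but your argument goes through unchanged with that substitution.
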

\begin{proof}
    Let $\varphi^e \in C^1([0,\ell^e])$ and denote by $\eta$ a modulus of continuity of $\nabla\varphi^e$. Then, for $k=1,\ldots,n-1$ we argue similar to \eqref{eq:HSDI} to obtain
    \begin{align*}
        \abs*{(\iota^*_n \varphi^e)_{k+1} - (\iota^*_n\varphi^e)_k} 
        &\leq \abs[\bigg]{ \int_{I^e_{n,k}}\partial_x\varphi(x)\dx x } + h^e_n\eta\bra{h^e_n}
        = h^e_n\bra[\big]{\abs*{\iota^*_n(\partial_x\varphi^e)_k} + \eta\bra{h^e_n}}.
    \end{align*}
    Furthermore, observe that for each non-negative $\psi^e \in C([0,\ell^e],[0,\infty))$ with modulus of continuity~$\eta$, it holds
    \begin{align*}
        \sum_{k=1}^{n-1} \tilde\gamma^e_{n,k+1} (\iota_n^\ast\psi)_k 
        &= \sum_{k=1}^{n-1} \tilde\gamma^e_{n,k+1} \frac{1}{h^e_n}\int_{I^e_{n,k}}\psi(x)\dx x
        \le \sum_{k=2}^n \tilde\gamma^e_{n,k} \frac{1}{h^e_n}\int_{I^e_{n,k}}\psi(x)\dx x + \eta(h^e_n)\\
        &\le \sum_{k=1}^n \tilde\gamma^e_{n,k} (\iota_n^\ast\psi)_k + \eta(h^e_n)
    \end{align*}
    Therefore, for $\iota_n\tilde\gamma^e_n \rightharpoonup^\ast \rho^e$ and $\varphi\in C^1$, Jensen's inequality together with a Taylor expansion of $\sfC^*(r)=\tfrac{r^2}{2} + o(r^2)$ around $0$ yields the upper estimate
    \begin{align*}
        \sum_{k=1}^{n-1}(\sfR^e_{n,k})^\ast(\tilde\gamma_n,\gnabla\iota_n^\ast\varphi) &=\sum_{k=1}^{n-1}\sigma^e_n\bra{\tilde\gamma^e_{n,k},\tilde\gamma^e_{n,k+1}}\sfC^\ast((\gnabla\iota_n^\ast\varphi^e)_k)\\
        &= \sum_{k=1}^{n-1}\sigma^e_n\bra{\tilde\gamma^e_{n,k},\tilde\gamma^e_{n,k+1}}\sfC^\ast\bra*{\abs*{(\iota_n^\ast \varphi^e)_{k+1}-(\iota_n^\ast \varphi^e)_k}}\\
        &\leq \sum_{k=1}^{n-1} \frac{\diffedge^e}{(h^e_n)^2} \sqrt{\tilde\gamma^e_{n,k}\tilde\gamma^e_{n,k+1}}\sfC^\ast\bra[\big]{h^e_n\bra{\abs*{\iota^*_n(\partial_x\varphi^e)_k} + \eta\bra{h^e_n}}}\\
        &= \frac{\diffedge^e}{2}\sum_{k=1}^{n-1}\sqrt{\tilde\gamma^e_{n,k}\tilde\gamma^e_{n,k+1}}\bra[\big]{\abs[\big]{\abs*{\iota^*_n(\partial_x\varphi^e)_k} + \eta\bra{h^e_n}}^2+ o(1)} \\ %
        &\le \frac{\diffedge^e}{2}\sum_{k=1}^{n-1}\frac{\tilde\gamma^e_{n,k}+\tilde\gamma^e_{n,k+1}}{2}\abs[\big]{\iota^*_n(\partial_x\varphi^e)_k }^2 +o(1)\\
        &\le \frac{\diffedge^e}{2}\sum_{k=1}^{n-1}\frac{\tilde\gamma^e_{n,k}+\tilde\gamma^e_{n,k+1}}{2}\iota^*_n\bra[\Big]{\abs[\big]{\partial_x\varphi^e}^2}_k +o(1)\\
        &\le \frac{\diffedge^e}{2}\sum_{k=1}^n\tilde\gamma^e_{n,k}\iota^*_n\bra[\Big]{\abs[\big]{\partial_x\varphi^e}^2}_k +o(1)\\
        &= \frac{\diffedge^e}{2} \int_0^{\ell^e}\iota_n\tilde\gamma^e \abs[\big]{\partial_x\varphi^e}^2\dx x + o(1)
    \end{align*}
    By letting $n\to \infty$, we get the upper limit bound by $(\sfR^e)^\ast(\rho,\nabla\varphi)=\frac{\diffedge^e}{2} \int_0^{\ell^e}\rho^e \abs[\big]{\partial_x\varphi^e}^2\dx x$ as defined in~\eqref{eq:Mgraph:Re*}.
\end{proof}
Now, we are able to prove the lower limit inequality for the rate as second part for the proof of Proposition~\ref{prop:lsc_dissipation_discrete}.
\begin{lemma}\label{lem:lsc_rate_discrete}
    Let $(\upgamma_n,\rmf_n)_{n\in\bbN}$ such that for all $n\in\bbN$ it holds $(\upgamma_n,\rmf_n)\in\overline\CE_n$ and the uniform bounds~\eqref{eq:ass:discrete:bdd}
    are satisfied. 
    Let $(\mu,j)\in\CE$ be a limit of $\iota_n(\upgamma_n,\rmf_n)$ in the sense of Proposition~\ref{prop:discrete_comp}.
    Then, it holds
    \begin{align*}
        \liminf_{n\to\infty}\int_0^T\calR_n (\upgamma_n,\rmf_n)\dx t \ge \int_0^T\calR(\mu,\rmj)\dx t. 
    \end{align*}
\end{lemma}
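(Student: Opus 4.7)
The strategy is the classical lower bound technique for convex integral functionals via Legendre-Fenchel duality, tailored to the embedding $\iota_n$. The key identity is the dual representation
\begin{equation*}
\calR(\mu(t),\rmj(t)) = \sup_{\Phi\in C^1(\nodes\times\Medges)}\bra[\big]{\skp{\bnabla\Phi,\rmj(t)}_\sfY - \calR^\ast(\mu(t),\bnabla\Phi)}
\end{equation*}
from Lemma~\ref{lem:duality:dissipations}. The plan is to produce matching lower bounds on the discrete side by testing the Fenchel-Young inequality against discretisations $\gnabla\iota_n^\ast\Phi$ of smooth functions $\Phi\in C^1(\nodes\times\Medges)$, passing to the limit in each piece separately, and then taking the supremum over $\Phi$.

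First, fix $\Phi=(\phi,\varphi)\in C^1(\nodes\times\Medges)$ and apply Fenchel-Young pointwise in time to obtain
\begin{equation*}
    \calR_n(\upgamma_n(t),\rmf_n(t)) \ge \skp{\gnabla\iota_n^\ast\Phi,\rmf_n(t)}_{\edges_n} - \calR_n^\ast(\upgamma_n(t),\gnabla\iota_n^\ast\Phi).
\end{equation*}
Integrating in time, the linear term splits into an internal-edge and a boundary pairing. For the internal edges, the shift identity~\eqref{eq:HSDI} rewrites $\sum_k\tilde f^e_{n,k}\bra*{(\iota_n^\ast\varphi)^e_{k+1}-(\iota_n^\ast\varphi)^e_k}$ as $\int_0^{\ell^e}\partial_x\varphi^e\dx(\iota_n\tilde f^e_n)$ up to a modulus-of-continuity error, which passes to $\int_0^{\ell^e}\partial_x\varphi^e\dx j^e$ by Proposition~\ref{prop:discrete_comp}~\ref{prop:discrete_comp:1}. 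For the boundary edges, $(\iota_n^\ast\varphi)^e_1\to\varphi^e|_\sfv$ by continuity, so the pairing $\bar f^e_{n,\sfv}(\phi_\sfv-(\iota_n^\ast\varphi)^e_1)$ converges to $\bar\jmath^e_\sfv(\phi_\sfv-\varphi^e|_\sfv)$ by Proposition~\ref{prop:discrete_comp}~\ref{prop:discrete_comp:2}. Summing yields $\int_0^T\skp{\bnabla\Phi,\rmj}_\sfY\dx t$.

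For the nonlinear term the target estimate is $\limsup_n \int_0^T \calR_n^\ast(\upgamma_n,\gnabla\iota_n^\ast\Phi)\dx t \le \int_0^T \calR^\ast(\mu,\bnabla\Phi)\dx t$. The internal contribution is handled by Lemma~\ref{lem:limsup_Rast_discrete}, which provides the pointwise-in-time $\limsup$ estimate. To upgrade this to the time-integrated statement I combine the explicit quadratic pointwise upper bound $(\sfR^e_{n,k})^\ast(\tilde\gamma_n,\gnabla\iota_n^\ast\varphi)\le \frac{\diffedge^e}{2}\norm{\partial_x\varphi^e}_\infty^2 (\tilde\gamma^e_{n,k}+\tilde\gamma^e_{n,k+1})/2 + o(1)$ (isolated inside the proof of Lemma~\ref{lem:limsup_Rast_discrete}) with the strong $L^1(0,T;L^1(\Medges))$-convergence of $\iota_n\tilde\gamma_n$ from Lemma~\ref{lem:strong_compactness}, so that reverse Fatou / dominated convergence applies. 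The boundary contribution $(\sfR_\sfv^e)^\ast(\tilde\gamma^e_{n,1},\bar\gamma_{n,\sfv},\phi_\sfv-\varphi^e|_\sfv)$ converges pointwise in $t$ by continuity of $\sfC^\ast$ together with the trace convergence in Proposition~\ref{prop:discrete_comp}~\ref{prop:discrete_comp:4new} and the strong convergence of reference measures from Lemma~\ref{lem:RefMeasureStrong}; an analogous pointwise quadratic upper bound in $\phi_\sfv-\varphi^e|_\sfv$ enables dominated convergence in $t$ here as well.

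Combining these pieces yields, for every $\Phi\in C^1(\nodes\times\Medges)$,
\begin{equation*}
    \liminf_{n\to\infty}\int_0^T\calR_n(\upgamma_n,\rmf_n)\dx t \ge \int_0^T\bra[\big]{\skp{\bnabla\Phi,\rmj}_\sfY-\calR^\ast(\mu,\bnabla\Phi)}\dx t.
\end{equation*}
It remains to exchange the supremum over $\Phi$ with the time integral, which follows from the standard measurable selection argument for Legendre transforms of normal convex integrands (using separability of the admissible test-function class in a suitable $C^1$-topology) together with the duality identity~\eqref{eq:DualDissipation:duality}. The main obstacle is the third paragraph: Fatou points the wrong way for $\calR_n^\ast$, so the whole argument hinges on extracting from the proof of Lemma~\ref{lem:limsup_Rast_discrete} a quadratic pointwise upper bound dominated by an $n$-uniformly $L^1_t$-integrable function, which is exactly what the strong $L^1(0,T;L^1(\Medges))$-compactness of $(\iota_n\upgamma_n)_n$ in Lemma~\ref{lem:strong_compactness} supplies.
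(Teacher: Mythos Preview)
Your proposal is correct and follows essentially the same Fenchel--Young duality strategy as the paper's proof. The only notable difference is that the paper works directly with \emph{time-dependent} test functions $\varphi^e\in C^1([0,T]\times[0,\ell^e])$ from the outset, which yields
\[
\int_0^T\Bigl(\int_0^{\ell^e}\partial_x\varphi\,\mathrm{d}\tilde\jmath^e - (\sfR^e)^\ast(\rho^e,\partial_x\varphi)\Bigr)\mathrm{d}t \le \liminf_{n\to\infty}\int_0^T\sum_{k=1}^{n-1}\sfR^e_{n,k}(\tilde\gamma_n,\tilde f_n)\,\mathrm{d}t
\]
for every such $\varphi$, and then invokes Fenchel--Moreau duality on the space--time integral as in~\cite[Theorem~6.2(i)]{HraivoronskaTse2023}. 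This sidesteps the measurable-selection argument you need at the end to exchange the supremum over time-independent $\Phi$ with the time integral. Your route works as well, but the time-dependent formulation is more economical. The boundary contribution, which the paper does not spell out, indeed follows from the joint convexity and lower semicontinuity of $(a,b,j)\mapsto\sfC(j\,|\,\sigma^e_\sfv(a,b))$ together with the convergences in Proposition~\ref{prop:discrete_comp}~\ref{prop:discrete_comp:2} and~\ref{prop:discrete_comp:4new}, just as you outline.
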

\begin{proof}
    By the duality of $\sfC$ and $\sfC^\ast$, we have
    \begin{align*}
        \sum_{k=1}^{n-1} (\gnabla\iota_n^\ast\varphi^e)_k \tilde f^e_{n,k}
        &\le \sum_{k=1}^{n-1} \pra[\big]{\sigma^e_n\bra{\tilde\gamma^e_{n,k},\tilde\gamma^e_{n,k+1}}\sfC^\ast((\gnabla\iota_n^\ast\varphi^e)_k) + \sfC\bra{\tilde f^e_{n,k}|\sigma^e_n\bra{\tilde\gamma^e_{n,k},\tilde\gamma^e_{n,k+1}}}}\\
        &= \sum_{k=1}^{n-1} \pra[\big]{(\sfR^e_{n,k})^\ast(\tilde\gamma_n,\gnabla\iota_n^\ast\varphi) + \sfR^e_{n,k}(\tilde\gamma_n,\tilde f_n)}
    \end{align*}
    Combining these estimates, the limits $j^e_n \rightharpoonup^\ast j^e$ and $\rho^e_n\rightharpoonup^\ast \rho^e$ yield for each $\varphi^e \in C^1([0,T]\times[0,\ell^e])$ 
    \begin{align*}
        &\int_0^T\pra[\bigg]{\int_0^{\ell^e} \partial_x\varphi \dx j^e  - (\sfR^e)^\ast(\rho^e,\partial_x\varphi)} \dx t \\
        &\qquad\le \lim_{n\to\infty} \int_0^T\int_0^{\ell^e}\partial_x\varphi \dx j^e_n \dx t - \limsup_{n\to\infty} \int_0^T \sum_{k=1}^{n-1}(\sfR^e_{n,k})^\ast(\tilde\gamma_n,\gnabla\iota_n^\ast\varphi) \dx t\\
        &\qquad\le \liminf_{n\to\infty} \int_0^T \sum_{k=1}^{n-1} \pra[\big]{\tilde f^e_{n,k}\overline\nabla\iota_n^\ast \varphi^e - (\sfR^e_{n,k})^\ast(\tilde\gamma_n,\gnabla\iota_n^\ast\varphi)} \dx t\\
        &\qquad\le \liminf_{n\to\infty} \int_0^T \sum_{k=1}^{n-1}\sfR^e_{n,k}(\tilde\gamma_n,\tilde f_n)\dx t.
    \end{align*} 
    With this, the lower semicontinuity for the internal edge contribution $\calR_\Medges$ follows from the Fenchel-Moreau duality theorem as in the remainder of the proof of \cite[Theorem 6.2 (i)]{HraivoronskaTse2023} with $\bbT = \operatorname{Id}$.

    It remains to treat the edge--vertex transition part $\calR_{\nodes,\edges}$, which in $\calR_n$ is given by the finitely many terms $  \sum_{e=\sfv\sfw\in\edges}\bigl[\sfR^e_\sfv(\tilde\gamma^e_{n,1},\bar\gamma_{n,\sfv},\bar f^e_{n,\sfv})+\sfR^e_\sfw(\bar\gamma_{n,\sfw},\tilde\gamma^e_{n,n},\bar f^e_{n,\sfw})\bigr]$. Since each $\sfR^e_\sfv$ is jointly convex and lower semicontinuous as the perspective function of $\sfC$, the convergence of the boundary traces from Proposition~\ref{prop:discrete_comp}~\ref{prop:discrete_comp:4new} together with the weak convergence $\iota_n\bar f_n\rightharpoonup\bar\jmath$ from Proposition~\ref{prop:discrete_comp}~\ref{prop:discrete_comp:2} directly yields $\liminf_{n\to\infty}\int_0^T\calR_{\nodes,\edges}(\upgamma_n,\bar f_n)\dx t\ge\int_0^T\calR_{\nodes,\edges}(\mu,\bar\jmath)\dx t$, e.g.\ by \cite[Theorem~2.34]{AmbrosioFuscoPallara00}. Adding the two contributions gives the claim.
\end{proof}

\section{Multiscale limits via EDP Convergence}\label{sec:multiscale}

Having established the gradient structure and well-posedness of solutions to~\eqref{eq:system_intro_linear}, we are now in the position to study different multiscale limits of the system.

\subsection{Kirchhoff limit}\label{ssec:Kirchhoff}

We start with the case in which the size of the reservoir vanishes and simultaneously the mass exchange between edges and vertices is accelerated. 
This scaling leads to the system studied in~\cite{ErbarForkertMaasMugnolo2022}. 
In particular, we use a similar continuity equation defined in duality with test functions being continuous across edges, which share a common vertex. The latter relation defines an equivalence relation $\sim$ and the according quotient space is denoted with $\sfG \coloneqq \sfL_{/\sim}$.
In this way, we arrive at a weak characterization of the Kirchhoff condition~\eqref{eq:Kirchhoff_intro_limit}.

\begin{definition}[Limit continuity equation and embedding]\label{def:tildeCE}
    The space of pairs $(\mu, j):[0,T]\to \calP(\Mgraph) \times\calM(\Medges)$ satisfying for all $\Phi\in C^1(\nodes\times\Medges)$
    with $\phi_\sfv = \varphi^e|_\sfv$ for all $\sfv\in\nodes$ and $e\in\edges(\sfv)$
    \begin{equation}\label{eq:def:tildeCE}%
		\pderiv{}{t} \pra[\bigg]{ \,\sum_{\sfv \in \nodes} \phi_\sfv \gamma_\sfv(t) + \sum_{e\in\edges} \int_0^{\ell^e} \mkern-8mu \varphi^e(x) \dx{\rho^e}(t)} = \sum_{e\in\edges} \int_0^{\ell^e}\mkern-8mu \partial_x \varphi^e \dx j^e(t)  \,
	\end{equation}
	for a.e.~$t\in [0,T]$
    is denoted by $\widetilde\CE$.

    Furthermore, the embedding $\Pi : \CE \to \widetilde\CE$ is defined by $\Pi(\mu,\rmj) = (\mu,j)$, i.e., the exchange fluxes with the reservoir $\bar\jmath$ are neglected.
\end{definition}
Indeed, choosing in \eqref{e:def:bCE:weak} test functions satisfying $\phi_\sfv=\varphi^e|_\sfv$ for all $\sfv \in \nodes$ and $e\in \edges(\sfv)$, we observe for all $(\mu,\rmj)\in\CE$ that $\Pi^\eps(\mu,\rmj) \in \widetilde\CE$.

The next ingredient is the rescaled dissipation functional and its tentative limit.
\begin{definition}[Rescaling and limit functionals]\label{def:D_epsD_0}
    Multiplying each of the constants $\scrk^e_\sfv$ by $\eps^{-1}$ and inserting the rescaled reference measures $\pi^\eps \coloneqq \frac{\pi}{\pi(\Medges)+\eps\omega(\nodes)}$ and $\omega^\eps \coloneqq \frac{\eps\omega}{\pi(\Medges)+\eps\omega(\nodes)}$ into \eqref{eq:def:Mgraph:EDfunctional}, the prelimit dissipation functionals read for $\mu=(\gamma,\rho)$ and $\rmj=(\bar\jmath,j)$ with $(\mu,\rmj)\in \CE$ as
\begin{align*}
    \calD_\eps(\mu,\rmj) = \int_0^T\Bigg(&\sum_{e\in\edges}\pra[\Bigg]{\frac{1}{2\diffedge^e}\int_0^{\ell^e} \frac{\abs{j^e}^2}{\rho^e}\dx x + 2\diffedge^e \int_0^{\ell^e}\abs[\Bigg]{\partial_x\sqrt{\frac{\rho^e}{\pi^{\eps,e}}}}^2\pi^{\eps,e}\dx x}\\
    &+\sum_{\sfv\in\nodes}\sum_{e\in\edges(\sfv)}\pra[\Bigg]{\sfC\bra[\big]{\bar\jmath^e_\sfv| \sigma^{\eps,e}_\sfv(\gamma_\sfv,\rho^e|_\sfv)}
    + 2 \sigma^{\eps,e}_\sfv\bra{\pi^{\eps,e}|_\sfv,\omega^\eps_\sfv} \abs[\Bigg]{ \sqrt{\frac{\rho^e|_\sfv}{\pi^{\eps,e}|_\sfv}} -  \sqrt{\frac{\gamma_\sfv}{\omega^\eps_\sfv}}}^2}\Bigg)\dx t,
\end{align*}
with $\sigma^{\eps,e}_\sfv(a,b) = \frac1\eps \scrk^e_\sfv\sqrt{ab}$.
The tentative limit dissipation functional is for $\mu=(\gamma,\rho)$ and $j$ with $(\mu,j)\in \widetilde\CE$ given by
\begin{align*}
    \calD_0(\mu,j) \coloneqq\int_0^T\Bigg(&\sum_{e\in\edges}\pra[\Bigg]{\frac{1}{2\diffedge^e}\int_0^{\ell^e} \frac{\abs{j^e}^2}{\rho^e}\dx x + 2\diffedge^e \int_0^{\ell^e}\abs[\Bigg]{\partial_x\sqrt{\frac{\rho^e}{\pi^e}}}^2\pi^e\dx x}\\
    &+\sum_{\sfv\in\nodes}\sum_{\;e,e'\in\edges(\sfv)}\chi_{\{0\}}\bra[\Bigg]{\frac{\rho^e|_\sfv}{\pi^e|_\sfv} - \frac{\rho^{e'}|_\sfv}{\pi^{e'}|_\sfv}}\Bigg)\dx t .
\end{align*}
Note that $\calD_0$ is independent of the component $\gamma$ in $\mu$.
\end{definition}

We are now in the position to state the main result of this section.
\begin{theorem}[EDP limit]\label{thm:EDP_Kirchhoff}
    Consider $(\mu^\eps,\rmj^\eps)_{\eps>0}\subset\CE$ satisfying $\sup_{\eps>0}\sup_{t\in[0,T]}\calE(\mu^\eps(t))<\infty$ and $\sup_{\eps>0}\calD_\eps(\mu^\eps,\rmj^\eps)<\infty$. Let $\Pi:\CE\to\widetilde\CE$ be defined by $\Pi(\mu,\rmj)\coloneqq (\mu,j)$.
    
    Then, there exists $(\mu,j)\in\widetilde\CE$ such that $\Pi(\mu^\eps,\rmj^\eps) \rightharpoonup (\mu,j)$ in the sense of Proposition~\ref{prop:compactness_Kirchhoff}, below, and it holds
    \begin{align*}
        \liminf_{\eps\to 0}\calE(\mu^\eps(t)) &\ge \calE(\mu(t))\quad\text{for all $t\in[0,T]$}\qquad\text{and}\qquad
        \liminf_{\eps\to 0} \calD_\eps(\mu^\eps,\rmj^\eps) \ge \calD_0(\mu,j).
    \end{align*}
    In particular, if in addition we have the well-preparedness $\calE(\mu^\eps(0)) \to \calE(\mu(0))$ as $\eps\to 0$, then 
    \begin{align*}
        \liminf_{\eps\to 0} \calL_\eps(\mu^\eps,\rmj^\eps) 
        \ge \calL_0(\mu,j).
    \end{align*}
\end{theorem}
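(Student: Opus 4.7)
The proof follows the three–step EDP pattern: compactness of the embedded curves, lower limit for the energy, and lower limit for the dissipation, with the $\calL$–bound obtained by addition. I sketch each step.

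\textbf{Compactness.} The uniform energy bound gives $L^\infty$ bounds on $\gamma^\eps(\nodes)$ and $\rho^\eps(\Medges)$. Using the explicit form of the rescaling $\pi^{\eps,e}|_\sfv=\pi^e|_\sfv/Z^\eps$, $\omega^\eps_\sfv=\eps\omega_\sfv/Z^\eps$, the boundary Fisher-type contribution to $\calD_\eps$ rewrites as
\begin{equation*}
    \frac{2\scrk^e_\sfv\sqrt{\pi^e|_\sfv\omega_\sfv}}{\sqrt\eps}\,\Bigl|\sqrt{\rho^e|_\sfv/\pi^e|_\sfv}-\sqrt{\gamma_\sfv/(\eps\omega_\sfv)}\Bigr|^2,
\end{equation*}
so boundedness of $\calD_\eps$ forces $\gamma^\eps_\sfv/\eps$ to remain of order one and hence $\gamma^\eps\to 0$ narrowly in $\calM(\nodes)$. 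On the edges, the Fisher information + Aubin–Lions strategy already developed in Lemmas~\ref{lem:strong_compactness} and \ref{lem:comp_fin_diff} yields $\rho^\eps\to\rho$ strongly in $L^1(0,T;L^1(\Medges))$ with $\sqrt{\rho/\pi}\in L^2(0,T;H^1(\Medges))$, including weak convergence of traces at the vertices. The quadratic edge-flux cost gives narrow compactness of $j^\eps$ in $\calM([0,T]\times\Medges)$.

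\textbf{Kirchhoff trace continuity.} This is the crux. Applying the elementary bound $(a-c)^2\le 2(a-b)^2+2(b-c)^2$ to the two boundary Fisher terms associated to edges $e,e'\in\edges(\sfv)$ at the shared vertex $\sfv$, I get
\begin{equation*}
    \frac{C_\sfv}{\sqrt\eps}\int_0^T\Bigl|\sqrt{\rho^e|_\sfv/\pi^e|_\sfv}-\sqrt{\rho^{e'}|_\sfv/\pi^{e'}|_\sfv}\Bigr|^2\dx t\le C\sup_\eps\calD_\eps(\mu^\eps,\rmj^\eps)<\infty.
\end{equation*}
Hence the limit traces satisfy $\rho^e|_\sfv/\pi^e|_\sfv=\rho^{e'}|_\sfv/\pi^{e'}|_\sfv$ a.e. in $t$, so the indicator $\chi_{\{0\}}$ in $\calD_0$ vanishes and $(\mu,j)$ is admissible as a test against any $\varphi\in C(\sfG)\cap C^1(\Medges)$, i.e.\ $(\mu,j)\in\widetilde\CE$ (note that $\Pi$ discards the edge–vertex exchange fluxes $\bar\jmath^\eps$, which is consistent with $\gamma\equiv 0$ in the limit).

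\textbf{Lower limit inequalities.} Pointwise lsc $\liminf_\eps\calE(\mu^\eps(t))\ge\calE(\mu(t))$ follows from narrow lsc of $\calH$. For the dissipation, the edge flux cost is a convex integral functional and passes to the limit by \cite[Theorem 3.4.1]{Buttazzo1989}, while the edge Fisher information does so thanks to Lemma~\ref{lem:I:convex} combined with strong $L^\infty$–convergence $\pi^{\eps,e}\to \pi^e/Z^0$ and weak $L^2$–convergence of $\partial_x\sqrt{\rho^\eps/\pi^\eps}$. The boundary $\sfC$–primal term has the denominator $\sigma^{\eps,e}_\sfv\to\infty$ while $\bar\jmath^{\eps,e}_\sfv$ remains bounded, so the Taylor expansion $\sigma\sfC(\bar\jmath/\sigma)\simeq\bar\jmath^2/(2\sigma)\to 0$ shows its limit contribution is zero, matching the absence of such a term in $\calD_0$. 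The boundary Fisher term's liminf is exactly the content of the trace–matching step above: its contribution is either $0$ (when the Kirchhoff condition holds) or $+\infty$, in both cases coinciding with $\chi_{\{0\}}$. Summing and using well-preparedness $\calE(\mu^\eps(0))\to\calE(\mu(0))$ gives $\liminf_\eps\calL_\eps\ge\calL_0$.

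\textbf{Main obstacle.} The delicate point is that the two boundary mechanisms on the $1/\sqrt\eps$ scale must be used simultaneously for three distinct roles—forcing $\gamma^\eps\to 0$, producing the trace-alignment that places the limit in $\widetilde\CE$, and delivering the $\chi_{\{0\}}$ term in $\calD_0$—without any double counting. The key is to exploit the Fisher-type term via two triangle-inequality decompositions (one coupling $\gamma$ to each edge trace, one coupling two adjacent edge traces), rather than trying to split the dissipation cleanly into separate edge and vertex contributions.
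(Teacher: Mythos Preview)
Your strategy matches the paper's (compactness in Proposition~\ref{prop:compactness_Kirchhoff}, lower limit in Proposition~\ref{prop:lsc_Kirchhoff}), and your triangle-inequality route to the trace alignment is equivalent to the paper's route via the common weak $L^2(0,T)$ limit $\sqrt{u_\sfv}$ of $\sqrt{\gamma^\eps_\sfv/\omega^\eps_\sfv}$ and of each $\sqrt{\rho^{\eps,e}/\pi^e}\big|_\sfv$, $e\in\edges(\sfv)$.

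One simplification you are missing: for the liminf of $\calD_\eps$ the two boundary contributions (the primal $\sfC$ term and the Fisher-type term) are nonnegative and can simply be \emph{dropped}. The surviving edge functionals are $\eps$-independent up to the converging normalisation $Z^\eps$ and are weakly lower semicontinuous, while the $\chi_{\{0\}}$ constraint in $\calD_0$ is verified to equal zero from the trace alignment already obtained during the compactness step. This removes your ``double-counting'' concern entirely and makes the discussion of the $\sfC$ primal term unnecessary---which is fortunate, since your claim that $\bar\jmath^{\eps,e}_\sfv$ remains bounded is not established (no compactness for $\bar\jmath^\eps$ is asserted or needed; the embedding $\Pi$ discards it). Finally, the paper obtains $\gamma^\eps\to 0$ more directly from the uniform energy bound via Pinsker's inequality, $\sum_\sfv|\gamma^\eps_\sfv-\omega^\eps_\sfv|\le\sqrt{2\calE(\mu^\eps)}$ with $\omega^\eps_\sfv\to 0$, rather than from the dissipation; your route requires the edge-trace bound first and is presented in slightly muddled logical order.
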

\begin{proof}
    The compactness is proved in Proposition~\ref{prop:compactness_Kirchhoff}, after which the lower limit is established in Proposition~\ref{prop:lsc_Kirchhoff}, below. 
\end{proof}

\begin{remark}
     Since by Proposition~\ref{prop:compactness_Kirchhoff}, below, we have that $\gamma^\eps\to 0$ strongly in $L^1([0,T]\times\nodes)$, the remaining pair $(\rho,j)$ satisfies for all $\varphi\in  C(\sfG) \cap C^1(\Medges)$ the continuity equation 
    \begin{equation}\label{eq:def:tildeCE_no_gamma}
        \frac{\dx{}}{\dx t}\sum_{e\in\edges} \int_0^{\ell^e} \varphi^e \dx{\rho^e} 
        = \sum_{e\in\edges} \int_0^{\ell^e} \partial_x \varphi^e \dx j^e.
    \end{equation}
    Furthermore, it suffices to consider $\calD_0$ as a function of just $(\rho,j)$. 
    Hence, Theorem~\ref{thm:EDP_Kirchhoff} induces the gradient system in continuity equation format $(\Medges,\Medges,\nabla,\calE_0, \calD_0)$ in the sense of Definition~\ref{def:GradSystCE}, where $\nabla:C^1(\Medges)\to C^0(\Medges)$ is just the standard gradient on each metric edge (see~\eqref{eq:def:abstract-gradient:nodes}) and $\calE_0(\rho)\coloneqq \calH(\rho|\pi)$.
    The corresponding gradient flow equation is given in~\eqref{eq:system_intro_Kirchhoff}.
    In particular, we have recovered exactly the metric gradient structure of \cite{ErbarForkertMaasMugnolo2022}.

\end{remark}

We proceed by establishing the two steps of EDP convergence with embedding (cf.~Definition~\ref{def:EDP-convergence}), starting with the compactness after embedding.
\begin{proposition}[Compactness after embedding]\label{prop:compactness_Kirchhoff}
    Consider $(\mu^\eps,j^\eps) \in\CE$ such that 
    \[ 
    \sup_{\eps>0} \sup_{t\in[0,T]}\calE(\mu^\eps(t))<\infty
    \quad \text{and}\quad 
    \sup_{\eps>0}\calD_\eps(\mu^\eps,j^\eps) < \infty \,.
    \]
    Then, there exists a pair $(\mu,j)\in\widetilde\CE$ such that (along a subsequence) it holds
    \begin{enumerate}[label=(\roman*)]

    \item $\int_\cdot j^\eps(t)\dx t \rightharpoonup^\ast \int_\cdot j(t)\dx t$ in $\calM([0,T]\times\Medges)$;
    \item $\gamma^\eps \to 0$ strongly in $L^1([0,T]\times\nodes)$;
    \item $\mu^\eps(t)\rightharpoonup (0,\rho(t))$ narrowly in $\calP(\Mgraph)$ for a.e. $t\in[0,T]$;
    \item $\forall\;\sfv\in\nodes, e\in \edges(\sfv)$ there exists $u_\sfv \in L^1(0,T)$ such that 
    $\sqrt{\frac{\rho^{\eps,e}}{\pi^e}}\Big|_\sfv \rightharpoonup \sqrt{u_\sfv}$ weakly in $L^2(0,T)$;
    \item $\sqrt{\frac{{\rho^{\eps}}}{\pi}}\rightharpoonup\sqrt{\frac{\rho}{\pi}}$ weakly in $L^2(0,T;H^1(\pi))$.
    \end{enumerate}
\end{proposition}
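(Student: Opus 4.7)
The plan is to extract the four pieces of information one at a time from the two a~priori bounds, with the main algebraic observation being that the edge Fisher term in $\calD_\eps$ is invariant under the Kirchhoff rescaling, while the boundary Fisher term carries a prefactor of order $\eps^{-1/2}$ that forces a matching condition at each vertex.

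Vertex mass and fluxes. Since $\omega^\eps_\sfv\sim\eps$ but $\sum_\sfv\calH(\gamma^\eps_\sfv|\omega^\eps_\sfv)$ is uniformly bounded, an elementary estimate using $\eta(r)=r\log r-r+1$ forces $\gamma^\eps_\sfv=O(1/|\log\eps|)\to 0$ for every $\sfv\in\nodes$. The flux bound (i) comes directly from the quadratic rate $\sfR^e(\rho^\eps,j^\eps)=\tfrac{1}{2\diffedge^e}\int|j^{\eps,e}|^2/\rho^{\eps,e}$: a Cauchy–Schwarz estimate against $\rho^{\eps,e}([0,\ell^e])\le 1$ gives $\int_0^T|j^{\eps,e}|([0,\ell^e])\dx t\le C$, and Banach–Alaoglu in $\calM([0,T]\times\Medges)$ supplies the weak-$^*$ limit.

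Sobolev regularity and identification. Introducing $w^\eps\coloneqq\sqrt{\rho^\eps/\pi}$ with the \emph{unrescaled} $\pi$, the decisive cancellation is $2\diffedge^e\int|\partial_x\sqrt{\rho^{\eps,e}/\pi^{\eps,e}}|^2\dx\pi^{\eps,e}=2\diffedge^e\int|\partial_x w^{\eps,e}|^2\dx\pi^e$, so the edge Fisher contribution to $\calD_\eps$ is $\eps$-independent. Combined with $\int|w^{\eps,e}|^2\dx\pi^e\le\rho^{\eps,e}([0,\ell^e])\le 1$, this produces an $\eps$-uniform $L^2(0,T;H^1(\pi))$-bound on $w^\eps$, hence a weak subsequential limit $w$. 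To upgrade this to the genuine square root $\sqrt{\rho/\pi}$ (claim (iv)) I would follow Lemma~\ref{lem:strong_compactness} and apply Aubin–Lions–Simon to $\rho^\eps$: spatial control comes from $\partial_x\rho^{\eps,e}=2w^{\eps,e}(\partial_x w^{\eps,e})\pi^e-\rho^{\eps,e}\partial_x P^e$ and Assumption~\ref{ass:Lip_Potential}, while time regularity in a dual Sobolev norm is obtained by testing~\eqref{e:def:bCE:weak} against $\Phi=(\phi,\varphi)\in C^1(\nodes\times\Medges)$, the flux bound handling the edge integral and $\gamma^\eps\to 0$ together with the $\sqrt\eps$-decay below handling the vertex integral. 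Strong $L^1(0,T;L^1(\Medges))$-convergence of $\rho^\eps$ together with the weak $H^1$-limit of $w^\eps$ forces $\rho=w^2\pi$; the same $BV$-in-time bound, tested against $C(\Mgraph)$, upgrades this to narrow convergence for every $t\in[0,T]$ (claim (ii)), and passing to the limit in~\eqref{e:def:bCE:weak} against $\varphi\in C(\sfG)\cap C^1(\Medges)$, for which the boundary terms $\bar\jmath^{\eps,e}_\sfv(\phi_\sfv-\varphi^e|_\sfv)$ vanish identically, shows $(\mu,j)\in\widetilde\CE$.

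Traces and vertex matching. The 1D trace embedding $H^1(0,\ell^e)\hookrightarrow C([0,\ell^e])$ combined with the previous step yields $L^2(0,T)$-bounds on $w^{\eps,e}|_\sfv$ and hence weak subsequential limits $w_{e,\sfv}$. The essential point for claim (iii) is that these limits agree across all edges sharing a vertex. Here the blowing-up coefficient $\sigma^{\eps,e}_\sfv(\pi^{\eps,e}|_\sfv,\omega^\eps_\sfv)=\scrk^e_\sfv\sqrt{\pi^e|_\sfv\,\omega_\sfv}/(\sqrt\eps\,Z^\eps)$ is decisive: boundedness of $\int_0^T\sfI^e_\sfv\dx t$ in $\calD_\eps$ yields
\begin{equation*}
    \int_0^T\abs[\Big]{\sqrt{Z^\eps}\,w^{\eps,e}|_\sfv-\sqrt{\gamma^\eps_\sfv/\omega^\eps_\sfv}}^{\,2}\dx t\;\le\; C\sqrt\eps\xrightarrow[\eps\to 0]{}0.
\end{equation*}
Since the right-hand factor $\sqrt{\gamma^\eps_\sfv/\omega^\eps_\sfv}$ depends only on $\sfv$, the limits $w_{e,\sfv}$ coincide across $e\in\edges(\sfv)$, producing a common $w_\sfv$ and the required $u_\sfv=w_\sfv^2$. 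Morally, this is the Kirchhoff continuity of $\rho/\pi$ at each vertex that makes $C(\sfG)\cap C^1(\Medges)$ the natural test-function space for $\widetilde\CE$.

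The main obstacle is the strong compactness step: the prelimit continuity equation couples edge and vertex dynamics, and the exchange fluxes $\bar\jmath^{\eps,e}_\sfv$ are only controlled by the $\cosh$-rate $\sfC$, which is weaker than the quadratic edge rate. The circumvention is to feed the vanishing of $\gamma^\eps$ and the $\sqrt\eps$-strong convergence of the trace difference (Step~4) back into the $BV$-in-time estimate of Step~3, so that the boundary contributions of the continuity equation can be absorbed. A secondary subtlety is correctly tracking the two scalings $Z^\eps\to\pi(\Medges)$ and $\sigma^{\eps,e}_\sfv\sim\eps^{-1/2}$ so that the identification $u_\sfv=w_\sfv^2$ is genuine.
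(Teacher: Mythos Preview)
Your proof is essentially correct and tracks the paper's closely: the two structural facts you isolate---scale-invariance of the edge Fisher term and the $\eps^{-1/2}$ prefactor on the boundary Fisher term---are exactly what the paper uses, and your vertex-matching argument for (iii), via the trace difference going to zero in $L^2(0,T)$ at rate $\eps^{1/4}$, coincides with theirs.

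The one substantive difference is how narrow compactness for all $t$ (claim (ii)) is obtained. You route through Aubin--Lions--Simon and correctly flag the time-regularity step as the obstacle, since the exchange fluxes $\bar\jmath^{\eps,e}_\sfv$ are controlled only through the $\sfC$-rate with diverging activity $\sigma^{\eps,e}_\sfv\sim\eps^{-1}$. Your stated circumvention---feeding the vanishing of $\gamma^\eps$ and the $\sqrt\eps$-decay back into the $BV$-in-time estimate---does not work as written: neither of those quantities controls $\int_0^T|\bar\jmath^{\eps,e}_\sfv|\,\mathrm{d}t$. The actual resolution, which is what the paper does, is to observe that after embedding into $\widetilde\CE$ one only tests against $\varphi\in C(\sfG)\cap C^1(\Medges)$ with $\phi_\sfv\coloneqq\varphi^e|_\sfv$, so the $\bar\jmath^\eps$-terms vanish \emph{identically}; then the standard Dolbeault--Nazaret--Savar\'e argument, using only the quadratic edge rate $\int_0^T\!\int|j^{\eps,e}|^2/\rho^{\eps,e}$, gives the compactness directly. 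In your Aubin--Lions framework the equivalent fix is to take the time-regularity dual space to consist of $\varphi\in C^1(\Medges)$ vanishing at all endpoints, so that the boundary contributions in $\partial_t\rho^\eps$ never appear. On the other hand, your route makes the identification $w=\sqrt{\rho/\pi}$ in (iv) explicit, a point the paper's proof glosses over.
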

\begin{proof}
    Applying from Remark~\ref{rem:PI} the Poincaré inequality, we obtain $\sqrt{\frac{\rho^\eps}{\pi}}\rightharpoonup\sqrt{\frac{\rho}{\pi}}$ weakly in $L^2(0,T;H^1(\pi))$. 
    Thus, due to continuity of the density of $\pi$, their traces converge weakly in $L^2(0,T;L^2(\nodes\times\edges))$ and it holds
\begin{align} \label{eq:Kichhoff_liminf_bound_1}  
    \int_0^T\sum_{\sfv\in\nodes}\sum_{e\in\edges(\sfv)}\abs[\bigg]{ \sqrt{\frac{\rho^{\eps,e}}{\pi^{\eps,e}}}\bigg|_\sfv}^2 \dx t
    \le C<\infty.
\end{align}
    Furthermore, the bound on the vertex part of the dissipation implies for all $\sfv\in\nodes$ and $e\in\edges(\sfv)$
\begin{align*} %
     \int_0^T\abs[\bigg]{ \sqrt{\frac{\rho^{\eps,e}}{\pi^{\eps,e}}}\bigg|_\sfv -  \sqrt{\frac{\gamma^\eps_\sfv}{\omega^\eps_\sfv}}}^2 \dx t
     \le C\sqrt{\eps}.
\end{align*}
Together with \eqref{eq:Kichhoff_liminf_bound_1}, this implies a uniform $L^2$ bound on $\sqrt{\frac{\gamma^\eps_\sfv}{\omega^\eps_\sfv}}$.
Hence, for all $\sfv\in\nodes$ there exists $u_\sfv$ such that $\sqrt{\frac{\gamma^\eps_\sfv}{\omega^\eps_\sfv}}\rightharpoonup \sqrt{u_\sfv}$ weakly in $L^2(0,T)$ and $\sqrt{\frac{\rho^{\eps,e}}{\pi^e}}\Big|_\sfv \rightharpoonup \sqrt{u_\sfv}$ weakly in $L^2(0,T)$ for all $e\in\edges(\sfv)$.

The uniform bounded of $\sqrt{\frac{\gamma^\eps}{\omega^\eps}}$ in $L^2([0,T]\times\nodes)$ and the vanishing $\omega^\eps \to 0$ by the scaling from Definition~\ref{def:D_epsD_0} also imply that $\gamma^\eps \to 0$ strongly in $L^1([0,T]\times\nodes)$ (and hence for a.e. time) as $\eps \to 0$ for all $\sfv\in\nodes$.

Finally, the uniform bound on $\int_0^T\int_0^{\ell^e} \frac{\abs{j^{\eps,e}}^2}{\rho^e}\dx x\dx t$ and the fact that $(\mu^\eps,j^\eps)\in\widetilde\CE$ imply by standard arguments (cf. e.g. \cite[Section 4]{DolbeaultNazaretSavare2009}) that there exists $\eta\in \AC([0,T];\calP(\nodes\times\Medges))$ such that $(\gamma^\eps\oplus\rho^\eps)(t) \rightharpoonup \eta(t)$ narrowly in $\calP(\nodes\times\Medges)$ and $j\in \calM([0,T]\times\Medges)$ such that $\int_\cdot j_t\dx t \rightharpoonup^\ast \int_\cdot j\dx t$ in $\calM([0,T]\times\Medges)$. Since we already know that $\gamma^\eps(t)$ vanishes as $\eps\to 0$ for a.e. $t\in[0,T]$, this implies the narrow convergence of $\mu^\eps(t)$  to $(0,\rho(t))$ for a.e. $t\in[0,T]$.
\end{proof}

The second step according to Principle~\ref{def:EDP-convergence}, the lower limit inequality, is shown next.
\begin{proposition}[Lower limit inequality]\label{prop:lsc_Kirchhoff}
Let $(\mu^\eps, \rmj^\eps)_\eps\subset\CE$ such that $\mu^\eps(t)\rightharpoonup\mu(t)$ narrowly in $\calP(\Mgraph)$ for all $t\in[0,T]$. 
Assume that for every $\sfv\in\nodes$ there exists $u_\sfv \in L^1(0,T)$ such that $\sqrt{\frac{\gamma^\eps_\sfv}{\omega^\eps_\sfv}} \rightharpoonup \sqrt{u_\sfv}$ weakly in $L^2(0,T)$ and $\sqrt{\frac{\rho^{\eps,e}}{\pi^{\eps,e}}}\Big|_\sfv \rightharpoonup \sqrt{u_\sfv}$ weakly in $L^2(0,T)$ for all $e\in\edges(\sfv)$. Furthermore, assume $\int_\cdot j_t^\eps\dx t \rightharpoonup^\ast \int_\cdot j_t\dx t$ weakly-$^\ast$ in $\calM([0,T]\times\Medges)$. Then, it holds
\begin{align*}
    \liminf_{\eps\to 0}\calE(\mu^\eps) &\ge \calE(\mu)\qquad\text{and}\qquad
    \liminf_{\eps\to 0} \calD_\eps(\mu^\eps,\rmj^\eps) \ge \calD_0(\mu,j).
\end{align*}
\end{proposition}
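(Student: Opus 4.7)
The plan is to split the statement into the energy inequality and the dissipation inequality, treating them independently. For the energy $\liminf_{\eps\to 0}\calE(\mu^\eps)\ge \calE(\mu)$, I would invoke the narrow lower semicontinuity of the edge entropies $\calH(\cdot|\pi^e)$ (applied with $\rho^{\eps,e}\rightharpoonup\rho^e$ from Proposition~\ref{prop:compactness_Kirchhoff}) together with the elementary limit $\calH(\gamma^\eps_\sfv|\omega_\sfv)\to \omega_\sfv=\calH(0|\omega_\sfv)$ coming from $\gamma^\eps_\sfv\to 0$ and the continuity of $\eta$ at $0$. The limit matches $\calE(\mu)=\calE(0,\rho)=\sum_e\calH(\rho^e|\pi^e)+\sum_\sfv\omega_\sfv$.

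For the dissipation I may assume $\liminf_{\eps\to 0}\calD_\eps(\mu^\eps,\rmj^\eps)<\infty$ and extract a subsequence with a uniform dissipation bound. All summands of $\calD_\eps$ being non-negative, I would first drop the cosh contributions $\sfC\bra*{\bar\jmath^{\eps,e}_\sfv\mid\sigma^{\eps,e}_\sfv(\rho^{\eps,e}|_\sfv,\gamma^\eps_\sfv)}$ and the entire vertex Hellinger-slope contribution, keeping only the two edge-based terms. The edge action $\tfrac{1}{2\diffedge^e}\!\int_0^{\ell^e}\!|j^{\eps,e}|^2/\rho^{\eps,e}\dx x$ is jointly lower semicontinuous under weak-$^*$ convergence of the flux together with narrow convergence of the density by the classical Benamou--Brenier argument. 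For the Fisher-information term, the chain-rule identity $\sqrt{\rho^{\eps,e}/\pi^{\eps,e}}=\sqrt{Z^\eps}\sqrt{\rho^{\eps,e}/\pi^e}$ combined with $\dx\pi^{\eps,e}=\dx\pi^e/Z^\eps$ produces an exact cancellation of the $Z^\eps$ factors, reducing it to the $\eps$-independent Fisher information against $\pi^e$; Lemma~\ref{lem:I:convex} together with Proposition~\ref{prop:compactness_Kirchhoff} then yields the lower limit $2\diffedge^e\!\int|\partial_x\sqrt{\rho^e/\pi^e}|^2\dx\pi^e$.

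It remains to check that the Kirchhoff indicator in $\calD_0$ is satisfied by the limit pair, i.e., that $\rho^e|_\sfv/\pi^e|_\sfv$ is independent of $e\in\edges(\sfv)$ for each $\sfv\in\nodes$. This is enforced by the hypothesis that a \emph{single} $u_\sfv$ governs the weak $L^2$-limit of $\sqrt{\rho^{\eps,e}/\pi^{\eps,e}}|_\sfv$ for every $e\in\edges(\sfv)$. Writing this trace as $\sqrt{Z^\eps}\sqrt{\rho^{\eps,e}/\pi^e}|_\sfv$, using $Z^\eps\to \pi(\Medges)$, and invoking the weak $L^2$-convergence of the unscaled traces obtained from Proposition~\ref{prop:compactness_Kirchhoff} combined with the continuity of the trace operator on $H^1(\pi^e)$, I would identify $\rho^e|_\sfv/\pi^e|_\sfv = u_\sfv/\pi(\Medges)$ independently of $e\in\edges(\sfv)$. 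Consequently the indicator contribution to $\calD_0$ vanishes and the edge-by-edge lower bounds assemble to the claimed inequality.

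The main obstacle I anticipate is the careful bookkeeping of the $Z^\eps$-rescalings through the Fisher information and the traces, and interlocking the weak $L^2$ convergence of rescaled and unscaled traces to match the single limit $u_\sfv$ across all adjacent edges; once this is in place, the rest of the argument is a direct application of standard lower semicontinuity principles for Benamou--Brenier-type action and Fisher-information functionals.
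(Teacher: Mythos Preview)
Your proposal is correct and follows essentially the same route as the paper's proof: drop the non-negative edge--vertex contributions, use joint lower semicontinuity of the Benamou--Brenier action and the Fisher information on the edges, and deduce the Kirchhoff constraint from the common trace limit $u_\sfv$. The paper compresses this into two sentences by simply asserting that the edge functionals and the energy are $\eps$-independent and weakly lower semicontinuous, whereas you explicitly track the $Z^\eps$-cancellation in the Fisher term and in the trace identification; your more careful bookkeeping actually resolves a minor notational inconsistency in the paper (the compactness statement uses unrescaled traces $\sqrt{\rho^{\eps,e}/\pi^e}|_\sfv$, while the hypothesis here uses the rescaled ones $\sqrt{\rho^{\eps,e}/\pi^{\eps,e}}|_\sfv$, differing by $\sqrt{Z^\eps}$).
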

\begin{proof}
    The edge functionals of $\calD_\eps$ and the energy functional are independent of $\eps$ and weakly lower semicontinuous (see e.g. \cite[Theorem~2.34]{AmbrosioFuscoPallara00}). 
    Hence, it remains to consider the remaining edge-vertex terms. However, since $\frac{\rho^e}{\pi^e}\big|_\sfv = u_\sfv$, these terms can be dropped to obtain the result.
\end{proof}

\subsection{Contraction of metric edges by fast diffusion}\label{ssec:edgepoints}

By accelerating the diffusion  $d^e \to d^e/\eps$ on each metric edge $e\in \edges$, the discussion from Section~\ref{ssec:MS_intro} suggests that the densities $\rho^e$ on the metric edges become quasi-stationary in the sense that $\rho^e(t) = \zeta^e(t) \pi^e$ for some time-dependent edge function $\zeta: [0,T]\times \edges \to [0,\infty)$. Thus, we introduce the following tentative limit continuity equation.
\begin{definition}[Limit continuity equation and embedding]\label{def:CE:edgefast}
    Recall the notations $\hat\nodes = \nodes \cup \edges$ and $\hat\edges = \set*{e\sfv : e\in \edges , \sfv\in \nodes(e)}$ from~\eqref{eq:def:ExtendedGraph}.
    We denote by $\pCE$ the space of pairs $(\mu, \bar\jmath):[0,T]\to \calP(\Mgraph)\times\calM(\hat\edges)$ with $(\gamma,\rho(\Medges))\in\AC(0,T;\calP(\hat\nodes))$, which satisfy for all $\bar\Phi =(\phi,\bar\varphi) \in C(\hat\nodes)$ 
    \begin{equation}\label{eq:def:CE:edgefast}
        \pderiv{}{t} \pra*{ \sum_{\sfv \in \nodes} \phi_\sfv \gamma_\sfv(t) + \sum_{e\in\edges} \bar\varphi^e \rho(t,[0,\ell^e])}
        = \sum_{\sfv\in\nodes} \sum_{e\in\edges(\sfv)} 
            (\phi_\sfv-\bar \varphi^e) \bar\jmath^e_\sfv(t)
             \,.
    \end{equation}
    Furthermore, the embedding $\Pi:\CE\to \pCE$ is defined by $\Pi(\mu,\rmj)\coloneqq(\mu,\bar\jmath)$.
\end{definition}
Comparing~\eqref{eq:def:CE:edgefast} with the weak formulation of $\CE$ in~\eqref{e:def:bCE:weak}, we indeed observe that $\Pi$ embeds $\CE$ into $\pCE$. On the other hand, the weak formulation~\eqref{eq:def:CE:edgefast} is equivalent to the system of ODEs given by
\begin{equation}\label{eq:CE:edgefast:ODE}
    \left\{
    \begin{alignedat}{2}
        \dot \gamma_v(t) &= \sum_{e\in \edges(\sfv)} \bar \jmath_\sfv^e \quad &&\forall \sfv\in \nodes \,; \\
        \dot \rho(t,[0,\ell^e]) &= \sum_{\sfv\in\nodes(e)} \bar \jmath_\sfv^e =  \bar \jmath_\sfv^e + \bar \jmath_\sfw^e \quad &&\forall \sfv\sfw=e\in \edges \,.
    \end{alignedat}
    \right.
\end{equation}
We note that the system~\eqref{eq:CE:edgefast:ODE} is a standard (directed) graph continuity equation on the extended graph $(\hat\nodes,\hat\edges)$ as defined in~\eqref{eq:def:ExtendedGraph}.

\begin{definition}\label{def:CombGraph:D_epsD_0}
    Inserting the rescaled diffusivity constants into \eqref{eq:def:Mgraph:EDfunctional}, the prelimit dissipation functionals read 
\begin{align*}
    \calD_\eps(\mu,\rmj)
    \coloneqq \int_0^T\Bigg(&\sum_{e\in\edges}\pra[\Bigg]{\frac{\eps}{2\diffedge^e}\int_0^{\ell^e} \frac{\abs{j^e}^2}{\rho^e}\dx x + \frac{2\diffedge^e}{\eps} \int_0^{\ell^e}\abs[\Bigg]{\partial_x\sqrt{\frac{\rho^e}{\pi^e}}}^2\pi^e\dx x}\\
    &+\sum_{\sfv\in\nodes}\sum_{e\in\edges(\sfv)}\pra[\Bigg]{\sfC\bra[\big]{\bar\jmath^e_\sfv| \sigma^e_\sfv(\gamma_\sfv,\rho^e|_\sfv)}
    + 2 \sigma^e_\sfv\bra{\omega_\sfv,\pi^e|_\sfv} \abs[\Bigg]{ \sqrt{\frac{\rho^e}{\pi^e}}\bigg|_\sfv -  \sqrt{\frac{\gamma_\sfv}{\omega_\sfv}}}^2}\Bigg)\dx t,
\end{align*}
where we recall $\sigma^e_\sfv(a,b)= \scrk^e_\sfv\sqrt{ab}$.
The tentative limit dissipation functional is defined by
\begin{align*}
    \calD_0(\mu,\bar\jmath) \coloneqq\int_0^T\bra[\Bigg]{&\sum_{e\in\edges} \chi_{\{0\}}\bra[\Bigg]{\partial_x\sqrt{\frac{\rho^e}{\pi^e}}}\\
    &+ \sum_{\sfv\in\nodes}\sum_{e\in\edges(\sfv)}\pra[\Bigg]{\sfC\bra[\big]{\bar\jmath^e_\sfv| \sigma^e_\sfv(\gamma_\sfv,\rho^e|_\sfv)}
    + 2 \sigma^e_\sfv\bra{\pi^e|_\sfv, \omega_\sfv} \abs[\Bigg]{ \sqrt{\frac{\rho^e}{\pi^e}}\bigg|_\sfv -  \sqrt{\frac{\gamma_\sfv}{\omega_\sfv}}}^2}}\dx t.
\end{align*}
\end{definition}

\begin{theorem}[EDP limit]\label{thm:EDP_limit_edgefast}
    Consider a family of curves $(\mu^\eps,\rmj^\eps)_{\eps>0}\subset\CE$ satisfying the a priori bounds $\sup_{\eps>0}\sup_{t\in[0,T]}\calE(\mu^\eps(t))<\infty$ and $\sup_{\eps>0}\calD_\eps(\mu^\eps,\rmj^\eps)<\infty$. Let $\Pi:\CE\to\pCE$ be defined by $\Pi(\mu,\rmj)\coloneqq (\mu,\bar\jmath)$.
    
    Then, there exists $(\mu,\bar\jmath)\in\pCE$ such that $\Pi(\mu^\eps,\rmj^\eps) \rightharpoonup (\mu,\bar\jmath)$ in the sense of Proposition~\ref{prop:comp}, below, and we have 
    \begin{align*}
        \liminf_{\eps\to 0}\calE(\mu^\eps(t)) 
        &\ge \calE(\mu(t)) \quad\text{for all $t\in[0,T]$}
        \qquad\text{and}\qquad
        \liminf_{\eps\to 0} \calD_\eps(\mu^\eps,\rmj^\eps) 
        \ge \calD_0(\mu,\bar\jmath).
    \end{align*}
    In particular, if in addition we have the well-preparedness $\calE(\mu^\eps(0)) \to \calE(\mu(0))$ as $\eps\to 0$, then 
    \begin{align*}
        \liminf_{\eps\to 0} \calL_\eps(\mu^\eps,\rmj^\eps) 
        \ge \calL_0(\mu,\bar\jmath).
    \end{align*}
\end{theorem}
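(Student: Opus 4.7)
The plan is to mirror the two-step structure of Theorem~\ref{thm:EDP_Kirchhoff}: first verify that the embedding $\Pi$ maps $\CE$ into $\pCE$, then establish compactness after embedding, and finally prove the lower limit inequalities. Well-definedness of $\Pi$ is immediate by testing the weak continuity equation~\eqref{e:def:bCE:weak} against $\Phi=(\phi,\varphi)$ with $\varphi^e\equiv \bar\varphi^e$ constant on each metric edge: then $\partial_x\varphi^e\equiv 0$ and $\varphi^e|_\sfv = \bar\varphi^e$, so that~\eqref{eq:def:CE:edgefast} is recovered with the discrete $\bar\jmath^\eps$ obtained from $\rmj^\eps$.

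For the compactness, the uniform energy bound together with Pinsker's inequality yields tightness of $\mu^\eps(t)$ for every $t\in[0,T]$. The dissipation bound provides the crucial estimate
\begin{equation*}
    \frac{2\diffedge^e}{\eps}\int_0^T\!\!\int_0^{\ell^e}\abs[\big]{\partial_x\sqrt{\rho^{\eps,e}/\pi^e}}^2 \dx\pi^e\dx t \le C,
\end{equation*}
so that $\partial_x\sqrt{\rho^{\eps,e}/\pi^e}\to 0$ strongly in $L^2(0,T;L^2(\pi^e))$. Combined with the uniform $L^2$-in-$\pi^e$ bound on $\sqrt{\rho^{\eps,e}/\pi^e}$ inherited from the energy bound, the Poincaré inequality of Remark~\ref{rem:PI} upgrades this to strong $L^2(0,T;H^1(\pi^e))$ convergence of $\sqrt{\rho^{\eps,e}/\pi^e}$ to an $x$-constant function $\sqrt{\zeta^e(\cdot)}$, whence $\rho^e = \zeta^e\pi^e$ with $\zeta^e\in L^1(0,T)$. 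The one-dimensional Sobolev embedding $H^1(0,\ell^e)\hookrightarrow C([0,\ell^e])$ then transfers this convergence to the traces $\rho^{\eps,e}|_\sfv/\pi^e|_\sfv \to \zeta^e$ strongly in a suitable $L^p$-in-time sense. Compactness of the discrete fluxes $\bar\jmath^\eps$ follows from a cosh-concentration estimate in the spirit of Lemma~\ref{lem:finite_flux_comp}, using that the weights $\sigma^e_\sfv(\pi^e|_\sfv,\omega_\sfv)$ are bounded and the cosh-dissipation is integrable: one obtains a uniform $L^1([0,T]\times\hat\edges)$ bound together with absolute continuity of the total variations with respect to $\scrL^1$, yielding weak-$^*$ compactness in $\calM([0,T]\times\hat\edges)$. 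Passing to the limit in the weak form of the continuity equation tested against edge-constant $\Phi$ produces $(\mu,\bar\jmath)\in\pCE$, and the absolute continuity of the limit mass profile $(\gamma(t),\rho(t,[0,\ell^e]))$ follows as in Lemma~\ref{lem:AC:representative}.

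For the lower limit of the energy, we use the standard narrow lower semicontinuity of the relative entropy applied at every $t\in[0,T]$. For the dissipation, the $\eps$-weighted kinetic edge term drops out trivially being non-negative, while the $\eps^{-1}$-weighted Fisher information converges to $\chi_{\{0\}}(\partial_x\sqrt{\rho^e/\pi^e})$ since the compactness step has already identified $\rho^e=\zeta^e\pi^e$ in the limit. The vertex-edge cosh terms $\sfC(\bar\jmath^\eps|\sigma^\eps)$ pass to the limit by joint convexity of the perspective function $(a,b,c)\mapsto\sqrt{ab}\,\sfC(c/\sqrt{ab})$ together with the strong trace convergence just established, and the Fisher-type boundary contributions $\sfI^e_\sfv$ are narrowly lower semicontinuous as in~\cite[Theorem~2.3.1]{Buttazzo1989}. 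Combining these ingredients with the well-preparedness of initial data closes the full $\liminf$ inequality for $\calL_\eps$. The main obstacle is precisely the trace convergence of $\rho^{\eps,e}|_\sfv/\pi^e|_\sfv$: while the Fisher-type constraint trivially forces the limit density to be edgewise constant, quantifying the vanishing oscillation well enough to pass to the limit in the boundary values requires both the Poincaré inequality (to turn the Fisher bound into control on oscillations of $\sqrt{\rho^{\eps,e}/\pi^e}$) and the one-dimensional Sobolev embedding (to upgrade this $L^2$-oscillation control into uniform control), so that the traces converge to the common bulk limit $\sqrt{\zeta^e}$.
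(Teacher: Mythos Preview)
Your overall strategy matches the paper's: compactness (Proposition~\ref{prop:comp}) via the Poincar\'e inequality for the spatial structure and cosh estimates for the discrete fluxes, followed by lower semicontinuity (Proposition~\ref{prop:lsc}). However, there is a genuine gap in your compactness step.

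You claim that Poincar\'e together with the energy bound yields \emph{strong} $L^2(0,T;H^1(\pi^e))$ convergence of $\sqrt{\rho^{\eps,e}/\pi^e}$ to an $x$-constant $\sqrt{\zeta^e}$. This does not follow. Poincar\'e only shows that $\sqrt{\rho^{\eps,e}/\pi^e}$ minus its spatial mean $\sqrt{\zeta^{\eps,e}}$ vanishes strongly in $L^2(0,T;L^2(\pi^e))$; the means themselves are merely bounded in $L^\infty(0,T)$ and hence converge only weakly. (Consider $u^\eps(t,x)=f^\eps(t)$ with $f^\eps\rightharpoonup f$ weakly but not strongly in $L^2(0,T)$: then $\partial_x u^\eps\equiv0$ trivially, yet $u^\eps$ does not converge strongly.) Without strong convergence of $\zeta^{\eps,e}$ you cannot identify the weak limit of $\sqrt{\zeta^{\eps,e}}$ with $\sqrt{\zeta^e}$, so your trace identification $\rho^{\eps,e}|_\sfv/\pi^e|_\sfv\to\zeta^e$ is not justified, and the narrow convergence of $\mu^\eps(t)$ for \emph{every} $t\in[0,T]$ --- needed for the pointwise-in-time energy liminf --- is not established either. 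The paper closes this gap via time regularity: the uniform flux bounds give $\BV$-in-time control on $\mu^\eps$, and an Aubin--Lions--Simon argument (as in Step~III of Lemma~\ref{lem:strong_compactness}) then produces strong $L^1(0,T;L^1(\Mgraph))$ convergence of $\mu^\eps$ and narrow convergence at every $t$. Once this is in place, your lower-limit reasoning is correct; in fact for the cosh and boundary-Fisher terms weak convergence already suffices by joint convexity, so the strong trace convergence you invoke there is stronger than what is actually needed.
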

\begin{proof}
    The compactness is proved in Proposition~\ref{prop:comp}, after which the lower limit is established in Proposition~\ref{prop:lsc}, below.
\end{proof}
\begin{remark}
    Theorem~\ref{thm:EDP_limit_edgefast} characterizes the gradient system in continuity equation format $(\hat\nodes,\hat\edges,\overline\nabla,\calE,\calR_0^*)$ in the sense of Definition~\ref{def:GradSystCE}. 
    Here, we denoted $\overline\nabla:C(\hat\nodes) \to C(\hat\edges)$, $(\overline\nabla \Phi)^e_\sfv \coloneqq \phi_\sfv-\bar \varphi^e$ (see also~\eqref{eq:CE:edgefast:ODE}). Moreover, $\calE(\mu) = \calH(\mu|(\omega,\pi))$ and
    \begin{align*}
        \calR^\ast_0(\mu,\upxi) = \sum_{e\in\edges} \chi_{\{0\}}(\xi) 
        + \sum_{\sfv\in\nodes}\sum_{e\in\edges(\sfv)} \sigma^e_\sfv(\gamma_\sfv,\rho^e|_\sfv)\sfC^\ast\bra{\bar\xi}.
    \end{align*}
    This gradient system induces the gradient flow equation~\eqref{eq:system_intro_EdgePoints}.
\end{remark}

We will now prove Theorem~\ref{thm:EDP_limit_edgefast} in multiple steps, beginning with the compactness.

\begin{proposition}[Compactness]\label{prop:comp}
Let $(\mu^\eps, j^\eps)_\eps\subset\CE$ be such that $\sup_{\eps>0}\calD_\eps(\mu^\eps,j^\eps)<\infty$ and $\sup_{\eps>0}\sup_{t\in[0,T]}\calE_\eps(\mu^\eps(t))<\infty$. 
Then, there exist $\zeta\in L^\infty(0,T;\R^\edges)$, $\gamma\in L^1(0,T;\calM_{\ge 0}(\nodes))$, and $\bar\jmath\in L^1(0,T;\calM(\hat\edges))$ such that $((\gamma,\zeta\pi),\bar\jmath)\in\CE$ and such that (along a subsequence) we have the convergence
\begin{enumerate}[label=(\roman*)]
    \item $\int_\cdot \bar\jmath^\eps(t)\dx t \rightharpoonup^\ast \int_\cdot \bar\jmath(t)\dx t$ weakly-$^\ast$ in $\calM([0,T]\times \hat\edges)$;
    \item $\mu^\eps(t) \rightharpoonup (\gamma(t),\zeta(t)\pi)$ narrowly in $\calP(\Mgraph)$ for all $t\in[0,T]$;
    \item $\mu^\eps \to (\gamma,\zeta\pi)$ in strongly $L^1([0,T]\times\Mgraph)$.
\end{enumerate}
\end{proposition}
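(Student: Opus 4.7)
The plan is to extract three independent pieces of compactness—the edge densities collapse to quasi-stationary profiles, the boundary fluxes are equi-integrable, and the total masses on the extended graph $(\hat\nodes,\hat\edges)$ are equi-continuous in time—and then combine them to pass to the limit in the continuity equation $\CE$ and land in $\pCE$.

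First I would exploit the rescaled edge Fisher information. The bound $\sup_\eps\calD_\eps(\mu^\eps,j^\eps)<\infty$ yields
\begin{equation*}
    \int_0^T \int_0^{\ell^e} \abs[\bigg]{\partial_x \sqrt{\frac{\rho^{\eps,e}}{\pi^e}}}^2 \dx \pi^e \dx t = O(\eps) \qquad \forall e\in\edges.
\end{equation*}
Combined with the uniform Poincaré inequality from Remark~\ref{rem:PI} and the $L^2(\pi^e)$-boundedness of $\sqrt{\rho^{\eps,e}/\pi^e}$ inherited from the entropy bound (via Pinsker), this forces $\sqrt{\rho^{\eps,e}/\pi^e}$ to be arbitrarily close in $L^2([0,T]\times[0,\ell^e],\dx t\otimes \pi^e)$ to its spatial average $\bar u^{\eps,e}(t)\coloneqq \pi^e([0,\ell^e])^{-1}\int_0^{\ell^e}\sqrt{\rho^{\eps,e}/\pi^e}\,\dx\pi^e$. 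Since $\bar u^{\eps,e}$ is uniformly bounded in $L^\infty(0,T)$ (again by the entropy bound), we extract a subsequence along which $\bar u^{\eps,e}\rightharpoonup^* v^e$ in $L^\infty(0,T)$; setting $\zeta^e \coloneqq (v^e)^2$ gives the candidate quasi-stationary profile $\rho^e = \zeta^e \pi^e$.

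Next I would establish compactness of the boundary fluxes $\bar\jmath^\eps$. Arguing exactly as in Lemma~\ref{lem:finite_flux_comp} (and simpler, since here there is no $1/n$ scaling), the cosh bound $\int_0^T\sigma_\sfv^e(\rho^{\eps,e}|_\sfv,\gamma^\eps_\sfv)\sfC(\bar\jmath^{\eps,e}_\sfv/\sigma^e_\sfv)\dx t < \infty$ combined with an upper bound on $\int_0^T \sigma^e_\sfv(\rho^{\eps,e}|_\sfv,\gamma^\eps_\sfv)\dx t$ yields the integral-equi-continuity estimate
\begin{equation*}
    \int_A \abs{\bar\jmath^{\eps,e}_\sfv} \dx t \le \frac{C}{s} + C\scrL^1(A)\,s\cosh(s) \qquad \forall s>0,\ A\in\calB([0,T]).
\end{equation*}
The upper bound on $\sigma^e_\sfv$ follows from $\gamma^\eps_\sfv\le 1$ and from the fact that the traces $\rho^{\eps,e}|_\sfv$ are uniformly $L^1(0,T)$-bounded—this being a consequence of the $H^1(\pi^e)$ bound on $\sqrt{\rho^{\eps,e}/\pi^e}$ (coming jointly from the entropy and the Fisher bound) together with the continuous trace $H^1\hookrightarrow C^0([0,\ell^e])$ in one space dimension. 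The estimate above provides both absolute continuity with respect to $\scrL^1$ and uniform finiteness, yielding weak-$^*$ compactness of $\int_\cdot \bar\jmath^\eps(t)\dx t$ in $\calM([0,T]\times\hat\edges)$ along a subsequence.

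Finally I would upgrade these to the statements (ii)--(iii). Testing $\CE^\eps$ against $\bar\Phi=(\phi,\bar\varphi)\in C(\hat\nodes)$ (so that $\partial_x\varphi^e\equiv 0$) and using the flux bound just obtained, the masses $(\gamma^\eps_\sfv,\rho^{\eps,e}([0,\ell^e]))$ are uniformly bounded in $BV(0,T)$ as functions on $\hat\nodes$; by Helly's selection theorem and boundedness they converge pointwise in $t$ to $(\gamma_\sfv(t),m^e(t))$, and the limit pair satisfies the graph continuity equation~\eqref{eq:CE:edgefast:ODE} on $(\hat\nodes,\hat\edges)$. Combining the pointwise-in-$t$ convergence of $\int_0^{\ell^e} \dx\rho^{\eps,e}(t)=m^e(t)$ with the $L^2$-quasi-stationarity from Step~1 identifies $m^e(t)=\zeta^e(t)\pi^e([0,\ell^e])$ and promotes the convergence of $\rho^\eps$ to strong $L^1(0,T;L^1(\Medges))$; for $\gamma^\eps$ the strong $L^1$-convergence is immediate since $\nodes$ is finite. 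This establishes (ii)--(iii) and shows $((\gamma,\zeta\pi),\bar\jmath)\in\pCE$ via Definition~\ref{def:CE:edgefast}. The main technical obstacle is the promotion from $L^2$-quasi-stationarity to $L^1(0,T;L^1(\Medges))$ convergence for every $t$; this is where the synergy between the Fisher bound (spatial regularity) and the flux bound through the continuity equation (time regularity of the total masses) is essential.
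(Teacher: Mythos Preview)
Your plan matches the paper's proof closely in all three steps: the $O(\eps)$ Fisher bound plus Poincar\'e for quasi-stationarity on the edges (the paper's Step~I), the $\sfC$-structure argument of Lemma~\ref{lem:finite_flux_comp} for flux compactness via a bound on $\int_A\sigma^e_\sfv\,\dx t$ (Step~II), and the combination of spatial and temporal regularity for the strong $L^1$ convergence and pointwise-in-$t$ narrow convergence (Step~III). One harmless variation: in Step~3 you use Helly's theorem on the $BV$ total-mass functions on $\hat\nodes$ obtained by testing with constant-in-$x$ functions, whereas the paper invokes the Aubin--Lions--Simon machinery of Lemma~\ref{lem:strong_compactness}; your route is arguably more direct here, since the limit edge profile is already identified as a multiple of $\pi^e$.

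There is, however, one point where the logic is out of order. In Step~1 you set $\zeta^e\coloneqq(v^e)^2$ with $v^e$ the weak-$^*$ limit of $\bar u^{\eps,e}$ in $L^\infty(0,T)$. Squaring does not commute with weak-$^*$ limits, so at this stage there is no reason for $(v^e)^2$ to coincide with the limit of $(\bar u^{\eps,e})^2$, which is what actually determines $\rho^e$. The paper sidesteps this by defining $\zeta^{\eps,e}\coloneqq(\bar u^{\eps,e})^2$ and extracting a weak limit of $\zeta^\eps$ directly. Your choice is vindicated only \emph{a posteriori}: once Step~3 (Helly on the masses plus the $L^1$ quasi-stationarity) gives strong $L^1(0,T)$ convergence of $(\bar u^{\eps,e})^2$ to $m^e/\pi^e([0,\ell^e])$, the nonnegativity and $L^\infty$ bound on $\bar u^{\eps,e}$ upgrade its weak-$^*$ convergence to strong, and then indeed $(v^e)^2=m^e/\pi^e([0,\ell^e])$. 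Either reorder the argument accordingly, or follow the paper and work with $\zeta^{\eps,e}$ from the outset. (A minor remark: the $L^2(\pi^e)$ bound on $\sqrt{\rho^{\eps,e}/\pi^e}$ and the $L^\infty(0,T)$ bound on $\bar u^{\eps,e}$ that you attribute to Pinsker are in fact just consequences of the mass constraint $\rho^{\eps,e}([0,\ell^e])\le 1$.)
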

\begin{proof}

\medskip
\noindent
\emph{Step I: Spatial regularity.}\\
Regarding the convergence of $\mu^\eps$, we first note that for all $e\in\edges$ we have
\begin{align}\label{eq:ass:dx-bound}
        \norm[\bigg]{\partial_x\sqrt{\frac{\rho^{\eps,e}}{\pi^e}}}_{L^2(0,T;L^2({\pi^e}))} \le C\eps.
\end{align}
For each $e\in\edges$ and $t\in[0,T]$ we define 
\begin{align*}
    \zeta^{\eps,e}(t) \coloneqq \bra[\bigg]{\frac{1}{\pi^e([0,\ell^e])}\int_0^{\ell^e} \sqrt{\frac{\rho^{\eps,e}(t)}{\pi^e}}\dx \pi^e}^2,
\end{align*}
and (abusing notation) denote the constant-in-space function with value $\zeta^{\eps,e}$ also by $\zeta^{\eps,e}$. Then, \eqref{eq:ass:dx-bound} and the Poincaré inequality \eqref{eq:ass:PI} yield
\begin{align*}
    \norm[\bigg]{\sqrt{\frac{\rho^{\eps,e}}{\pi^e}}-\sqrt{ \zeta^{\eps,e}}}_{L^2(0,T; L^2(\pi^e))} 
    \le C\eps
\end{align*}
In particular, using Hölder's inequality, we obtain $\norm{\rho^\eps-\zeta^\eps\pi}_{L^1(0,T;W^{1,1}(\Medges))} \to 0$ as $\eps \to 0$.

On the other hand, Pinsker's inequality and the uniform energy bound imply the bound $\sup_{\eps>0} \norm{\zeta^\eps}_{L^\infty(0,T;\R^\edges)}<\infty$, so that there exists $\zeta\in L^\infty(0,T;\R^\edges)$ and a (not renamed) subsequence such that $\zeta^\eps\rightharpoonup \zeta$ weakly in $L^\infty(0,T;\R^\edges)$. 
Since by definition $\pi\in W^{1,1}(\Medges)$, this also implies $\zeta^\eps\pi\rightharpoonup \zeta\pi$ weakly in $L^\infty(0,T;W^{1,1}(\Medges))$. 
In total we have $\rho^\eps\rightharpoonup \zeta\pi$ weakly in $L^1(0,T;W^{1,1}(\Medges))$. For the vertex densities the energy bound directly yields for each $\sfv\in\nodes$ (along a subsequence) $\gamma_\sfv^\eps \rightharpoonup \gamma_\sfv$ weakly in $L^\infty(0,T)$ for some $\gamma_\sfv\in L^\infty(0,T)$. 

\medskip
\noindent
\emph{Step II: Convergence of fluxes.}\\ 
Regarding the fluxes, we note that the calculations from Step 1 also imply for all $e\in\edges$, $\sfv\in\nodes$ that
$\sup_{\eps>0}\norm[\Big]{\sqrt{\frac{\rho^{\eps,e}}{\pi^e}}\Big|_\sfv}_{L^2(0,T)} \le C$, and $\sup_{\eps>0}\norm[\Big]{\sqrt{\frac{\gamma^{\eps}_\sfv}{\omega_\sfv}}}_{L^\infty(0,T)} \le C$.
Consequently, we obtain for $A\in\calB([0,T])$, $e\in\edges$, and $\sfv\in\nodes$ that
\begin{equation}\label{eq:est_sigma}
    \int_A \!\! \sigma_\sfv^e(\gamma^\eps_\sfv,\rho^{\eps,e}|_\sfv)\dx t = \int_A\!\! \sigma_\sfv^e(\omega_\sfv,\pi^e|_\sfv)\sqrt{\frac{\rho^{\eps,e}}{\pi^e}\Big|_\sfv\frac{\gamma^\eps_\sfv}{\omega_\sfv}}\dx t
    \le C^2\sigma_\sfv^e(\omega_\sfv,\pi^e|_\sfv)\sqrt{\scrL^1(A)} = \tilde C \sqrt{\scrL^1(A)},
\end{equation}
for some constant $\tilde C$ depending on the $L^\infty$ norm of $(\pi,\omega)$.
Arguing as in the proof of Lemma~\ref{lem:finite_flux_comp}, this yields for every $s>0$
\begin{align*}
    \abs{\bar J^{\eps,e}_\sfv}(A) &\le \frac{\bar C\tilde C}{s} + \tilde C \sqrt{\scrL^1(A)}\frac{\sfC^\ast(s)}{s}.
\end{align*}
The right-hand side is independent of $\eps$ and, choosing $s$ large enough and $\scrL^1(A)$ small enough, can be made smaller than any $\delta>0$. Hence, it holds $\sup_{\eps>0}\abs{\bar J^{\eps,e}_\sfv} \ll \scrL^1$ and, choosing $A= [0,T]$, we find that $\sup_{\eps>0}\abs{\bar J^{\eps,e}_\sfv([0,T])} <\infty$. 
This implies that $(\bar J^\eps)_{\eps>0}$ has a subsequence converging weakly-$^\ast$in $\calM([0,T]\times\hat\edges)$ to some $\bar J\in\calM([0,T]\times\hat\edges)$ with $\bar J^e_\sfv\ll\scrL^1$ for all $e\in\edges$, $\sfv\in\nodes$.
By the disintegration theorem, there exists $\bar\jmath$ such that $\bar J^e_\sfv = \int_\cdot\bar\jmath^e_\sfv\dx t$. 

\medskip
\noindent
\emph{Step III: Strong convergence and narrow convergence for all times.}\\ 
Arguing as in Step III of the proof of Lemma~\ref{lem:strong_compactness}, we obtain that $\mu^\eps \to (\gamma,\zeta\pi)$ strongly in $L^1([0,T]\times\Mgraph)$.
This strong convergence, the continuity equation, and the uniform flux bound together also imply narrow convergence for all times, thereby ensuring that indeed $(\mu,\bar\jmath)\in\pCE$.
\end{proof}

\begin{proposition}[Lower limit inequality]\label{prop:lsc}
Let $(\mu^\eps, j^\eps)_\eps\subset\CE$ be such that $(\mu^\eps, \bar\jmath^\eps)^\eps\rightharpoonup ((\gamma,\zeta\pi),\bar\jmath)$ in the sense of Proposition~\ref{prop:comp}. 
Then, it holds
\begin{align*}
    \liminf_{\eps\to 0}\calE(\mu^\eps(t)) 
    &\ge \calE(\mu(t)) \quad\text{for all $t\in[0,T]$}
    \qquad\text{and}\qquad
    \liminf_{\eps\to 0} \calD_\eps(\mu^\eps,\rmj^\eps) 
    \ge \calD_0(\mu,\bar\jmath).
\end{align*}
\end{proposition}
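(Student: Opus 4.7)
The plan is to handle the energy and dissipation lower limits separately. The energy bound $\liminf_{\eps\to 0}\calE(\mu^\eps(t))\ge\calE(\mu(t))$ at each $t\in[0,T]$ follows at once from the narrow lower semicontinuity of the relative entropy $\calH(\cdot|(\omega,\pi))$ combined with the pointwise-in-time narrow convergence $\mu^\eps(t)\rightharpoonup\mu(t)$ from Proposition~\ref{prop:comp}. For the dissipation, I split $\calD_\eps$ into its four constituent integrals. The edge kinetic term $\frac{\eps}{2\diffedge^e}\int_0^{\ell^e}|j^{\eps,e}|^2/\rho^{\eps,e}\dx x$ and the edge slope term $\frac{2\diffedge^e}{\eps}\int_0^{\ell^e}|\partial_x\sqrt{\rho^{\eps,e}/\pi^e}|^2\dx\pi^e$ are both nonnegative. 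In $\calD_0$ there is no matching kinetic term, and the matching slope term $\chi_{\{0\}}(\partial_x\sqrt{\rho^e/\pi^e})$ vanishes identically, since Proposition~\ref{prop:comp} identifies the limit density as $\rho=\zeta\pi$ and hence $\partial_x\sqrt{\rho/\pi}\equiv 0$. The trivial estimate $\liminf\ge 0$ therefore suffices for both edge contributions, and the entire limit information is carried by the vertex-edge coupling terms.

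For the coupling slope term, observe that the coefficient $\sigma^e_\sfv(\pi^e|_\sfv,\omega_\sfv)$ is a fixed constant and that the integrand $(a,b)\mapsto 2\sigma^e_\sfv(\pi^e|_\sfv,\omega_\sfv)|\sqrt a-\sqrt b|^2$ is jointly convex and continuous on $[0,\infty)^2$. The standard lower semicontinuity result \cite[Theorem 2.3.1]{Buttazzo1989} then applies, provided I establish the weak $L^2(0,T)$ convergences $\sqrt{\rho^{\eps,e}/\pi^e}|_\sfv\rightharpoonup\sqrt{\zeta^e}$ and $\sqrt{\gamma^\eps_\sfv/\omega_\sfv}\rightharpoonup\sqrt{\gamma_\sfv/\omega_\sfv}$. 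The latter follows from the weak convergence of $\gamma^\eps_\sfv$ together with the uniform $L^\infty$ bound provided by the energy; the former is deduced by combining the $O(\eps)$ estimate on $\partial_x\sqrt{\rho^{\eps,e}/\pi^e}$ from Step~I of Proposition~\ref{prop:comp} with a one-dimensional trace inequality, which in fact yields strong $L^2(0,T)$ convergence of the traces to the constant-in-$x$ profile $\sqrt{\zeta^e}$.

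For the coupling rate term I would mimic the duality argument of Lemma~\ref{lem:lsc_rate_discrete}. Given $\varphi\in C([0,T]\times\hat\edges)$, Fenchel's inequality yields pointwise in time
$\bar\jmath^{\eps,e}_\sfv\varphi^e_\sfv\le\sigma^e_\sfv(\rho^{\eps,e}|_\sfv,\gamma^\eps_\sfv)\sfC^*(\varphi^e_\sfv)+\sfC(\bar\jmath^{\eps,e}_\sfv|\sigma^e_\sfv(\rho^{\eps,e}|_\sfv,\gamma^\eps_\sfv))$.
After integration in time, the weak-$^*$ convergence of $\bar\jmath^\eps$ handles the left-hand side. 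For the first right-hand-side term I would establish the strong $L^1(0,T)$ convergence $\sigma^e_\sfv(\rho^{\eps,e}|_\sfv,\gamma^\eps_\sfv)\to\sigma^e_\sfv(\rho^e|_\sfv,\gamma_\sfv)$; this is the main technical step. Strong $L^1$ convergence of $\gamma^\eps_\sfv$ will come from a BV-in-time bound derived from the continuity equation together with the uniform flux bound of Proposition~\ref{prop:comp}, while strong $L^1$ convergence of the trace $\rho^{\eps,e}|_\sfv$ is inherited from the preceding trace analysis combined with the strong $L^1(L^1)$ convergence $\rho^\eps\to\zeta\pi$. Continuity of the geometric mean then upgrades these to strong convergence of the coefficient. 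Passing to the liminf, taking the supremum over $\varphi$, and applying the Fenchel-Moreau theorem produces the desired bound. The principal obstacle will be precisely these strong trace convergences, but all the ingredients are present in Proposition~\ref{prop:comp} and its proof.
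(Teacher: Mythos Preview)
Your overall strategy is sound and would succeed, but it is considerably more elaborate than the paper's argument and contains one small misstep in the justification. The paper simply drops the two nonnegative edge contributions (exactly as you do) and then observes that the remaining coupling functional
\[
\int_0^T \sum_{\sfv\in\nodes}\sum_{e\in\edges(\sfv)}\pra[\Big]{\sfC\bra[\big]{\bar\jmath^{\eps,e}_\sfv\,\big|\,\sigma_\sfv^e(\rho^{\eps,e}|_\sfv,\gamma^\eps_\sfv)}
    + 2\sigma_\sfv^e(\pi^e|_\sfv,\omega_\sfv)\,\bigl|\sqrt{\rho^{\eps,e}/\pi^e}|_\sfv-\sqrt{\gamma^\eps_\sfv/\omega_\sfv}\bigr|^2}\dx t
\]
is, for each fixed $(\sfv,e)$, a jointly convex and positively $1$-homogeneous integrand in the three measure-valued arguments $(\bar\jmath^{\eps,e}_\sfv,\rho^{\eps,e}|_\sfv,\gamma^\eps_\sfv)$ on $[0,T]$ (this is the same perspective-of-concave structure exploited in Lemma~\ref{lem:ED:convex:affine}). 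A single invocation of \cite[Theorem~2.34]{AmbrosioFuscoPallara00} then gives lower semicontinuity under the weak-$^*$ / narrow convergences already provided by Proposition~\ref{prop:comp}. No duality argument, and in particular no strong $L^1$ convergence of the coefficient $\sigma^e_\sfv(\rho^{\eps,e}|_\sfv,\gamma^\eps_\sfv)$, is required.

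Your route does work, but note one slip: for the slope term you claim that weak convergence of $\gamma^\eps_\sfv$ together with an $L^\infty$ bound yields weak $L^2$ convergence of $\sqrt{\gamma^\eps_\sfv/\omega_\sfv}$. This is false in general, since weak limits do not commute with nonlinear maps. What you actually need (and have) is the \emph{strong} $L^1$ convergence $\gamma^\eps_\sfv\to\gamma_\sfv$ from Proposition~\ref{prop:comp}(iii), which together with the $L^\infty$ bound gives strong $L^2$ convergence of the square root. You correctly invoke this strong convergence later for the rate term, so the fix is only to use it earlier as well. With that adjustment your duality argument for the rate term goes through; the strong $L^1$ convergence of $\sigma^e_\sfv(\rho^{\eps,e}|_\sfv,\gamma^\eps_\sfv)$ follows by writing it as a product of $\sqrt{\rho^{\eps,e}|_\sfv}$ (strongly $L^2$-convergent via the trace estimate you outline) and $\sqrt{\gamma^\eps_\sfv}$ (strongly $L^2$-convergent and $L^\infty$-bounded). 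The paper's joint-convexity shortcut simply sidesteps all of this.
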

\begin{proof}
We estimate
\begin{align*}
    \calD_\eps(\mu^\eps,j^\eps) &= \int_0^T\sum_{e\in\edges}\pra[\Bigg]{\frac{\eps}{2\diffedge^e}\int_0^{\ell^e} \frac{\abs{j^{\eps,e}}^2}{\rho^{\eps,e}}\dx x + \frac{2\diffedge^e}{\eps} \int_0^{\ell^e}\abs[\bigg]{\partial_x\sqrt{\frac{\rho^{\eps,e}}{\pi^e}}}^2\pi^e\dx x}\dx t\\
    &\quad+\int_0^T \sum_{\sfv\in\nodes}\sum_{e\in\edges(\sfv)}\pra[\Bigg]{\sfC\bra[\big]{\bar\jmath^{\eps,e}_\sfv|\sigma_\sfv^e(\gamma^\eps_\sfv,\rho^{\eps,e}|_\sfv)}
    + 2\sigma_\sfv^e\bra{\omega_\sfv,\pi^e|_\sfv} \abs[\bigg]{ \sqrt{\frac{\rho^{\eps,e}}{\pi^e}}\bigg|_\sfv -  \sqrt{\frac{\gamma^\eps_\sfv}{\omega_\sfv}}}^2}\dx t\\
    &\ge\int_0^T \sum_{\sfv\in\nodes}\sum_{e\in\edges(\sfv)}\pra[\Bigg]{\sfC\bra[\big]{\bar\jmath^{\eps,e}_\sfv|\sigma_\sfv^e(\gamma^\eps_\sfv,\rho^{\eps,e}|_\sfv)}
    + 2\sigma_\sfv^e\bra{\omega_\sfv,\pi^e|_\sfv} \abs[\bigg]{ \sqrt{\frac{\rho^{\eps,e}}{\pi^e}}\bigg|_\sfv -  \sqrt{\frac{\gamma^\eps_\sfv}{\omega_\sfv}}}^2}\dx t.
\end{align*}
Both the remaining integral functional and the energy are lower semicontinuous with respect to the convergences of $\mu^\eps$ and $ \bar\jmath^\eps$ (see e.g. \cite[Theorem~2.34]{AmbrosioFuscoPallara00}). Thus, since $\mu = ((\zeta^e \pi^e)_e,(\gamma_\sfv)_\sfv)$, we obtain both the claimed lower limits.
\end{proof}

\subsection{Combinatorial graph limit} \label{ssec:terminal}

We have shown in the previous section that the fast edge diffusion limit system is indeed equivalent to the ODE system~\eqref{eq:system_intro_EdgePoints} on the extended graph~\eqref{eq:def:ExtendedGraph} with extended vertex set $\hat\nodes = \nodes \cup \edges$ and extended edge set $\hat\edges = \set*{e\sfv : e\in \edges , \sfv\in \nodes(e)}$.
In this section, we aim to further reduce this system to obtain an ODE system on the original combinatorial graph $(\nodes,\edges)$ with vertex set $\nodes$ and edge set $\edges$.
To this end, we consider another rescaling, where the jump rates of the system~\eqref{eq:system_intro_EdgePoints} obtained in the Section~\ref{ssec:edgepoints} are accelerated to leave the edge nodes (see Figure~\ref{fig:MarkovTerminal}).
This is done by applying the results from~\cite[§7]{PeletierSchlichting2022}, where a similar problem is studied.
We first state the result in our notation, before establishing the link to~\cite[§7]{PeletierSchlichting2022}.
\begin{definition}[Limit continuity equation and embedding]\label{def:CE_terminaly}  
    The set $\widehat{\CE}$ is defined as the space of pairs $(\gamma, \bar\jmath):[0,T]\to \calP(\nodes)\times\calM(\nodes\times\edges)$ with $\gamma\in\AC(0,T;\calP(\nodes))$, which satisfy for all $\phi \in C(\nodes)$ 
    \begin{equation}\label{eq:def:CE:terminal}
        \pderiv{}{t} \sum_{\sfv \in \nodes} \phi_\sfv \gamma_\sfv(t)
        = \sum_{\sfv\in\nodes} \sum_{e\in\edges(\sfv)} \phi_\sfv \bar\jmath^e_\sfv(t)\,.
    \end{equation}
\end{definition}
\begin{remark}
    As before, this definition could be written without test functions as ODE: $\dot \gamma_\sfv(t)= \sum_{e\in\edges(\sfv)} \bar \jmath^e_{\sfv}$. However, the formulation~\eqref{eq:def:CE:terminal} makes it easy to see that \eqref{eq:def:CE:terminal} is a special case of \eqref{eq:def:CE:edgefast} with the choice $\varphi =0$. This ensures that the map $\Pi(\mu,\bar\jmath) \coloneqq (\gamma,\bar\jmath)$ is indeed an embedding $\Pi:\pCE\to\widehat{\CE}$.
\end{remark}
\begin{definition}[Reference measures, energies and dissipations]
    For $\eps \ge 0$ (including $\eps = 0)$ we introduce the rescaled reference measures
    \begin{align*}
        \pi^\eps &\coloneqq \frac{\eps}{Z^\eps}\pi
        \qquad\text{and}\qquad 
        \omega^\eps \coloneqq \frac{1}{Z^\eps}\omega, \qquad\text{where}\qquad Z^\eps \coloneqq \sum_{\sfv\in\nodes}\omega_\sfv + \eps\sum_{e\in\edges}\pi^e([0,\ell^e]).
    \end{align*}
    Moreover, the prelimit and limit energies are given by
    \begin{equation}\label{eq:def:terminal:energies}
    \calE_\eps(\mu)\coloneqq \calH(\mu|(\omega^\eps,\pi^\eps)), \quad\text{ for } \eps>0 
    \qquad\text{and}\qquad
    \calE_0(\gamma)\coloneqq \calH(\gamma|\omega^0) \,,\quad\text{respectively.}
    \end{equation}
    For the prelimit dissipations, the dynamics is sped up by replacing $\scrk^e_\sfv$ with $\scrk^e_\sfv/\sqrt{\eps}$ for all $e\in\edges$, $\sfv\in\nodes$ and define $\sigma^{\eps,e}_\sfv:\R_+\times\R_+\to \R_+$ by $\sigma^{\eps,e}_\sfv(a,b)=\scrk^e_\sfv\sqrt{ab}/\sqrt{\eps}$ and notice that
    \begin{equation}\label{eq:def:terminal:sigma0}
        \sigma^{\eps,e}_\sfv(\omega_\sfv^\eps,\pi^{\eps,e}|_\sfv) = \frac{\scrk^e_\sfv}{Z^\eps}\sqrt{\pi^e|_\sfv \omega_\sfv}\to \sigma^{0,e}_{\sfv} \coloneqq \frac{\scrk^e_\sfv}{Z^0}\sqrt{\pi^e|_\sfv \omega_\sfv}
        \qquad\text{ for every } e=\sfv\sfw \in \edges\,.
    \end{equation}
    For every $\eps> 0$ the rescaled prelimit dissipation functional is given by
    \begin{equation}\label{eq:terminal:Deps}
        \widehat\calD_\eps(\mu,\bar\jmath) \coloneqq \sum_{\sfv\in\nodes}\sum_{e\in\edges(\sfv)}
        \int_0^T
        \pra[\bigg]{
        \sfC\bra[\big]{\bar\jmath^e_\sfv | \sigma^{\eps}(\gamma_\sfv^\eps,\zeta^e\pi^{\eps,e}|_\sfv)}
        + 2 \sigma^{\eps}(\omega_\sfv^\eps,\pi^{\eps,e}|_\sfv) \abs[\bigg]{ \sqrt{\zeta^e} -  \sqrt{\frac{\gamma_\sfv}{\omega_\sfv^\eps}}}^2 
        } \dx{}t, %
    \end{equation}
    if $(\mu,\bar\jmath)\in\pCE$ and $\rho = \zeta\pi^\eps$ for some $\zeta:[0,T]\to\R^\edges$, setting $\widehat\calD_\eps(\mu,\bar\jmath) = \infty$, otherwise.
    The prelimit dissipation function is defined by $\widehat\calL_\eps(\mu,\bar\jmath) \coloneqq \calE_\eps(\mu(T)) - \calE_\eps(\mu(0)) + \widehat\calD_\eps(\mu,\bar\jmath)$.

    The tentative limit dissipation functional is defined for $(\gamma,\bar\jmath)\in\widehat{\CE}$ with $\bar\jmath^e_\sfv(t) = -\bar\jmath^e_\sfw(t)$ for all $e=\sfv\sfw\in \edges$ for a.e. $t\in [0,T]$ by
    \begin{align}\label{eq:def:terminal:limitD}
        \widehat{\calD}_0(\gamma,\bar\jmath) &\coloneqq 
        \sum_{e=\sfv\sfw\in\edges}
        \int_0^T
        \pra[\bigg]{
        \sfC\bra[\big]{\bar\jmath^e_\sfv | \overline{\scrk}_{\sfv\sfw} \sqrt{\omega^0_\sfv \omega^0_\sfw}}
        + 2  \overline{\scrk}_{\sfv\sfw} \sqrt{\omega^0_\sfv \omega^0_\sfw}\abs[\bigg]{\sqrt{\frac{\gamma_\sfv}{\omega_\sfv^0}} -  \sqrt{\frac{\gamma_\sfw}{\omega_\sfw^0}}}^2 
        } \dx{}t\,,  \\ %
        \text{with }\ \ \overline{\scrk}_{\sfv\sfw} &\coloneqq \frac{\mathsf{Harm}\bra*{\pi^e|_{\sfv} \,r(e,\sfv),\pi^e|_{\sfw} \, r(\sfw,e)}}{2\sqrt{\omega_\sfv \omega_\sfw}} \ \ \text{ and } \ \ \mathsf{Harm}(a,b)\coloneqq\frac{2}{\frac{1}{a}+\frac{1}{b}} \text{ for  } a,b>0 \,,
        \label{eq:def:terminal:k}
    \end{align}
    and we set $\widehat{\calD}(\gamma,\bar\jmath) =\infty$ if $(\gamma,\bar\jmath)\notin\widehat{\CE}$ or if $\bar\jmath^e_\sfv \ne -\bar\jmath^e_\sfw$ for any $e=\sfv\sfw\in \edges$ and on any subset of $[0,T]$ with positive Lebesgue measure. 

    The energy dissipation functional is then given as $\widehat{\calL}_0(\gamma,\bar\jmath) \coloneqq \calE_0(\gamma(T)) - \calE_0(\gamma(0)) + \widehat{\calD}_0(\gamma,\bar\jmath)$.
\end{definition}
Note that the above definition is chosen such that $\widehat\calD_1$ is equal to $\calD_0$ from Definition~\ref{def:CombGraph:D_epsD_0} and the same holds for the energy, i.e., the prelimit gradient system of this section is indeed the limit of the previous section. 

\begin{theorem}[EDP limit]\label{thm:EDP_terminal}
    Consider a family of curves $(\mu^\eps,\bar\jmath^\eps)_{\eps>0}\subset\pCE$ with $\rho^\eps = \zeta^\eps\pi^\eps$ satisfying the a priori bounds $\sup_{\eps>0}\sup_{t\in[0,T]}\calE_\eps(\mu^\eps(t))<\infty$ and $\sup_{\eps>0}\widehat\calD_\eps(\mu^\eps,\bar\jmath^\eps)<\infty$.
    
    Then, there exists $(\gamma,\bar\jmath)\in\overline\CE$ satisfying $\bar \jmath^e_{\sfv}(t)=-\bar\jmath^e_{\sfw}(t)$ for all $e=\sfv\sfw \in \edges$ and a.e. $t\in[0,T]$, such that
    \begin{enumerate}[label=(\roman*)]
		\item $\mu^\eps(t) \rightharpoonup (\gamma(t),0)$ narrowly in $\calP(\Mgraph)$ for all $t\in[0,T]$;
        \item $\int_\cdot\bar\jmath^\eps(t)\dx t \rightharpoonup^\ast \int_\cdot\bar\jmath(t)\dx t$ weakly-$^\ast$ in $\calM(\hat\edges)$;
    \end{enumerate}
    and it holds
    \begin{align*}
        \liminf_{\eps\to 0}\calE_\eps(\mu^\eps(t)) 
        &\ge \calE_0(\gamma(t)) \quad\text{for all $t\in[0,T]$}
        \qquad\text{and}\qquad
        \liminf_{\eps\to 0} \widehat\calD_\eps(\mu^\eps,\bar\jmath^\eps) 
        \ge \widehat{\calD}_0(\gamma,\bar\jmath).
    \end{align*}
    In particular, if in addition we have the well-preparedness $\calE_\eps(\mu^\eps(0)) \to \calE_0(\gamma(0))$ as $\eps\to 0$, then
    \begin{align*}
        \liminf_{\eps\to 0} \widehat\calL_\eps(\mu^\eps,\bar\jmath^\eps) 
        \ge \widehat{\calL}_0(\gamma,\bar\jmath).
    \end{align*}
\end{theorem}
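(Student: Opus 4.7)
The strategy is to verify the two ingredients of Principle~\ref{def:EDP-convergence} (compactness after embedding and lower limit inequality) for the family $(\pCE, \calE_\eps, \widehat\calD_\eps)$ contracting to $(\widehat\CE, \calE_0, \widehat\calD_0)$. Under the ansatz $\rho^\eps = \zeta^\eps \pi^\eps$, the prelimit gradient system is, up to relabeling, a continuous-time Markov jump process on the extended graph $(\hat\nodes,\hat\edges)$ whose transition rates through the edge-nodes $\edges$ diverge as $\eps^{-1/2}$ while the edge-node reference masses vanish linearly in $\eps$. This is precisely the series-contraction setting analyzed for $\cosh$-gradient structures in~\cite[§7]{PeletierSchlichting2022} and~\cite[§3.3]{LieroMielkePeletierRenger2017}, and the plan is to verify the hypotheses of that framework while tracking the bookkeeping imposed by our metric-graph notation; the emergence of the harmonic mean $\overline\scrk_{\sfv\sfw}$ in~\eqref{eq:def:terminal:k} is the characteristic fingerprint of this contraction.

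For compactness, the scaling is designed so that $\sigma^{\eps,e}_\sfv(\pi^{\eps,e}|_\sfv, \omega^\eps_\sfv) = \scrk^e_\sfv \sqrt{\pi^e|_\sfv\,\omega_\sfv}/Z^\eps$ stays bounded away from $0$ and $\infty$, see~\eqref{eq:def:terminal:sigma0}. The $\sfC^*$-duality argument of Lemma~\ref{lem:finite_flux_comp} (now with $\sqrt\eps$ playing the role of $1/n$) then yields weak-$^*$ compactness of the time-integrated fluxes in $\calM([0,T]\times\hat\edges)$. The energy bound and Pinsker control $\gamma^\eps$ in $L^\infty(0,T;\calM_{\ge 0}(\nodes))$ and $\zeta^\eps$ componentwise in $L^\infty$, whence $\rho^{\eps,e}([0,\ell^e]) = \zeta^{\eps,e}\pi^{\eps,e}([0,\ell^e]) = O(\eps) \to 0$ in total variation, uniformly in $t$. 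An Arzelà--Ascoli argument on $\gamma^\eps$ (based on the continuity equation and the uniform flux bound) upgrades this to pointwise narrow convergence $\mu^\eps(t) \rightharpoonup (\gamma(t),0)$ for every $t\in[0,T]$. Testing~\eqref{eq:def:CE:edgefast} against $\bar\Phi = (0,\chi_{\{e_0\}})$ for fixed $e_0=\sfv\sfw$ gives $\partial_t\rho^{\eps,e_0}([0,\ell^{e_0}]) = -(\bar\jmath^{\eps,e_0}_\sfv + \bar\jmath^{\eps,e_0}_\sfw)$; since the left-hand side is $O(\eps)$ uniformly, passing to the limit yields the antisymmetry $\bar\jmath^{e_0}_\sfv + \bar\jmath^{e_0}_\sfw = 0$ a.e. Testing against $\phi \in C(\nodes)$ extended by $0$ on $\edges$ shows $(\gamma,\bar\jmath)\in\widehat\CE$. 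The energy lower limit $\liminf \calE_\eps(\mu^\eps(t)) \geq \calE_0(\gamma(t))$ follows at once by dropping the non-negative contribution $\calH(\rho^\eps|\pi^\eps)$ and using joint lsc of the relative entropy under the narrow convergences $\gamma^\eps(t)\rightharpoonup\gamma(t)$ and $\omega^\eps\to\omega^0$.

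The main obstacle is the dissipation lower limit, which I split into slope and rate parts. The slope part is handled by the sharp quadratic contraction
\begin{equation*}
    \sigma_1(c-a)^2 + \sigma_2(c-b)^2 \geq \frac{\sigma_1\sigma_2}{\sigma_1+\sigma_2}(a-b)^2 \qquad \forall\,a,b,c\in\R,\ \sigma_1,\sigma_2>0,
\end{equation*}
applied pointwise with $c=\sqrt{\zeta^{\eps,e}}$, $a=\sqrt{\gamma^\eps_\sfv/\omega^\eps_\sfv}$, $b=\sqrt{\gamma^\eps_\sfw/\omega^\eps_\sfw}$, $\sigma_1 = \sigma^{\eps,e}_\sfv(\pi^{\eps,e}|_\sfv,\omega^\eps_\sfv)$, $\sigma_2 = \sigma^{\eps,e}_\sfw(\pi^{\eps,e}|_\sfw,\omega^\eps_\sfw)$; in the limit, a short computation using~\eqref{eq:def:terminal:sigma0} gives $\sigma_1\sigma_2/(\sigma_1+\sigma_2)\to \tfrac{1}{2}\mathsf{Harm}(\sigma^{0,e}_\sfv,\sigma^{0,e}_\sfw) = \overline\scrk_{\sfv\sfw}\sqrt{\omega^0_\sfv\omega^0_\sfw}$, and the classical lsc of convex integrands (cf. \cite[Theorem 2.3.1]{Buttazzo1989}) applied to the weakly convergent square roots concludes the limit of the slope. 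The rate part is the essentially new ingredient and rests on the analogous $\cosh$-level inequality
\begin{equation*}
    \sigma_1\sfC\bra*{j/\sigma_1}+\sigma_2\sfC\bra*{j/\sigma_2} \ \geq\ \overline\sigma\,\sfC\bra*{j/\overline\sigma}, \qquad \overline\sigma = \tfrac{\sigma_1\sigma_2}{\sigma_1+\sigma_2},
\end{equation*}
which by Legendre duality is equivalent to the inf-convolution bound $\inf_\xi[\sigma_1\sfC^*(\xi)+\sigma_2\sfC^*(\eta-\xi)] \leq \overline\sigma\sfC^*(\eta)$ and is established (for our $\sfC^*$) in~\cite[§7]{PeletierSchlichting2022}. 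To accommodate the fact that $\bar\jmath^{\eps,e}_\sfv$ and $-\bar\jmath^{\eps,e}_\sfw$ only agree in the limit, I pass through the dual formulation: for $\phi\in C(\nodes)$ test the dual dissipation with $\psi^\eps\in C(\hat\nodes)$ satisfying $\psi^\eps|_\nodes=\phi$ and $\psi^{\eps,e}$ chosen to attain the inf-convolution. The flux-potential pairing over edge $e$ rewrites as $\bar\jmath^{\eps,e}_\sfv\phi_\sfv + \bar\jmath^{\eps,e}_\sfw\phi_\sfw - \psi^{\eps,e}(\bar\jmath^{\eps,e}_\sfv+\bar\jmath^{\eps,e}_\sfw)$, whose last term vanishes in the limit by the antisymmetry and boundedness of $\psi^{\eps,e}$. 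Taking the supremum over $\phi$ and invoking Fenchel--Moreau closedness reconstructs the rate part of $\widehat\calD_0$. Combining the two lower limits with the well-preparedness of the initial data yields $\liminf_\eps \widehat\calL_\eps \geq \widehat\calL_0$.
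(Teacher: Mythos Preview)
Your compactness sketch and the slope lower bound are essentially sound, but the rate lower bound has a genuine gap that the slope/rate separation cannot close. The prelimit rate conductivity on the edge $e=\sfv\sfw$ is $\sigma^{\eps,e}_\sfv(\rho^{\eps,e}|_\sfv,\gamma^\eps_\sfv)=\sqrt{\zeta^{\eps,e}}\cdot(\scrk^e_\sfv/\sqrt{Z^\eps})\sqrt{\pi^e|_\sfv\,\gamma^\eps_\sfv}$, so it carries the uncontrolled factor $\sqrt{\zeta^{\eps,e}}$. Your $\cosh$-series inequality $\sigma_1\sfC(j/\sigma_1)+\sigma_2\sfC(j/\sigma_2)\ge\overline\sigma\,\sfC(j/\overline\sigma)$ applied with these $\sigma_i$ leaves $\overline\sigma$ still proportional to $\sqrt{\zeta^{\eps,e}}$; and once the slope has been discarded nothing penalises large $\zeta^{\eps,e}$, so the rate lower bound degenerates to zero (indeed $\sigma\sfC(j/\sigma)\to 0$ as $\sigma\to\infty$). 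The dual variant runs into the same wall: $\calR^*_\eps(\mu^\eps,\overline\nabla\psi^\eps)$ contains the identical $\sqrt{\zeta^{\eps,e}}$ factor, so no choice of $\psi^{\eps,e}$ produces a $\limsup$ dominated by $\calR^*_0(\gamma,\overline\nabla\phi)$. The paper's argument avoids this by \emph{not} separating: after a joint lower-semicontinuity step (\cite[Lemma~7.4]{PeletierSchlichting2022}) one keeps the inner infimum $\inf_{z\ge 0}\bigl\{\sum_{\sfu\in\{\sfv,\sfw\}}\bigl[\sfC(\bar\jmath^e_\sfu\,|\,\sqrt{z}\,\sigma^{0,e}_\sfu)+2\sigma^{0,e}_\sfu|\sqrt{z}-\sqrt{\gamma_\sfu/\omega^0_\sfu}|^2\bigr]\bigr\}$ intact, and it is precisely \cite[Corollary~3.4]{PeletierSchlichting2022} that evaluates this \emph{coupled} infimum to the contracted expression with conductivity $\overline\scrk_{\sfv\sfw}\sqrt{\omega^0_\sfv\omega^0_\sfw}$. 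The coupling is essential: as $z\to\infty$ the rate vanishes but the slope diverges, and the balance is what produces the harmonic mean; separate minimisation loses the rate half of $\widehat\calD_0$ entirely.

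A smaller issue in the compactness part: the energy bound and Pinsker give only $\zeta^{\eps,e}=O(1/\eps)$ pointwise, not uniform $L^\infty$ control, so the claim $\rho^{\eps,e}([0,\ell^e])=O(\eps)$ uniformly in $t$ is unjustified. The route to $\rho^\eps(t)\rightharpoonup 0$ for every $t$ goes through the slope bound (which gives $\sqrt{\zeta^{\eps,e}}$ uniformly in $L^2(0,T)$) combined with the flux compactness and the continuity equation; this is what \cite[Lemma~7.2]{PeletierSchlichting2022} supplies and what the paper invokes.
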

\begin{proof}
    The compactness statements are a consequence of~\cite[Lemma 7.2]{PeletierSchlichting2022}, which also implies the antisymmetry property $\bar \jmath^e_{\sfv}=-\bar\jmath^e_{\sfw}$ (from the fact, that the graph divergence vanishes on $e\in\edges$) and that $\zeta^{\eps,e}$ converges narrowly to a finite measure $\zeta^e\in \calM([0,T]\times \edges)$ for all $e\in \edges$.
    To obtain the lower limit inequality, we change the order of summation in~\eqref{eq:terminal:Deps} and minimize with respect to $\zeta$, for which we do not have any control in the limit, by using the limit structure of the fluxes $\bar\jmath^e_\sfv=-\bar\jmath^e_\sfw$ for $e=\sfv\sfw\in \edges$. In this way, arguing as in the proof of~\cite[Lemma 7.4]{PeletierSchlichting2022}, we arrive at a lower limit, which still contains an inner minimization problem:
    \begin{align*}
        \liminf_{\eps\to0} \widehat\calD_\eps(\mu,\bar\jmath) \geq \sum_{e\in\edges} \int_0^T \inf_{\zeta^e\in \R_+} \set[\bigg]{\sum_{\sfv\in \nodes(e)} \bra[\bigg]{\sfC\bra[\big]{ \bar\jmath^e_\sfv | \sqrt{\zeta^e}\sigma^{0,e}_\sfv}
        + 2 \sigma^{0,e}_\sfv \abs[\bigg]{ \sqrt{\zeta^e} -  \sqrt{\frac{\gamma_\sfv}{\omega_\sfv^0}}}^2 } } \dx{t} \,.
    \end{align*}
    In our situation, the inner optimization problem is exactly the series law for the conductivities and instead of using~\cite[Lemma 7.3]{PeletierSchlichting2022}, we can directly apply~\cite[Corollary~3.4]{PeletierSchlichting2022} to conclude that $\liminf_{\eps\to0} \widehat\calD_\eps(\mu,\bar\jmath) \geq \widehat\calD_0(\gamma,\bar\jmath)$ as defined in~\eqref{eq:def:terminal:limitD}, where we note for $e=\sfv\sfw\in \edges$ that
    \[ 
        \overline\scrk_{\sfv\sfw} = \frac{\mathsf{Harm}(\sigma^{0,e}_{\sfv},\sigma^{0,e}_{\sfw})}{2\sqrt{\omega^0_\sfv \omega^0_\sfw}} = \frac{\mathsf{Harm}\bra*{ \scrk^{e}_\sfv \sqrt{\pi^e|_\sfv\,\omega_\sfv},\scrk^{e}_\sfw \sqrt{\pi^e|_\sfw\,\omega_\sfw}}}{2\sqrt{\omega_\sfv \omega_\sfw}}
        = \frac{\mathsf{Harm}\bra*{r(e,\sfv) \pi^e|_{\sfv}, r(\sfw,e) \pi^e|_{\sfw}}}{2\sqrt{\omega_\sfv \omega_\sfw}},
    \]
    where we used~\eqref{eq:def:terminal:sigma0} to derive the second equality and the definition of $\scrk^e_{\sfv\sfw}$ from~\eqref{eq:DBC:r} to derive the third equality.
\end{proof}

We can recover a limit gradient system in continuity equation format with respect to the standard graph gradient $\overline\nabla: C(\nodes)\to C(\edges)$ defined for $\phi:\nodes\to \R$ by $\overline\nabla\phi_{\sfv\sfw} \coloneqq \phi_\sfw-\phi_\sfv$. Indeed, for any $(\gamma,\bar\jmath)\in\widehat{\CE}$ satisfying $\bar \jmath^e_{\sfv}(t)=-\bar\jmath^e_{\sfw}(t)$, the continuity equation~\eqref{eq:def:CE:terminal} can be antisymmetrised as follows
\begin{equation}\label{eq:terminal:CE:antisym}
        \pderiv{}{t} \sum_{\sfv \in \nodes} \phi_\sfv \gamma_\sfv(t)
        = 
        \sum_{e\in\edges} 
        \sum_{\sfv\in\nodes(e)} 
        \phi_\sfv \bar\jmath^e_\sfv(t)
        =
        \sum_{e=\sfv\sfw\in\edges} \phi_\sfv \bar\jmath^e_\sfv(t)+ \sum_{e=\sfw\sfv\in\edges} \phi_\sfv \bar\jmath^e_\sfv(t) = \sum_{e=\sfv\sfw\in\edges}  \bra*{ \phi_\sfv - \phi_\sfw} \bar\jmath^{e}_\sfv(t) \,,
\end{equation}
where, for the second sum, we used the identity 
\[
\sum_{e=\sfw\sfv\in\edges} \phi_\sfv \bar\jmath^e_\sfv(t) 
= - \sum_{e=\sfw\sfv\in\edges} \phi_\sfv \bar\jmath^e_\sfw(t)
= - \sum_{e=\sfv\sfw\in\edges} \phi_\sfw \bar\jmath^e_\sfv(t) \,.
\]
Consequently, we can introduce the directed fluxes $f:[0,T]\to\calM(\edges)$ from $\sfv$ to $\sfw$ by 
\begin{equation}\label{eq:def:terminal:flux}
    f_{\sfv\sfw}(t)\coloneqq
    \begin{cases}
        \bar\jmath_{\sfw}^e(t)=-\bar\jmath^e_{\sfv}(t) , &\text{ if } \sfv\sfw=e \,;\\
        -\bar\jmath_{\sfv}^e(t)=\bar\jmath_{\sfw}^e(t) , & \text{ if }  \sfw\sfv=e \,.
    \end{cases}
\end{equation}
Hereby, we note that $\bar\jmath^e_\sfv$ corresponds in the prelimit model to the directed flux going from $e$ to $\sfv$.

The pair $(\gamma,f)$ solves the standard graph continuity equation on the directed graph $(\nodes,\edges)$
\begin{equation}\label{eq:terminal:GraphCE}
\pderiv{}{t} \sum_{\sfv \in \nodes} \phi_\sfv \gamma_\sfv(t) = \sum_{\sfv\sfw\in\edges} \overline\nabla \phi_{\sfv\sfw}(t) f_{\sfv\sfw}(t) 
\end{equation}
or equivalently in strong divergence form
\begin{equation}\label{eq:def:terminal:div}
     \dot \gamma_{\sfv}(t) + (\overline\div f(t))_{\sfv} = 0 \,,
     \quad\text{ where }\quad (\overline\div f)_{\sfv}\coloneqq \sum_{\sfw\in\nodes:\sfv\sfw\in \edges} f_{\sfv\sfw}-\sum_{\sfw\in\nodes:\sfw\sfv\in \edges} f_{\sfw\sfv} \,.
\end{equation}
In this case, we write $(\gamma,f)\in \overline\CE%
$ 
for such a curve. We use the same \emph{bar}-notation as in~\eqref{eq:CE_discrete}, which is the standard graph continuity equation on the extended graph $(\hat\nodes,\hat\edges)$ defined in~\eqref{eq:def:ExtendedGraph}.
Finally, we notice that the EDP limit $\widehat\calL_0$ has a gradient structure with respect to this continuity equation.
\begin{remark}\label{rem:terminal:GF}
    For any $(\gamma,\bar\jmath)\in \widehat{\CE}$ with $\widehat{\calL}_0(\gamma,\bar\jmath) <\infty$, it holds $(\gamma,f)\in\overline\CE$ with $f$ defined in~\eqref{eq:def:terminal:flux} and the identity
    \begin{equation}
        \widehat{\calL}_0(\gamma,\bar\jmath) = 
        \overline\calL%
        (\gamma,f) \coloneqq \calE_0(\gamma(T))-\calE_0(\gamma(0)) + \overline\calD%
        (\gamma,f) \,,
    \end{equation}
    with 
    \begin{equation}
        \overline\calD%
        (\gamma,f)\coloneqq\int_0^T
        \pra[\Bigg]{
        \sfC\bra[\big]{f_{\sfv\sfw}(t) | \overline{\scrk}_{\sfv\sfw} \sqrt{\omega^0_\sfv \omega^0_\sfw}}
        + 2  \overline{\scrk}_{\sfv\sfw} \sqrt{\omega^0_\sfv \omega^0_\sfw}\abs[\Bigg]{\sqrt{\frac{\gamma_\sfv(t)}{\omega_\sfv^0}} -  \sqrt{\frac{\gamma_\sfw(t)}{\omega_\sfw^0}}}^2 
        } \dx{}t\,.
    \end{equation}
    This defines the gradient system in continuity equation format $(\nodes,\edges,\overline\nabla,\calE_0,\overline\calR^*)$ in the sense of Definition~\ref{def:GradSystCE}, where $(\nodes,\edges,\overline\nabla)$ induces the continuity equation $\overline\CE$ as in~\eqref{eq:terminal:GraphCE}, $\calE_0$ is defined in~\eqref{eq:def:terminal:energies}, and the dual dissipation potential of $\cosh$-type is given for $\gamma\in\calP(\nodes)$ and $\Xi:\edges\to \R$ by
\begin{equation}\label{eq:def:terminal:limitR*}
    \overline{\calR}^*(\gamma,\Xi) \coloneqq \sum_{\sfv\sfw\in\edges} \overline{\scrk}_{\sfv\sfw} \sqrt{\omega^0_{\sfv}\,\omega^0_{\sfw}} \sfC^*(\Xi_{\sfv\sfw}) \,.
\end{equation}
In particular, we recover the gradient flow solution~\eqref{eq:GF:abstract} in strong form as
\begin{alignat*}{2}
    \forall \sfv\in\nodes:&\quad &&\dot \gamma_\sfv(t) +(\overline\div f(t))_\sfv = 0, \\ 
    \forall \sfv\sfw\in \edges:&\quad &&f_{\sfv\sfw}(t) = \rmD_2\overline{\calR}^*(\gamma,-\overline\nabla \calE_0'(\gamma))|_{\sfv\sfw}
    =\overline{\scrk}_{\sfv\sfw}\sqrt{\omega^0_{\sfv}\,\omega^0_{\sfw}} \bra*{ \frac{\gamma_\sfw(t)}{\omega_\sfw^0}- \frac{\gamma_\sfv(t)}{\omega_\sfv^0}} ,
\end{alignat*}
where we used the identity $(\sfC^*)'(\tfrac{1}{2}(\log b- \log a)) = b-a$, since $(\sfC^*)'(r)=2\sinh(r/2)$.
By recalling the definitions~\eqref{eq:def:terminal:k} and~\eqref{eq:def:terminal:div}, we arrive at the limit dynamic~\eqref{eq:intro:TerminalLimit} introduced in  Section~\ref{ssec:MS_intro}.
\end{remark}

\begin{remark}[Joint fast edge diffusion and combinatorial limit]\label{rem:joint_limit}
    We note that the limits considered in Section~\ref{ssec:edgepoints} and the present section are complementary to each other.
    Indeed, the fast diffusion limit is realized by accelerating the dynamics along the metric edges, but keeping the reference measures and the jump rates between metric edges and vertex reservoirs fixed. 
    In contrast to this, the combinatorial limit is obtained by rescaling the reference measures and jump coefficients.
    It is therefore plausible that these limits should commute with each other. 
    In light of the length of this manuscript, we refrain from adding yet another section to consider this joint limit analytically.
    Instead we present in Section~\ref{sec:Numerics} numerical results indicating that the two limits may indeed commute.
\end{remark}

\section{Numerical simulations}\label{sec:Numerics}

We present several numerical examples based on an implementation of the discrete scheme introduced in Section~\ref{sec:microscopic}.

This implementation is used to investigate the multiscale limits as considered in the analysis of Section~\ref{sec:multiscale}.   
Going beyond the scope of Section~\ref{sec:multiscale}, we study the rescaled systems for initial data that are not well-prepared, observing that the rescaled systems accommodate for this lack of well-preparedness after short times.
Furthermore, using relative entropies and Hellinger distances, rescaled prelimit systems and their respective limit systems are compared to each other for well-prepared initial data.
We conclude the section by numerically studying the simultaneous fast edge diffusion and combinatorial limit mentioned in Remark~\ref{rem:joint_limit}.

Due to the scope of the present work, we focus on the case of a single graph (triangular) and fixed reference measures. The interesting study of different measures, graph topologies or other driving functionals, such as for example nonlocal interactions, is postponed to future works.

\paragraph*{Prelimit system }
With the notation of Section~\ref{sec:microscopic}, the (space-)discrete version of \eqref{eq:system_intro_linear} reads as 

\begin{align*}
    \frac{\dx}{\dx t}\tilde\gamma^e_k &= \frac{\tilde d^e_k}{(h^e_n)^2}\bra[\bigg]{\frac{\tilde\gamma^e_{k+1}}{\tilde\omega^e_{n,k+1}}-\frac{\tilde\gamma^e_k}{\tilde\omega^e_{n,k}}} + \frac{\tilde d^e_{k-1}}{(h^e_n)^2}\bra[\bigg]{\frac{\tilde\gamma^e_{k-1}}{\tilde\omega^e_{n,k-1}}-\frac{\tilde\gamma^e_k}{\tilde\omega^e_{n,k}}},\quad k=2,\ldots, n-1,\, e\in \edges,
\end{align*}
as well as
\begin{align*}
    \frac{\dx}{\dx t}\tilde\gamma^e_1 
    &= \frac{\tilde d^e_1}{(h^e_n)^2}\bra[\bigg]{\frac{\tilde\gamma^e_2}{\tilde\omega^e_{n,2}}-\frac{\tilde\gamma^e_1}{\tilde\omega^e_{n,1}}} 
    +  d^{e,n}_{\sfv} \bra[\bigg]{\frac{\bar\gamma_\sfv}{\omega_\sfv}-\frac{\tilde\gamma^e_1}{\tilde\omega^e_{n,1}}},\quad e=\sfv\sfw\in \edges,\\
    \frac{\dx}{\dx t}\tilde\gamma^e_n 
    &= d^{e,n}_\sfw\bra[\bigg]{\frac{\bar\gamma_\sfw}{\omega_\sfw}-\frac{\tilde\gamma^e_n}{\tilde\omega^e_{n,n}}} 
    + \frac{\tilde d^e_{n-1}}{(h^e_n)^2}\bra[\bigg]{\frac{\tilde\gamma^e_{n-1}}{\tilde\omega^e_{n,n-1}}-\frac{\tilde\gamma^e_n}{\tilde\omega^e_{n,n}}},\quad e=\sfv\sfw\in \edges,    
\end{align*}
with $\tilde d^e_k \coloneqq \diffedge^e\sqrt{\tilde\omega^e_{n,k}\tilde\omega^e_{n,k+1}}$, $d^{e,n}_{\sfv} \coloneqq \scrk^e_\sfv\sqrt{\omega_\sfv\tilde\omega^e_{n,0}}$, and $d^{e,n}_\sfv \coloneqq \scrk^e_\sfv\sqrt{\tilde\omega^e_{n,n}\omega_\sfv}$, where the internal vertex measures $\tilde\omega^e_{n,k}$ are as in \eqref{eq:def_internal_vertex_measures}.
The vertex evolution is characterized by
\begin{align*}
    \partial_t\bar \gamma_\sfv &= \sum_{e=\sfv\sfw\in \edges(\sfv)}d^{e,n}_{\sfv}\bra[\bigg]{ \frac{\tilde\gamma^e_1}{\tilde\omega^e_{n,1}}-  \frac{\bar \gamma_\sfv}{\omega_\sfv} } + \sum_{e=\sfw\sfv\in \edges(\sfv)}d^{e,n}_\sfv\bra[\bigg]{ \frac{\tilde\gamma^e_n}{\tilde\omega^e_{n,n}}-  \frac{\bar \gamma_\sfv}{\omega_\sfv} },  &&\forall \sfv\in \nodes \,.
\end{align*}
We will perform all simulations with densities as unknowns, which we now introduce. For all $n\in\bbN$, $\sfv\in\nodes$, $e\in\edges$ and $k=1,\ldots,n$ we define $\tilde u_k^e = \frac{\tilde\gamma_k^e}{\tilde\omega_{n,k}^e}$ and $\bar u_\sfv = \frac{\bar\gamma_\sfv}{\omega_\sfv}$. Substituting these into the above equations, we obtain
\begin{align}\label{eq:discrete_u_interior}
    \tilde \omega_{n,k}^e\frac{\dx}{\dx t}\tilde u^e_k &= \frac{\tilde d^e_k}{(h^e_n)^2}\bra{\tilde u^e_{k+1}-\tilde u^e_k} + \frac{\tilde d^e_{k-1}}{(h^e_n)^2}\bra{\tilde u^e_{k-1}-\tilde u^e_k},\quad k=2,\ldots, n-1,\, e\in \edges,
\end{align}
as well as
\begin{subequations}\label{eq:discrete_u_bndries}
\begin{align}
    \tilde\omega^e_{n,1}\frac{\dx}{\dx t}\tilde u^e_1 
    &= \frac{\tilde d^e_1}{(h^e_n)^2}\bra{\tilde u^e_2-\tilde u^e_1}
    +  d^{e,n}_{\sfv} \bra{\bar u_\sfv-\tilde u^e_1},\quad e=\sfv\sfw\in \edges, \label{eq:discrete_u_bndries:v}\\
    \tilde\omega^e_{n,n}\frac{\dx}{\dx t} u^e_n 
    &= d^{e,n}_\sfw\bra{\bar u_\sfw-\tilde u^e_n} 
    + \frac{\tilde d^e_{n-1}}{(h^e_n)^2}\bra{\tilde u^e_{n-1}-\tilde u^e_n},\quad e=\sfv\sfw\in \edges, \label{eq:discrete_u_bndries:w}
\end{align}
\end{subequations}
and
\begin{align}\label{eq:discrete_vertex}
    \omega_\sfv\partial_t \bar u_\sfv &= \sum_{e=\sfv\sfw\in \edges(\sfv)}d^{e,n}_{\sfv}\bra*{ \tilde u^e_1-  \bar u_\sfv} + \sum_{e=\sfw\sfv\in \edges(\sfv)}d^{e,n}_{\sfv}\bra*{\tilde u^e_n-  \bar u_\sfv},  &&\forall \sfv\in \nodes \,.
\end{align}

The system of ODEs \eqref{eq:discrete_u_interior}--\eqref{eq:discrete_vertex} will be used for all numerical simulations below and we fix $n=100$ in all experiments except the ones displayed in Figure~\ref{fig:hellinger}.

All experiments are carried out on a triangle with three vertices and three edges as sketched in Figure~\ref{fig:graphs}. We also fix the length of all edges to one, i.e., $\ell^{e_1}=\ell^{e_2}=\ell^{e_3}=1$. In addition, for the diffusion coefficients as well as the symmetrised jump rates, we choose $\diffedge^{e_1}=\diffedge^{e_2}=\diffedge^{e_3}=1$ and $\scrk^e_\sfv=1$ for $e=e_1,\,e_2,\,e_3$ and $v=v_1,\,v_2,\,v_3$, respectively.

\begin{figure}[!ht]
    \centering
    \begin{minipage}{.4\textwidth}
    \centering
    \begin{tikzpicture}[scale=1, transform shape]
    \pgfdeclarelayer{background}
    \pgfsetlayers{background,main}
    \begin{scope}[every node/.style={draw,circle}]
            \node (v1) at (0,0) {$\sfv_1$};
            \node (v2) at (2,-1) {$\sfv_2$};
            \node (v3) at (2,1) {$\sfv_3$};
    \end{scope}
        \begin{scope}[>={Stealth[bluegray]},
            every edge/.style={draw=bluegray,line width=1pt}]
            \path [-] (v1) edge node[bluegray,below] {$e_1$} (v2);
            \path [-] (v2) edge node[bluegray,below right] {$e_2$} (v3);
            \path [-] (v3) edge node[bluegray,above left] {$e_3$} (v1);
    \end{scope}
    \end{tikzpicture}
    \end{minipage}
    \caption{Sketch of the triangular graph used in all numerical experiments.} %
    \label{fig:graphs}
\end{figure}
For the invariant measures, we choose 
\begin{align*}
\pi^{e_1} = \frac{x + 0.1}{Z}, \quad \pi^{e_2} = \frac{\sin(4\pi x) + 1.1}{Z}, \quad \pi^{e_3} = \frac{0.8x^2 - 1.8x + 1.1}{Z}
\end{align*}
and 
\begin{align*}
\omega_{\sfv_1} = \frac{0.1}{Z},\quad \omega_{\sfv_2} = \frac{1.1}{Z},\quad \omega_{\sfv_3} = \frac{1.1}{Z},
\end{align*}
where the normalization factor
$$
Z = \sum_{i=1}^3 \bra*{\int_0^1 \pi^{e_i}(x)\dx x + \omega_{\sfv_i}} = 4.47,
$$
ensures that the invariant measure on the whole metric graph remains a probability measure. 
Note that this choice is continuous not only along the edges, but also at the vertices. 

In accordance with Section~\ref{sec:microscopic}, we obtain discretized quantities on the edges by integrating over patches, see \eqref{eq:def_internal_vertex_measures}, i.e., $\tilde\omega^e_{n,k}\coloneqq \pi^e(I^e_{n,k})$ with $I^e_{n,\alpha} =[(\alpha-1) h^e_n,\alpha h^e_n)$ and $h^e_n = \ell^e/n$. 
Here, we stress that both $\tilde\gamma^e_k$ and $\gamma_v$ are measures (not densities) and to ensure that they form a probability vector on the graph, the sum over all of them needs to be one (without any additional scaling factors).

On the other hand, from the discrete quantities $\tilde \gamma_k^e$, we can recover the continuous objects $\rho^n$ by using the embedding defined in \eqref{eq:embedding_discr_to_cont}, i.e., by applying a piecewise constant interpolation and scaling with $n$ to preserve the total mass. 

Finally, unless stated otherwise, we prescribe the initial data
\begin{align*}
\tilde \gamma^{e_1}_k &= \tilde \gamma^{e_2}_k = \tilde \gamma^{e_3}_k = \frac{1}{6n}\qquad\text{and}\qquad
\bar \gamma_{\sfv_1} = \bar \gamma_{\sfv_2} = \bar \gamma_{\sfv_3} = \frac{1}{6},
\end{align*}
which correspond to uniform initial measures in the continuum.

Regarding the numerical solver, we note that upon rescaling the parameters to study the different limits, the system becomes stiff, meaning that some of the components evolve on a very different time scale than other. 
Therefore, a simple explicit time stepping scheme is no longer appropriate, as for a stable solution very small time steps would be necessary. 
Instead, we use an implicit multi-step variable-order (order from $1$ to $5$) method that is based on a backward differentiation formula for the derivative, \cite{Shampine1997}, and that is implemented in $\textrm{scipy}$.

\paragraph*{Kirchhoff-limit }
To obtain discrete limit dynamics for this case, we need to formulate the discrete version of the graph evolution without the vertex dynamics, ensuring continuity of the measures over the vertices as well as the Kirchhoff condition to preserve the total mass.
To this end, we introduce a discrete patch around each vertex, which can be thought of as enforcing that all edge degrees of freedom adjacent to this vertex are equal. 
We denote this patch vertex degree of freedom by $\tilde u_v$. %

The dynamics in the interior are then modified as follows, taking into account that the  first and last degree of freedom on each edge is removed:
\begin{align}\label{eq:discrete_u_interior_kirchhoff}
    \tilde \omega_{n,k}^e\frac{\dx}{\dx t}\tilde u^e_k &= n^2\tilde d^e_k\bra{\tilde u^e_{k+1}-\tilde u^e_k} + n^2\tilde d^e_{k-1}\bra{\tilde u^e_{k-1}-\tilde u^e_k},\quad k=3,\ldots, n-2,\, e\in \edges.
\end{align}
On the internal vertices adjacent to the vertex degree of freedom we have
\begin{align}\label{eq:discrete_u_bndries_kirchhoff}
    \tilde\omega^e_{n,2}\frac{\dx}{\dx t}\tilde u^e_2 
    &= n^2\tilde d^e_2\bra{\tilde u^e_3-\tilde u^e_2}
    +  n^2\tilde d^e_1\bra{\bar u_\sfv-\tilde u^e_2},\\
    \tilde\omega^e_{n,n-1}\frac{\dx}{\dx t} \tilde u^e_{n-1} 
    &= n^2d^e_{n-1}\bra{\bar u_\sfv-\tilde u^e_n} 
    + n^2\tilde d^e_{n-2}\bra{\tilde u^e_{n-2}-\tilde u^e_{n-1}},\quad e=\sfv\sfw\in \edges.
\end{align}
Finally, for the vertex patch we use the notation
\begin{align*}
\tilde\omega_\sfv = \sum_{e=\sfv\sfw\in \edges(\sfv)} \tilde\omega_{n,1}^e + \sum_{e=\sfw\sfv\in \edges(\sfv)}\tilde \omega_{n,n}^e,
\end{align*}
and obtain 
\begin{align}\label{eq:discrete_vertex_kirchhoff}
    \tilde \omega_\sfv\partial_t \bar u_\sfv &= \sum_{e=\sfv\sfw\in \edges(\sfv)}n^2\tilde d^e_{1}\bra*{ \tilde u^e_2-  \bar u_\sfv} + \sum_{e=\sfw\sfv\in \edges(\sfv)}n^2\tilde d^{e}_{n-1}\bra*{\tilde u^e_{n-1}-  \bar u_\sfv},  &&\forall \sfv\in \nodes \,.
\end{align}
We remark that we expect the system above to converge, as $n\to\infty$, to the continuous Kirchhoff system. Yet, the rigorous proof of this statement would need similar arguments as in Section~\ref{sec:microscopic} and is postponed to future research. 

In Figure \ref{fig:Kirchhoff_limit}, we present a comparison of the measures $\upgamma^\eps = u^\eps\upomega^\eps$ for the (rescaled) system \eqref{eq:discrete_u_interior}--\eqref{eq:discrete_vertex}, i.e., with $\tilde \omega^\eps_\sfv \coloneqq \eps  \tilde \omega_\sfv/Z^\eps$ (with $Z^\eps$ a renormalization factor of order 1), $\scrk^{\eps,e}_v = \scrk^{e}_v / \eps$, and the Kirchhoff system \eqref{eq:discrete_u_interior_kirchhoff}--\eqref{eq:discrete_vertex_kirchhoff}. 

As expected, we observe that continuity at the vertices is enforced more and more as $\eps$ approaches zero. 
Additionally, the fact that our initial data are not well-prepared leads to the formation of boundary layers at short times, see inlets (b) and (c). They occur as the rescaled invariant vertex measures together with the rescaled jump rates allow for a very fast transport of mass off the vertices. 
For later times, the dynamics on the edges become dominant, driving the shapes of the profiles towards the invariant (edge) measures. 
At large times, we see that the rescaled systems almost coincide with the limit system, while the unscaled systems remain distinct.

\begin{figure}[!ht]
    \centering
    \subfloat[$t=0$]{\includegraphics[width=.47\textwidth]{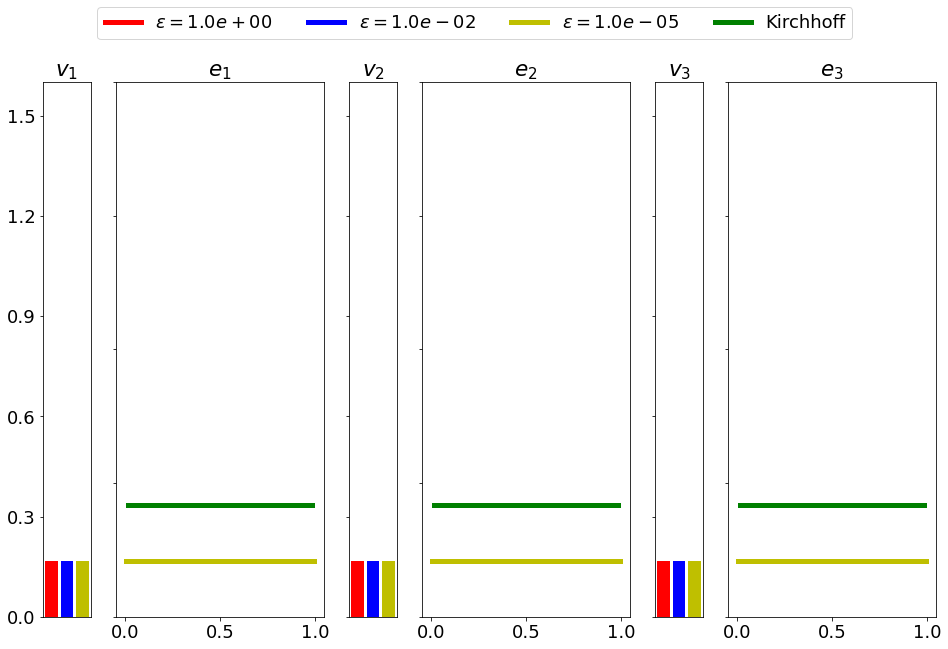}}\hspace{5mm}
    \subfloat[$t=0.005$]{\includegraphics[width=.47\textwidth]{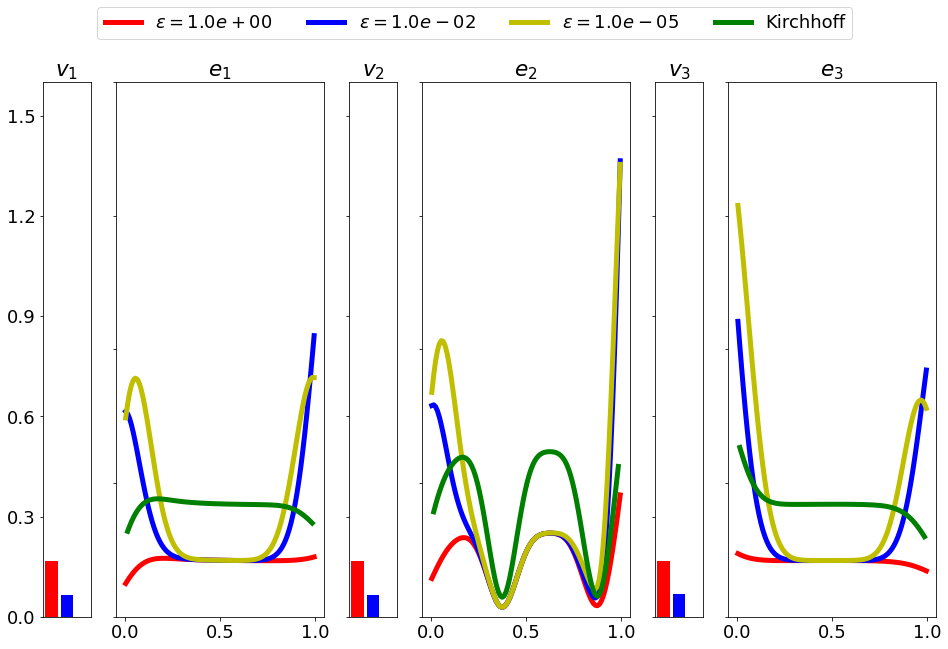}}
    \\
    \subfloat[$t=0.01$]{\includegraphics[width=.47\textwidth]{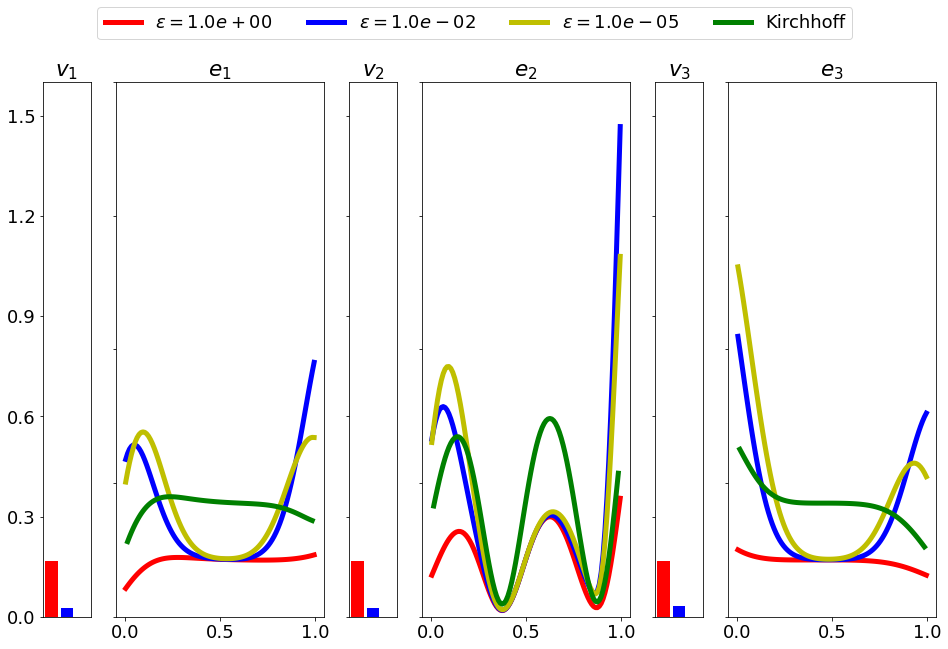}}\hspace{5mm}
    \subfloat[$t=0.1$]{\includegraphics[width=.47\textwidth]{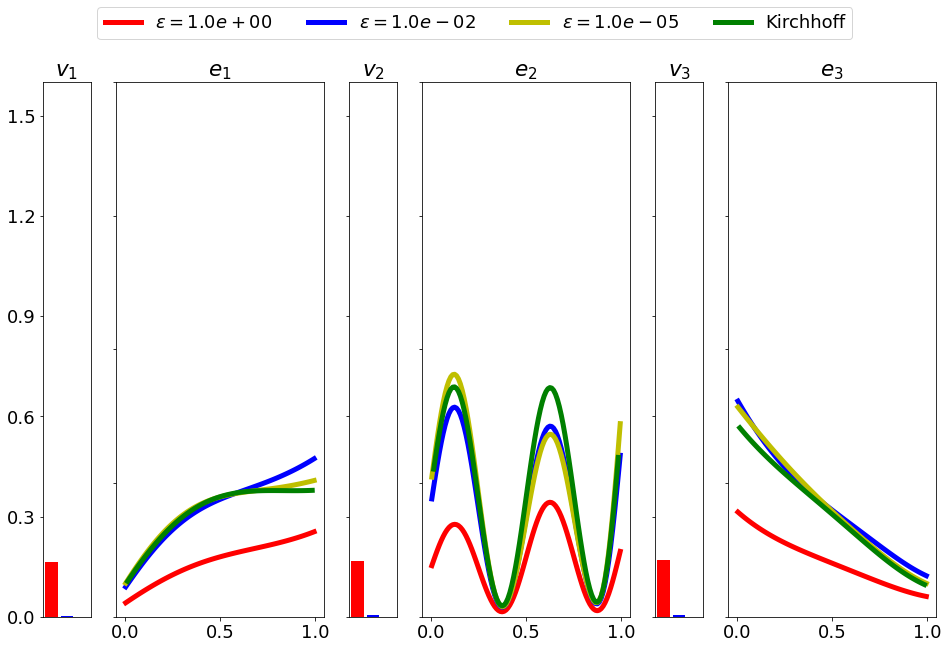}}
    \\
    \subfloat[$t=1.0$]{\includegraphics[width=.47\textwidth]{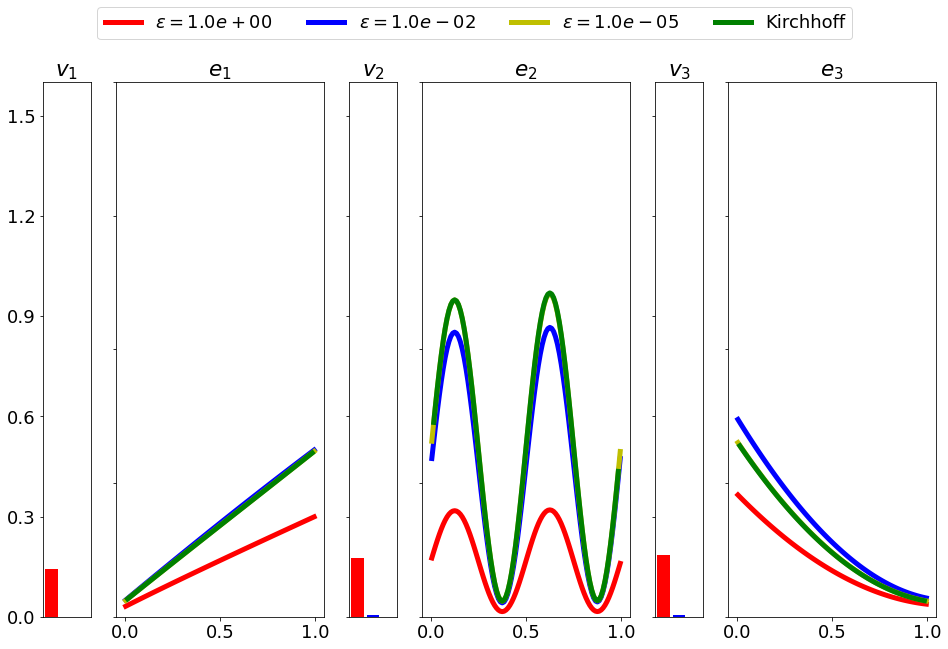}}\hspace{5mm}
    \subfloat[$t=40.0$]{\includegraphics[width=.47\textwidth]{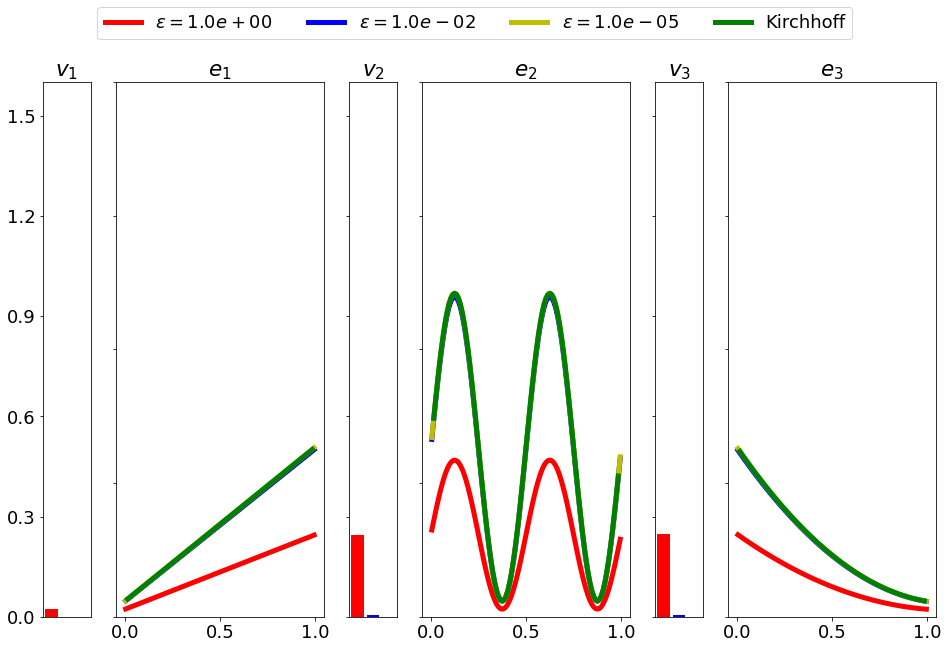}}
    \caption{Evolution of the measures $\upgamma^\eps = u^\eps\upomega^\eps$ for the prelimit system with rescaling $\omega^\eps_\sfv \coloneqq \eps \omega_\sfv$ and $\scrk^{\eps,e}_v = \scrk^{e}_v / \eps$ with different values of $\eps$ as well as for the discrete Kirchhoff system \eqref{eq:discrete_u_interior_kirchhoff}--\eqref{eq:discrete_vertex_kirchhoff}.
    }
    \label{fig:Kirchhoff_limit}
\end{figure}

\paragraph*{Fast edge diffusion limit }
Next, we consider the fast diffusion limit $\diffedge^{\eps,e} = \eps^{-1} \diffedge^e = \eps^{-1}$ for all $e\in \edges$. The resulting limit system on the graph $(\hat\nodes,\hat\edges)$ introduced in \eqref{eq:def:ExtendedGraph} reads as
\begin{equation}\label{eq:discrete_fast}
    \begin{aligned}
    		\pi^e[0,1]\partial_t\zeta^e(t) &= \sum_{\sfv\in \nodes(e)}\scrk^e_\sfv\sqrt{\pi^e|_{\sfv}\omega_\sfv}\bra*{\bar u_\sfv(t) - \zeta^e(t)},  &&\forall e\in \edges,\\
		\omega_v\partial_t \bar u_\sfv(t) &= \sum_{e\in \edges(\sfv)}\scrk^e_\sfv\sqrt{\pi^e|_{\sfv}\omega_\sfv}\bra*{ \zeta^e(t)  - \bar u_\sfv},  &&\forall \sfv\in \nodes \,,
    \end{aligned}
\end{equation}
where $u_v$ again denotes the quotient $\gamma_v / \omega_v$ and where we kept the notation $\zeta^e(t)$ from Proposition~\ref{prop:comp} as it already denotes a density. 
Figure \ref{fig:Fast_limit} displays the dynamics of the measures $\upgamma^\eps = u^\eps\upomega$ obtained from the (rescaled) system \eqref{eq:discrete_u_interior}--\eqref{eq:discrete_vertex} and the fast edge system \eqref{eq:discrete_fast}. 
Note that we visualize the limit system by presenting the product $\zeta^e\pi^e$ on each edge.

In this case, no boundary layers occur and the stationary profiles of prelimit and limit coincide, since no rescaling of the reference measures takes place. 
We do, however, see that with decreasing $\eps$ the profile of the respective invariant measures on the edges is assumed more quickly.

\begin{figure}[!ht]
    \centering
    \subfloat[$t=0$]{\includegraphics[width=.47\textwidth]{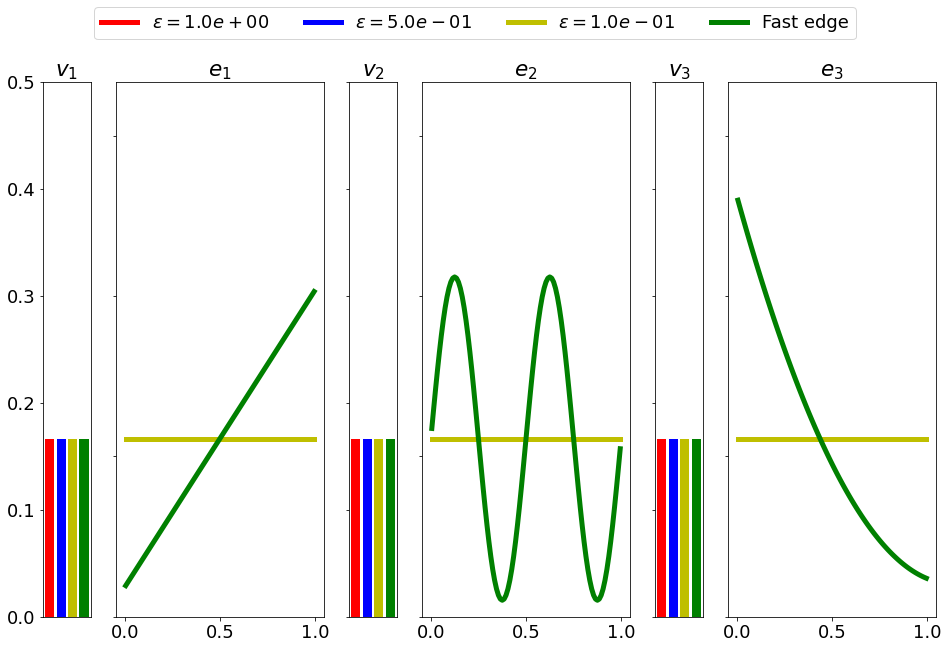}}\hspace{5mm}
    \subfloat[$t=0.005$]{\includegraphics[width=.47\textwidth]{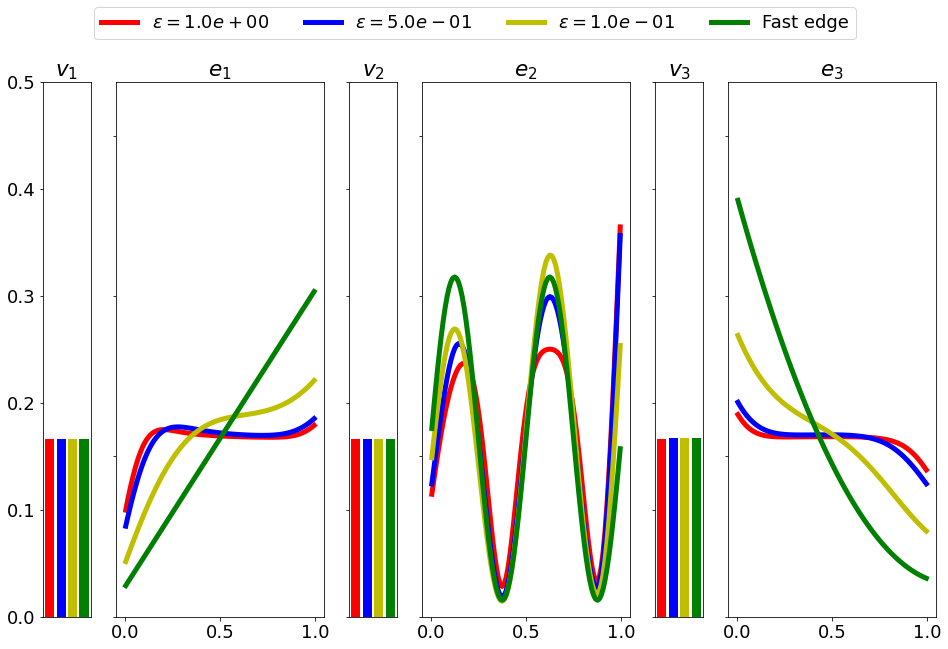}}
    \\
    \subfloat[$t=0.01$]{\includegraphics[width=.47\textwidth]{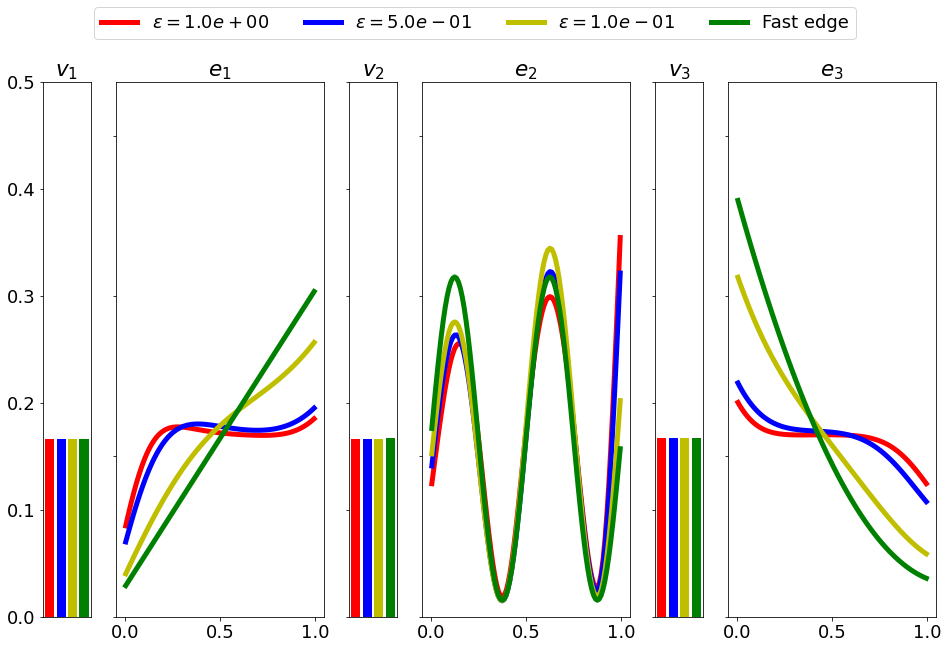}}\hspace{5mm}
    \subfloat[$t=0.1$]{\includegraphics[width=.47\textwidth]{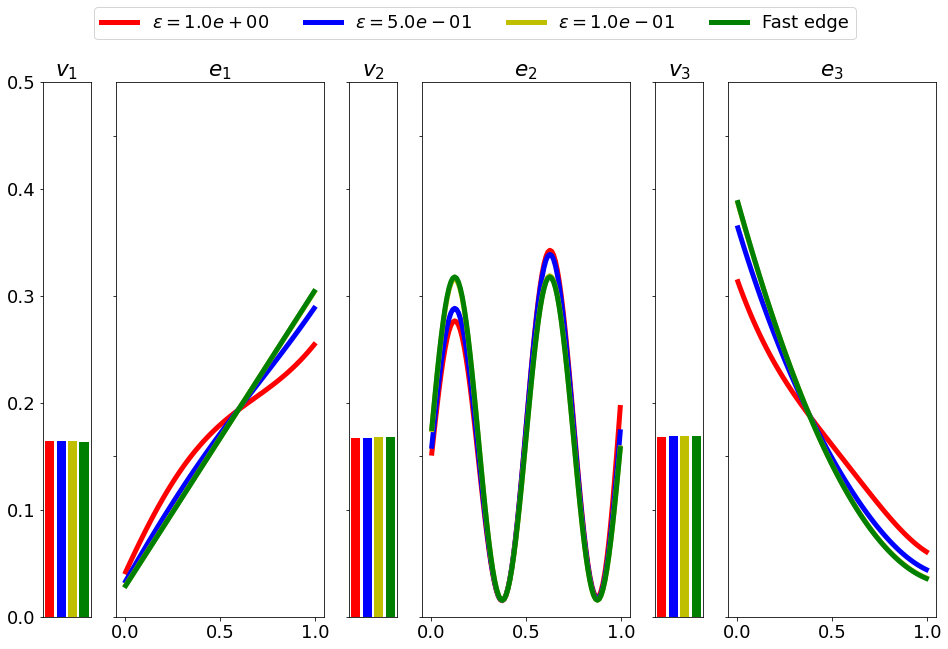}}
    \\
    \subfloat[$t=1.0$]{\includegraphics[width=.47\textwidth]{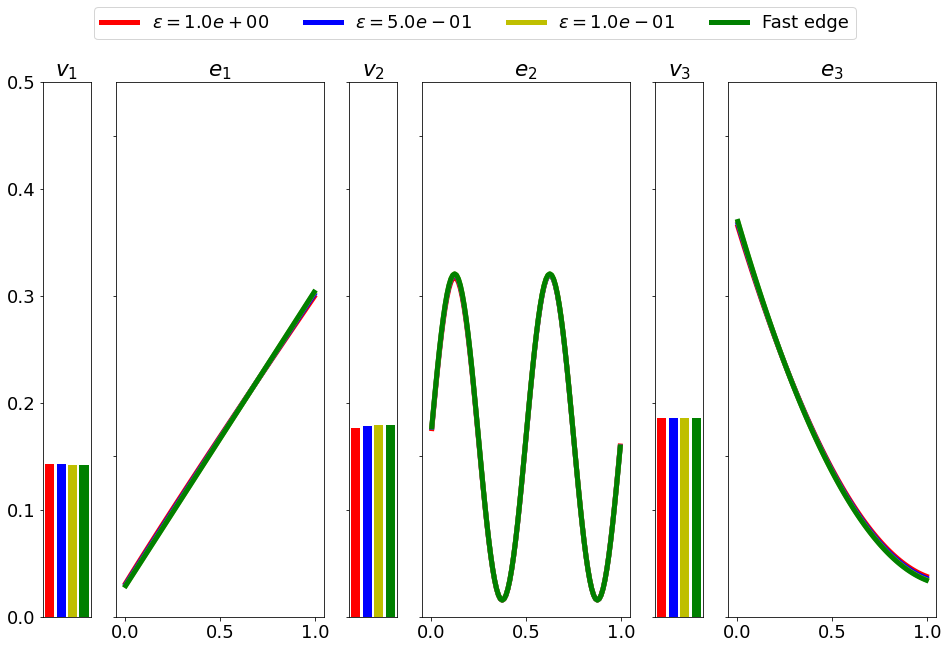}}\hspace{5mm}
    \subfloat[$t=40.0$]{\includegraphics[width=.47\textwidth]{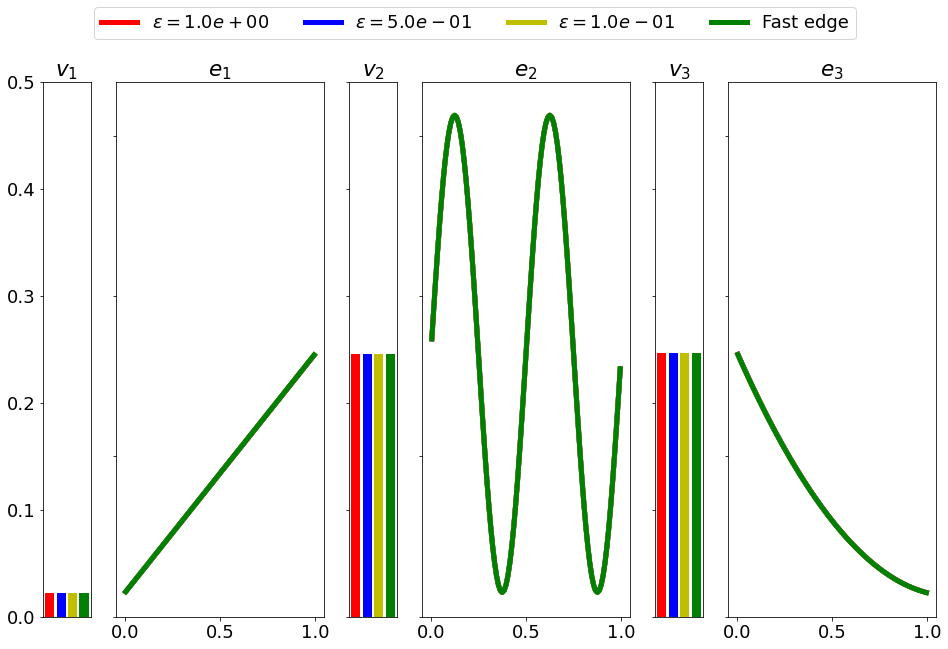}}
    \caption{Evolution of the measures $\upgamma^\eps = u^\eps\upomega$ for the prelimit system with rescaling $d^{\eps,e} = \eps^{-1}d^{e}$ with different values of $\eps$ as well as the for combinatorial system~\eqref{eq:discrete_fast}
    on the extended graph $(\hat\nodes,\hat\edges)$.}
    \label{fig:Fast_limit}
\end{figure}

\paragraph*{Combinatorial graph limit } 
We now address the combinatorial graph limit going from \eqref{eq:discrete_fast} to the system on original combinatorial graph $(\nodes,\edges)$ given by
\begin{align}
    \label{eq:system_numeris_EdgePoints}
    \omega_\sfv^0\partial_t u^0_\sfv(t) &= \sum_{\sfv\sfw \in \edges \text{ or } \sfw\sfv \in \edges}\bar\scrk_{\sfv\sfw}\sqrt{\omega_\sfv^0\omega_\sfw^0}\bra*{u_\sfw^0 - u_\sfv^0},  &&\forall \sfv\in \nodes  \,,
\end{align}
with $u_\sfv = \gamma_\sfv(t) / \omega_\sfv^0$, $u_\sfw = \gamma_\sfw(t) / \omega_\sfw^0$ and where
\[ 
    \overline\scrk_{\sfv\sfw} = \frac{\mathsf{Harm}\bra[\big]{ \scrk^{e}_\sfv \sqrt{\pi^e|_\sfv\,\omega_\sfv},\scrk^{e}_\sfw \sqrt{\pi^e|_\sfw\,\omega_\sfw}}}{2\sqrt{\omega_\sfv \omega_\sfw}}.
\]

As discussed in Section~\ref{ssec:terminal}, this limit is achieved by the rescaling $\scrk^{\eps,e}_\sfv = \eps^{-\frac12} \scrk^e_\sfv = \eps^{-\frac12}$ as well as  $\pi^{\eps,e} = \eps\pi^e/Z^\eps$ and  $\omega_\sfv^\eps = \omega_\sfv/Z^\eps$, where we recall $Z^\eps = \eps\sum_{e\in\edges}\pi^e([0,1]) + \sum_{\sfv\in\nodes}\omega_\sfv$ in \eqref{eq:discrete_fast}.

Since both the prelimit and the limit systems are discrete, we refrain from presenting the evolution of their profiles.
We do, however, consider the convergence of the relative entropies and the Hellinger distances for well-prepared initial data. 
Indeed, this is discussed next.

\paragraph*{Relative entropies and Hellinger distances }
We now study the relative entropies $\calE(\upgamma) = \calH(\upgamma|\upomega)$
for different values of $\eps$ and each of the limits discussed in Section~\ref{sec:multiscale}. 
Furthermore, we calculate for each case the time integrals of Hellinger-type distances between the respective prelimit and limit systems. They are given by
\begin{align}\label{eq:dist_compare}
    \mathrm{H}(\upgamma_1, \upgamma_2) = \frac{1}{2}\int_0^T\bra[\Bigg]{\sum_{e \in \edges} \sum_{k=1}^n\bra[\Bigg]{\sqrt{\frac{\tilde \gamma_{1,k}^e}{\tilde \omega_{k}^e}}-\sqrt{\frac{\tilde \gamma_{2,k}^e}{\tilde \omega_{k}^e}}}^2\;dx + \sum_{\sfv\in \nodes} \bra*{\sqrt{\frac{\bar\gamma_{1,\sfv}}{\omega_v}}-\sqrt{\frac{\bar\gamma_{2,\sfv}}{\omega_v}}}^2} \,,
\end{align}
with obvious modifications if we only compare vertex-based quantities.
In order to have a reasonable comparison, we use well-prepared initial data for all these studies. 
To this end, we chose the initial densities 
\begin{align*}
\tilde u^{e_1}_k &= k/n, \;\tilde u^{e_2}_k = 1/n, \; \tilde u^{e_3}_k = 1-k/n \qquad\text{and}\qquad \gamma_{\sfv_1} = 0,\, \gamma_{\sfv_2} = \gamma_{\sfv_3} = 1.
\end{align*}
This results in the discrete approximation of a continuous function over the graph, which is necessary to ensure well-preparedness for the Kirchhoff limit.

The results for the relative entropies for Kirchhoff, fast edge diffusion, and combinatorial limits are displayed in Figure \ref{fig:entropies}. 
We observe exponential convergence for all systems, yet with rates depending on $\eps$.
In all cases the limit systems converge faster than the rescaled ones for $\eps > 0$, which can be explained by the additional states that are still active (e.g. $\zeta$ in the combinatorial limit) in the prelimit systems and need to be dissipated.
What is more, as shown in Section~\ref{sec:multiscale}, as $\eps\to 0$, the entropies converge to the limit entropy. 
Similarly, we observe that for all cases the Hellinger distances between prelimit and limit decrease with $\eps$ as shown in Table~\ref{tab:Hellinger}.
Here, we stress that the dependence of the distances on $\eps$ is not comparable across different limits, since the role of $\eps$ is distinct for each limit.

\begin{figure}[ht]
    \centering
    \subfloat[Kirchhoff limit]{\includegraphics[width=.33\textwidth]{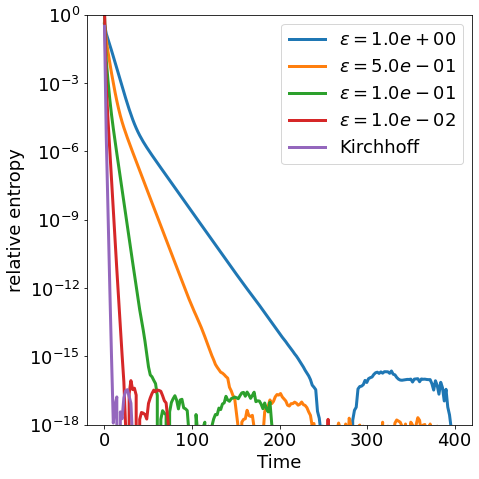}}
    \subfloat[Fast edge limit]{
    \includegraphics[width=.33\textwidth]{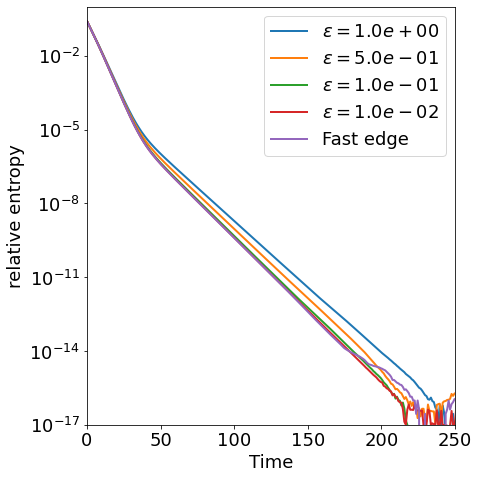}
    }
      \subfloat[Combinatorial limit]{\includegraphics[width=.33\textwidth]{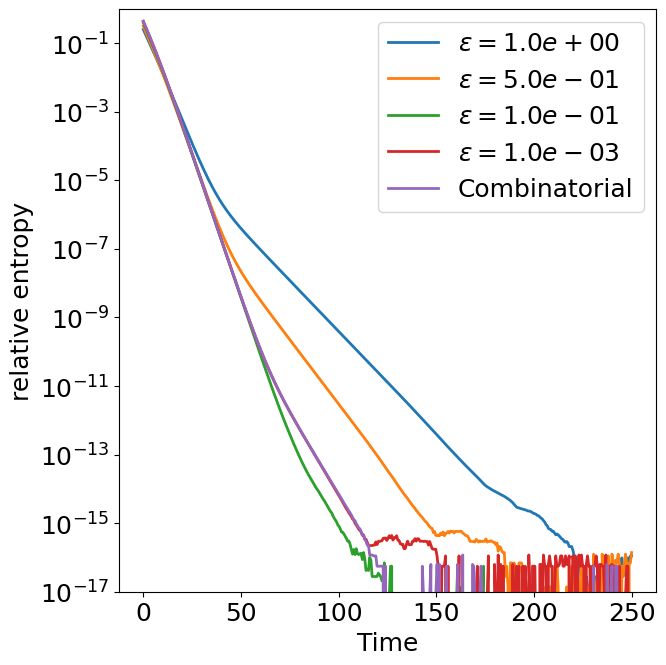}}
    \caption{Evolution of the relative entropies (logarithmic axes) as a function of time for different limits and different values of $\eps$.}
    \label{fig:entropies}
\end{figure}

\begin{table}
\centering
\begin{tabular}{|l|l|l|l|l|}
\hline
               & $\eps=1$ & $\eps=0.1$ & $\eps=0.01$ & $\eps=0.001$ \\ \hline
Kirchhoff      & 4.37 & 4.18e-1 & 1.17e-2 & 5.64e-5 \\ \hline
Fast diffusion & 2.14e-3 &5.43e-4 & 2.2e-5 & 3.92e-8   \\ \hline
Combinatorial  & 8.59e-5 & 2.14e-5 & 8.46e-7 & 8.44e-11  \\ \hline
\end{tabular}
\caption{Values of the Hellinger distance defined in \eqref{eq:dist_compare} for different values of $\eps$. For fast diffusion and combinatorial, convergence is already observed at $\eps=0.001$.}
\label{tab:Hellinger}
\end{table}

\begin{figure}[h!]
    \centering
    \subfloat[Joint limit at $n=50$]{\includegraphics[width=.33\textwidth]{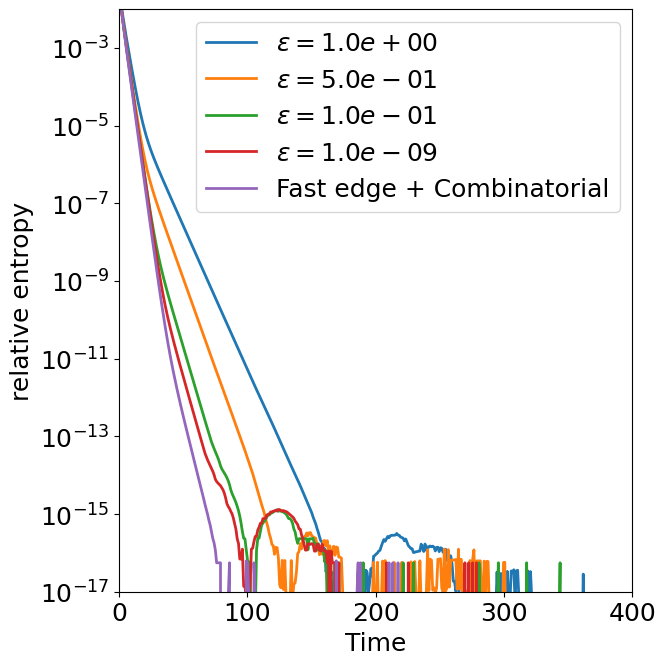}}
    \subfloat[Joint limit at $n=400$]{
    \includegraphics[width=.33\textwidth]{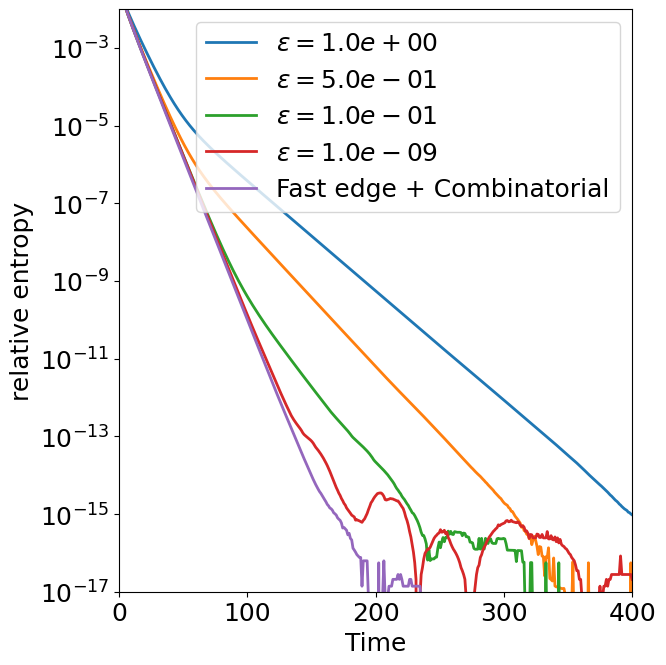}
    }
      \subfloat[Hellinger distances]{\includegraphics[width=.33\textwidth]{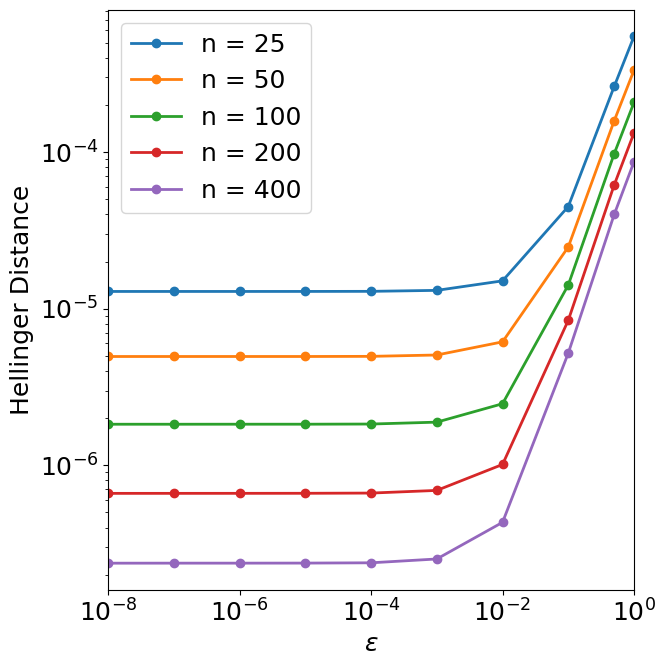}}    
    \caption{(a),(b): Evolution of the relative entropies (logarithmic axes) as a function of time for different values of $\eps$ for $n=50$ and $n=400$, respectively.\\
    (c): Time integrated Hellinger distances between prelimit and limit curves for different values of $n$ and $\eps$ (both with logarithmic axes).}
    \label{fig:hellinger}
\end{figure}

\paragraph{Joint fast edge diffusion and combinatorial limit }
Recalling Remark~\ref{rem:joint_limit}, we now address the joint fast edge diffusion ($\diffedge^{\eps,e} = \eps^{-1} \diffedge^e$) and combinatorial ($\scrk^{\eps,e}_\sfv = \eps^{-\frac12} \scrk^e_\sfv = \eps^{-\frac12}$ as well as  $\pi^{\eps,e} = \eps\pi^e/Z^\eps$ and  $\omega_\sfv^\eps = \omega_\sfv/Z^\eps$) limit. 
In Figure~\ref{fig:hellinger} (a) and (b), we observe that 
as $\eps$ decreases, there remains a visible difference in the rate with which entropy decreases, when compared to the limit system. However, we recall that we employ a discrete approximation for the edge diffusion, i.e., we perform in fact a combinatorial limit in the spirit of~\cite[§7]{PeletierSchlichting2022}.
For finite $n$, the approximation error of the discrete approximation of the edge diffusion starts to dominate the total error as $\eps$ becomes small.
Indeed, Figure~\ref{fig:hellinger} (c) shows that the Hellinger distances between the prelimit and limit curves decrease as $n$ increases, suggesting a convergence rate of order $1/n$.

\section{Conclusion and outlook}\label{sec:conclusion}

\paragraph*{Incorporating nonlocal interaction energies into the framework }
    As in~\cite{ErbarFathiLaschosSchlichting2016,ErbarForkertMaasMugnolo2022} it is 
    possible to generalize our approach to systems satisfying a \emph{local detailed balance} condition, which relaxes the detailed balance condition~\eqref{eq:DBC:r} as follows: For all $\mu=(\gamma,\rho)\in \mathcal{P}(\Mgraph)$ there exist jump rates $r[\mu]: \nodes \times \edges \to [0,\infty)$, $r[\mu]: \edges \times \nodes \to [0,\infty)$ and a \emph{local equilibrium} $(\omega[\mu],\pi[\mu])\in \calP_+(\Mgraph)$ such that it holds
    \begin{equation}\label{eq:localDBC}
        r[\mu](e,\sfv) \, \pi^e[\mu]|_{\sfv} = r[\mu](\sfv,e) \, \omega_\sfv[\mu] \qquad \forall (\sfv,e)\in \nodes\times \edges \,.
    \end{equation}
    A state $\mu^*\in \calP(\Mgraph)$ is stationary for the according dynamic~\eqref{eq:system_intro_linear} with $r$ replaced by $r[\mu]$ and $P$ replaced by $-\log \pi[\mu]$ if and only if $\mu^* = (\omega[\mu^*],\pi[\mu^*])$. In particular, this allows to treat nonlocal interaction energies by considering $\pi[\mu] = e^{-P[\mu]}$ with $P^e[\mu](x) \coloneqq \int_0^{\ell^e} K^e(x-y) \rho^e(\dx{x})$ for some kernel $K^e:\R \to \R$ for all $e\in\edges$. In this case, the driving energy~\eqref{eq:def:free_energy} has to be redefined such that $\calE'_\Medges(\mu) = \log \frac{\rho}{\pi(\mu)}$ and $\calE'_\nodes(\mu) = \log \frac{\gamma}{\omega(\mu)}$ (for details of the construction of the free energy see~\cite[Definition 2.3]{ErbarFathiLaschosSchlichting2016}). 
    
    The crucial a priori estimates required for obtaining existence as well as the main EDP convergence results can still be obtained as described before if the kernels $K^e$ are Lipschitz. Indeed, in this context we highlight \cite[Lemma~5.6]{ErbarForkertMaasMugnolo2022} stating that the Fisher information with Lipschitz interaction kernels is finite if and only if the standard Fisher information without such kernels is finite.

\paragraph*{Scaling limits in the discrete setting }
The numerical implementation of the combinatorial graph limit suggests that the rescalings of Section~\ref{ssec:edgepoints} and Section~\ref{ssec:terminal} commute with each other. It might be interesting to further investigate this observation. 

\paragraph*{Numerical studies }
In Section~\ref{sec:Numerics}, we restricted our considerations only to a single graph topology, a triangle shape. It would be interesting to numerically study more complicated graph topologies to develop a deeper understanding of possible phenomena that do not occur on a triangle-shaped graph.

\section*{Acknowledgements} %

The authors thank Martin Burger (Hamburg) and Matthias Erbar (Bielefeld) for fruitful discussion about gradient flows on metric graphs. 
GH was financed in part by a PhD scholarship of the German National Academic Foundation (Studienstiftung des deutschen Volkes).
JFP thanks the Deutsche Forschungsgemeinschaft (DFG) for support via the Research Unit FOR 5387 POPULAR, Project No. 511588106.
The research of AS is partially based upon work from COST Action 24122 mSPACE, supported
by COST (European Cooperation in Science and Technology), www.cost.eu.

\section*{Statements and declarations}

The code and data used to generate the numerical simulations presented in this work are available from the authors upon reasonable request.

\medskip\noindent
The authors declare that they have no conflict of interest to disclose.

\bibliographystyle{alphaabbr}
\bibliography{bib}

\newcommand{\etalchar}[1]{$^{#1}$}
\begin{thebibliography}{CCHM{\etalchar{+}}23}

\bibitem[ACG21]{AleandriColangeliGabrielli2020}
M.~Aleandri, M.~Colangeli, and D.~Gabrielli.
\newblock A combinatorial representation for the invariant measure of diffusion
  processes on metric graphs.
\newblock {\em ALEA Lat. Am. J. Probab. Math. Stat.}, 18(2):1773, 2021.

\bibitem[AFP00]{AmbrosioFuscoPallara00}
L.~Ambrosio, N.~Fusco, and D.~Pallara.
\newblock {\em {Functions of Bounded Variation and Free Discontinuity
  Problems}}.
\newblock Oxford Mathematical Monographs. Oxford University Press, first
  edition, 2000.

\bibitem[AGS08]{AmbrosioGigliSavare2008}
L.~Ambrosio, N.~Gigli, and G.~Savar{\'{e}}.
\newblock {\em Gradient flows in metric spaces and in the space of probability
  measures}.
\newblock Lectures in Mathematics ETH Z\"{u}rich. Birkh\"{a}user Verlag, Basel,
  second edition, 2008.

\bibitem[BCFK06]{QuantumGraphsApplications2006}
G.~Berkolaiko, R.~Carlson, S.~A. Fulling, and P.~Kuchment, editors.
\newblock {\em {Quantum Graphs and Their Applications}}.
\newblock American Mathematical Society, 2006.

\bibitem[BHP23]{BurgerHumpertPietschmann2023}
M.~Burger, I.~Humpert, and J.-F. Pietschmann.
\newblock Dynamic Optimal Transport on Networks.
\newblock {\em {ESAIM} Control Optim. Calc. Var.}, 29:54, 2023.

\bibitem[BKF24]{BuddeKramar2024}
C.~Budde and M.~Kramar~Fijav{\v{z}}.
\newblock Well-posedness of non-autonomous transport equation on metric graphs.
\newblock {\em Semigroup Forum}, 108(2):319--334, 2024.

\bibitem[BLS19]{BerkolaikoLatushkinSukhtaiev2019}
G.~Berkolaiko, Y.~Latushkin, and S.~Sukhtaiev.
\newblock Limits of quantum graph operators with shrinking edges.
\newblock {\em Adv. Math.}, 352:632--669, August 2019.

\bibitem[But89]{Buttazzo1989}
G.~Buttazzo.
\newblock {\em Semicontinuity, Relaxation, and Integral Representation in the
  Calculus of Variations}.
\newblock Pitman research notes in mathematics series. Longman Scientific \&
  Technical, 1989.

\bibitem[Car08]{Carlson2008}
R.~Carlson.
\newblock Boundary value problems for infinite metric graphs.
\newblock In {\em Analysis on graphs and its applications. Selected papers
  based on the Isaac Newton Institute for Mathematical Sciences programme,
  Cambridge, UK, January 8--June 29, 2007}, pages 355--368. Providence, RI:
  American Mathematical Society (AMS), 2008.

\bibitem[Car22]{Carlson2022}
R.~Carlson.
\newblock Robin boundary conditions for the {Laplacian} on metric graph
  completions.
\newblock {\em Methods Funct. Anal. Topol.}, 28(1):12--24, 2022.

\bibitem[CCHM{\etalchar{+}}23]{CancesEtAl2023}
C.~Cancès, C.~Chainais-Hillairet, B.~Merlet, F.~Raimondi, and J.~Venel.
\newblock Mathematical analysis of a thermodynamically consistent reduced model
  for iron corrosion.
\newblock {\em Z. Angew. Math. Phys.}, 74(3), April 2023.

\bibitem[CHLZ12]{ChowHuangLiZhou12}
S.-N. Chow, W.~Huang, Y.~Li, and H.~Zhou.
\newblock Fokker--{P}lanck equations for a free energy functional or {M}arkov
  process on a graph.
\newblock {\em Arch. Ration. Mech. Anal.}, 203(3):969--1008, 2012.

\bibitem[DL15]{DisserLiero2015}
K.~Disser and M.~Liero.
\newblock On gradient structures for {M}arkov chains and the passage to
  {W}asserstein gradient flows.
\newblock {\em Netw. Heterog. Media}, 10(2):233--253, 2015.

\bibitem[DNS09]{DolbeaultNazaretSavare2009}
J.~Dolbeault, B.~Nazaret, and G.~Savar{\'e}.
\newblock A New Class of Transport Distances between Measures.
\newblock {\em Calc. Var. Partial Differential Equations}, 34(2):193--231, June
  2009.

\bibitem[EFLS16]{ErbarFathiLaschosSchlichting2016}
M.~Erbar, M.~Fathi, V.~Laschos, and A.~Schlichting.
\newblock Gradient flow structure for {M}c{K}ean-{V}lasov equations on discrete
  spaces.
\newblock {\em Discrete Contin. Dyn. Syst.}, 36(12):6799--6833, 2016.

\bibitem[EFMM22]{ErbarForkertMaasMugnolo2022}
M.~Erbar, D.~Forkert, J.~Maas, and D.~Mugnolo.
\newblock Gradient flow formulation of diffusion equations in the Wasserstein
  space over a Metric graph.
\newblock {\em Netw. Heterog. Media}, 17(5):687, 2022.

\bibitem[EHS25]{EspositoHeinzeSchlichting2023}
A.~Esposito, G.~Heinze, and A.~Schlichting.
\newblock Graph-to-local limit for the nonlocal interaction equation.
\newblock {\em J. Math. Pures Appl.}, 194:Paper No. 103663, 2025.

\bibitem[EPSS21]{EspositoPatacchiniSchlichtingSlepcev2021}
A.~Esposito, F.~S. Patacchini, A.~Schlichting, and D.~Slep\v{c}ev.
\newblock Nonlocal-interaction equation on graphs: Gradient flow structure and
  continuum limit.
\newblock {\em Arch. Ration. Mech. Anal.}, 240(2):699--760, 2021.

\bibitem[Erb14]{Erb14}
M.~Erbar.
\newblock {Gradient flows of the entropy for jump processes}.
\newblock {\em Ann. Inst. Henri Poincar{\'{e}} Probab. Stat.}, 50(3):920 --
  945, 2014.

\bibitem[Erb23]{Erbar2023}
M.~Erbar.
\newblock A gradient flow approach to the Boltzmann equation.
\newblock {\em J. Eur. Math. Soc. (JEMS)}, 26(11):4441–4490, June 2023.

\bibitem[FBP24]{Fazeny2024}
A.~Fazeny, M.~Burger, and J.-F. Pietschmann.
\newblock Optimal transport on gas networks.
\newblock {\em Preprint arXiv:2405.01698}, 2024.

\bibitem[FHKM22]{fischer2022global}
J.~Fischer, K.~Hopf, M.~Kniely, and A.~Mielke.
\newblock Global existence analysis of energy-reaction-diffusion systems.
\newblock {\em SIAM J. Math. Anal.}, 54(1):220--267, 2022.

\bibitem[FL21]{FrenzelLiero21}
T.~Frenzel and M.~Liero.
\newblock Effective diffusion in thin structures via generalized gradient
  systems and {EDP}-convergence.
\newblock {\em Discrete Contin. Dyn. Syst. Ser. S}, 14(1):395, 2021.

\bibitem[FS00]{FreidlinSheu2000}
M.~Freidlin and S.-J. Sheu.
\newblock Diffusion processes on graphs: {Stochastic} differential equations,
  large deviation principle.
\newblock {\em Probab. Theory Relat. Fields}, 116(2):181--220, 2000.

\bibitem[FW93]{FreidlinWentzell1993}
M.~I. Freidlin and A.~D. Wentzell.
\newblock Diffusion processes on graphs and the averaging principle.
\newblock {\em Ann. Probab.}, 21(4):2215--2245, 1993.

\bibitem[GST09]{GianazzaSavareToscani2009}
U.~Gianazza, G.~Savar{\'e}, and G.~Toscani.
\newblock The {W}asserstein gradient flow of the {F}isher information and the
  quantum drift-diffusion equation.
\newblock {\em Arch. Ration. Mech. Anal.}, 194(1):133--220, 2009.

\bibitem[HMS25]{heinze2025discrete}
G.~Heinze, A.~Mielke, and A.~Stephan.
\newblock Discrete-to-continuum limit for nonlinear reaction-diffusion systems
  via EDP convergence for gradient systems.
\newblock {\em Preprint arXiv:2504.06837}, 2025.

\bibitem[HS87]{HolleyStroock1987}
R.~Holley and D.~Stroock.
\newblock Logarithmic Sobolev inequalities and stochastic Ising models.
\newblock {\em J. Stat. Phys.}, 46(5–6):1159–1194, March 1987.

\bibitem[HST24]{HraivoronskaSchlichtingTse2024}
A.~Hraivoronska, A.~Schlichting, and O.~Tse.
\newblock Variational convergence of the Scharfetter–Gummel scheme to the
  aggregation-diffusion equation and vanishing diffusion limit.
\newblock {\em Numer. Math.}, 156(6):2221–2292, November 2024.

\bibitem[HT23]{HraivoronskaTse2023}
A.~Hraivoronska and O.~Tse.
\newblock Diffusive Limit of Random Walks on Tessellations via Generalized
  Gradient Flows.
\newblock {\em SIAM J. Math. Anal.}, 55(4):2948--2995, 2023.

\bibitem[KFP20]{KramarFijav-Puchalska2020}
M.~Kramar~Fijavž and A.~Puchalska.
\newblock Semigroups for dynamical processes on metric graphs.
\newblock {\em Philos. Trans. Roy. Soc. A}, 378(2185):20190619, October 2020.

\bibitem[LMPR17]{LieroMielkePeletierRenger2017}
M.~Liero, A.~Mielke, M.~A. Peletier, and D.~R.~M. Renger.
\newblock On microscopic origins of generalized gradient structures.
\newblock {\em Discrete Contin. Dyn. Syst. Ser. S}, 10(1):1, 2017.

\bibitem[LS24]{LamSchlichting2024}
C.~Y. Lam and A.~Schlichting.
\newblock Variational convergence of exchange-driven stochastic particle
  systems in the thermodynamic limit.
\newblock {\em Preprint arXiv:2401.06696}, 2024.

\bibitem[Maa11]{Maas2011}
J.~Maas.
\newblock Gradient flows of the entropy for finite {M}arkov chains.
\newblock {\em J. Funct. Anal.}, 261(8):2250--2292, 2011.

\bibitem[Mie11]{Mielke2011}
A.~Mielke.
\newblock A gradient structure for reaction-diffusion systems and for
  energy-drift-diffusion systems.
\newblock {\em Nonlinearity}, 24:1329--1346, 2011.

\bibitem[Mie16a]{Mielke2016a}
A.~Mielke.
\newblock {Deriving Effective Models for Multiscale Systems via Evolutionary
  {$\Gamma$}-Convergence}.
\newblock In {\em Control of Self-Organizing Nonlinear Systems}, pages
  235--251. Springer, 2016.

\bibitem[Mie16b]{Mielke2016}
A.~Mielke.
\newblock On Evolutionary {$\varGamma$-Convergence} for gradient systems.
\newblock In A.~Muntean, J.~Rademacher, and A.~Zagaris, editors, {\em
  Macroscopic and large scale phenomena: {Coarse} graining, mean field limits
  and ergodicity}, pages 187--249. Springer International Publishing, Cham,
  2016.

\bibitem[MMP21]{MielkeMontefuscoPeletier21}
A.~Mielke, A.~Montefusco, and M.~A. Peletier.
\newblock Exploring families of energy-dissipation landscapes via tilting:
  Three types of {EDP} convergence.
\newblock {\em Contin. Mech. Thermodyn.}, 33:611--637, 2021.

\bibitem[MR07]{mugnolo2007dynamic}
D.~Mugnolo and S.~Romanelli.
\newblock Dynamic and generalized Wentzell node conditions for network
  equations.
\newblock {\em Math. Methods Appl. Sci.}, 30(6):681--706, 2007.

\bibitem[MS19]{MielkeStephan2020}
A.~Mielke and A.~Stephan.
\newblock Coarse-graining via {EDP}-convergence for linear fast-slow reaction
  systems.
\newblock {\em Math. Models Methods Appl. Sci.}, 30(9):1765--1807, 2019.
\newblock Erratum for Lemma 3.4 under arXiv:1911.06234.

\bibitem[Mug14]{Mugnolo2014}
D.~Mugnolo.
\newblock {\em Semigroup Methods for Evolution Equations on Networks}.
\newblock Springer International Publishing, 2014.

\bibitem[PR21]{PeletierRenger21}
M.~A. Peletier and D.~R.~M. Renger.
\newblock Fast Reaction Limits via {$\Gamma$}-Convergence of the Flux Rate
  Functional.
\newblock {\em J. Dynam. Differential Equations}, pages 1--42, 2021.

\bibitem[PRST22]{PeletierRossiSavareTse2022}
M.~A. Peletier, R.~Rossi, G.~Savar{\'e}, and O.~Tse.
\newblock Jump processes as generalized gradient flows.
\newblock {\em Calc. Var. Partial Differential Equations}, 61(1):1--85, 2022.

\bibitem[PS22]{PeletierSchlottke2022}
M.~A. Peletier and M.~C. Schlottke.
\newblock Gamma-convergence of a gradient-flow structure to a non-gradient-flow
  structure.
\newblock {\em Calc. Var. Partial Differential Equations}, 61(3), 2022.

\bibitem[PS23]{PeletierSchlichting2022}
M.~A. Peletier and A.~Schlichting.
\newblock Cosh gradient systems and tilting.
\newblock {\em Nonlinear Anal.}, 231:Paper No. 113094, 113, 2023.

\bibitem[RS03]{rossi2003tightness}
R.~Rossi and G.~Savar{\'e}.
\newblock Tightness, integral equicontinuity and compactness for evolution
  problems in Banach spaces.
\newblock {\em Ann. Sc. Norm. Super. Pisa Cl. Sci. (5)}, 2(2):395--431, 2003.

\bibitem[Sch19]{Schlichting2019}
A.~Schlichting.
\newblock Macroscopic limit of the {B}ecker-{D}\"{o}ring equation \textit{via}
  gradient flows.
\newblock {\em ESAIM Control Optim. Calc. Var.}, 25:Paper No. 22, 36, 2019.

\bibitem[Ser11]{Serfaty11}
S.~Serfaty.
\newblock Gamma-convergence of gradient flows on {H}ilbert and metric spaces
  and applications.
\newblock {\em Discrete Contin. Dyn. Syst.}, 31(4):1427--1451, 2011.

\bibitem[SR97]{Shampine1997}
L.~F. Shampine and M.~W. Reichelt.
\newblock The MATLAB ODE Suite.
\newblock {\em SIAM J. Sci. Comput.}, 18(1):1--22, 1997.

\bibitem[SS04]{SandierSerfaty04}
E.~Sandier and S.~Serfaty.
\newblock {Gamma-convergence of gradient flows with applications to
  Ginzburg-Landau}.
\newblock {\em Comm. Pure Appl. Math.}, 57(12):1627--1672, 12 2004.

\bibitem[Ste21]{Stephan21}
A.~Stephan.
\newblock {EDP}-convergence for a linear reaction-diffusion system with fast
  reversible reaction.
\newblock {\em Calc. Var. Partial Differential Equations}, 60(6):1--35, 2021.

\end{thebibliography}
\end{document}